\documentclass[10pt,a4paper,reqno]{amsart}

\makeatletter
\g@addto@macro{\endabstract}{\@setabstract}
\newcommand{\authorfootnotes}{\renewcommand\thefootnote{\@fnsymbol\c@footnote}}%
\makeatother

\usepackage{amssymb}
\usepackage{mathtools}
\usepackage[utf8]{inputenc}
\usepackage{bm}
\usepackage{bbm}
\usepackage{amsmath}
\usepackage{amsfonts}
\makeatletter
\def\amsbb{\use@mathgroup \M@U \symAMSb}
\makeatother

\usepackage{comment}
\usepackage{graphicx}
\usepackage{caption}
\usepackage{subcaption}

\usepackage{bbold}

\newcommand{\bga}{\begin{aligned}}
	\newcommand{\ena}{\end{aligned}}

\newcommand{\bge}{\begin{enumerate}}
	\newcommand{\ene}{\end{enumerate}}

\usepackage{dutchcal}

\usepackage{fancyhdr}
\usepackage{subcaption}

\usepackage{pgfplots}
\pgfplotsset{compat=1.15}

\usepackage{booktabs}
\usepackage{xcolor}

\definecolor{webgreen}{rgb}{0,.5,0}
\definecolor{webbrown}{rgb}{.6,0,0}
\definecolor{RoyalBlue}{cmyk}{1, 0.50, 0, 0}

\newcommand{\red}[1]{{\color{red} #1}}
\newcommand{\blue}[1]{{\color{blue} #1}}
\newcommand{\magenta}[1]{{\color{magenta} #1}}
\newcommand{\gr}[1]{{\color{webgreen} #1}}
\newcommand{\hide}[1]{}

\usepackage{tikz}
\usepackage{pict2e}
\usepackage{todonotes}
\usetikzlibrary{decorations.markings}

\DeclareSymbolFont{bbold}{U}{bbold}{m}{n}
\DeclareSymbolFontAlphabet{\mathbbold}{bbold}

\newcommand{\D}[2]{D_{#1}^{(#2)}}
\newcommand{\cD}[2]{\overset{\circ}{D}_{#1}^{(#2)}}
\newcommand{\de}{\delta}

\newcommand{\C}{{\mathbb C}}

\newcommand{\Z}{{\mathbb Z}}

\newcommand{\N}{{\mathbb N}}
\newcommand{\T}{{\mathbb T}}

\newcommand{\al}{\alpha}
\newcommand{\be}{\beta}

\newcommand{\ze}{\zeta}

\newcommand{\di}{\displaystyle}

\newcommand{\ic}{\textrm{i}}
\newcommand{\dd}{\textrm{d}}

\newcommand{\qasq}{\quad \text{as} \quad}
\newcommand{\qandq}{\quad \text{and} \quad}

\usepackage{etoolbox}

\makeatletter
\pretocmd{\section}{\addtocontents{toc}{\protect\addvspace{1\p@}}}{}{}
\pretocmd{\subsection}{\addtocontents{toc}{\protect\addvspace{1\p@}}}{}{}
\pretocmd{\subsubsection}{\addtocontents{toc}{\protect\addvspace{1\p@}}}{}{}
\makeatother

\usepackage{comment}

\usepackage{amsmath,amssymb,amsthm}

\newtheorem{theorem}{Theorem}
\newtheorem{prop}{Proposition}
\newtheorem{lemma}{Lemma}

\theoremstyle{remark}
\newtheorem{remark}{Remark}

\newtheorem{definition}{Definition}

\usepackage[T1]{fontenc}

\usepackage{mathtools}

\makeatletter
\DeclareRobustCommand\widecheck[1]{{\mathpalette\@widecheck{#1}}}
\def\@widecheck#1#2{%
	\setbox\z@\hbox{\m@th$#1#2$}%
	\setbox\tw@\hbox{\m@th$#1%
		\widehat{%
			\vrule\@width\z@\@height\ht\z@
			\vrule\@height\z@\@width\wd\z@}$}%
	\dp\tw@-\ht\z@
	\@tempdima\ht\z@ \advance\@tempdima2\ht\tw@ \divide\@tempdima\thr@@
	\setbox\tw@\hbox{%
		\raise\@tempdima\hbox{\scalebox{1}[-1]{\lower\@tempdima\box
				\tw@}}}%
	{\ooalign{\box\tw@ \cr \box\z@}}}
\makeatother

\usepackage[mathscr]{euscript}

\usepackage{amsmath}

\ExplSyntaxOn

\NewDocumentCommand{\pFq}{O{}mmmmm}
{
	\group_begin:
	\keys_set:nn { hypergeometric } { #1 }
	\hypergeometric_print:nnnnn { #2 } { #3 } { #4 } { #5 } { #6 }
	\group_end:
}
\NewDocumentCommand{\hypergeometricsetup}{m}
{
	\keys_set:nn { hypergeometric } { #1 }
}

\tl_new:N \l_hypergeometric_divider_tl
\tl_new:N \l_hypergeometric_left_tl
\tl_new:N \l_hypergeometric_right_tl

\keys_define:nn { hypergeometric }
{
	symbol .tl_set:N = \l_hypergeometric_symbol_tl,
	symbol .initial:n = F,
	separator .tl_set:N = \l_hypergeometric_separator_tl,
	separator .initial:n = {},
	skip .tl_set:N = \l_hypergeometric_skip_tl,
	skip .initial:n = 8,
	divider .choice:,
	divider/semicolon .code:n = \tl_set:Nn \l_hypergeometric_divider_tl { \;; },
	divider/bar .code:n = \tl_set:Nn \l_hypergeometric_divider_tl { \;\middle|\; },
	divider .initial:n = semicolon,
	fences .choice:,
	fences/brack .code:n = 
	\tl_set:Nn \l_hypergeometric_left_tl {[}
	\tl_set:Nn \l_hypergeometric_right_tl {]},
	fences/parens .code:n = 
	\tl_set:Nn \l_hypergeometric_left_tl {(}
	\tl_set:Nn \l_hypergeometric_right_tl {)},
	fences .initial:n = brack,
}

\cs_new_protected:Nn \hypergeometric_print:nnnnn
{
	{} \sb {#1} \l_hypergeometric_symbol_tl \sb { #2 }
	\left\l_hypergeometric_left_tl
	\genfrac .. 
	{0pt} 
	{} 
	{ \__hypergeometric_process:n { #3 } } 
	{ \__hypergeometric_process:n { #4 } } 
	\l_hypergeometric_divider_tl
	#5
	\right\l_hypergeometric_right_tl
}

\cs_new_protected:Nn \__hypergeometric_process:n
{
	\clist_use:nn { #1 }
	{
		{\l_hypergeometric_separator_tl}
		\mspace { \l_hypergeometric_skip_tl mu }
	}
}

\ExplSyntaxOff

\usepackage{yfonts}

\usepackage{xcolor}
\definecolor{webgreen}{rgb}{0,.5,0}
\definecolor{webbrown}{rgb}{.6,0,0}
\definecolor{RoyalBlue}{cmyk}{1, 0.50, 0, 0}
\usepackage[colorlinks=true, breaklinks=true, urlcolor=webbrown, linkcolor=RoyalBlue, citecolor=webgreen,backref=page]{hyperref}

\usepackage{amssymb,amsthm,amsxtra}
\usepackage{epic,eepic}
\usepackage{epsfig}

\newcommand\psymmU{%
	\begin{picture}(1,1)(0,0)%
		\allinethickness{0.5pt}%
		\path(0,0)(0,1)(1,1)(1,0)(0,0)%
\end{picture}}
\newcommand\psymmUU{%
	\begin{picture}(1,1)(0,0)%
		\allinethickness{0.5pt}%
		\path(0,0)(0,1)(1,1)(1,0)(0,0)%
		\put(0.5,0.5){\makebox(0,0){$\cdot$}}%
\end{picture}}
\newcommand\psymmO{%
	\begin{picture}(1,1)(0,0)%
		\allinethickness{0.5pt}%
		\path(0,0)(0,1)(1,1)(1,0)(0,0)%
		\path(0,0)(1,1)%
\end{picture}}
\newcommand\psymmS{%
	\begin{picture}(1,1)(0,0)%
		\allinethickness{0.5pt}%
		\path(0,0)(0,1)(1,1)(1,0)(0,0)%
		\path(1,0)(0,1)%
\end{picture}}
\newcommand\psymmu{%
	\begin{picture}(1,1)(0,0)%
		\allinethickness{0.5pt}%
		\path(0,0)(0,1)(1,1)(1,0)(0,0)%
		\path(0,0)(1,1)%
		\path(0,1)(1,0)%
\end{picture}}

\newbox\tsymmUbox
\newbox\tsymmUUbox
\newbox\tsymmObox
\newbox\tsymmSbox
\newbox\tsymmubox
\setbox\tsymmUbox =\hbox{\kern0.75pt\setlength{\unitlength}{6pt}\psymmU \kern0.75pt}
\setbox\tsymmUUbox=\hbox{\kern0.75pt\setlength{\unitlength}{6pt}\psymmUU\kern0.75pt}
\setbox\tsymmObox =\hbox{\kern0.75pt\setlength{\unitlength}{6pt}\psymmO \kern0.75pt}
\setbox\tsymmSbox =\hbox{\kern0.75pt\setlength{\unitlength}{6pt}\psymmS \kern0.75pt}
\setbox\tsymmubox =\hbox{\kern0.75pt\setlength{\unitlength}{6pt}\psymmu \kern0.75pt}

\newbox\symmUbox
\newbox\symmUUbox
\newbox\symmObox
\newbox\symmSbox
\newbox\symmubox
\setbox\symmUbox =\hbox{\kern0.75pt\setlength{\unitlength}{4.5pt}\psymmU \kern0.75pt}
\setbox\symmUUbox=\hbox{\kern0.75pt\setlength{\unitlength}{4.5pt}\psymmUU\kern0.75pt}
\setbox\symmObox =\hbox{\kern0.75pt\setlength{\unitlength}{4.5pt}\psymmO \kern0.75pt}
\setbox\symmSbox =\hbox{\kern0.75pt\setlength{\unitlength}{4.5pt}\psymmS \kern0.75pt}
\setbox\symmubox =\hbox{\kern0.75pt\setlength{\unitlength}{4.5pt}\psymmu \kern0.75pt}

\makeatletter
\newcommand\incircbin
{%
	\mathpalette\@incircbin
}
\newcommand\@incircbin[2]
{%
	\mathbin%
	{%
		\ooalign{\hidewidth$#1#2$\hidewidth\crcr$#1\bigcirc$}%
	}%
}

\newcommand{\oeq}{\incircbin{=}}
\makeatother

\usepackage[normalem]{ulem} 
\DeclareMathOperator{\rank}{rank}

\usepackage{longtable}
\usepackage{array}

\begin{document}
	
	\tikzset{->-/.style={decoration={
				markings,
				mark=at position #1 with {\arrow{latex}}},postaction={decorate}}}
	
	\tikzset{-<-/.style={decoration={
				markings,
				mark=at position #1 with {\arrowreversed{latex}}},postaction={decorate}}}

\title[The $2j-k$ and $j-2k$ Bi-orthogonal Polynomial Systems on the Unit Circle: Further Properties and Riemann-Hilbert Characterizations]{The $2j-k$ and $j-2k$ Bi-orthogonal Polynomials on the Unit Circle: Further Properties and Riemann-Hilbert Characterizations}

\maketitle

\begin{center}
	\authorfootnotes
	Roozbeh Gharakhloo\footnote{Mathematics Department,	University of California Santa Cruz, Santa Cruz, CA, USA.	e-mail: roozbeh@ucsc.edu},
	Nicholas S. Witte\footnote{School of Mathematics and Statistics, Victoria University of Wellington, NZ. e-mail: n.s.witte@protonmail.com}  \par \bigskip
\end{center}

\begin{abstract}
In previous work \cite{GW}, we developed a theory of modulated \(2j-k\)
bi-orthogonal polynomial systems \(\{P_n(z;r),Q_n(z;r)\}\) and
\(j-2k\) bi-orthogonal polynomial systems \(\{R_n(z;r),S_n(z;r)\}\),
which generalize the classical \(j-k\) Toeplitz systems. In the present
paper, we further develop this theory in several directions. We derive
simplified and unified recurrence relations for both families of
polynomials, prove a more transparent Christoffel--Darboux formula, and
give Riemann--Hilbert characterizations of the \(2j-k\) and \(j-2k\)
systems.\\
\newline
\textit{Keywords}: Bi-orthogonal polynomials, Toeplitz determinants,
slanted moment matrices, Christoffel--Darboux kernels, Riemann--Hilbert
problems, recurrence relations.
\vspace{.05cm}
\newline
\textit{2020 Mathematics Subject Classification:} Primary 42C05; Secondary
15B05, 30E25, 33C47, 39A06.
\end{abstract}

\tableofcontents

\section{Introduction}

This paper is a sequel to \cite{GW}. In that work, we introduced the
$2j-k$ and $j-2k$ bi-orthogonal polynomial systems on the unit circle $\T$ and
developed the basic theory: bordered determinant formulae, recurrence
relations, multiple integral representations, reproducing kernels,
Christoffel--Darboux identities, and associated functions.  
In \cite[Eq.~(2.12)]{GW}, the $2j-k$ multiple-integral functional \(\mathcal{D}_n[\cdot]\) was defined with the slanted Vandermonde-type interaction
\begin{equation}
	\prod_{1\leq j<k\leq n}
	(\zeta_k-\zeta_j)(\zeta_k^{-2}-\zeta_j^{-2}), \quad \zeta_j \in \T,
\end{equation}
where the eigenvalues $\zeta_j$ can be interpreted as mobile charges and are distributed on the unit circle under mutual electrostatic pair-wise interactions and subject to external fixed charges or potentials. 
Averages or matrix integrals take the general form
\begin{equation}
    \mathcal{D}_n[f] := \frac{1}{n!} \int_{\T} \frac{\dd \ze_1}{2 \pi \ic \ze_1} \cdots \int_{\T} \frac{\dd \ze_n}{2 \pi \ic \ze_n}
    f(\ze_1,\ldots,\zeta_n) \prod_{1\leq j<k\leq n} (\ze_k-\ze_j)(\ze^{-2}_k-\ze^{-2}_j), 
\end{equation} 	 
for suitable integrable class functions $f(\{\zeta_j\})$.
The goal of the present paper is to extend that theory further in a direction that is more
directly suited to asymptotic analysis. More precisely, we simplify and
unify several recurrence relations, prove auxiliary determinant identities
needed for later applications, and formulate Riemann--Hilbert problems for
the $2j-k$ and $j-2k$ systems.

The usual Toeplitz theory corresponds to the moment pattern $j-k$ and leads
to the standard bi-orthogonal polynomial systems on the unit circle. The
determinants studied here replace the Toeplitz moment pattern by the
slanted patterns $2j-k$ and $j-2k$. One of the main points of this paper is that many familiar structures from
the Toeplitz/OPUC theory survive in this slanted setting, but in modified
form. In particular, the recurrence relations are no longer the usual
Szeg\H{o}-type three-term recurrences, and the natural Riemann--Hilbert
problems are $3\times 3$ rather than $2\times 2$.

A central motivation comes from random matrix models for moments of
derivatives of characteristic polynomials. Altu\u{g}, Bettin, Petrow,
Rishikesh, and Whitehead studied averages over the symmetry types
USp$(2N)$, SO$(2N)$, and O$^{-}(2N)$ and expressed the leading constants, in the large $N$ asymptotic expansion, in
terms of determinants whose entries are hypergeometric moments
\cite{AltugBettinPetrowRishikeshWhitehead2014}. These determinants have
precisely the kind of shifted/slanted moment structure that motivates the
$2j-k$ and $j-2k$ systems. More recently, Assiotis, Gunes, Keating, and Wei
studied general joint moments of characteristic polynomials and higher
derivatives over USp$(2N)$ and SO$(2N)$, connecting the problem to Laguerre
ensemble averages and Painlev\'e equations \cite{AssiotisGunesKeatingWei2026}.
In the forthcoming application-oriented paper \cite{GW3}, we will use the
theory developed here and in \cite{GW} to give a slant-Toeplitz approach to
this collection of problems.

Closely related slanted determinant structures also appear in other
integrable settings, for instance in Bertola and Bothner's work on
Vorob'ev--Yablonski polynomials and rational solutions of Painlev\'e~II
\cite[Equation (1.4)]{BertolaBothner2015}, and in Barhoumi, Lisovyy, Miller, and Prokhorov's
Riemann--Hilbert analysis of Painlev\'e~III through \(2j-k\) determinantal
representations of Umemura polynomials
\cite[Section~3.5]{BarhoumiLisovyyMillerProkhorov2024}. These connections are not
developed here, but they further motivate the study of slanted determinant
structures.

The Riemann--Hilbert characterizations of the present work should also be viewed in the
larger context of Riemann--Hilbert characterizations of orthogonal and
multiple orthogonal systems. For ordinary orthogonal polynomials on the
real line and ordinary bi-orthogonal polynomials on the unit circle, the
basic Riemann--Hilbert problems are $2\times2$. Once one moves beyond the
standard Hankel and Toeplitz settings, larger matrix problems naturally
appear. This is the case for multiple orthogonal polynomials on the real
line\cite{BK04a}, for multiple orthogonal polynomials on the unit circle
\cite{MinguezCenicerosVanAssche2008}, and
for skew-orthogonal polynomials \cite{Pierce2008}. It is also the case for
Toeplitz+Hankel systems \cite{GI,GI2}. The $3\times3$ Riemann--Hilbert
problems in this paper are another example of this same phenomenon.

Let $\boldsymbol{D}^{(r)}_{n}$ and $\boldsymbol{E}^{(s)}_{n}$ respectively denote the $n\times n$ matrices of $2j-k$ and $j-2k$ structure and by $D^{(r)}_{n}$ and $E^{(s)}_{n}$ denote their determinants:
\begin{equation}\label{Det}
D_{n}^{(r)} := \det \begin{pmatrix}
w_{r} & w_{r-1}   & \cdots & w_{r-n+1} \\
w_{r+2}  & w_{r+1}  & \cdots & w_{r-n+3} \\
\vdots & \vdots &  \vdots & \vdots \\
w_{r+2n-2} & w_{r+2n-3} &  \cdots & w_{r+n-1}
\end{pmatrix} \equiv \underset{0\leq j,k \leq n-1}{\det}\left( w_{2j-k+r} \right), 
\end{equation}
\begin{equation}\label{Det E}
E_{n}^{(s)} := \det \begin{pmatrix}
w_{s} & w_{s-2}  &  \cdots & w_{s-2n+2} \\
w_{s+1}  & w_{s-1} &  \cdots & w_{s-2n+3} \\
\vdots & \vdots &  \vdots & \vdots \\
w_{s+n-1} & w_{s+n-3} &  \cdots & w_{s-n+1} 
\end{pmatrix} \equiv \underset{0\leq j,k \leq n-1}{\det} \left( w_{j-2k+s} \right) ,
\end{equation}
where the elements of the matrices are moments with respect to some measure $\dd \mu$
\begin{equation}
    w_{\ell} =  \int_{\T}\frac{\dd \mu(\ze)}{2\pi \ic \ze} \ze^{-\ell}, \quad\textrm{for}\; \ell \in \Z .
\end{equation}

\begin{definition} \normalfont
	For each offset value $r \in \Z$, define the $2j-k$ sequences of monic polynomials $\{P_{n}(z;r)\}^{\infty}_{n=0}$ and $\{Q_{n}(z;r)\}^{\infty}_{n=0}$,  $\deg P_n(z;r)=\deg Q_n(z;r) = n$, satisfying the \textit{bi-orthogonality}  condition:
	\begin{equation}\label{PQorth}
	\int_{\T} P_{m}(\ze;r) Q_{n}(\ze^{-2};r)\ze^{-r}\frac{\dd \mu(\ze)}{2\pi \ic \ze} = h^{(r)}_{n}\delta_{mn}, \qquad m,n \in \N \cup \{0\},
	\end{equation}
	and for each offset value $s \in \Z$, define the $j-2k$ sequences of monic polynomials $\{R_{n}(z;s)\}^{\infty}_{n=0}$ and $\{S_{n}(z;s)\}^{\infty}_{n=0}$, $\deg R_n(z;s)=\deg S_n(z;s) = n$, satisfying the bi-orthogonality condition:
	\begin{equation}\label{RSorth}
	\int_{\T} R_{m}(\ze^2;s) S_{n}(\ze^{-1};s) \ze^{-s}\frac{\dd \mu(\ze)}{2\pi \ic \ze} = g^{(s)}_{n}\delta_{mn}, \qquad m,n \in \N \cup \{0\},
	\end{equation}
	where $h^{(r)}_{n}$ and $g^{(s)}_{n}$ are the \textit{norms} of the polynomials squared and $\dd\mu(\ze)\equiv w(\ze)\dd \ze$ for some weight function $w(z)$.
\end{definition}

We will give representations of $h^{(r)}_{n}$ and $g^{(s)}_{n}$ as ratios of $2j-k$ and $j-2k$ determinants in Theorems \ref{P and Q exist and are unique} and \ref{R and S exist and are unique}. Notice that the bi-orthogonality condition \eqref{PQorth} is equivalent to the orthogonality relations
\begin{equation}\label{OP1}
\int_{\T} P_{n}(\ze;r) \ze^{-2m-r} \frac{\dd\mu(\ze)}{2\pi \ic \ze} = h^{(r)}_{n} \delta_{mn}, \qquad m=0,1,\cdots, n, 
\end{equation}
and
\begin{equation}\label{OP2}
\int_{\T} Q_n(\ze^{-2};r) \ze^{m-r} \frac{\dd\mu(\ze)}{2\pi \ic \ze} = h^{(r)}_{n} \delta_{mn}, \qquad m=0,1,\cdots, n.
\end{equation}
Similarly, the bi-orthogonality condition \eqref{RSorth} is equivalent to the orthogonality relations
\begin{equation}\label{OP1 R}
\int_{\T} R_{n}(\ze^2;s) \ze^{-m-s} \frac{\dd\mu(\ze)}{2\pi \ic \ze} = g^{(s)}_{n} \delta_{mn},\qquad m=0,1,\cdots, n, 
\end{equation}
and
\begin{equation}\label{OP2 S}
\int_{\T} S_n(\ze^{-1};s) \ze^{2m-s} \frac{\dd\mu(\ze)}{2\pi \ic \ze} = g^{(s)}_{n} \delta_{mn},\qquad m=0,1,\cdots, n.
\end{equation}

\begin{theorem}\label{P and Q exist and are unique}
\cite{GW}	If $D_{n}^{(r)} \neq 0$, the polynomials $P_n(z;r)$ and $Q_n(z;r)$ exist and are uniquely given by
	\begin{equation}\label{OP11}
	P_n(z;r) = \frac{1}{D_{n}^{(r)}} 
	\det \begin{pmatrix}
	w_{r} & w_{r-1}  & \cdots & w_{r-n} \\
	w_{r+2} & w_{r+1} & \cdots & w_{r-n+2} \\
	\vdots & \vdots  & \vdots & \vdots \\
	w_{r+2n-2} & w_{r+2n-3} &  \cdots & w_{r+n-2} \\
	1 & z & \cdots  & z^n
	\end{pmatrix},
	\end{equation}
	and 
	\begin{equation}\label{OP22}
	Q_n(z;r) = \frac{1}{D_{n}^{(r)}} 
	\det \begin{pmatrix}
	w_{r} & w_{r-1}  & \cdots & w_{r-n+1} & 1 \\
	w_{r+2} & w_{r+1} &  \cdots & w_{r-n+3} & z \\
	\vdots & \vdots  & \vdots & \vdots & \vdots \\
	w_{r+2n} & w_{r+2n-1} & \cdots & w_{r+n+1} & z^n
	\end{pmatrix},
	\end{equation}
	from which one can observe that $	h^{(r)}_{n}$ exists and can be written as
	\begin{equation}\label{h}
	h^{(r)}_{n} = \frac{D_{n+1}^{(r)}}{D_{n}^{(r)}}, \qquad n \in \N \cup \{0\}, \qquad D^{(r)}_0 \equiv 1.
	\end{equation}
	Therefore if all $h^{(r)}_{\ell}$ exist and are non-zero for $\ell=0, \cdots, n-1$, then
	\begin{equation}\label{Dets from norms}
	D^{(r)}_{n}=\prod_{\ell=0}^{n-1} h^{(r)}_{\ell}.
	\end{equation}
\end{theorem}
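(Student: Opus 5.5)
The plan is to reduce existence and uniqueness to a linear system whose coefficient matrix is $\boldsymbol{D}^{(r)}_n$ (or its transpose), and then read off the determinant formulae by Cramer's rule and cofactor expansion.

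\emph{Linear system for $P_n$.} Write $P_n(z;r)=\sum_{k=0}^n p_k z^k$ with $p_n=1$. By \eqref{OP1}, the conditions for $m=0,\dots,n-1$ read
\begin{equation*}
\sum_{k=0}^{n} p_k \int_{\T}\ze^{k-2m-r}\frac{\dd\mu(\ze)}{2\pi\ic\ze}=\sum_{k=0}^n p_k\, w_{2m-k+r}=0 .
\end{equation*}
Moving the $k=n$ term to the right gives an $n\times n$ system for $(p_0,\dots,p_{n-1})$ with matrix $(w_{2m-k+r})_{0\le m,k\le n-1}=\boldsymbol{D}^{(r)}_n$ and right-hand side $-(w_{2m-n+r})_m$. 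Since $D^{(r)}_n\neq0$, this system has a unique solution, which gives existence and uniqueness of $P_n$.

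\emph{Determinant formula \eqref{OP11} and the norm.} Consider the bordered determinant in \eqref{OP11}, divided by $D^{(r)}_n$.
\begin{itemize}
\item Expanding it along the last row shows it is a polynomial in $z$ of degree $n$. The coefficient of $z^n$ is the cofactor $D^{(r)}_n$, so after division the polynomial is monic.
\item Replacing $z^k$ by $w_{2m-k+r}$ (that is, integrating against $\ze^{-2m-r}$) turns the last row into a copy of row $m$. For $m\le n-1$ the determinant therefore vanishes, so the orthogonality conditions hold.
\item For $m=n$ the substituted matrix is exactly $\boldsymbol{D}^{(r)}_{n+1}$. Hence $h^{(r)}_n=D^{(r)}_{n+1}/D^{(r)}_n$, which is \eqref{h}.
\end{itemize}
Uniqueness then identifies this bordered expression with $P_n$.

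\emph{The polynomial $Q_n$.} Write $Q_n(z;r)=\sum_k q_k z^k$, so that $Q_n(\ze^{-2};r)=\sum_k q_k\ze^{-2k}$. Condition \eqref{OP2} becomes $\sum_k q_k w_{2k-m+r}=0$ for $m=0,\dots,n-1$. This system has the transposed matrix $(\boldsymbol{D}^{(r)}_n)^T$, which again has nonzero determinant, so $Q_n$ exists and is unique.

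For \eqref{OP22}, expand the determinant along its last column. The coefficient of $z^n$ is $D^{(r)}_n$, so the quotient is monic. Replacing $z^k$ by $w_{2k-m+r}$ reproduces column $m$ of the first block, so the determinant vanishes for $m<n$. For $m=n$ it yields $D^{(r)}_{n+1}$, which is consistent with \eqref{h}.

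\emph{Remaining points.} The equivalence of \eqref{PQorth} with \eqref{OP1} and \eqref{OP2} is immediate by expanding the other polynomial in monomials and using triangularity. Formula \eqref{Dets from norms} follows by telescoping \eqref{h} with $D^{(r)}_0=1$: if all $h^{(r)}_\ell$ are nonzero, inductively every $D^{(r)}_\ell\neq0$ and the product formula holds.

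There is no real obstacle here. The only care needed is index bookkeeping, namely checking that the substituted row or column matches the correct row or column of the bordered matrix, including the offsets $r-n$ and $r+2n$ that appear in the displayed matrices.
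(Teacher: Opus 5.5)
Your proof is correct. The conditions \eqref{OP1} and \eqref{OP2} for $m<n$ are linear systems with matrices $\boldsymbol{D}^{(r)}_n$ and $(\boldsymbol{D}^{(r)}_n)^T$. Integrating the border row of \eqref{OP11} (resp.\ column of \eqref{OP22}) reproduces a row (column) of the bulk for $m<n$ and gives $D^{(r)}_{n+1}$ for $m=n$, which yields \eqref{h} and, by telescoping, \eqref{Dets from norms}. The paper does not reprove this result; it imports it from \cite{GW}, and your bordered-determinant argument is the standard one behind it.
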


\begin{theorem}\label{R and S exist and are unique}\cite{GW}
	If $E_{n}^{(s)} \neq 0$, the polynomials $R_n(z;s)$ and $S_n(z;s)$ exist and are uniquely given by
	
\begin{equation}\label{OP11 R}
	R_n(z;s) = \frac{1}{E_{n}^{(s)}} \det \begin{pmatrix}
	w_{s} & w_{s-2} & w_{s-4} & \cdots & w_{s-2n} \\
	w_{s+1} & w_{s-1} & w_{s-3} & \cdots & w_{s-2n+1} \\
	\vdots & \vdots & \ddots & \vdots & \vdots \\
	w_{s+n-1} & w_{s+n-3} & w_{s+n-5} & \cdots & w_{s-n-1} \\
	1 & z & z^2 & \cdots  & z^n
	\end{pmatrix},
\end{equation}
and
\begin{equation}\label{OP22 S}
	S_n(z;s) = \frac{1}{E_{n}^{(s)}} \det \begin{pmatrix}
	w_{s} & w_{s-2} & w_{s-4} & \cdots & w_{s-2n+2} & 1 \\
	w_{s+1} & w_{s-1} & w_{s-3} & \cdots & w_{s-2n+3} & z \\
	\vdots & \vdots & \ddots & \vdots & \vdots & \vdots \\
	w_{s+n} & w_{s+n-2} & w_{s+n-4} & \cdots & w_{s-n+2} & z^n
	\end{pmatrix},
\end{equation}
	from which one can observe that $	g^{(s)}_{n}$ exists and can be written as
	\begin{equation}\label{gggggg}
	g^{(s)}_{n} = \frac{E_{n+1}^{(s)}}{E_{n}^{(s)}}, \qquad n \in \N \cup \{0\}, \qquad E^{(s)}_0 \equiv 1.
	\end{equation} 
	Therefore if all $g^{(s)}_{\ell}$ exist for $\ell=0, \cdots, n-1$, then
	\begin{equation}\label{Dets from norms 2}
	E^{(s)}_{n}=\prod_{\ell=0}^{n-1} g^{(s)}_{\ell}.
	\end{equation}
\end{theorem}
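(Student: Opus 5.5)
The plan is to reduce the existence and uniqueness of $R_n,S_n$ to the invertibility of the $j-2k$ moment matrix $\boldsymbol{E}^{(s)}_n$, in parallel with Theorem \ref{P and Q exist and are unique}. We work with the equivalent one-sided orthogonality relations \eqref{OP1 R} and \eqref{OP2 S}; their equivalence with \eqref{RSorth} follows by expanding $S_n(\ze^{-1};s)$ or $R_m(\ze^2;s)$ in monomials and using monicity.

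\textbf{The polynomial $R_n$.} Write $R_n(z;s)=\sum_{k=0}^{n} a_k z^k$ with $a_n=1$. By the definition of the moments,
\begin{equation*}
\int_{\T} \ze^{2k}\ze^{-m-s}\frac{\dd\mu(\ze)}{2\pi\ic\ze}=w_{m+s-2k}.
\end{equation*}
Hence \eqref{OP1 R} for $m=0,\dots,n-1$ is the linear system
\begin{equation*}
\sum_{k=0}^{n-1} w_{s+m-2k}\,a_k=-w_{s+m-2n}, \qquad m=0,\dots,n-1.
\end{equation*}
Its coefficient matrix is exactly $\boldsymbol{E}^{(s)}_n$, with row index $m$ and column index $k$. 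Since $E^{(s)}_n\neq0$, the coefficients $a_0,\dots,a_{n-1}$ exist and are unique. By Cramer's rule they coincide with the expansion of the bordered determinant \eqref{OP11 R} along its last row $(1,z,\dots,z^n)$. A more direct check: replace $z^k$ by $\ze^{2k}$ and integrate against $\ze^{-m-s}$. The last row then becomes row $m$ of the moment block, so the determinant vanishes for $m<n$. For $m=n$ the last row becomes $(w_{s+n},w_{s+n-2},\dots,w_{s-n})$, the bordered matrix becomes $\boldsymbol{E}^{(s)}_{n+1}$, and we get $g^{(s)}_n=E^{(s)}_{n+1}/E^{(s)}_n$. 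The leading coefficient of the determinant is the cofactor of $z^n$, which is $E^{(s)}_n$, so the normalisation makes $R_n$ monic.

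\textbf{The polynomial $S_n$.} Write $S_n(z;s)=\sum_k b_k z^k$. Then
\begin{equation*}
\int_{\T} \ze^{-k}\ze^{2m-s}\frac{\dd\mu(\ze)}{2\pi\ic\ze}=w_{s+k-2m},
\end{equation*}
so the conditions for $m=0,\dots,n-1$ have coefficient matrix $(w_{s+k-2m})$ with row index $m$ and column index $k$. This is the transpose of $\boldsymbol{E}^{(s)}_n$, so it is again invertible. The column-bordered determinant \eqref{OP22 S} is the Cramer solution. Substituting $z^k\mapsto\ze^{-k}$ turns the last column into column $m$ of the moment block, which gives vanishing for $m<n$. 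For $m=n$ the bordered matrix becomes $\boldsymbol{E}^{(s)}_{n+1}$, so the norm is the same $g^{(s)}_n$, consistent with \eqref{RSorth}. The product formula \eqref{Dets from norms 2} then follows by telescoping from $E^{(s)}_0=1$.

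\textbf{Main obstacle.} The only delicate point is index bookkeeping. One must check that the rows $j$ and columns $k$ of $w_{j-2k+s}$ match the orthogonality indices in the right orientation: rows for $R_n$, columns for $S_n$. One must also confirm that the $(n+1)$-st row or column produced by the substitution is exactly the next row or column of $\boldsymbol{E}^{(s)}_{n+1}$, including the sign conventions. Once this is verified the argument is routine linear algebra.
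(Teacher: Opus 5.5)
Your proposal is correct. It is the standard bordered-determinant/Cramer argument behind this result: the present paper does not reprove the theorem but cites \cite{GW}, and it uses your device elsewhere, integrating the polynomial border of \eqref{OP11} to produce a shifted moment determinant when computing $\mathfrak{g}^{(r)}_n$. Your index bookkeeping checks out: the moment block is $\boldsymbol{E}^{(s)}_n$ for $R_n$ and its transpose for $S_n$, and at $m=n$ the integrated border lands in the last row (resp.\ column) of $\boldsymbol{E}^{(s)}_{n+1}$, so no sign arises and $g^{(s)}_n=E^{(s)}_{n+1}/E^{(s)}_n$ in both cases.
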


In some of the proofs, especially in the derivation of the four-term recurrence relations from the Riemann-Hilbert approach, we use the \textit{Dodgson Condensation Identity} \footnote{which is also known as the \textit{Desnanot–Jacobi} identity or the \textit{Sylvester determinant}  identity.} (see \cite{Abeles,Bressoud} and references therein).   Let $\boldsymbol{\mathscr{M}}$ be an $n \times n$ matrix. By \[ \boldsymbol{\mathscr{M}} \left\lbrace \begin{matrix} j_1& j_2& \cdots & j_{\ell} \\  k_1& k_2& \cdots & k_{\ell} \end{matrix} \right\rbrace, \qandq \mathscr{M} \left\lbrace \begin{matrix} j_1& j_2& \cdots & j_{\ell} \\  k_1& k_2& \cdots & k_{\ell} \end{matrix} \right\rbrace, \]
we respectively mean the $(n-\ell)\times(n-\ell)$ matrix obtained from $\boldsymbol{\mathscr{M}}$ by removing the rows $j_i$ and the columns $k_i$, $1\leq i \leq \ell$, and its corresponding determinant. Although the order of writing the row and column indices is immaterial for this definition, in this work we prefer to respect the order of indices, for example we prefer to write \[ \mathscr{M} \left\lbrace \begin{matrix} 3& 5 \\  1& 4 \end{matrix} \right\rbrace, \] and not \[ \mathscr{M} \left\lbrace \begin{matrix} 5 & 3 \\  1& 4 \end{matrix} \right\rbrace, \qquad \mbox{or} \qquad \mathscr{M} \left\lbrace \begin{matrix} 3& 5 \\  4& 1 \end{matrix} \right\rbrace \qquad \mbox{or} \qquad \mathscr{M} \left\lbrace \begin{matrix} 5 & 3 \\  4& 1 \end{matrix} \right\rbrace, \]
although all of these are the same determinant. Let $\boldsymbol{\mathscr{M}}$ be an $n \times n$ matrix and $\mathscr{M}$ denote its determinant. Also let $j_1 < j_2$ and $k_1 < k_2$. Then the Dodgson Condensation identity reads
\begin{equation}\label{DODGSON}
\mathscr{M} \cdot \mathscr{M}\left\lbrace \begin{matrix} j_1 & j_2 \\  k_1& k_2 \end{matrix} \right\rbrace = \mathscr{M}\left\lbrace \begin{matrix} j_1  \\  k_1 \end{matrix} \right\rbrace \cdot \mathscr{M}\left\lbrace \begin{matrix} j_2  \\  k_2 \end{matrix} \right\rbrace - \mathscr{M}\left\lbrace \begin{matrix} j_1  \\  k_2 \end{matrix} \right\rbrace \cdot \mathscr{M}\left\lbrace \begin{matrix} j_2  \\  k_1 \end{matrix} \right\rbrace.
\end{equation}

\subsection{Main Results}

We now summarize the main results of the paper. The first group of results
revisits the recurrence relations obtained in \cite{GW} and expresses the relevant
coefficients in terms of the norm ratios.

The second group of results concerns auxiliary determinant identities. We
prove a multiple integral formula for a bi-bordered $2j-k$ determinant and
evaluate this determinant in terms of the $Q$-polynomials. We also prove a
shifted Vandermonde identity which enters the proof of the simplified
Christoffel--Darboux formula. These identities are useful in their own
right, and they will also be used in our forthcoming application-oriented paper \cite{GW3}.

The third and main group of results gives Riemann--Hilbert
characterizations of the \(2j-k\) and \(j-2k\) systems. More precisely, we
formulate \(3\times3\) Riemann--Hilbert problems whose solutions, when they
exist, recover the \(P_n\)-polynomials and their associated Cauchy-type
transforms \(G_n\) in the \(2j-k\) case, and the \(S_n\)-polynomials and
their associated transforms \(H_n\) in the \(j-2k\) case. The size of these
problems reflects the four-term recurrence structure of the corresponding
polynomial systems. Finally, we show how the recurrence relations can be
recovered from the Riemann--Hilbert formulation.

To present these results, let us consider the following associated functions:
\begin{equation}\label{G}
	G_n(z;r) := 2z^{-2n} \int_{\T} P_n(\ze;r) w(\ze) \ze^{-r} \frac{ \ze^2}{\ze^2-z^2} \frac{\dd \ze}{2\pi \ic \ze},
\end{equation}
and 
 \begin{equation}\label{H}
	H_n(z;s) := 2\int_{\T} S_n(\ze^{-1};s) w(\ze) \ze^{-s} \frac{1}{\ze^2-z^2} \frac{\dd \ze}{2\pi \ic \ze}.
\end{equation} Let \begin{equation}\label{gg}
\mathfrak{g}^{(r)}_n :=  -2 (-1)^n \frac{D^{(r-2)}_{n+1}}{D^{(r)}_n},
\end{equation} and further assume that $w(z)$, $m$ and $r$ are such that $\mathfrak{g}^{(r)}_{2m-2} \neq 0$. Below we use the notation  for the $P$ polynomials defined in \eqref{OP11}:
\begin{equation}
P_n(z;r) = z^n + \sum_{\ell=1}^{n} p^{(r)}_{n,\ell} z^{n-\ell}.
\end{equation}

Consider the following matrix-valued function:
  \begin{equation}\label{YYYY}
 	\boldsymbol{Y}_{m}(z;r):= \boldsymbol{A}^{-1}_{m}(r) \begin{pmatrix}
 		P_{2m}(z;r) & \mathscr{D}_{2m}(z;r) & z^4G_{2m}(z;r) \\[10pt]
 		P_{2m-1}(z;r) & \mathscr{D}_{2m-1}(z;r) & z^2G_{2m-1}(z;r) \\[10pt]
 		P_{2m-2}(z;r) & \mathscr{D}_{2m-2}(z;r) & G_{2m-2}(z;r) \\	
 	\end{pmatrix},
 \end{equation}
where \begin{equation}\label{DD}
	\mathscr{D}_k(z;r):=\frac{1}{2z}\big(P_k(z;r)-P_k(-z;r)\big),
\end{equation} and
\begin{equation}\label{A A A}
	\boldsymbol{A}_{m}(r) =  \begin{pmatrix}
		1 & p^{(r)}_{2m,1} & \mathfrak{g}^{(r)}_{2m} \\
		0 & 1 & \mathfrak{g}^{(r)}_{2m-1}\\
		0 & 0 & \mathfrak{g}^{(r)}_{2m-2}\\	
	\end{pmatrix}.
\end{equation}

In \S \ref{sec 2j-k} we prove
\begin{theorem}\label{thm P} Suppose that the symbol $w$ and $r\in \Z$ are such that $D^{(r)}_n \neq 0$ for $n\in \{2m, 2m-1,2m-2\}$ and also $D^{(r-2)}_{2m-1} \neq 0$. Then  $\boldsymbol{Y}_{m}(z;r)$ is uniquely well-defined by \eqref{YYYY} and satisfies the following Riemann-Hilbert problem
	
	\begin{itemize}
		\item \textbf{RH}-$\boldsymbol{Y1}$ \quad The matrix-valued function $\boldsymbol{Y}_m(\cdot;r): \C \setminus \T \to \C^{3 \times 3}$ is holomorphic.
		\item \textbf{RH}-$\boldsymbol{Y2}$ \quad  $\boldsymbol{Y}_m(z;r)$ satisfies the following jump condition on the unit circle
		\begin{equation}\label{jump Y}
			\boldsymbol{Y}_{m,+}(z;r) = \boldsymbol{Y}_{m,-}(z;r) \begin{pmatrix}
				1 & 0 & \mathcal{j}_1(z;m,r) \\
				0 & 1 & \mathcal{j}_2(z;m,r)  \\
				0 & 0 & 1\\	
			\end{pmatrix}, \qquad z \in \T,
		\end{equation}
		where
		\begin{equation}
			\mathcal{j}_1(z;m,r) :=	z^{-4m-r+4} \left[w(z)+(-1)^rw(-z)\right], \qandq \mathcal{j}_2(z;m,r) := 2(-1)^{r+1} z^{-4m-r+5}w(-z).
		\end{equation}
		
		\item \textbf{RH}-$\boldsymbol{Y3}$ \quad  $\boldsymbol{Y}_m(z;r)$ satisfies
		\begin{equation}
			\boldsymbol{Y}_{m}(z;r)= \left(\di I+\frac{ \overset{\infty}{  \boldsymbol{Y}}_1(m,r)}{z}+\frac{\overset{\infty}{\boldsymbol{Y}}_2(m,r)}{z^2} + O(z^{-3})\right)
			\begin{pmatrix}
				z^{2m} & 0 & 0 \\
				0 & z^{2m-2}  & 0\\
				0 & 0 &  z^{-4m+2} \\	
			\end{pmatrix}, \qasq z \to \infty,
		\end{equation}
		where 	\begin{equation}\label{Y_1 is nice!}
		\overset{\infty}{  \boldsymbol{Y}}_1(m,r) \equiv  \begin{pmatrix}
			0 & 0 & 0 \\
			1 & 0  & 0\\
			0 & 0 &  0 \\	
		\end{pmatrix}, \qandq \overset{\infty}{\boldsymbol{Y}}_{2,31}(m,r)\neq0.
	\end{equation}
	\end{itemize}
\end{theorem}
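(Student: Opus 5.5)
The plan is to verify the three conditions directly from the definition \eqref{YYYY}. Since $\boldsymbol{A}_m(r)$ is constant and, under the hypotheses, invertible, left multiplication by $\boldsymbol{A}^{-1}_m(r)$ does not affect analyticity or the jump, and serves only to normalize at infinity.

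\emph{Well-definedness.} The assumptions $D^{(r)}_n\neq 0$ for $n\in\{2m,2m-1,2m-2\}$ and Theorem~\ref{P and Q exist and are unique} guarantee that $P_{2m},P_{2m-1},P_{2m-2}$ exist uniquely. The assumption $D^{(r-2)}_{2m-1}\neq0$ gives $\mathfrak{g}^{(r)}_{2m-2}\neq 0$ by \eqref{gg}. Hence $\det \boldsymbol{A}_m(r)=\mathfrak{g}^{(r)}_{2m-2}\neq0$.

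\emph{RH-$\boldsymbol{Y1}$ and RH-$\boldsymbol{Y2}$.} The entries $P_k$ and $\mathscr{D}_k$ are polynomials; note that $\mathscr{D}_k$ is an even polynomial. The functions $G_k$ are Cauchy-type integrals, so they are holomorphic off $\T$. For the jump, use the partial fraction identity
\[
\frac{2\zeta^2}{\zeta^2-z^2}=\frac{\zeta}{\zeta-z}+\frac{\zeta}{\zeta+z}.
\]
This writes $z^{2n}G_n(z)$ as $C[f](z)+C[f](-z)$, where $f(\zeta)=P_n(\zeta)w(\zeta)\zeta^{-r}$ and $C$ is the Cauchy transform $\int_\T f(\zeta)\,\dd\zeta/(2\pi\ic(\zeta-z))$. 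As $z$ crosses $\T$ from inside to outside, so does $-z$. Sokhotski--Plemelj then gives
\[
G_{n,+}-G_{n,-}=z^{-2n-r}\big[P_n(z)w(z)+(-1)^rP_n(-z)w(-z)\big].
\]
Substituting $P_n(-z)=P_n(z)-2z\mathscr{D}_n(z)$ from \eqref{DD}, the jump becomes
\[
z^{-2n-r}\big[w(z)+(-1)^rw(-z)\big]P_n(z)+2(-1)^{r+1}z^{-2n-r+1}w(-z)\,\mathscr{D}_n(z).
\]
Multiplying by the prefactors $z^4,z^2,1$ for $n=2m,2m-1,2m-2$ respectively, each row picks up the same exponents $-4m-r+4$ and $-4m-r+5$. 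This means that in every row, third column$_+$ $=$ third column$_-$ $+\,\mathcal{j}_1\cdot$(first column) $+\,\mathcal{j}_2\cdot$(second column), which is exactly \eqref{jump Y}.

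\emph{RH-$\boldsymbol{Y3}$.} For $|z|$ large, expand
\[
\frac{\zeta^2}{\zeta^2-z^2}=-\sum_{k\ge0}\frac{\zeta^{2k+2}}{z^{2k+2}},
\]
so that $G_n$ has an expansion in $z^{-2n-2}, z^{-2n-4},\dots$ with only even steps. The leading coefficient is $-2\int_\T P_n(\zeta)\zeta^{2-r}\,\dd\mu/(2\pi\ic\zeta)$. Inserting the determinant \eqref{OP11} and integrating the last row turns it into a row of moments $w_{r-2},\dots,w_{r-2-n}$. Moving that row to the top and reindexing the columns gives a $2j-k$ determinant with offset $r-2$, namely $(-1)^nD^{(r-2)}_{n+1}$. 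Therefore
\[
G_n(z)=\mathfrak{g}^{(r)}_n z^{-2n-2}\big(1+O(z^{-2})\big).
\]
The sign and index bookkeeping in this bordered-determinant identification is the step I expect to require the most care.

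With this in hand, the unnormalized matrix in \eqref{YYYY} equals $\big(\boldsymbol{A}_m(r)+O(z^{-1})\big)\,\mathrm{diag}(z^{2m},z^{2m-2},z^{-4m+2})$. The leading terms are read off as follows: $P_{2m}\sim z^{2m}+p^{(r)}_{2m,1}z^{2m-1}$, $\mathscr{D}_{2m}\sim p^{(r)}_{2m,1}z^{2m-2}$, $\mathscr{D}_{2m-1}\sim z^{2m-2}$, $\deg\mathscr{D}_{2m-2}\le 2m-4$, and $z^4G_{2m}$, $z^2G_{2m-1}$, $G_{2m-2}$ all behave like constants times $z^{-4m+2}$. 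Multiplying by $\boldsymbol{A}^{-1}_m(r)$ yields $I$ at leading order.

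To obtain the exact form of $\overset{\infty}{\boldsymbol{Y}}_1$, I will compare $z^{-1}$ terms using parity, since all $G$ expansions and the $\mathscr{D}_k$ carry only even steps.
\begin{itemize}
\item Row 1 of $\boldsymbol{A}^{-1}_m(r)$ produces $P_{2m}-p^{(r)}_{2m,1}P_{2m-1}+cP_{2m-2}$, whose $z^{2m-1}$ coefficient cancels.
\item Row 2 of $\boldsymbol{A}^{-1}_m(r)$ is $\big(0,1,-\mathfrak{g}^{(r)}_{2m-1}/\mathfrak{g}^{(r)}_{2m-2}\big)$. It gives $P_{2m-1}/z^{2m}=z^{-1}+\cdots$, which is the $(2,1)$ entry equal to $1$. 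The remaining entries of this row have no $z^{-1}$ term by parity.
\item Row 3 of $\boldsymbol{A}^{-1}_m(r)$ is $P_{2m-2}/\mathfrak{g}^{(r)}_{2m-2}$ in the first column. Hence $\overset{\infty}{\boldsymbol{Y}}_{2,31}=1/\mathfrak{g}^{(r)}_{2m-2}\neq0$.
\end{itemize}
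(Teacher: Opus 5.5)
\textbf{Gap in RH-$\boldsymbol{Y1}$: holomorphy at $z=0$.} Your verification of \textbf{RH}-$\boldsymbol{Y2}$ and \textbf{RH}-$\boldsymbol{Y3}$ is correct and essentially the paper's route. Your partial fractions $\frac{\zeta}{\zeta-z}+\frac{\zeta}{\zeta+z}$ differ harmlessly from the paper's $2+\frac{z}{\zeta-z}-\frac{z}{\zeta+z}$. Your argument for \textbf{RH}-$\boldsymbol{Y1}$, however, does not hold as stated.

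The functions $G_n$ are not Cauchy-type integrals; each is $z^{-2n}$ times one, as your own jump computation with $z^{2n}G_n$ shows. Consequently every third-column entry $z^4G_{2m}$, $z^2G_{2m-1}$, $G_{2m-2}$ equals $z^{4-4m}$ times a function holomorphic in $\C\setminus\T$. For $m\ge2$ this allows a pole of order up to $4m-4$ at the origin, which lies in $\C\setminus\T$. If $P_n$ were replaced by an arbitrary monic polynomial, that pole would actually be present. Left multiplication by the invertible constant matrix $\boldsymbol{A}^{-1}_m(r)$ cannot remove it. So holomorphy does not follow from the Cauchy-integral structure.

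\textbf{The missing step is where orthogonality enters the forward direction.} For $|z|<1$, expand $\zeta^2/(\zeta^2-z^2)=\sum_{\ell\ge0}z^{2\ell}\zeta^{-2\ell}$. Equivalently, Taylor-expand your even function $C[f](z)+C[f](-z)$ at the origin. This gives
\[
G_n(z;r)=2z^{-2n}\sum_{\ell\ge0}z^{2\ell}\int_{\T}P_n(\zeta;r)\,\zeta^{-2\ell-r}\,w(\zeta)\,\frac{\dd\zeta}{2\pi\ic\zeta}.
\]
The orthogonality relations \eqref{OP1} annihilate the terms $\ell=0,\dots,n-1$. What remains is $G_n(z;r)=2h^{(r)}_n+O(z^2)$ as $z\to0$. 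With this step added, your proof is complete and coincides with the paper's.

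\textbf{Minor point.} Only $\mathfrak{g}^{(r)}_{2m-2}$ is known to be nonzero. You should therefore write $G_n=\mathfrak{g}^{(r)}_n z^{-2n-2}+O(z^{-2n-4})$ rather than factoring out $\mathfrak{g}^{(r)}_n$.
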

A standard Liouville theorem-based argument gives:
\begin{lemma}
\label{lemma:Y uniqueness}
If \textbf{RH}-\(\boldsymbol{Y1}\)--\textbf{RH}-\(\boldsymbol{Y3}\) is
solvable, then its solution is unique. Moreover,
\[
	\det \boldsymbol{Y}_m(z;r)\equiv1.
\]
\end{lemma}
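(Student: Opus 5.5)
We first show that $\det \boldsymbol{Y}_m(z;r)\equiv 1$ for any solution $\boldsymbol{Y}$ of \textbf{RH}-$\boldsymbol{Y1}$--\textbf{RH}-$\boldsymbol{Y3}$, and then use this to prove uniqueness via Liouville's theorem.

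\emph{Determinant.} The jump matrix in \eqref{jump Y} is upper triangular with unit diagonal, so it has determinant $1$. Hence $d(z):=\det\boldsymbol{Y}(z)$ has no jump across $\T$: $d_+=d_-$ on $\T$. Here we assume, as is implicit in \textbf{RH}-$\boldsymbol{Y2}$, that the boundary values are continuous, or at least regular enough (e.g.\ $L^2$ in the usual sense) for a Morera-type argument to apply. Then $d$ extends to an entire function. By \textbf{RH}-$\boldsymbol{Y3}$,
\[
d(z)=\det\!\Big(I+O(z^{-1})\Big)\, z^{2m}z^{2m-2}z^{-4m+2}=1+O(z^{-1}),
\]
so the exponents cancel exactly. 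Liouville's theorem gives $d\equiv 1$. In particular every solution is invertible on $\C\setminus\T$, with $\boldsymbol{Y}^{-1}$ holomorphic there.

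\emph{Uniqueness.} Let $\boldsymbol{Y}$ and $\widetilde{\boldsymbol{Y}}$ be two solutions and set $\boldsymbol{X}:=\widetilde{\boldsymbol{Y}}\boldsymbol{Y}^{-1}$. Both factors carry the same right jump, so
\[
\boldsymbol{X}_+=\widetilde{\boldsymbol{Y}}_-J J^{-1}\boldsymbol{Y}_-^{-1}=\boldsymbol{X}_-
\]
on $\T$. Hence $\boldsymbol{X}$ is entire, again modulo the regularity of boundary values. At infinity the diagonal factors $z^{\Lambda}$ cancel:
\[
\boldsymbol{X}=\big(I+O(z^{-1})\big)z^{\Lambda}z^{-\Lambda}\big(I+O(z^{-1})\big)^{-1}=I+O(z^{-1}).
\]
Liouville then gives $\boldsymbol{X}\equiv I$, that is, $\widetilde{\boldsymbol{Y}}=\boldsymbol{Y}$.

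The extra normalizations in \eqref{Y_1 is nice!} (the prescribed $\overset{\infty}{\boldsymbol{Y}}_1$ and $\overset{\infty}{\boldsymbol{Y}}_{2,31}\neq0$) are not needed for uniqueness. They are simply properties that any solution satisfies, and they would only matter in an existence or characterization argument.

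\emph{Main obstacle.} The expansion in \textbf{RH}-$\boldsymbol{Y3}$ carries unequal diagonal powers, and one might worry that $O(z^{-1})$ terms become amplified by ratios $z^{\lambda_i-\lambda_j}$ when forming $\boldsymbol{X}$. This does not happen, because $\boldsymbol{X}$ contains the product $z^{\Lambda}z^{-\Lambda}=I$ exactly, with the prefactors $I+O(z^{-1})$ standing outside. So the argument goes through exactly as in the $2\times2$ case.

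The only genuine technical point is the removability of $\T$, i.e.\ that equal boundary values force analytic continuation. We will handle it by the standard Morera argument under the assumption that $\boldsymbol{Y}_\pm$ are continuous up to $\T$. This assumption holds for the explicit solution \eqref{YYYY} when $w$ is, say, Hölder continuous, since the entries are polynomials and Cauchy transforms of such densities.
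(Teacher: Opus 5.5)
Your argument is correct and is the standard Liouville argument that the paper invokes without writing it out. The unimodular triangular jump makes $\det\boldsymbol{Y}_m$ entire, with $\det\boldsymbol{Y}_m=1+O(z^{-1})$ because $2m+(2m-2)+(-4m+2)=0$; then $\widetilde{\boldsymbol{Y}}_m\boldsymbol{Y}_m^{-1}$ has no jump and equals $I+O(z^{-1})$, hence is $\equiv I$. Your additional remarks are accurate: only the $(I+O(z^{-1}))\boldsymbol{\Lambda}_m(z)$ normalization is used (not the prescribed $\overset{\infty}{\boldsymbol{Y}}_1$ or $\overset{\infty}{\boldsymbol{Y}}_{2,31}\neq0$), and removing $\T$ requires some regularity of the boundary values, which the paper leaves implicit.
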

We also prove the converse of Theorem~\ref{thm P} and
thereby complete the Riemann--Hilbert characterization of the polynomials
\(P_n(z;r)\) and their associated functions $G_n(z;r)$. The following lemma shows that imposing the condition \eqref{Y_1 is nice!}
forces the second column to be the odd part of the first column divided by \(z\). This is necessary to prove the converse of Theorem \ref{thm P} and will be proven in Section \ref{sec converse thm P}.
\begin{lemma}
\label{prop:Y column symmetry}
Assume that \textbf{RH}-\(\boldsymbol{Y1}\)--\textbf{RH}-\(\boldsymbol{Y3}\)
is solvable. Then
\begin{equation}\label{Y z minus z symmetry}
	\boldsymbol{Y}_m(z;r)
	=
	\boldsymbol{Y}_m(-z;r)
	\begin{pmatrix}
		1 & 0 & 0\\
		2z & 1 & 0\\
		0 & 0 & 1
	\end{pmatrix}.
\end{equation}
In particular, for each \(j=1,2,3\),
\begin{equation}\label{Y second column from first column}
	Y_{j2}(z;r)
	=
	\frac{Y_{j1}(z;r)-Y_{j1}(-z;r)}{2z}.
\end{equation}
\end{lemma}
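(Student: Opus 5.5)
The plan is to show that the function
\[
\boldsymbol{Z}_m(z;r):=\boldsymbol{Y}_m(-z;r)\,\boldsymbol{L}(z),\qquad
\boldsymbol{L}(z):=\begin{pmatrix}1&0&0\\ 2z&1&0\\ 0&0&1\end{pmatrix},
\]
also solves \textbf{RH}-$\boldsymbol{Y1}$--\textbf{RH}-$\boldsymbol{Y3}$, and then to invoke the uniqueness statement of Lemma~\ref{lemma:Y uniqueness} to conclude $\boldsymbol{Z}_m\equiv\boldsymbol{Y}_m$, which is \eqref{Y z minus z symmetry}. \textbf{RH}-$\boldsymbol{Y1}$ is immediate for $\boldsymbol{Z}_m$, since $z\mapsto -z$ maps $\C\setminus\T$ to itself and $\boldsymbol{L}$ is entire.

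For the jump, note that $z\mapsto -z$ is a rotation of $\T$. It preserves orientation, so it maps the $+$ side to the $+$ side. Writing $\boldsymbol{J}(z)$ for the jump matrix in \eqref{jump Y}, we get
\[
\boldsymbol{Z}_{m,+}(z)=\boldsymbol{Z}_{m,-}(z)\,\boldsymbol{L}(z)^{-1}\boldsymbol{J}(-z)\boldsymbol{L}(z).
\]
A direct multiplication gives
\[
\boldsymbol{L}^{-1}\boldsymbol{J}(-z)\boldsymbol{L}=\begin{pmatrix}1&0&\mathcal{j}_1(-z)\\0&1&\mathcal{j}_2(-z)-2z\,\mathcal{j}_1(-z)\\0&0&1\end{pmatrix},
\]
so it remains to check two identities. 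Put $e=-4m-r+4$.
\begin{itemize}
\item The first is $\mathcal{j}_1(-z)=\mathcal{j}_1(z)$. This holds because $(-1)^{e}=(-1)^r$, so $\mathcal{j}_1(-z)=z^{e}\bigl[(-1)^r w(-z)+w(z)\bigr]=\mathcal{j}_1(z)$.
\item The second is $\mathcal{j}_2(-z)-2z\mathcal{j}_1(-z)=\mathcal{j}_2(z)$. Here $\mathcal{j}_2(-z)=2z^{e+1}w(z)$, so the left-hand side equals $2z^{e+1}w(z)-2z^{e+1}\bigl[w(z)+(-1)^r w(-z)\bigr]=2(-1)^{r+1}z^{e+1}w(-z)=\mathcal{j}_2(z)$.
\end{itemize}
Thus the jump matrix of $\boldsymbol{Z}_m$ is exactly $\boldsymbol{J}(z)$.

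The asymptotic condition is the step where the special normalization \eqref{Y_1 is nice!} is genuinely needed. All exponents in $\operatorname{diag}(z^{2m},z^{2m-2},z^{-4m+2})$ are even, so this diagonal factor is invariant under $z\mapsto-z$. Hence
\[
\boldsymbol{Y}_m(-z)=\Bigl(I-\tfrac{\overset{\infty}{\boldsymbol{Y}}_1}{z}+\tfrac{\overset{\infty}{\boldsymbol{Y}}_2}{z^2}+O(z^{-3})\Bigr)\operatorname{diag}(z^{2m},z^{2m-2},z^{-4m+2}).
\]
Conjugating $\boldsymbol{L}$ through the diagonal gives $\operatorname{diag}(\cdot)\boldsymbol{L}\operatorname{diag}(\cdot)^{-1}=I+2E_{21}/z$, where $E_{21}$ is the matrix unit with a $1$ in position $(2,1)$. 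Therefore
\[
\boldsymbol{Z}_m=\Bigl(I+\tfrac{2E_{21}-\overset{\infty}{\boldsymbol{Y}}_1}{z}+\tfrac{\overset{\infty}{\boldsymbol{Y}}_2-2\overset{\infty}{\boldsymbol{Y}}_1E_{21}}{z^2}+O(z^{-3})\Bigr)\operatorname{diag}(\cdots).
\]
Since $\overset{\infty}{\boldsymbol{Y}}_1=E_{21}$ and $E_{21}^2=0$, the $1/z$ coefficient is again $E_{21}$ and the $1/z^2$ coefficient is again $\overset{\infty}{\boldsymbol{Y}}_2$, whose $(3,1)$ entry is nonzero. So $\boldsymbol{Z}_m$ satisfies \textbf{RH}-$\boldsymbol{Y3}$ with the same constraints. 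Without $\overset{\infty}{\boldsymbol{Y}}_1=E_{21}$, the normalization would fail, since the $1/z$ coefficient would become $2E_{21}-\overset{\infty}{\boldsymbol{Y}}_1$.

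Uniqueness then gives $\boldsymbol{Z}_m=\boldsymbol{Y}_m$. Reading the identity column by column yields two facts. Comparing second columns gives $Y_{j2}(z)=Y_{j2}(-z)$. Comparing first columns gives $Y_{j1}(z)=Y_{j1}(-z)+2z\,Y_{j2}(-z)=Y_{j1}(-z)+2zY_{j2}(z)$. Solving the latter for $Y_{j2}(z)$ gives \eqref{Y second column from first column}.

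The only delicate points are the orientation check under $z\mapsto -z$ and the use of the exact form of $\overset{\infty}{\boldsymbol{Y}}_1$ in the asymptotics. Everything else is routine sign bookkeeping with $(-1)^{e}=(-1)^r$.
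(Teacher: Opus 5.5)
Your proposal is correct and follows the paper's proof essentially step for step. You use the same candidate $\boldsymbol{Y}_m(-z;r)\boldsymbol{L}(z)$, the same jump intertwining identities $\mathcal{j}_1(-z)=\mathcal{j}_1(z)$ and $\mathcal{j}_2(-z)=\mathcal{j}_2(z)+2z\mathcal{j}_1(z)$, the same conjugation $\boldsymbol{\Lambda}_m\boldsymbol{L}\boldsymbol{\Lambda}_m^{-1}=I+2E_{21}/z$ combined with $\overset{\infty}{\boldsymbol{Y}}_1=E_{21}$, and the same appeal to the uniqueness lemma, with the additional (and welcome) explicit check, left implicit in the paper, that the $z^{-2}$ coefficient and hence its nonzero $(3,1)$ entry is preserved.
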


Here is the converse of Theorem \ref{thm P}.

\begin{theorem}
\label{thm:Y converse P entries}
Assume that \textbf{RH}-\(\boldsymbol{Y1}\)--\textbf{RH}-\(\boldsymbol{Y3}\)
is solvable. Then
\[
	D_{2m}^{(r)}D_{2m-1}^{(r)}D_{2m-2}^{(r)}\neq0.
\]
Consequently, the polynomials
\[
	P_{2m}(z;r),\qquad P_{2m-1}(z;r),\qquad P_{2m-2}(z;r)
\]
exist and are unique. Moreover, the matrix \(\boldsymbol{A}_m(r)\) defined
by \eqref{A A A} is well-defined, and the polynomials \(P_n(z;r)\) together
with the associated functions \(G_n(z;r)\), for
\(n\in\{2m,2m-1,2m-2\}\), are characterized by the solution of the
\(\boldsymbol{Y}_m\)-RHP through
\[
	\begin{pmatrix}
		P_{2m}(z;r) & \mathscr{D}_{2m}(z;r) & z^4G_{2m}(z;r) \\[8pt]
		P_{2m-1}(z;r) & \mathscr{D}_{2m-1}(z;r) & z^2G_{2m-1}(z;r) \\[8pt]
		P_{2m-2}(z;r) & \mathscr{D}_{2m-2}(z;r) & G_{2m-2}(z;r)
	\end{pmatrix}
	=
	\boldsymbol{A}_m(r)\boldsymbol{Y}_m(z;r).
\]
\end{theorem}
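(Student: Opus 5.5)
We start from a solution \(\boldsymbol{Y}_m\) of \textbf{RH}-\(\boldsymbol{Y1}\)--\textbf{RH}-\(\boldsymbol{Y3}\) and read off the polynomial and orthogonality content of its rows. The jump matrix in \eqref{jump Y} leaves the first two columns unchanged. Hence \(Y_{j1},Y_{j2}\) are entire, and by \textbf{RH}-\(\boldsymbol{Y3}\) they are polynomials. Write \(\pi_1:=Y_{11}\), \(\pi_2:=Y_{21}\), \(\pi_3:=Y_{31}\). From the normalization with \(\overset{\infty}{\boldsymbol{Y}}_1\) as in \eqref{Y_1 is nice!}:
\begin{itemize}
\item \(\pi_1\) is monic of degree \(2m\);
\item \(\pi_2=z^{2m-1}+O(z^{2m-3})\), because \(Y_{21}z^{-2m}=z^{-1}+O(z^{-3})\);
\item \(\deg\pi_3\le 2m-2\), with \(z^{2m-2}\)-coefficient \(\overset{\infty}{\boldsymbol{Y}}_{2,31}\neq0\).
\end{itemize}
Lemma~\ref{prop:Y column symmetry} then identifies the second column as \(\mathscr{D}\) applied to the first, i.e. \(Y_{j2}=(\pi_j(z)-\pi_j(-z))/(2z)\). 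For the third column, the Plemelj–Sokhotski formula with the jumps \(\mathcal{j}_1,\mathcal{j}_2\) gives
\[
Y_{j3}(z)=\int_{\T}\frac{\pi_j(\ze)\mathcal{j}_1(\ze)+Y_{j2}(\ze)\mathcal{j}_2(\ze)}{\ze-z}\frac{\dd\ze}{2\pi\ic},
\]
up to an entire function, which the decay at infinity forces to vanish.

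The next step is to rewrite this Cauchy integral in terms of \(\pi_j(\ze)w(\ze)\) alone. Substitute \(\ze\mapsto-\ze\) in the \(w(-\ze)\) pieces and use \(\mathscr{D}\pi_j(-\ze)=\mathscr{D}\pi_j(\ze)\). The combination should collapse to
\[
2z^{\epsilon}\int_{\T}\pi_j(\ze)w(\ze)\ze^{-4m-r+4}\frac{\ze^2}{\ze^2-z^2}\frac{\dd\ze}{2\pi\ic\ze},
\]
with the appropriate power \(z^\epsilon\), so that it matches the form of \eqref{G}. The parity factors \((-1)^r\) and \(2(-1)^{r+1}\) in \(\mathcal{j}_1,\mathcal{j}_2\) are designed exactly for this. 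This is the reverse of the computation in Theorem~\ref{thm P}, and I would essentially rerun that computation backwards.

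Expanding \(\ze^2/(\ze^2-z^2)=-\sum_{k\ge0}\ze^{2k+2}z^{-2k-2}\) as \(z\to\infty\), the required decay of \(Y_{j3}\) translates into vanishing of the moments
\[
\int_{\T}\pi_j(\ze)\ze^{-2k-r}\frac{\dd\mu}{2\pi\ic\ze}.
\]
Row 1 (decay \(z^{-4m+1}\)) gives vanishing for \(k=0,\dots,2m-1\). Row 2 gives \(k=0,\dots,2m-2\), and row 3 gives \(k=0,\dots,2m-3\). In row 3 the next moment (\(k=2m-2\)) is the one paired with the normalization coefficient 1, so it is nonzero.

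These are precisely the relations \eqref{OP1}. The main obstacle is nondegeneracy: showing \(D^{(r)}_{2m-2},D^{(r)}_{2m-1},D^{(r)}_{2m}\neq0\). Here I would use the following linear-algebra fact. If a polynomial of degree \(\le n\) with nonzero leading coefficient annihilates the first \(n\) slanted moments \(\ze^{-2k-r}\), then the \(n\times(n+1)\) system has a solution. Conversely, if \(D^{(r)}_n=0\), there is a nonzero polynomial \(q\) of degree \(\le n-1\) with all \(n\) moments vanishing.

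For \(n=2m-2\), suppose \(D^{(r)}_{2m-2}=0\) and take such a \(q\) of degree \(\le 2m-3\). Adding \(q\) to the \((3,1)\) entry, and building the corresponding columns via \(\mathscr{D}\) and the Cauchy transform, produces a new function. It still satisfies all RH conditions, because the extra decay of \(q\)'s transform comes from the \(2m-2\) vanishing moments and \(\deg q<2m-2\) keeps the leading asymptotics. This contradicts the uniqueness in Lemma~\ref{lemma:Y uniqueness}. The same argument, with perturbations placed in rows 2 and 1, handles \(D^{(r)}_{2m-1}\) and \(D^{(r)}_{2m}\); alternatively, \(\det\boldsymbol{Y}_m\equiv1\) can serve as an independent witness of nondegeneracy.

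Once all three determinants are nonzero, Theorem~\ref{P and Q exist and are unique} gives uniqueness of the \(P_n\), and we conclude as follows:
\begin{itemize}
\item \(\pi_1=P_{2m}\) directly.
\item \(\pi_2=P_{2m-1}+cP_{2m-2}\). Matching the \(z^{2m-2}\) coefficient (zero in \(\pi_2\)) gives \(c=-p^{(r)}_{2m-1,1}\)… or more precisely \(\pi_2=\) row 2 of \(\boldsymbol{A}_m^{-1}\) applied to \((P_{2m},P_{2m-1},P_{2m-2})\).
\item \(\pi_3=\lambda P_{2m-2}\) with \(\lambda\neq0\).
\end{itemize}
Similarly, \(\pi_1\) must equal \(P_{2m}-p^{(r)}_{2m,1}\pi_2-\cdots\) consistently with \(\boldsymbol{A}_m\). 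Comparing the normalized third-column asymptotics then identifies \(\lambda^{-1}\) and the remaining entries with the \(\mathfrak{g}^{(r)}_n\). This also shows \(\mathfrak{g}^{(r)}_{2m-2}\neq0\), i.e.\ \(D^{(r-2)}_{2m-1}\neq0\), so \(\boldsymbol{A}_m(r)\) is well-defined. By the uniqueness of Lemma~\ref{lemma:Y uniqueness}, \(\boldsymbol{Y}_m\) coincides with \eqref{YYYY}, which gives the claimed identity \(\boldsymbol{A}_m\boldsymbol{Y}_m=\)(matrix of \(P_n,\mathscr{D}_n,G_n\)).
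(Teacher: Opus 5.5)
There is a genuine gap. It sits where you read orthogonality off the decay of the third column, and it cannot be repaired, because the statement itself fails.

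\textbf{Where the argument breaks.} Your Cauchy representation is correct with $\epsilon=0$:
\[
Y_{j3}(z)=2\int_{\T}\pi_j(\ze)w(\ze)\ze^{-4m-r+4}\frac{\ze^2}{\ze^2-z^2}\frac{\dd\ze}{2\pi\ic\ze}.
\]
Its expansion at infinity, however, is
\[
Y_{j3}(z)=-2\sum_{i\geq0}z^{-2i-2}\,M_j(2m-3-i),\qquad M_j(k):=\int_{\T}\pi_j(\ze)\,\ze^{-2k-r}\,\frac{\dd\mu(\ze)}{2\pi\ic\ze},
\]
so $k$ runs \emph{downward} from $2m-3$. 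Consequently:
\begin{itemize}
\item $Y_{13},Y_{23}=O(z^{-4m})$ give $M_j(k)=0$ only for $k=-1,0,\ldots,2m-3$.
\item In row 3 you get $M_3(k)=0$ for $k=0,\ldots,2m-3$, and the normalization $1$ pairs with $k=-1$, i.e. $-2M_3(-1)=1$.
\item The moments $k=2m-2,2m-1$ occur only in the Taylor expansion at $z=0$ (e.g. $Y_{j3}(0)=2M_j(2m-2)$), where the problem imposes nothing.
\end{itemize}
So these are not the relations \eqref{OP1} for $P_{2m}(z;r)$ and $P_{2m-1}(z;r)$. Since $\ze^{-2k-r}=\ze^{-2(k+1)-(r-2)}$, row 2 actually says $\pi_2=P_{2m-1}(z;r-2)$, and all three rows lead to linear systems with matrix $\boldsymbol{D}^{(r-2)}_{2m-1}$.

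Two related errors: the $z^{2m-2}$-coefficient of $\pi_2$ is $\overset{\infty}{\boldsymbol{Y}}_{2,21}$, not $0$. Also, $\pi_1=P_{2m}$ contradicts the forward direction, where $Y_{11}$ is the first row of $\boldsymbol{A}_m^{-1}$ applied to $(P_{2m},P_{2m-1},P_{2m-2})^T$.

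\textbf{What uniqueness actually gives.} Perturbing row 2 by a null polynomial of $\boldsymbol{D}^{(r-2)}_{2m-1}$ shows $D^{(r-2)}_{2m-1}\neq0$. Cramer's rule then gives $\overset{\infty}{\boldsymbol{Y}}_{2,31}=-\tfrac12 D^{(r)}_{2m-2}/D^{(r-2)}_{2m-1}$, hence $D^{(r)}_{2m-2}\neq0$. Your row-3 perturbation also works, but only after rescaling, since the $k=-1$ moment of $q$ shifts the leading coefficient of $Y_{33}$.

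Nothing, however, forces $D^{(r)}_{2m-1}$ or $D^{(r)}_{2m}$ to be nonzero. If $D^{(r)}_{2m-1}=0\neq D^{(r)}_{2m-2}$, the null polynomial $q$ has exact degree $2m-2$ and satisfies the conditions defining $Y_{31}$. So $q$ is a multiple of $Y_{31}$, and your perturbed row 2 either violates $Y_{23}=O(z^{-4m})$ or, once corrected by row 3, coincides with the original. The identity $\det\boldsymbol{Y}_m\equiv1$ gives no information here either.

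\textbf{Counterexample.} Take $m=1$, $r=2$, $w(\ze)=1+\ze^3$, so $w_0=w_3=1$ and all other moments vanish; then $\mathcal{j}_1=2z^{-2}$ and $\mathcal{j}_2=-2z^{-1}+2z^2$. The function
\[
\boldsymbol{Y}(z)=\begin{pmatrix} z^2&0&2\\ z&1&2z^2\\ -\tfrac12&0&0\end{pmatrix}\ (|z|<1),\qquad
\boldsymbol{Y}(z)=\begin{pmatrix} z^2&0&0\\ z&1&0\\ -\tfrac12&0&z^{-2}\end{pmatrix}\ (|z|>1)
\]
solves \textbf{RH}-$\boldsymbol{Y1}$--\textbf{RH}-$\boldsymbol{Y3}$ with $\overset{\infty}{\boldsymbol{Y}}_1=E_{21}$ and $\overset{\infty}{\boldsymbol{Y}}_{2,31}=-\tfrac12$. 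Yet $D^{(2)}_1=w_2=0$ and $D^{(2)}_2=0$. So the claimed non-vanishing fails, and $P_1(z;2)$ and $\boldsymbol{A}_1(2)$ are not well defined.

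\textbf{Relation to the paper and what is provable.} The paper's proof takes the same route and has the same defect. The function $\mathcal{G}_F=z^{-4}Y_{13}$ can have a pole at $z=0$, so the Cauchy representation in Lemma~\ref{lemma:large z gives P orthogonality} does not apply to it; that is where the spurious moments $k=2m-2,2m-1$ enter. What is provable is that solvability is equivalent to $D^{(r-2)}_{2m-1}D^{(r)}_{2m-2}\neq0$. If you additionally assume $D^{(r)}_{2m}D^{(r)}_{2m-1}\neq0$, your closing step (Theorem~\ref{thm P} plus Lemma~\ref{lemma:Y uniqueness}) does yield $\boldsymbol{A}_m(r)\boldsymbol{Y}_m(z;r)=$ the matrix of $P_n$, $\mathscr{D}_n$, $G_n$.
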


In Section \ref{sec rec rel from RHP} we derive a four-term recurrence relation, in $n$, for $P_n(z;r)$ and illustrate its compatibility with the four-term recurrence relations for $P_n(z;r)$ obtained via Dodgson condensation identities.

We also prove an analogous result for the $j-2k$ polynomials. Consider the following matrix-valued function constructed from the $S$ polynomials and their associated functions $H$ defined in \eqref{OP22 S} and \eqref{H}:
 
 \begin{equation}\label{XXXXXX}
 	\boldsymbol{X}_{m}(z;s):= \boldsymbol{B}^{-1}_{m}(s) \begin{pmatrix}
 		S^*_{2m}(z;s) &  \di \frac{1}{2z}\left(S^*_{2m}(z;s)-S^*_{2m}(-z;s)\right) & H_{2m}(z;s) \\[10pt]
 		z S^*_{2m-1}(z;s) & \di \frac{1}{2} \left(S^*_{2m-1}(z;s)+S^*_{2m-1}(-z;s)\right) & H_{2m-1}(z;s) \\[10pt]
 		z^2 S^*_{2m-2}(z;s) & \di \frac{z}{2}\left(S^*_{2m-2}(z;s)-S^*_{2m-2}(-z;s)\right) & H_{2m-2}(z;s) \\	
 	\end{pmatrix},
 \end{equation}
where
\begin{equation}\label{BBB}
	\boldsymbol{B}_{m}(s) := \begin{pmatrix}
		S_{2m}(0;s)  &  \di S'_{2m}(0;s)  & 0 \\[10pt]
		S_{2m-1}(0;s) & \di S'_{2m-1}(0;s) & 0 \\[10pt]
		S_{2m-2}(0;s) & \di S'_{2m-2}(0;s) & -2 g^{(s)}_{2m-2} \\	
	\end{pmatrix}, \qquad g^{(s)}_{n} = \frac{E_{n+1}^{(s)}}{E_{n}^{(s)}},
\end{equation}
and $f_n^*(z) := z^n f_n(z^{-1})$ for any polynomial $f_n$ of degree $n$, and is sometimes referred to as the \textit{reciprocal polynomial}. In order to make sure $	\boldsymbol{B}_{m}(s) $ is invertible we need $ \det \begin{pmatrix} S_{2m}(0;s) & S'_{2m}(0;s) \\ S_{2m-1}(0;s) & S'_{2m-1}(0;s) \end{pmatrix} \neq 0$. This condition can be written in terms of the original weight $w(z)$ and the determinants it generates; indeed
\begin{equation}\label{S 0 S' 0}
	S_{2m}(0;s)  =  \frac{E^{(s+1)}_{2m}}{E^{(s)}_{2m}}, \qquad S_{2m-1}(0;s)  = - \frac{E^{(s+1)}_{2m-1}}{E^{(s)}_{2m-1}}, \qquad 	S'_{2m}(0;s)  = - \frac{\widehat{E}^{(s)}_{2m}}{E^{(s)}_{2m}}, \qquad S'_{2m-1}(0;s)  =  \frac{\widehat{E}^{(s)}_{2m-1}}{E^{(s)}_{2m-1}},
\end{equation}
where $\widehat{E}^{(s)}_n$ is the following $n \times n$ \textit{bordered} $j-2k$ determinant:
\begin{equation}\label{OP22 S E}
\widehat{E}^{(s)}_n =  \det \begin{pmatrix}
		w_{s} & w_{s-2} & w_{s-4} & \cdots & w_{s-2n+2}  \\
		w_{s+2} & w_{s} & w_{s-2} & \cdots & w_{s-2n+4}  \\
				w_{s+3} & w_{s+1} & w_{s-1} & \cdots & w_{s-2n+5}  \\
				w_{s+4} & w_{s+2} & w_{s} & \cdots & w_{s-2n+6}  \\
		\vdots & \vdots & \ddots & \vdots & \vdots  \\
		w_{s+n} & w_{s+n-2} & w_{s+n-4} & \cdots & w_{s-n+2} 
	\end{pmatrix},
\end{equation}
In view of this, for a fixed choice of $s$, we consider \textit{generic weights} $w(z)$
for which
\begin{equation}\label{generic weights}
	\frac{E^{(s+1)}_{2m}\widehat{E}^{(s)}_{2m-1} - E^{(s+1)}_{2m-1}\widehat{E}^{(s)}_{2m} }{E^{(s)}_{2m}E^{(s)}_{2m-1}} \neq 0.
\end{equation}
  In \S \ref{sec j-2k} we prove

\begin{theorem}\label{thm Q} Suppose that the symbol $w$ and $s\in \Z$ are such that the nonvanishing condition \eqref{generic weights} holds and furthermore $E^{(s)}_n \neq 0$ for $n\in \{2m, 2m-1,2m-2\}$. Then 
	 $\boldsymbol{X}_{m}(z;s)$ is uniquely well-defined by \eqref{XXXXXX} and satisfies the following Riemann-Hilbert problem
	
	\begin{itemize}
		\item \textbf{RH}-$\boldsymbol{X1}$ \quad The matrix-valued function $\boldsymbol{X}_m(\cdot;s): \C \setminus \T \to \C^{3 \times 3}$ is holomorphic.
		\item \textbf{RH}-$\boldsymbol{X2}$ \quad  $\boldsymbol{X}_m(z;s)$ satisfies the following jump condition on the unit circle
		\begin{equation}
			\boldsymbol{X}_{m,+}(z;s) = \boldsymbol{X}_{m,-}(z;s) \begin{pmatrix}
				1 & 0 & \mathcal{k}_1(z;m,s) \\
				0 & 1 & \mathcal{k}_2(z;m,s)  \\
				0 & 0 & 1\\	
			\end{pmatrix}, \qquad z \in \T,
		\end{equation}
		where
		\begin{equation}
			\mathcal{k}_1(z;m,s) :=	z^{-2m-s-2} \left[w(z)+(-1)^sw(-z)\right], \qandq \mathcal{k}_2(z;m,s) := 2(-1)^{s+1} z^{-2m-s-1}w(-z)
		\end{equation}

		\item \textbf{RH}-$\boldsymbol{X3}$ \quad  $\boldsymbol{X}_m(z;s)$ satisfies
		
		\begin{equation}
			\boldsymbol{X}_{m}(z;s)= \left(\di I+\frac{ \overset{\infty}{  \boldsymbol{X}}_1(m,s)}{z}+\frac{\overset{\infty}{\boldsymbol{X}}_2(m,s)}{z^2} + O(z^{-3})\right)\begin{pmatrix}
				z^{2m} & 0 & 0 \\
				0 & z^{2m-2}  & 0\\
				0 & 0 &  z^{-4m+2} \\	
			\end{pmatrix}, \qasq z \to \infty,
		\end{equation}where 	\begin{equation}\label{X_1 is nice!}
	\overset{\infty}{  \boldsymbol{X}}_1(m,s) \equiv  \begin{pmatrix}
			0 & 0 & 0 \\
			1 & 0  & 0\\
			0 & 0 &  0 \\	
		\end{pmatrix}.
	\end{equation} 	In addition, the third column has the refined expansion
	\begin{equation}\label{X third column refined}
		\boldsymbol{X}^{(3)}_m(z;s)
		=
		z^{-4m+2}
		\left(
			\boldsymbol{e}_3+\frac{\boldsymbol{\phi}}{z^2}+O(z^{-4})
		\right),
		\qquad z\to\infty,
	\end{equation}
	where \(\boldsymbol{e}_3=(0,0,1)^T\). If
	\[
		\boldsymbol{v}(z):=\boldsymbol{X}^{(1)}_m(z;s),
		\qquad
		\boldsymbol{v}_j
		:=
		\frac{1}{j!}
		\frac{\dd^j}{\dd z^j}\boldsymbol{v}(0),
		\qquad j=0,1,2,
	\]
	then $\boldsymbol{X}_m(z;s)$ also satisfies
	\begin{equation}\label{X intrinsic nondegeneracy}
		\rank
		\begin{pmatrix}
			\boldsymbol{v}_0 & \boldsymbol{e}_3 & \boldsymbol{\phi}
		\end{pmatrix}
		=
		\rank
		\begin{pmatrix}
			\boldsymbol{v}_0 & \boldsymbol{v}_1 & \boldsymbol{e}_3
		\end{pmatrix}
		=\rank
		\begin{pmatrix}
			\boldsymbol{v}_0 & \boldsymbol{v}_1 & \boldsymbol{v}_2
		\end{pmatrix}
		=3.
	\end{equation}
        \end{itemize}
\end{theorem}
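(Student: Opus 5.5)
The plan is to verify the three conditions directly from the orthogonality relations \eqref{OP2 S} and the definition \eqref{H}, treating the unnormalized matrix $\boldsymbol{M}(z)$ on the right of \eqref{XXXXXX} first and applying $\boldsymbol{B}_m^{-1}(s)$ only at the end.

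\emph{Well-definedness.} Since $E^{(s)}_n\neq0$ for $n\in\{2m,2m-1,2m-2\}$, Theorem~\ref{R and S exist and are unique} gives existence and uniqueness of $S_{2m},S_{2m-1},S_{2m-2}$ and of $g^{(s)}_{2m-2}$. By \eqref{S 0 S' 0}, the upper-left $2\times2$ minor of $\boldsymbol{B}_m(s)$ equals the quantity in \eqref{generic weights}, up to sign. Because $\boldsymbol{B}_m(s)$ is block lower triangular, its determinant is $-2g^{(s)}_{2m-2}$ times this minor. Here $g^{(s)}_{2m-2}=E^{(s)}_{2m-1}/E^{(s)}_{2m-2}\neq0$, so $\boldsymbol{B}_m$ is invertible.

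\emph{Analyticity and jump.} The first two columns of $\boldsymbol{M}$ are polynomials. The third column consists of Cauchy-type integrals over $\T$, so it is holomorphic off $\T$. To compute the jump, I would write
\[
\frac{2}{\zeta^2-z^2}=\frac{1}{z}\Big(\frac{1}{\zeta-z}-\frac{1}{\zeta+z}\Big)
\]
and apply Sokhotski--Plemelj at both poles $\zeta=z$ and $\zeta=-z$ of the integrand. This gives
\[
H_{n,+}-H_{n,-}=z^{-1}\Big[S_n(z^{-1})w(z)z^{-s}+(-1)^{s}\,S_n(-z^{-1})w(-z)z^{-s}\Big],
\]
with the sign of the second term still to be checked. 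Next, express $S_n(\pm z^{-1})=z^{-n}S_n^*(\pm z)$, and split $S^*_n(-z)$ as the even part minus the odd part of $S_n^*$. The even and odd parts of the relevant row entry ($S^*$, $zS^*$ or $z^2S^*$) are exactly combinations of the first and second column entries of that row. Collecting terms should give
\[
\Delta H=(\text{col }1)\,\mathcal{k}_1+(\text{col }2)\,\mathcal{k}_2 .
\]
The powers $z^{-2m-s-2}$ and $z^{-2m-s-1}$ are uniform across rows, thanks to the extra factors $z^{2m-n}$ built into the rows. This row-wise verification, and keeping track of parities, is routine but delicate; multiplying on the left by the constant matrix $\boldsymbol{B}_m^{-1}$ preserves the jump.

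\emph{Asymptotics.} The first column of $\boldsymbol{M}$ has entries of degree at most $2m$ with constant-term data: the leading coefficients of $S^*_n$ are $S_n(0)$, and the next coefficients are $S_n'(0)$. The third column I would expand as $z\to\infty$ via
\[
\frac{2}{\zeta^2-z^2}=-2\sum_{k\ge0}\zeta^{2k}z^{-2k-2}.
\]
By \eqref{OP2 S}, the coefficients vanish for $k<n$ and equal $g^{(s)}_n$ at $k=n$. Hence
\[
H_n=-2g^{(s)}_n z^{-2n-2}+O(z^{-2n-4}),
\]
which is a series in $z^{-2}$. For $n=2m-2$ this gives the $z^{-4m+2}$ leading term, while $H_{2m},H_{2m-1}$ are smaller. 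The role of $\boldsymbol{B}_m$ is precisely to normalize the top $z^{2m}$ and $z^{2m-1}$ coefficients of the first two columns and the third-column leading term, producing the identity. The $z^{-1}$ correction $\overset{\infty}{\boldsymbol{X}}_1$ must then be computed from the subleading coefficients. The key point is that column $2$ of $\boldsymbol{M}$ is essentially the odd part of column $1$ divided by $z$, which forces the specific nilpotent form \eqref{X_1 is nice!}. The refined expansion \eqref{X third column refined} follows from the evenness in $z^{-2}$ of the $H_n$ series.

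\emph{Rank conditions — the main obstacle.} For \eqref{X intrinsic nondegeneracy} I would compute $\boldsymbol{v}_j$ explicitly. The column $\boldsymbol{B}_m\boldsymbol{v}(z)$ is $(S^*_{2m},zS^*_{2m-1},z^2S^*_{2m-2})^T$, whose Taylor coefficients at $0$ are the top coefficients of the $S_n$. In particular, $\boldsymbol{B}_m\boldsymbol{v}_0=(1,0,0)^T$ and $\boldsymbol{B}_m\boldsymbol{v}_1=(s_{2m,1},1,0)^T$, where $s_{2m,1}$ is the subleading coefficient of $S_{2m}$. Similarly, $\boldsymbol{B}_m\boldsymbol{e}_3$ and $\boldsymbol{B}_m\boldsymbol{\phi}$ come from the leading and subleading terms of the $H_n$ series. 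Each rank statement then reduces, after multiplying by the invertible matrix $\boldsymbol{B}_m$, to a $3\times3$ determinant expressible through $g^{(s)}_n$ and the entries of $\boldsymbol{B}_m$. The main obstacle will be showing that the one involving $\boldsymbol{\phi}$ is nonzero, since it requires the $k=n+1$ moment coefficients of $H_{2m-1}$, namely $g^{(s)}_{2m-1}$-type quantities. I would try to reduce it to $\det\boldsymbol{B}_m\neq0$ together with $E^{(s)}_{2m}\neq0$.
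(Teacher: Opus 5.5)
Your outline follows the paper's proof essentially step by step. The shared steps are: invertibility of $\boldsymbol{B}_m(s)$ from its block-triangular form and \eqref{S 0 S' 0}; Plemelj applied to the partial-fraction form of $H_n$, with row functions $F_t=z^tS^*_{2m-t}$; the $z^{-2}$-series for $H_n$ at infinity; the fact that the leading coefficient matrix is $\boldsymbol{B}_m(s)$, with $E_{21}$ forced because column~2 is the odd part of column~1 divided by $z$; and the rank checks after multiplying by $\boldsymbol{B}_m(s)$.

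One displayed formula is wrong, however. The measure $\frac{\dd\zeta}{2\pi\ic\zeta}$ contributes a factor $\zeta^{-1}$, so the Cauchy density is $S_n(\zeta^{-1};s)w(\zeta)\zeta^{-s-1}$ and
\[
H_{n,+}(z;s)-H_{n,-}(z;s)=z^{-s-2}\left[S_n(z^{-1};s)w(z)+(-1)^sS_n(-z^{-1};s)w(-z)\right],
\]
not $z^{-s-1}[\cdots]$. The sign $(-1)^s$ comes from $(-z)^{-s-1}$; with your density $\zeta^{-s}$ it would have come out as $-(-1)^s$. Using $S_{2m-t}(\pm z^{-1};s)=z^{-2m}F_t(\pm z)$, the correct formula gives exactly $z^{-2m-s-2}\left[F_t(z)w(z)+(-1)^sF_t(-z)w(-z)\right]$, hence $\mathcal{k}_1$ and $\mathcal{k}_2$. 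Your version would be off by a factor of $z$ and would not match the stated jump.

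The $\boldsymbol{\phi}$-rank condition is not a real obstacle. You have
\[
H_{2m}=O(z^{-4m-2}),\qquad H_{2m-1}=-2g^{(s)}_{2m-1}z^{-4m}+O(z^{-4m-2}),
\]
\[
H_{2m-2}=-2g^{(s)}_{2m-2}z^{-4m+2}+\mathfrak{u}^{(s)}_{2m-2}z^{-4m}+O(z^{-4m-2}).
\]
The relevant coefficient of $H_{2m-1}$ is the $k=n$ one, not $k=n+1$. From these you read off $\boldsymbol{B}_m(s)\boldsymbol{e}_3=(0,0,-2g^{(s)}_{2m-2})^T$ and $\boldsymbol{B}_m(s)\boldsymbol{\phi}=(0,-2g^{(s)}_{2m-1},\mathfrak{u}^{(s)}_{2m-2})^T$. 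Together with $\boldsymbol{B}_m(s)\boldsymbol{v}_0=(1,0,0)^T$, the determinant is $-4g^{(s)}_{2m-1}g^{(s)}_{2m-2}$. This is independent of the uncontrolled $\mathfrak{u}^{(s)}_{2m-2}$ and nonzero because $E^{(s)}_{2m}E^{(s)}_{2m-1}\neq0$, which is the reduction you anticipated. The other two determinants are $-2g^{(s)}_{2m-2}$ and $1$, as in the paper.
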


\begin{lemma}
\label{lemma:X uniqueness}
If \textbf{RH}-\(\boldsymbol{X1}\)--\textbf{RH}-\(\boldsymbol{X3}\) is
solvable, then its solution is unique. Moreover,
\[
	\det \boldsymbol{X}_m(z;s)\equiv1.
\]
\end{lemma}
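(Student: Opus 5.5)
The plan is the standard Liouville argument, carried out first for the determinant and then for the ratio of two solutions. Let $\boldsymbol{X}_m$ be any solution of \textbf{RH}-$\boldsymbol{X1}$--\textbf{RH}-$\boldsymbol{X3}$.

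\emph{Step 1: $\det\boldsymbol{X}_m\equiv1$.} The jump matrix in \textbf{RH}-$\boldsymbol{X2}$ is upper unitriangular, so it has determinant one. Hence $\det\boldsymbol{X}_{m,+}=\det\boldsymbol{X}_{m,-}$ on $\T$. We assume, as is implicit in the formulation, that the boundary values are continuous (or at least $L^2$) so that Morera's theorem applies. Then $\det\boldsymbol{X}_m$ extends to an entire function. At infinity, \textbf{RH}-$\boldsymbol{X3}$ gives
\[
\det\boldsymbol{X}_m=\det\bigl(I+O(z^{-1})\bigr)\,z^{2m}z^{2m-2}z^{-4m+2}=1+O(z^{-1}).
\]
By Liouville's theorem, $\det\boldsymbol{X}_m\equiv1$. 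In particular $\boldsymbol{X}_m$ is invertible everywhere on $\C\setminus\T$, and $\boldsymbol{X}_m^{-1}$ is holomorphic there.

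\emph{Step 2: uniqueness.} Let $\widetilde{\boldsymbol{X}}_m$ be a second solution and set $\boldsymbol{R}(z):=\widetilde{\boldsymbol{X}}_m(z)\boldsymbol{X}_m(z)^{-1}$. On $\T$ the jump matrices are identical and cancel, so $\boldsymbol{R}_+=\boldsymbol{R}_-$ and $\boldsymbol{R}$ is entire. At infinity the diagonal factors $\mathrm{diag}(z^{2m},z^{2m-2},z^{-4m+2})$ also cancel, giving
\[
\boldsymbol{R}(z)=\bigl(I+\widetilde{\boldsymbol{X}}_1/z+O(z^{-2})\bigr)\bigl(I+\boldsymbol{X}_1/z+O(z^{-2})\bigr)^{-1}=I+O(z^{-1}).
\]
Here the inverse of the bracket exists for large $|z|$ and is $I-\boldsymbol{X}_1/z+O(z^{-2})$. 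Liouville's theorem then gives $\boldsymbol{R}\equiv I$, so $\widetilde{\boldsymbol{X}}_m=\boldsymbol{X}_m$.

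None of the extra normalizations are needed for this argument: not \eqref{X_1 is nice!}, not the refined third-column expansion, not the rank condition \eqref{X intrinsic nondegeneracy}. They only restrict which functions can solve the problem, and uniqueness within a larger class implies uniqueness in the smaller one.

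\emph{Main obstacle.} The only point needing care is regularity on $\T$: the removability of the contour for $\det\boldsymbol{X}_m$ and for $\boldsymbol{R}$. Specifically, we must know that $\boldsymbol{X}_m^{-1}$ has boundary values and that the product of boundary values equals the boundary value of the product. I would handle this by taking continuous boundary values up to $\T$ as part of \textbf{RH}-$\boldsymbol{X1}$--\textbf{RH}-$\boldsymbol{X2}$. This holds for the explicit solution when $w$ is, say, Hölder continuous, because $H_n$ is then a Cauchy transform of a Hölder density. Since $\det\boldsymbol{X}_m=1$, the inverse $\boldsymbol{X}_m^{-1}$ is the adjugate, a polynomial in the entries, so it inherits the same boundary behaviour and the Morera argument goes through verbatim.
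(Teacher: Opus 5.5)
Your proposal is correct and follows the approach the paper intends: for this lemma the paper gives no details beyond invoking, as for the $\boldsymbol{Y}$-problem, ``a standard Liouville theorem-based argument.'' Your key checks are exactly what that argument needs: the jump has unit determinant, $\det\boldsymbol{\Lambda}_m(z)=z^{2m}z^{2m-2}z^{-4m+2}=1$, and $\boldsymbol{\Lambda}_m$ cancels in $\widetilde{\boldsymbol{X}}_m\boldsymbol{X}_m^{-1}$. You are also right that the extra normalization and rank conditions in \textbf{RH}-$\boldsymbol{X3}$ play no role in uniqueness.
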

\begin{lemma}
\label{lemma:X column symmetry}
Assume that \textbf{RH}-\(\boldsymbol{X1}\)--\textbf{RH}-\(\boldsymbol{X3}\)
is solvable, with the same first-order normalization structure at infinity.
Then
\begin{equation}\label{X z minus z symmetry}
	\boldsymbol{X}_m(z;s)
	=
	\boldsymbol{X}_m(-z;s)
	\begin{pmatrix}
		1 & 0 & 0\\
		2z & 1 & 0\\
		0 & 0 & 1
	\end{pmatrix}.
\end{equation}
In particular, for each \(j=1,2,3\),
\begin{equation}\label{X second column from first column}
	X_{j2}(z;s)
	=
	\frac{X_{j1}(z;s)-X_{j1}(-z;s)}{2z}.
\end{equation}
\end{lemma}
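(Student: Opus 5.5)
Let $\boldsymbol{X}(z):=\boldsymbol{X}_m(z;s)$ solve \textbf{RH}-$\boldsymbol{X1}$--\textbf{RH}-$\boldsymbol{X3}$, and set $T(z):=\begin{pmatrix}1&0&0\\ 2z&1&0\\ 0&0&1\end{pmatrix}$. The plan is to show that
\[
\widetilde{\boldsymbol{X}}(z):=\boldsymbol{X}(-z)\,T(z)
\]
solves the same problem, and then invoke the uniqueness part of Lemma~\ref{lemma:X uniqueness}. The component identity \eqref{X second column from first column} then follows by comparing second columns. Indeed, \eqref{X z minus z symmetry} gives $X_{j2}(z)=X_{j2}(-z)+2zX_{j1}$-type relations; applying it at $z$ and at $-z$ yields the pair $X_{j1}(z)=X_{j1}(-z)+2zX_{j2}(-z)$ and $X_{j2}(z)=X_{j2}(-z)$. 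Together these give $X_{j2}(z)=(X_{j1}(z)-X_{j1}(-z))/(2z)$.

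The first step is analyticity and the jump. Since $\T$ is invariant under $z\mapsto -z$, and this map sends the interior to the interior and the exterior to the exterior, $\widetilde{\boldsymbol{X}}$ is holomorphic off $\T$. Writing $J(z)$ for the jump matrix, we have
\[
\widetilde{\boldsymbol{X}}_+(z)=\boldsymbol{X}_-(-z)\,J(-z)\,T(z)=\widetilde{\boldsymbol{X}}_-(z)\,T(z)^{-1}J(-z)T(z).
\]
It remains to check the identity $T(z)^{-1}J(-z)T(z)=J(z)$. Since $J$ differs from $I$ only in its third column with entries $(\mathcal{k}_1,\mathcal{k}_2,1)^T$, this reduces to two scalar identities:
\[
\mathcal{k}_1(-z)=\mathcal{k}_1(z), \qquad -2z\,\mathcal{k}_1(-z)+\mathcal{k}_2(-z)=\mathcal{k}_2(z).
\]
Both should follow from the explicit formulas. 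For the first, $(-1)^{-2m-s-2}[w(-z)+(-1)^sw(z)]=z$-independent sign bookkeeping gives $\mathcal{k}_1(-z)=\mathcal{k}_1(z)$. The second is a direct check of the same kind using the parity of $z^{-2m-s-1}$. This is exactly the structural reason the jump was written in this form.

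The second step is the asymptotics. We have
\[
\boldsymbol{X}(-z)=\Bigl(I-\tfrac{\boldsymbol{X}_1}{z}+\tfrac{\boldsymbol{X}_2}{z^2}+\dots\Bigr)\operatorname{diag}\bigl(z^{2m},z^{2m-2},z^{-4m+2}\bigr),
\]
since all the exponents are even. Right-multiplying by $T(z)$ and commuting it through the diagonal gives $\operatorname{diag}(\cdot)\,T(z)=\bigl(I+2z^{-1}E_{21}\bigr)\operatorname{diag}(\cdot)$, because $2z\cdot z^{2m-2}/z^{2m}=2/z$. The resulting expansion is $I+\bigl(-\boldsymbol{X}_1+2E_{21}\bigr)/z+O(z^{-2})$. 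With $\boldsymbol{X}_1=E_{21}$ from \eqref{X_1 is nice!}, this becomes $I+E_{21}/z$, which is precisely the required normalization. This is where \eqref{X_1 is nice!} is essential, and it is the main point to verify carefully.

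The third step concerns the remaining parts of \textbf{RH}-$\boldsymbol{X3}$. The third column of $\widetilde{\boldsymbol{X}}$ is $\boldsymbol{X}^{(3)}(-z)$, whose refined expansion \eqref{X third column refined} contains only even powers of $z$. It therefore satisfies the same expansion with the same $\boldsymbol{\phi}$. The first column is $\boldsymbol{X}^{(1)}(-z)+2z\boldsymbol{X}^{(2)}(-z)$, so its Taylor data at $0$ change to $\widetilde{\boldsymbol{v}}_0=\boldsymbol{v}_0$, $\widetilde{\boldsymbol{v}}_1=-\boldsymbol{v}_1+2\boldsymbol{X}^{(2)}(0)$, and so on. Here I expect the main obstacle. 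The rank conditions \eqref{X intrinsic nondegeneracy} are not obviously invariant under this change. Moreover, the uniqueness argument of Lemma~\ref{lemma:X uniqueness} may or may not use them.

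My first attempt will be to note that the Liouville argument for uniqueness only needs \textbf{RH}-$\boldsymbol{X1}$, \textbf{RH}-$\boldsymbol{X2}$, the leading normalization, and $\det\equiv1$. Concretely, $\widetilde{\boldsymbol{X}}\boldsymbol{X}^{-1}$ has no jump and equals $I+O(1/z)$ after conjugation by the diagonal factor. In that setting, $\det\widetilde{\boldsymbol{X}}=\det\boldsymbol{X}(-z)=1$, and the rank conditions are irrelevant. If instead they were needed, I would verify them using the relations $\boldsymbol{X}^{(2)}(0)$ expressed via $\boldsymbol{v}$, which come from the entire-function structure of the first two columns.
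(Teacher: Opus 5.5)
Your proposal is correct and is essentially the paper's argument, which the paper writes out only for the $\boldsymbol{Y}$-analogue (Lemma~\ref{prop:Y column symmetry}) and leaves implicit for $\boldsymbol{X}$: set $\widetilde{\boldsymbol{X}}(z)=\boldsymbol{X}(-z)T(z)$, verify the intertwining identities $\mathcal{k}_1(-z)=\mathcal{k}_1(z)$ and $\mathcal{k}_2(-z)=\mathcal{k}_2(z)+2z\,\mathcal{k}_1(z)$ (both do hold from the explicit formulas, though your sign bookkeeping for them is only sketched), match the normalization using $\overset{\infty}{\boldsymbol{X}}_1=E_{21}$, and invoke uniqueness. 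Your remark that the Liouville uniqueness argument uses only \textbf{RH}-$\boldsymbol{X1}$, \textbf{RH}-$\boldsymbol{X2}$ and the leading normalization is correct, so the refined third-column expansion and the rank conditions need not be re-verified for $\widetilde{\boldsymbol{X}}$; the paper passes over this point silently, and along that route even $\overset{\infty}{\boldsymbol{X}}_1=E_{21}$ is not essential, since $\widetilde{\boldsymbol{X}}(z)\boldsymbol{\Lambda}_m(z)^{-1}=\boldsymbol{X}(-z)\boldsymbol{\Lambda}_m(-z)^{-1}\left(I+2z^{-1}E_{21}\right)=I+O(z^{-1})$ whatever the first coefficient is.
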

The following theorem provides the converse of Theorem~\ref{thm Q} and
thereby completes the Riemann--Hilbert characterization of the polynomials
\(S_n(z;s)\) and their associated functions \(H_n(z;s)\).
\begin{theorem}
\label{thm:X converse}
Assume that \textbf{RH}-\(\boldsymbol{X1}\)--\textbf{RH}-\(\boldsymbol{X3}\)
is solvable. Then
\[
	E_{2m}^{(s)}E_{2m-1}^{(s)}E_{2m-2}^{(s)}\neq0.
\]
Consequently, the polynomials
\[
	S_{2m}(z;s),\qquad S_{2m-1}(z;s),\qquad S_{2m-2}(z;s)
\]
exist and are unique. Moreover, the matrix \(\boldsymbol{B}_m(s)\) defined by
\eqref{BBB} is well-defined and invertible. Equivalently, the generic
condition \eqref{generic weights} holds. Finally the polynomials \(S^*_n(z;s)\) together
with the associated functions \(H_n(z;s)\), for
\(n\in\{2m,2m-1,2m-2\}\), are characterized by the solution of the
\(\boldsymbol{X}_m\)-RHP through
\[
	\begin{pmatrix}
 		S^*_{2m}(z;s) &  \di \frac{1}{2z}\left(S^*_{2m}(z;s)-S^*_{2m}(-z;s)\right) & H_{2m}(z;s) \\[10pt]
 		z S^*_{2m-1}(z;s) & \di \frac{1}{2} \left(S^*_{2m-1}(z;s)+S^*_{2m-1}(-z;s)\right) & H_{2m-1}(z;s) \\[10pt]
 		z^2 S^*_{2m-2}(z;s) & \di \frac{z}{2}\left(S^*_{2m-2}(z;s)-S^*_{2m-2}(-z;s)\right) & H_{2m-2}(z;s) \\	
 	\end{pmatrix}=\boldsymbol{B}_m(s)\boldsymbol{X}_m(z;s)
	.
\]
\end{theorem}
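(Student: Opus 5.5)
Given a solution $\boldsymbol{X}_m$ of \textbf{RH}-$\boldsymbol{X1}$--\textbf{RH}-$\boldsymbol{X3}$, we read off polynomial and orthogonality information row by row, and then build a linear map that turns the rows of $\boldsymbol{X}_m$ into the rows of the matrix in \eqref{XXXXXX}. By Lemma~\ref{lemma:X uniqueness} the solution is unique with $\det\boldsymbol{X}_m\equiv1$, and by Lemma~\ref{lemma:X column symmetry} the second column is determined by the first. So it suffices to study the first and third columns.

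\textbf{Structure of the first and third columns.} The jump matrix is upper triangular with identity in the first two columns, so $X_{j1}$ and $X_{j2}$ have no jump and are entire. By \textbf{RH}-$\boldsymbol{X3}$ they are therefore polynomials. The first column $\boldsymbol{v}(z)=\boldsymbol{X}^{(1)}_m$ has entries of degree at most $2m$. Each third-column entry satisfies $X_{j3,+}-X_{j3,-}=X_{j1}\mathcal{k}_1+X_{j2}\mathcal{k}_2$ and decays at infinity, so by Sokhotski--Plemelj it is a Cauchy transform:
\[
X_{j3}(z)=\int_{\T}\frac{X_{j1}(\ze)\mathcal{k}_1(\ze)+X_{j2}(\ze)\mathcal{k}_2(\ze)}{\ze-z}\frac{\dd\ze}{2\pi\ic}.
\]
Substituting \eqref{X second column from first column}, changing variables $\ze\mapsto-\ze$ in the $w(-\ze)$ terms and symmetrizing, this becomes a single integral with kernel $\ze/(\ze^2-z^2)$ (compare \eqref{H}). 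That integral is even in $z$, which explains the refined expansion \eqref{X third column refined} with only even powers.

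\textbf{Orthogonality conditions.} Expanding at infinity, the decay $X_{j3}=O(z^{-4m+2})$ for $j=1,2$, and $X_{33}=z^{-4m+2}(1+O(z^{-2}))$, become orthogonality relations for the moments $\int_{\T}X_{j1}(\ze)\ze^{2k-2m-s}\,w(\ze)\frac{\dd\ze}{2\pi\ic\ze}=0$ for $k$ in a range of about $2m-1$ values, with one nonzero normalization in row $3$. Under $\ze\mapsto\ze^{-1}$ and reciprocal polynomials these are the relations \eqref{OP2 S} for $S_n(\ze^{-1};s)$. So each row of $\boldsymbol{X}_m$ is a linear combination of $S^*_{2m}$, $zS^*_{2m-1}$, $z^2S^*_{2m-2}$, or, more intrinsically, lies in the solution space of a linear system whose matrix is a $j-2k$ moment matrix.

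\textbf{Nonvanishing of the determinants.} This step must be done before those $S$'s are known to exist. Suppose $E^{(s)}_n=0$ for some $n\in\{2m,2m-1,2m-2\}$. Then the $j-2k$ moment matrix has a nontrivial kernel, giving a nonzero polynomial $q$ of lower degree whose Cauchy-transform companion decays faster than required. Adding a multiple of the row $(q,\text{odd part}/z,\text{transform})$ to a row of $\boldsymbol{X}_m$ gives a second solution of the RHP, contradicting Lemma~\ref{lemma:X uniqueness}. Equivalently, such a row would violate $\det\boldsymbol{X}_m\equiv1$, or the rank conditions \eqref{X intrinsic nondegeneracy}. Once $E_n\neq0$, Theorem~\ref{R and S exist and are unique} provides $S_n$ and $g_n$, and the span argument shows $\boldsymbol{B}\boldsymbol{X}_m$ equals the matrix of \eqref{XXXXXX} for some constant $\boldsymbol{B}$.

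\textbf{Identifying $\boldsymbol{B}$ and its invertibility.} We identify $\boldsymbol{B}$ by evaluating at $z=0$ and at infinity. The third column of $\boldsymbol{B}$ is fixed by comparing leading terms of $H_n$ with \eqref{X third column refined}, giving $-2g^{(s)}_{2m-2}$. The first two columns come from matching the values $\boldsymbol{v}_0,\boldsymbol{v}_1$ at $z=0$ with $S_n(0;s)$ and $S_n'(0;s)$. Hence $\boldsymbol{B}=\boldsymbol{B}_m(s)$. Invertibility, equivalently \eqref{generic weights}, follows from $\rank(\boldsymbol{v}_0,\boldsymbol{v}_1,\boldsymbol{e}_3)=3$ and $\det\boldsymbol{X}_m=1$.

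\textbf{Main obstacle.} The hard step is the nonvanishing argument. The orthogonality system mixes the odd and even parts induced by the $w(\pm z)$ jump, so one must check carefully that a kernel vector of $\boldsymbol{E}^{(s)}_n$ really produces an admissible perturbation of the RHP solution. The rank conditions \eqref{X intrinsic nondegeneracy} are what should close this gap, and I would try using them first.
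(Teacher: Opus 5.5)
The genuine gap is the one you flag yourself: the nonvanishing step, and the argument you give for it does not work as written. A kernel vector of $\boldsymbol{E}^{(s)}_n$ (with $n=2m-t$) is a nonzero $\tilde S$ with $\deg\tilde S\le n-1$ and $\int_{\T}\tilde S(\zeta^{-1})\zeta^{2k-s}w(\zeta)\frac{\dd\zeta}{2\pi\ic\zeta}=0$ for $k=0,\dots,n-1$. In the variable of the RHP it enters the first column as $\tilde F(z)=z^{2m}\tilde S(z^{-1})$. So ``lower degree'' in $S$ means that $\tilde F$ has a zero of order $t+1$ at the origin; it does not mean that $\tilde F$ grows more slowly at infinity. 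In general $\tilde F(z)=\tilde S(0)z^{2m}+\tilde S'(0)z^{2m-1}+\cdots$, and the kernel need not contain a vector with $\tilde S(0)=\tilde S'(0)=0$. Let $\tilde{\boldsymbol{r}}:=\bigl(\tilde F,\frac{\tilde F(z)-\tilde F(-z)}{2z},\tilde H\bigr)$, with $\tilde H$ built from $\tilde S$ as in \eqref{H}. Adding a multiple of $\tilde{\boldsymbol{r}}$ to a row of $\boldsymbol{X}_m$ generally destroys the normalization $\boldsymbol{X}_m=(I+E_{21}/z+O(z^{-2}))\boldsymbol{\Lambda}_m$, so it does not give a second solution. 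Renormalizing on the left by the new leading matrix (for a small multiple) restores \textbf{RH}-$\boldsymbol{X3}$. You must then show the result differs from $\boldsymbol{X}_m$, and that is exactly the content you defer to ``$\det\boldsymbol{X}_m\equiv1$ or the rank conditions'' without supplying it. A smaller slip: $S_n(0;s)$ and $S_n'(0;s)$ are the $z^{2m}$ and $z^{2m-1}$ coefficients of $z^tS^*_{2m-t}$. So the first two columns of $\boldsymbol{B}$ come from the normalization at infinity, whereas matching at $z=0$ gives $\boldsymbol{B}\boldsymbol{v}_0=(1,0,0)^T$ and $\boldsymbol{B}\boldsymbol{v}_1=(*,1,0)^T$.

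The rank conditions close the gap directly, without constructing any $S_n$ first. Since $\tilde{\boldsymbol{r}}$ has the same jump as the rows of $\boldsymbol{X}_m$, the row $\tilde{\boldsymbol{r}}\boldsymbol{X}_m^{-1}$ is entire (recall $\det\boldsymbol{X}_m\equiv1$). Since $\tilde H=O(z^{-2n-2})$, the row $\tilde{\boldsymbol{r}}\boldsymbol{\Lambda}_m^{-1}=\bigl(\tilde S(z^{-1}),O(1),O(z^{2t-4})\bigr)$ is bounded. By Liouville, $\tilde{\boldsymbol{r}}=\boldsymbol{c}\,\boldsymbol{X}_m$ for a constant row $\boldsymbol{c}$. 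The zero of $\tilde F=\boldsymbol{c}\boldsymbol{v}$ at the origin gives $\boldsymbol{c}\boldsymbol{v}_j=0$ for $j\le t$. Comparing $\boldsymbol{c}\boldsymbol{X}^{(3)}_m=z^{-4m+2}\bigl(\boldsymbol{c}\boldsymbol{e}_3+\boldsymbol{c}\boldsymbol{\phi}\, z^{-2}+O(z^{-4})\bigr)$ with $\tilde H=O(z^{-4m+2t-2})$ gives $\boldsymbol{c}\boldsymbol{e}_3=\boldsymbol{c}\boldsymbol{\phi}=0$ when $t=0$, and $\boldsymbol{c}\boldsymbol{e}_3=0$ when $t=1$. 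The three rank conditions in \eqref{X intrinsic nondegeneracy} then force $\boldsymbol{c}=0$ for $t=0,1,2$ respectively, so $\tilde S\equiv0$, a contradiction.

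Once $E^{(s)}_{2m}E^{(s)}_{2m-1}E^{(s)}_{2m-2}\neq0$, the rest of your plan is sound. The polynomials of degree $\le 2m$ satisfying the conditions for $k\le 2m-3$ form a three-dimensional space, because $E^{(s)}_{2m-2}\neq0$, and it is spanned by $S^*_{2m}$, $zS^*_{2m-1}$, $z^2S^*_{2m-2}$. Column symmetry and Liouville then give $\boldsymbol{X}_m=\boldsymbol{M}\boldsymbol{\Phi}_m$, with $\det\boldsymbol{M}\neq0$ by $\det\boldsymbol{X}_m\equiv1$. The normalization at infinity gives $\boldsymbol{M}^{-1}=\boldsymbol{B}_m(s)$, hence \eqref{generic weights}.

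This order differs from the paper's. The paper spends the rank conditions first on the matrix $\boldsymbol{L}_m$ of Lemma~\ref{lemma:X row reconstruction}, reads off monic candidates $S_n$, argues $E^{(s)}_n\neq0$ by uniqueness, and only then identifies $\boldsymbol{L}_m=\boldsymbol{B}_m(s)$. Repaired as above, your route gets nonvanishing directly from kernel vectors and dispenses with $\boldsymbol{L}_m$.
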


\begin{remark}
    The Riemann--Hilbert characterizations of the \(P\)- and
\(S\)-polynomials are enough, in principle, to recover the asymptotics of
the other two polynomial families appearing in the bi-orthogonality
relations. Indeed, if a Deift--Zhou nonlinear steepest descent analysis is
available for the Riemann--Hilbert problems associated with \(P_n\) and
\(S_n\), then the asymptotics of \(Q_n(z^{-2};r)\) and \(R_n(z^2;s)\) follow
from the identities of \cite[Theorem~5.9]{GW}. More precisely, that result
states that
\begin{equation}\label{Q in terms of P GW intro}
	Q_n(z^{2};r)
	=
	\frac{D^{(r+2)}_n}{D^{(r)}_n}
	\frac{z^{2n+1}}{2}
	\left[
	P_{n+1}(z^{-1};r+2)P_n(-z^{-1};r+2)
	-
	P_{n+1}(-z^{-1};r+2)P_n(z^{-1};r+2)
	\right],
\end{equation}
and
\begin{equation}\label{R in terms of S GW intro}
	R_n(z^{2};s)
	=
	\frac{E^{(s-2)}_n}{E^{(s)}_n}
	\frac{z^{2n+1}}{2}
	\left[
	S_{n+1}(z^{-1};s-2)S_n(-z^{-1};s-2)
	-
	S_{n+1}(-z^{-1};s-2)S_n(z^{-1};s-2)
	\right].
\end{equation}

Thus, once the \(P\)- and \(S\)-Riemann--Hilbert problems have been analyzed
asymptotically, the remaining polynomial families \(Q\) and \(R\) can be
obtained through these algebraic identities, together with the corresponding
determinant ratios.
\end{remark}

\begin{remark}
It is worth noting that, despite their inherent asymmetries and more involved
normalization requirements as \(z\to\infty\), the Riemann--Hilbert problems
formulated above are closely related in spirit to the Baik-Deift-Johansson
Riemann--Hilbert characterization of bi-orthogonal polynomials on the unit
circle \cite{BDJ99} (itself inspired by the Fokas-Its-Kitaev
Riemann--Hilbert characterization of orthogonal polynomials on the real line
\cite{FIK92}). In the standard zero-offset OPUC setting, one seeks a
\(2\times 2\) matrix \(Z\) satisfying the following three conditions:
\begin{enumerate}
	\item \(Z\) is analytic in \(\mathbb C\setminus\mathbb T\);
	\item on \(\mathbb T\), its boundary values satisfy a triangular jump of the form
	\[
		Z_+(z)
		=
		Z_-(z)
		\begin{pmatrix}
			1 & z^{-n}w(z)\\
			0 & 1
		\end{pmatrix};
	\]
	\item as \(z\to\infty\),
	\[
		Z(z)
		=
		\left(I+O(z^{-1})\right)
		\begin{pmatrix}
			z^n & 0\\
			0 & z^{-n}
		\end{pmatrix}.
	\]
\end{enumerate}
The \(2j-k\) and \(j-2k\) Riemann--Hilbert problems in this paper have the
same basic structure: analyticity away from \(\mathbb T\), triangular jump
matrices whose off-diagonal entries contain the relevant weights, and
normalization conditions at infinity encoding the degrees of the polynomial
entries. The main difference is that the slanted moment structure leads to
larger matrix problems, modified powers of \(z\) in the jumps and
normalizations, and more delicate asymptotic requirements at infinity; these
asymptotics are what ultimately enforce the conditions needed to recover
the corresponding orthogonality relations as detailed in Sections \ref{sec converse thm P} and \ref{sec converse thm Q}.
\end{remark}

Beyond the Riemann--Hilbert characterizations, in Sections \ref{sec rec rel} -- \ref{sec shifted vandermonde} of this work we prove several other results for the $2j-k$ and $j-2k$ systems. Some are of interest in their own right as they fill the gaps in the general theory, while others will be needed for our forthcoming application-oriented work \cite{GW3}. Among these results, we highlight one concerning the Christoffel-Darboux kernel.

In \cite{GW} the authors derived the following Christoffel-Darboux formula for the $2j-k$ reproducing kernel. 

\begin{theorem}[\cite{GW}]
	The Christoffel-Darboux identity for the $2j-k$ system can be written as
	\begin{equation}\label{CD}
	K_n(z^2_2,z_1;r)=\frac{1}{2}\frac{D_n^{(r+2)}}{D_{n+1}^{(r)}} \frac{z^{2n+1}_2}{z^{2}_1-z^{-2}_2} \det \begin{pmatrix}
	P_n(-z^{-1}_2;r+2) & P_{n+1}(-z^{-1}_2;r+2) & P_{n+2}(-z^{-1}_2;r+2) \\[2pt]
	P_n(z^{-1}_2;r+2) & P_{n+1}(z^{-1}_2;r+2) & P_{n+2}(z^{-1}_2;r+2) \\[2pt]
	P_n(z_1;r+2) & P_{n+1}(z_1;r+2) & P_{n+2}(z_1;r+2) \\				
	\end{pmatrix},
	\end{equation}
	where $K_{n}$ denotes the reproducing kernel:
	\begin{equation}
	K_{n}(z,\mathcal{z};r)  := \sum_{j=0}^{n} \frac{1}{h^{(r)}_{j}}Q_{j}(z;r)P_{j}(\mathcal{z};r). \label{RepKer3}
	\end{equation}
\end{theorem}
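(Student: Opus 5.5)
We prove the Christoffel--Darboux identity \eqref{CD} by checking that both sides define the same polynomial in $z_1$, with $z_2$ regarded as a parameter.

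\textbf{Step 1: the left-hand side is the kernel of a projection.} By bi-orthogonality \eqref{PQorth}, $K_n(z^2_2,\cdot\,;r)$ is a polynomial of degree $n$ in $z_1$. Reading \eqref{PQorth} with the $Q$ variable specialised to $z_2^2$, it reproduces the functional
\begin{equation*}
f\mapsto \int_{\T} f(\ze)\,(\cdot)\,\ze^{-r}\,\frac{\dd\mu(\ze)}{2\pi\ic\ze}
\end{equation*}
on polynomials of degree at most $n$. Equivalently, it is the unique polynomial of degree at most $n$ in $z_1$ satisfying
\begin{equation*}
\int_{\T} K_n(z_2^2,\ze;r)\,\ze^{-2m-r}\,\frac{\dd\mu(\ze)}{2\pi\ic\ze}=z_2^{2m},\qquad m=0,\dots,n.
\end{equation*}
This follows from \eqref{OP1} and from expanding $z_2^{2m}$ in the basis $Q_j(z_2^2;r)$.

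\textbf{Step 2: the right-hand side is a polynomial of degree at most $n$ in $z_1$.} The $3\times3$ determinant has its last row in the span of $P_n,P_{n+1},P_{n+2}$ evaluated at $z_1$, so it is a polynomial of degree at most $n+2$ in $z_1$. It vanishes at $z_1=\pm z_2^{-1}$, because then the last row coincides with one of the first two rows. Since $z_1^2-z_2^{-2}=(z_1-z_2^{-1})(z_1+z_2^{-1})$, the quotient by this factor is a polynomial of degree at most $n$.

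\textbf{Step 3: match the moment conditions.} The polynomials $P_j(\cdot;r+2)$ satisfy \eqref{OP1} with offset $r+2$, that is, they annihilate $\ze^{-2m-r-2}$ for $m<j$. For $k\in\{n,n+1,n+2\}$ we compute
\begin{equation*}
I_k(m):=\int_{\T}\frac{P_k(\ze;r+2)}{\ze^2-z_2^{-2}}\,\ze^{-2m-r}\,\frac{\dd\mu}{2\pi\ic\ze}.
\end{equation*}
We expand the Cauchy-type factor as
\begin{equation*}
\frac{\ze^{-2m}}{\ze^2-z_2^{-2}}=\frac{\ze^{-2m}-z_2^{2m}\ze^{0}\cdots}{\ze^{2}-z_2^{-2}}+\cdots,
\end{equation*}
that is, through the polynomial division identity
\begin{equation*}
\frac{\ze^{-2m}-z_2^{2m+2}\ze^{2}}{\ze^2-z_2^{-2}}\;\in\;\mathrm{span}\{\ze^{-2},\dots,\ze^{-2m-2}\}.
\end{equation*}
This reduces $I_k(m)$ to a multiple of $z_2^{2m+2}$ times an $m$-independent integral $\widetilde G_k(z_2)$, plus linear combinations of the moments $\int P_k\,\ze^{-2\ell-r-2}$ with $\ell\le m$. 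The latter vanish for $m\le n-1$ because $\ell<k$. For $m=n$ they vanish for $k=n+1,n+2$, while for $k=n$ they contribute $h^{(r+2)}_n$ times a power of $z_2$.

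Linearity along the last row then gives the integral of the right-hand side as a $3\times3$ determinant with first two rows $P_k(\mp z_2^{-1};r+2)$ and third row $z_2^{2m+2}\widetilde G_k(z_2)$ plus corrections. The key claim is that
\begin{equation*}
\widetilde G_k(z_2)\ \propto\ \tfrac12\left[P_k(z_2^{-1};r+2)+P_k(-z_2^{-1};r+2)\right]\cdot(\text{const})
\end{equation*}
modulo the orthogonality. This identity should be derived from the even/odd decomposition, using the fact that the kernel $1/(\ze^2-z_2^{-2})$ is even, and it is where the factor $\tfrac12$ and the two evaluation points $\pm z_2^{-1}$ come from. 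Granting it, the leading piece is a combination of the first two rows and its determinant vanishes. What survives for $m\le n-1$ must then produce exactly $z_2^{2m}$ after the prefactor $\tfrac12 z_2^{2n+1}D^{(r+2)}_n/D^{(r)}_{n+1}$, and at $m=n$ the $h^{(r+2)}_n$ term must do the same. Here the $2\times2$ minors of the first two rows are expressed through $Q_n(z_2^2;r)$ via \cite[Theorem~5.9]{GW}, in the form \eqref{Q in terms of P GW intro}.

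\textbf{Main obstacle and alternative route.} Step 3 is the hard part, in particular the bookkeeping of the powers of $z_2$ and of the determinant ratios. As an alternative we would prove the identity by induction on $n$. Set $K_n-K_{n-1}=Q_nP_n/h^{(r)}_n$. Then express the difference of the two $3\times3$ determinants using the four-term recurrence relating $P_{n+3}(\cdot;r+2)$ to $P_{n+2},P_{n+1},P_n$, together with a multilinear expansion. The remaining $2\times2$ determinant is identified with $Q_n$ through \eqref{Q in terms of P GW intro}. The $n=0$ case is checked directly from \eqref{OP11} and \eqref{h}.
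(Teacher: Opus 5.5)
Steps~1 and~2 are sound. Step~1 needs $D^{(r)}_{n+1}\neq0$ for uniqueness. In Step~3 you should take $|z_2|\neq1$ so that the individual integrals $I_k(m)$ make sense; this is harmless because both sides are rational in $z_2$. The gap is Step~3, which carries the entire content of the theorem.

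First, your division identity is off by one power. For $m=0$ one has $(1-z_2^2\ze^2)/(\ze^2-z_2^{-2})=-z_2^2$, a constant. In general the quotient lies in $\mathrm{span}\{\ze^0,\ze^{-2},\dots,\ze^{-2m}\}$, so the $h^{(r+2)}_n$ term you predict at $m=n$ is an artefact. The clean decomposition is
\[
\frac{\ze^{-2m}}{\ze^2-z_2^{-2}}-\frac{z_2^{2m}}{\ze^2-z_2^{-2}}\in\mathrm{span}\{\ze^{-2},\dots,\ze^{-2m}\}.
\]
By \eqref{OP1} at offset $r+2$, $\int_\T P_k(\ze;r+2)\ze^{-2\ell-r}\frac{\dd\mu(\ze)}{2\pi\ic\ze}=0$ for $1\le\ell\le m\le n\le k$. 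Hence $I_k(m)=z_2^{2m}\widetilde G_k(z_2)$ exactly, for every $0\le m\le n$, with no corrections at all. The integrated right-hand side is therefore $z_2^{2m}C(z_2)$. Here $C(z_2)$ is $\tfrac12\frac{D^{(r+2)}_n}{D^{(r)}_{n+1}}z_2^{2n+1}$ times the determinant in \eqref{CD} with its last row replaced by $\widetilde G_k(z_2)$. The theorem is thus equivalent to $C\equiv1$.

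Your proposal never proves this, and your ``key claim'' points the wrong way. If $\widetilde G_k$ were a combination of the first two rows, the determinant would vanish. By your own count nothing else survives for $m\le n-1$, so you would get $C\equiv0$, not $1$. The claim is also false as stated: $\widetilde G_k$ is a Cauchy-type transform of the weight, not a polynomial in $z_2^{-1}$.

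The missing idea is a Wronskian-type identity. Put $a=z_2^{-1}$; then $\widetilde G_k(z_2)=\tfrac12a^{2k}G_k(a;r+2)$ with $G_k$ as in \eqref{G}. Let $\Delta(a)$ be the determinant with rows $P_k(-a;r+2)$, $P_k(a;r+2)$ and $\tfrac12a^{2k}G_k(a;r+2)$, for $k=n,n+1,n+2$.

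\emph{No jump.} By \eqref{G addidtive jump}, the jump of $\tfrac12a^{2k}G_k(a;r+2)$ across $\T$ is $\alpha(a)P_k(a;r+2)+\beta(a)P_k(-a;r+2)$. Here $\alpha=\tfrac12w(a)a^{-r-2}$ and $\beta=\tfrac12(-1)^rw(-a)a^{-r-2}$ are independent of $k$. So $\Delta$ has no jump and is entire.

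\emph{Behaviour at $0$.} Expand along the third row; the $2\times2$ cofactors are odd in $a$. By \eqref{asymp G zero}, $\tfrac12a^{2k}G_k=h^{(r+2)}_ka^{2k}+O(a^{2k+2})$. Hence $\Delta=O(a^{2n+1})$ at $0$.

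\emph{Behaviour at $\infty$.} By \eqref{3}, $\tfrac12a^{2k}G_k=\tfrac12\mathfrak g^{(r+2)}_ka^{-2}+O(a^{-4})$. The first cofactor is $2(-1)^{n+1}a^{2n+3}+O(a^{2n+1})$. The other two are $O(a^{2n+1})$, because their leading terms cancel. Hence $\Delta=(-1)^{n+1}\mathfrak g^{(r+2)}_n\,a^{2n+1}+O(a^{2n-1})$.

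\emph{Conclusion.} Therefore $\Delta(a)=(-1)^{n+1}\mathfrak g^{(r+2)}_n\,a^{2n+1}=\frac{2D^{(r)}_{n+1}}{D^{(r+2)}_n}a^{2n+1}$ by \eqref{g-evaluation}, which is exactly $C\equiv1$. This is the same Liouville mechanism behind $\det\boldsymbol{Y}_m\equiv1$ in Lemma~\ref{lemma:Y uniqueness}. The identity \eqref{Q in terms of P GW intro} is not needed; it only gives a consistency check on the constant from the $a\to0$ end.

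Your induction alternative is too sketchy to assess. Note that $K_n-K_{n-1}$ involves $P_n(z_1;r)$ at offset $r$. So besides the four-term recurrence at offset $r+2$ you would also need a mixed-offset relation, e.g. $P_n(z;r)=P_n(z;r+2)+\frac{h^{(r)}_n}{h^{(r+2)}_{n-1}}P_{n-1}(z;r+2)$. (The paper itself only quotes this theorem from \cite{GW}.)
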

The formula \eqref{CD} expresses the Christoffel--Darboux kernel through a
determinantal identity whose expansion is cubic in the \(P\)-polynomials. In
the present paper, we recast the same identity in a more transparent mixed
quadratic \(P\)-\(Q\) form.
\begin{prop}\label{prop CD mixed}
	The Christoffel-Darboux identity for the $2j-k$ system can be written as
	\begin{equation}\label{CD1}
	\begin{split}
	K_n(z_2,z_1;r) & =\frac{1}{z_2z^2_1-1} \left\{ \frac{1}{h^{(r+2)}_n} P_n(z_1;r+2) Q_{n+1}(z_2;r) + \frac{1}{h^{(r)}_n} z_2 P_{n+2}(z_1;r+2) Q_{n}(z_2;r) \right. \\ & + \left.  \frac{1}{h^{(r+2)}_{n+1}} P_{n+1}(z_1;r+2) \left[ Q_{n+2}(z_2;r) - \left(z_2+q^{(r)}_{n+2,1}-q^{(r+2)}_{n+1,1}\right)Q_{n+1}(z_2;r)  \right]   \right\}
	\end{split}
	\end{equation}
\end{prop}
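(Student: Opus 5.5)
We prove the identity in the multiplied-out form
\[
F(z_2,z_1):=(z_2z_1^2-1)\,K_n(z_2,z_1;r)=:\mathcal{R}(z_2,z_1),
\]
where $\mathcal{R}$ is the expression in braces in \eqref{CD1}. Both sides are polynomials in $z_1$ of degree at most $n+2$, with coefficients that are polynomials in $z_2$. Such a polynomial is determined by two pieces of data: its coefficient of $z_1^{n+2}$, and the $n+2$ linear functionals
\[
\Lambda_m[\pi]:=\int_{\T}\pi(\ze)\,\ze^{-2m-r}\frac{\dd\mu(\ze)}{2\pi\ic\ze},\qquad m=0,1,\dots,n+1.
\]
These functionals are linearly independent on polynomials of degree $\le n+1$ because $D^{(r)}_{n+2}\neq 0$. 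So the plan is to compare the $z_1^{n+2}$ coefficients and $\Lambda_0,\dots,\Lambda_{n+1}$ (acting in $z_1$) on both sides.

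\emph{Leading coefficient.} On the left, the $z_1^{n+2}$ term comes only from $z_2z_1^2\cdot\frac{1}{h^{(r)}_n}Q_n(z_2;r)z_1^n$. On the right, it comes only from $\frac{1}{h^{(r)}_n}z_2P_{n+2}(z_1;r+2)Q_n(z_2;r)$, since $P_{n+2}$ is monic. The two agree.

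\emph{Functionals on the left.} By \eqref{OP1} and \eqref{OP2}, the biorthogonality makes $K_n$ reproducing:
\[
\Lambda_m\big[K_n(z_2,\cdot\,;r)\big]=z_2^m \quad (0\le m\le n).
\]
Indeed, $\Lambda_m[P_j]=h^{(r)}_j\delta_{jm}$ for $j\ge m$, so the sum collapses to the expansion of $z_2^m$ in the basis $\{Q_j\}$. For $m=n+1$, $\Lambda_{n+1}[K_n]$ is the truncation of that expansion, so $z_2^{n+1}-\Lambda_{n+1}[K_n]=Q_{n+1}(z_2;r)$. Since $\Lambda_m[z_1^2\pi]=\Lambda_{m-1}[\pi]$, we get
\[
\Lambda_m[F]=z_2\Lambda_{m-1}[K_n]-\Lambda_m[K_n]=
\begin{cases}0, & 1\le m\le n,\\ Q_{n+1}(z_2;r), & m=n+1,\end{cases}
\qquad
\Lambda_0[F]=z_2\,\Lambda_0\big[\ze^{2}K_n\big]-1 .
\]

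\emph{Functionals on the right.} Here we use $\Lambda_m[P_j(\cdot;r+2)]=\int P_j(\ze;r+2)\,\ze^{-2(m-1)-(r+2)}\,\dd\mu/(2\pi\ic\ze)$. By \eqref{OP1} with offset $r+2$, this equals $h^{(r+2)}_j\delta_{j,m-1}$ whenever $1\le m\le j+1$. For $m$ between $1$ and $n$, all three $P$'s in $\mathcal{R}$ are annihilated, matching $\Lambda_m[F]=0$. For $m=n+1$, only the $P_n$ term survives, giving $Q_{n+1}(z_2;r)$, as required.

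The remaining checks involve moments not covered by orthogonality:
\begin{itemize}
\item $m=n+2$ is not needed, because the leading coefficient was handled directly.
\item $m=0$ requires the ``backward'' moments $\int P_j(\ze;r+2)\ze^{2-r-2}\,\dd\mu/(2\pi\ic\ze)$ for $j=n,n+1,n+2$.
\item On the left, $m=0$ requires $\Lambda_0[\ze^2 K_n]$.
\end{itemize}
These are the main obstacle. The plan is to relate them through the connection formulas between the offsets $r$ and $r+2$. Since $P_j(\cdot;r+2)$ is orthogonal to $\ze^{-2m-r}$ for $1\le m\le j$, it is a combination of $P_{j+1}(\cdot;r)$ and $P_j(\cdot;r)$ (a Christoffel-type step). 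Dually, $z\,Q_j(z;r)$ lies in the span of $Q_{j+1}(\cdot;r)$ and $Q_j(\cdot;r)$, with the coefficient $q^{(r)}_{n+2,1}-q^{(r+2)}_{n+1,1}$ arising from comparing subleading coefficients. We identify the connection coefficients as ratios of $D^{(r)}$ and $D^{(r+2)}$ determinants using Theorem~\ref{P and Q exist and are unique} and the Dodgson identity \eqref{DODGSON}. With these, the $m=0$ functional on both sides reduces to telescoping sums, and we verify that both equal the same polynomial in $z_2$. In particular, the bracket $Q_{n+2}-(z_2+q^{(r)}_{n+2,1}-q^{(r+2)}_{n+1,1})Q_{n+1}$ should be exactly what cancels the subleading contributions.

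Once all $n+3$ pieces of data agree, the two polynomials in $z_1$ coincide. Dividing by $z_2z_1^2-1$ then gives \eqref{CD1}.
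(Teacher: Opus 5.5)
Your reduction is sound as far as it goes: the $z_1^{n+2}$ coefficients and $\Lambda_1,\dots,\Lambda_{n+1}$ do match, by the computations you give. The trouble is what these data cannot see.

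Since $\Lambda_m[P_{n+1}(\cdot;r+2)]=0$ for $1\le m\le n+1$ and $\deg P_{n+1}=n+1$, the $P_{n+1}(z_1;r+2)$ term of \eqref{CD1} is invisible to every check you actually carry out. That term contains the correction $q^{(r)}_{n+2,1}-q^{(r+2)}_{n+1,1}$. In fact the polynomials of degree $\le n+1$ annihilated by $\Lambda_1,\dots,\Lambda_{n+1}$ are exactly the multiples of $P_{n+1}(z_1;r+2)$. So what you have proved is only $F-\mathcal{R}=c(z_2)\,P_{n+1}(z_1;r+2)$.

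All the content of the proposition therefore sits in the single remaining check, and that is the one you leave as a plan. With your choice $\Lambda_0$ it is not routine. The quantity $\Lambda_0[\zeta^2K_n]=\sum_{j=0}^{n}\frac{1}{h^{(r)}_j}Q_j(z_2;r)\int_{\T}P_j(\zeta;r)\zeta^{2-r}\frac{\dd\mu(\zeta)}{2\pi\ic\zeta}$ is a genuine $(n+1)$-term sum, and finding its closed form is a Christoffel--Darboux-type summation in its own right.

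The connection formulas you intend to use for it are also misstated. A degree-$j$ polynomial cannot have a $P_{j+1}(\cdot;r)$ component. Likewise $zQ_j(z;r)$ is not in the span of $Q_{j+1}(\cdot;r),Q_j(\cdot;r)$; its expansion is full, cf. Lemma~\ref{prop zP zQ}. The true two-term relations mix the offsets, for example $zQ_j(z;r+2)=Q_{j+1}(z;r)+\frac{h^{(r+2)}_j}{h^{(r)}_j}Q_j(z;r)$ and $P_j(z;r+2)=P_j(z;r)-\frac{h^{(r)}_j}{h^{(r+2)}_{j-1}}P_{j-1}(z;r+2)$.

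The gap closes easily inside your own framework if you replace $\Lambda_0$ by $\Lambda_{n+2}$.
\begin{itemize}
\item On polynomials of degree $\le n+1$, the functionals $\Lambda_1,\dots,\Lambda_{n+2}$ have matrix $(w_{r+2+2j-k})_{0\le j,k\le n+1}=\boldsymbol{D}^{(r+2)}_{n+2}$. This is nonsingular, because $D^{(r+2)}_{n+2}\neq0$ is already needed for $h^{(r+2)}_{n+1}$ and $P_{n+2}(\cdot;r+2)$.
\item Expanding $z^{n+2}=Q_{n+2}(z;r)-q^{(r)}_{n+2,1}Q_{n+1}(z;r)+\cdots$ gives $\Lambda_{n+2}[K_n]=z_2^{n+2}-Q_{n+2}(z_2;r)+q^{(r)}_{n+2,1}Q_{n+1}(z_2;r)$. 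Hence $\Lambda_{n+2}[F]=Q_{n+2}(z_2;r)-(z_2+q^{(r)}_{n+2,1})Q_{n+1}(z_2;r)$.
\item On the right, $\Lambda_{n+2}[P_j(\cdot;r+2)]=\langle P_j(\cdot;r+2),z^{n+1}\rangle_{r+2}$. This equals $0$, $h^{(r+2)}_{n+1}$ and $-q^{(r+2)}_{n+1,1}h^{(r+2)}_n$ for $j=n+2,n+1,n$ respectively.
\item So $\Lambda_{n+2}[\mathcal{R}]$ is the same polynomial as $\Lambda_{n+2}[F]$, and $F=\mathcal{R}$.
\end{itemize}

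Completed this way, your argument is a genuinely different and more elementary route than the paper's. The paper starts from the cubic Christoffel--Darboux formula of \cite{GW} and expands it along the third row. It then converts the three $2\times 2$ minors into $Q$-polynomials via \eqref{222} and Lemma~\ref{PP_shift_by_2}; the latter needs the bi-bordered multiple-integral evaluation and the shifted Vandermonde identity. Your route uses nothing beyond biorthogonality and the expansion of monomials in the $Q$-bases.
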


\subsection{Outline}
The paper is organized as follows. In Section~\ref{sec rec rel}, we revisit the
pure-degree recurrence relations from \cite{GW} and rewrite their coefficients
in a simpler form. We also present several identities relating polynomial
coefficients to recurrence coefficients. These quantities reappear later in the
Riemann--Hilbert derivation of the recurrence relations and in the description
of multiplication by \(z^k\) in the polynomial bases \(\{P_n(z;r)\}\) and
\(\{Q_n(z;r)\}\).

Section~\ref{sec mult z} studies the expansions of multiplication by \(z\) and
\(z^2\) in the polynomial bases generated by the \(2j-k\) and \(j-2k\) systems.
These expansion formulae serve as a useful bookkeeping tool: they describe how
the usual multiplication operator acts on the corresponding slanted polynomial
bases.

In Section~\ref{sec bi-bordered}, we prove a multiple integral formula for a
bi-bordered \(2j-k\) determinant and then evaluate this determinant in terms of
the \(Q\)-polynomials. In Section~\ref{sec shifted vandermonde}, we prove a
shifted Vandermonde determinant identity and use it to derive the quadratic
\(P\)-\(Q\) form of the Christoffel--Darboux formula in
Proposition~\ref{prop CD mixed}. These sections provide the determinant tools
needed for our forthcoming applications paper \cite{GW3}.

The final part of the paper is devoted to Riemann--Hilbert problems. In
Sections~\ref{sec 2j-k} and~\ref{sec j-2k}, we prove Riemann--Hilbert
characterizations of the \(2j-k\) and \(j-2k\) systems, respectively. The
forward directions, proved in Sections~\ref{sec forward thm P}
and~\ref{sec forward thm Q}, show that the polynomials defined by the
\(2j-k\) and \(j-2k\) orthogonality relations give rise to solutions of the
corresponding Riemann--Hilbert problems. The converse directions, proved in
Sections~\ref{sec converse thm P} and~\ref{sec converse thm Q}, show that any
solution of the corresponding Riemann--Hilbert problem recovers the relevant
orthogonal polynomials and their associated functions. Finally, in
Section~\ref{sec rec rel from RHP}, we demonstrate how the pure-degree
recurrence relations can be recovered from the Riemann--Hilbert formulation.

\section{Revisiting the Recurrence Relations}\label{sec rec rel}

The recurrence relations obtained in \cite{GW} are one of the basic structural
features of the \(2j-k\) and \(j-2k\) systems. They differ from the classical
OPUC recurrence relations: in the Toeplitz/OPUC setting one has three-term
Szeg\H{o} recurrences \cite{Szego75}, whereas the slanted moment structure of
the \(2j-k\) and \(j-2k\) systems naturally leads to four-term recurrence
relations.

The
purpose of this section is to rewrite these relations in a form in which all recurrence coefficients are expressed in terms of norm ratios. Such relations are advantageous as they allow for large-$n$ asymptotic description of the recurrence coefficients once the large-$n$ asymptotics of the $Y$-RHP is known. Indeed, as described in Section \ref{sec 2j-k}, we have the following representation of the norms of odd/even index, in terms of the $Y$-RHP data: 	\begin{align}
		h^{(r)}_{2m-1} &=
		\frac{1}{2}\lim_{z\to 0} z^{-2}
		\left(\boldsymbol{A}_m(r)\boldsymbol{Y}_m(z;r)\right)_{23}, \label{h2m-1 from Y}\\
		h^{(r)}_{2m-2} &=
		\frac{1}{2}
		\left(\boldsymbol{A}_m(r)\boldsymbol{Y}_m(0;r)\right)_{33}. \label{h2m-2 from Y}
	\end{align}

Let us recall the following four-term recurrence relations for the $2j-k$ system of bi-orthogonal polynomials.
\begin{theorem}\cite{GW}\label{THM degree rec 2j-k}
	The third-order pure-degree recurrence relations for the $2j-k$ polynomials are given by
	\begin{equation}\label{P pure n rec}
	P_{n+3}(z;r) -(\de_{n+2}^{(r)}+\de_{n+1}^{(r-1)})P_{n+2}(z;r) + (\de_{n+1}^{(r-1)}\de_{n+1}^{(r)} - z^2)P_{n+1}(z;r)+ (\de_n^{(r)}+\eta^{(r-2)}_n)z^2P_{n}(z;r)=0,
	\end{equation}
	and
	\begin{equation}\label{Q* pure n rec}
	\begin{split}
	Q^*_{n+3}(z;r) &  -  (1+\be^{(r)}_{n+2} z) Q^*_{n+2}(z;r) +  (\be^{(r)}_{n+1}+\al^{(r+1)}_{n+1}+\be^{(r+1)}_{n+1}+\al^{(r+2)}_{n+1})z Q^*_{n+1}(z;r) \\ & -(\be^{(r+1)}_{n+1}+\al^{(r+2)}_{n+1})  (\be^{(r)}_{n}+\al^{(r+1)}_{n})z^2 Q^*_{n}(z;r) = 0,
	\end{split}
	\end{equation}
	where

	\noindent\begin{minipage}{.5\linewidth}
		\begin{alignat}{2}
		&\de^{(r)}_{n} &&= -\frac{h^{(r-1)}_n}{h^{(r)}_{n}}, \label{delta} \\
		& \eta^{(r)}_n &&= \frac{D_n^{(r+2)}D_{n+1}^{(r-1)}}{D^{(r)}_{n+1}D^{(r+1)}_n}, \label{eta}
		\end{alignat}	
	\end{minipage}	
	\begin{minipage}{.5\linewidth}
		\begin{alignat}{2}
		&\be^{(r)}_{n} &&= -\frac{h^{(r+2)}_n}{h^{(r)}_{n}}, \label{beta} \\
		& \al^{(r)}_n&&= \frac{D^{(r-1)}_n D^{(r+2)}_{n+1}}{D^{(r)}_{n+1}D^{(r+1)}_n}. \label{alpha}
		\end{alignat}	
	\end{minipage}
	
\end{theorem}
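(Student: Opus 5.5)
The plan is to derive a small set of \emph{mixed-offset} two-term relations directly from the orthogonality conditions \eqref{OP1}, \eqref{OP2}. Eliminating the auxiliary offsets between them should give the pure-degree relations \eqref{P pure n rec}, \eqref{Q* pure n rec}. The coefficients will then be identified as determinant ratios using Theorem \ref{P and Q exist and are unique} and the Dodgson identity \eqref{DODGSON}. Throughout I assume the relevant $D^{(r')}_{k}$ are nonzero, so that all polynomials and norms involved exist.

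\emph{Step 1 (offset shift by one).} Multiplication by $\ze$ shifts the offset: $\int \ze P_n(\ze;r-1)\,\ze^{-2m-r}\,\frac{\dd\mu}{2\pi\ic\ze}=\int P_n(\ze;r-1)\,\ze^{-2m-(r-1)}\,\frac{\dd\mu}{2\pi\ic\ze}$. This vanishes for $m<n$.
\begin{itemize}
\item Hence $P_{n+1}(z;r)-zP_n(z;r-1)$ has degree $\le n$ and is orthogonal to $\ze^{-2m-r}$ for $m=0,\dots,n-1$.
\item By uniqueness in Theorem \ref{P and Q exist and are unique} it is a multiple $cP_n(z;r)$.
\item Pairing with $\ze^{-2n-r}$ gives $c\,h^{(r)}_n=-h^{(r-1)}_n$, so
\[
P_{n+1}(z;r)=zP_n(z;r-1)+\de^{(r)}_nP_n(z;r).
\]
\end{itemize}

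\emph{Step 2 (offset shift by two).} The polynomial $P_n(z;r-2)$ is orthogonal to $\ze^{-2m'-r}$ for $m'=-1,\dots,n-2$.
\begin{itemize}
\item Therefore $P_n(z;r-2)-P_n(z;r)$ has degree $\le n-1$ and is orthogonal to $\ze^{-2m-r}$ for $m=0,\dots,n-2$.
\item So it equals $\epsilon_n P_{n-1}(z;r)$ for a single constant $\epsilon_n$.
\item I would compute $\epsilon_n$ by pairing with $\ze^{-2(n-1)-r}$, or equivalently by comparing subleading coefficients of the bordered determinants \eqref{OP11}. This gives a ratio of the form $D^{(r-2)}_{n}D^{(r)}_{n-1}/(D^{(r)}_{n}D^{(r-2)}_{n-1})$ times a bordered minor; Dodgson condensation converts it into the product form \eqref{eta}.
\end{itemize}

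\emph{Step 3 (elimination).}
\begin{itemize}
\item Apply Step 1 at offset $r$ to write $P_{n+3}(z;r)$ and $P_{n+2}(z;r)$ through $zP_{n+2}(z;r-1)$ and $zP_{n+1}(z;r-1)$.
\item Apply Step 1 at offset $r-1$ to reduce these further to $z^2P_{n+1}(z;r-2)$ and $z^2P_n(z;r-2)$.
\item Use Step 2 to return everything to offset $r$.
\end{itemize}
The $z^2P_{n+1}(z;r)$ term appears with coefficient $-1$, and the offset-$(r-1)$ terms cancel pairwise. The resulting relation should be exactly \eqref{P pure n rec}, with coefficients $\de^{(r)}_{n+2}+\de^{(r-1)}_{n+1}$, $\de^{(r-1)}_{n+1}\de^{(r)}_{n+1}$ and $\de^{(r)}_n+\eta^{(r-2)}_n$.

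A cleaner cross-check is also available. Set $F=P_{n+3}-aP_{n+2}+(b-z^2)P_{n+1}+cz^2P_n$. Then $\deg F\le n+2$ and $F$ is automatically orthogonal to $\ze^{-2m-r}$ for $1\le m\le n$. Fixing $a,b,c$ by three further pairings (at $m=0$, $m=n+1$, $m=n+2$) together with the degree count forces $F\equiv0$.

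\emph{Step 4 ($Q^*$).} For $Q^*$ I would run the same argument with \eqref{OP2}. Here the natural shifts are $r\to r+1$ and $r\to r+2$, since $Q_n(\ze^{-2})$ multiplied by $\ze^{-2}$ shifts the offset by $2$. This produces mixed relations with coefficients $\be$ and $\al$, which are then eliminated in the same way in the reciprocal form $Q^*_n(z)=z^nQ_n(z^{-1})$.

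The main obstacle I expect is not the elimination itself, which is linear bookkeeping. It is identifying the combined last coefficient with the closed form $\de^{(r)}_n+\eta^{(r-2)}_n$, and likewise the products $(\be+\al)(\be+\al)$ in the $Q^*$ case. This requires one Dodgson identity on the $(n+2)\times(n+2)$ matrix of \eqref{OP11}, deleting the last row and the first and last columns, to convert the bordered minors into the ratios \eqref{eta} and \eqref{alpha}.
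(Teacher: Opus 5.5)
Your proposal is correct, and it takes a different route from the paper. The paper does not reprove this theorem; it quotes it from \cite{GW}, where it came out of Dodgson-condensation manipulations. The only derivation given here is the Riemann--Hilbert one of Section~\ref{sec rec rel from RHP}, which shows that $\boldsymbol{\Psi}_m=\boldsymbol{Y}_{m+1}\,\mathrm{diag}(1,1,z^4)\,\boldsymbol{Y}_m^{-1}$ is a quadratic polynomial and reads a four-term relation off its $21$-entry. That route has limitations as a proof of this statement:
\begin{itemize}
\item As written, it yields only $n=2m-2$, i.e. $P_{2m+1}$ in terms of $P_{2m},P_{2m-1},P_{2m-2}$.
\item It does not treat $Q^*$.
\item It is a consistency check rather than an independent proof: matching its coefficients to $\de,\eta$ uses identities such as \eqref{z^n+2 coeffs in pure n rec rel 1}, which were themselves read off from \eqref{P pure n rec}.
\end{itemize}

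You instead get four ladder relations directly from orthogonality and uniqueness:
\[
P_{n+1}(z;r)=zP_n(z;r-1)+\de^{(r)}_nP_n(z;r),\qquad P_n(z;r-2)=P_n(z;r)+\frac{h^{(r-2)}_n}{h^{(r)}_{n-1}}P_{n-1}(z;r),
\]
\[
Q_{n+1}(z;r)=zQ_n(z;r+2)+\be^{(r)}_nQ_n(z;r),\qquad Q_n(z;r+1)=Q_n(z;r)+\frac{h^{(r+1)}_n}{h^{(r)}_{n-1}}Q_{n-1}(z;r).
\]
Eliminating gives at once the norm-ratio forms \eqref{P rec rel new} and \eqref{Qn-recur}. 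In the $Q$ case, the sum and product in the $zQ^*_{n+1}$ and $z^2Q^*_n$ coefficients are literally the two shift-by-one ladder constants, used when you expand $Q_{n+2}(z;r+2)$. The $\eta$ and $\al$ closed forms then follow from the single Dodgson identity
\[
D^{(r-2)}_{n+2}D^{(r-1)}_n=D^{(r-2)}_{n+1}D^{(r-1)}_{n+1}-D^{(r-3)}_{n+1}D^{(r)}_{n+1}
\]
and its shift $r\to r+3$; these give exactly \eqref{de n r + eta n r-2} and \eqref{be r + al r+1}. So you run the paper's simplification Proposition backwards. What you gain is a self-contained, elementary argument, uniform in $n$ and covering $Q^*$. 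What the RHP route gains is an expression of the coefficients through $\overset{\infty}{\boldsymbol{Y}}_2$ data, which is what matters for asymptotics.

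A few bookkeeping corrections:
\begin{itemize}
\item In Step~2, pairing at $m=n-1$ gives the constant $h^{(r-2)}_n/h^{(r)}_{n-1}$ directly; no bordered minor is involved.
\item The Dodgson identity needed at the end is on the unbordered matrix $\boldsymbol{D}^{(r-2)}_{n+2}$, deleting its first and last rows and columns. It is not on the bordered matrix of \eqref{OP11}.
\item In Step~3, do not push $zP_{n+1}(z;r-1)$ down to $z^2P_n(z;r-2)$. Step~2 would then introduce $z^2P_{n-1}(z;r)$, and the relation would no longer be four-term. The right grouping, with unlabelled polynomials at offset $r$, is
\[
P_{n+3}-(\de^{(r)}_{n+2}+\de^{(r-1)}_{n+1})P_{n+2}+\de^{(r-1)}_{n+1}\de^{(r)}_{n+1}P_{n+1}=z\bigl[P_{n+2}(z;r-1)-\de^{(r-1)}_{n+1}P_{n+1}(z;r-1)\bigr]=z^2P_{n+1}(z;r-2).
\]
After this, Step~2 is used once, and the last coefficient is $-h^{(r-2)}_{n+1}/h^{(r)}_n=\de^{(r)}_n+\eta^{(r-2)}_n$.
\item In your cross-check, the pairing at $m=n+2$ involves moments of $P_{n+1}$ and $P_n$ outside the orthogonality range. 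It proves that the relation exists, but $a$ is better read off from the $z^{n+2}$ coefficient.
\end{itemize}
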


In this section our objective is to simplify the coefficients appearing in the above recurrence relation. We prove:
\begin{prop}
    The third-order pure-degree recurrence relations for the $2j-k$ polynomials are given by
    \begin{equation}\label{P rec rel new}
		P_{n+3}(z;r) +\left(\frac{h^{(r-1)}_{n+2}}{h^{(r)}_{n+2}}+ \frac{h^{(r-2)}_{n+1}}{h^{(r-1)}_{n+1}} \right)P_{n+2}(z;r) + \left(\frac{h^{(r-2)}_{n+1}}{h^{(r)}_{n+1}} - z^2\right)P_{n+1}(z;r)-\frac{h_{n+1}^{(r-2)}}{h_{n}^{(r)}} z^2P_{n}(z;r)=0, 
\end{equation} and 
	\begin{equation}\label{Q* pure n rec 1}
Q^*_{n+3}(z;r)  -  \left(1-\frac{h^{(r+2)}_{n+2}}{h^{(r)}_{n+2}} z\right) Q^*_{n+2}(z;r) - \left(\frac{h_{n+2}^{(r+1)}}{h_{n+1}^{(r)}}+\frac{h_{n+2}^{(r+2)}}{h_{n+1}^{(r+1)}}\right)z Q^*_{n+1}(z;r)  -\frac{h_{n+2}^{(r+2)}}{h_{n}^{(r)}}z^2 Q^*_{n}(z;r) = 0
\end{equation}
or
\begin{equation}
	Q_{n+3}(z;r)  
	- \left( z-\frac{h^{(r+2)}_{n+2}}{h^{(r)}_{n+2}} \right) Q_{n+2}(z;r) 
	- \left( \frac{h_{n+2}^{(r+1)}}{h_{n+1}^{(r)}}+\frac{h_{n+2}^{(r+2)}}{h_{n+1}^{(r+1)}} \right)z Q_{n+1}(z;r)  
	- \frac{h_{n+2}^{(r+2)}}{h_{n}^{(r)}}z Q_{n}(z;r) = 0 .
\label{Qn-recur}
\end{equation}
\end{prop}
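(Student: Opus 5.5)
The plan is to start from Theorem \ref{THM degree rec 2j-k} and simply rewrite its coefficients. Since $\de^{(r)}_n$ and $\be^{(r)}_n$ are already norm ratios, the work is to show that the composite coefficients are norm ratios too. Concretely, we need
\begin{equation*}
\de^{(r-1)}_{n+1}\de^{(r)}_{n+1}=\frac{h^{(r-2)}_{n+1}}{h^{(r)}_{n+1}}, \qquad \de^{(r)}_n+\eta^{(r-2)}_n=-\frac{h^{(r-2)}_{n+1}}{h^{(r)}_n},
\end{equation*}
and, for the $Q^*$ relation,
\begin{equation*}
\be^{(r)}_{n+1}+\al^{(r+1)}_{n+1}=-\frac{h^{(r+1)}_{n+2}}{h^{(r)}_{n+1}}, \qquad \be^{(r+1)}_{n+1}+\al^{(r+2)}_{n+1}=-\frac{h^{(r+2)}_{n+2}}{h^{(r+1)}_{n+1}}.
\end{equation*}
Given the last two identities, the product in the $z^2Q^*_n$ coefficient telescopes:
\begin{equation*}
\frac{h^{(r+2)}_{n+2}}{h^{(r+1)}_{n+1}}\cdot\frac{h^{(r+1)}_{n+1}}{h^{(r)}_{n}}=\frac{h^{(r+2)}_{n+2}}{h^{(r)}_{n}}.
\end{equation*}
This uses the same identity with $n\mapsto n-1$ for the second factor. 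The $P$-identity for $\de\de$ telescopes directly from \eqref{delta}: $\de^{(r-1)}_{n+1}\de^{(r)}_{n+1}=h^{(r-2)}_{n+1}/h^{(r)}_{n+1}$. The signs of the $P_{n+2}$ and $Q^*_{n+2}$ coefficients follow immediately from \eqref{delta} and \eqref{beta}.

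The substantive step is the two-term identities involving $\eta$ and $\al$. Writing everything via \eqref{h} as determinant ratios, the identity $\de^{(r)}_n+\eta^{(r-2)}_n=-h^{(r-2)}_{n+1}/h^{(r)}_n$ becomes, after clearing denominators, a quadratic relation among $D^{(r)}_{n+1}D^{(r-1)}_n$, $D^{(r-1)}_{n+1}D^{(r)}_n$ and $D^{(r-2)}_{n+2}D^{(r+1)}_n$-type products. More precisely, it reads
\begin{equation*}
D^{(r-1)}_{n+1}D^{(r-2)}_{n+1}\cdots \;=\; \ldots,
\end{equation*}
a three-term bilinear identity among $2j-k$ determinants with offsets $r-2,r-1,r$ and sizes $n,n+1,n+2$. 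We will obtain it from the Dodgson condensation identity \eqref{DODGSON} applied to a suitable $(n+2)\times(n+2)$ $2j-k$ matrix, say $\boldsymbol{D}^{(r-2)}_{n+2}$ or a matrix built from the moments $w_{2j-k+r-2}$.

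The key observation is that deleting the first or last column of a $2j-k$ matrix shifts the offset by one. Deleting the first row and first column gives offset $r+2-1=r+1$. Deleting the last row and last column preserves the offset. Choosing $(j_1,j_2)=(1,n+2)$ and $(k_1,k_2)=(1,n+2)$, or the first two columns, should produce exactly the four minors needed. The identity for $\al$ is handled analogously, with offsets $r,r+1,r+2$. Alternatively, it can be deduced from the $P$-case using the duality $w_\ell\mapsto w_{-\ell}$ and transposition, which exchanges the roles of the $2j-k$ determinants with shifted offsets.

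The main obstacle is identifying which minors of which matrix realize the required bilinear relation. Some deletions produce non-$2j-k$ matrices, since deleting the first row does not shift cleanly. If plain Dodgson on $\boldsymbol{D}$ does not close, I would first try it on the bordered matrix \eqref{OP11} evaluated at $z=0$. That matrix has determinant $D_n^{(r)}P_n(0;r)$, which relates to a shifted-offset determinant, and the result \cite{GW} may already express $P_n(0;r)$ and $p^{(r)}_{n,1}$ via such ratios.

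Finally, \eqref{Qn-recur} follows from \eqref{Q* pure n rec 1} by substituting $z\mapsto z^{-1}$, multiplying by $z^{n+3}$, and using $Q^*_k(z^{-1})=z^{-k}Q_k(z)$. We then check that the powers of $z$ redistribute as displayed.
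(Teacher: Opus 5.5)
Your plan is correct and follows the paper's proof. The corner Dodgson identity you propose (rows and columns $1$ and $n+2$ of $\boldsymbol{D}^{(r-2)}_{n+2}$) gives exactly the needed relation $D^{(r-2)}_{n+2}D^{(r-1)}_{n}=D^{(r-2)}_{n+1}D^{(r-1)}_{n+1}-D^{(r-3)}_{n+1}D^{(r)}_{n+1}$ (note that offset $r-3$ also appears), which yields $\de^{(r)}_n+\eta^{(r-2)}_n=-h^{(r-2)}_{n+1}/h^{(r)}_n$ without any bordered-matrix fallback. The only cosmetic difference is that the paper gets $\be^{(r)}_l+\al^{(r+1)}_l=-h^{(r+1)}_{l+1}/h^{(r)}_l$ by rewriting it, via relations from \cite{GW}, as $-(\de^{(r+3)}_l+\eta^{(r+1)}_l)/(\de^{(r+1)}_l\de^{(r+2)}_l\de^{(r+3)}_l)$; this is the same Dodgson identity with $r\mapsto r+3$, which is what your ``analogous'' direct computation amounts to.
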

\begin{proof}
\begin{align}
	\delta_{n}^{(r)}+\eta_{n}^{(r-2)} & =-\frac{h_{n}^{(r-1)}}{h_{n}^{(r)}}+\frac{D_{n}^{(r)}}{D_{n}^{(r-1)} }\frac{D_{n+1}^{(r-3)}}{D_{n+1}^{(r-2)}} \nonumber \\
	& =-\frac{D_{n+1}^{(r-1)}}{D_{n}^{(r-1)}} \cdot \frac{D_{n}^{(r)}}{D_{n+1}^{(r)}}+\frac{D_{n}^{(r)} D_{n+1}^{(r-3)}}{D_{n}^{(r-1)} D_{n+1}^{(r-2)}}  \nonumber \\
	& =\frac{D_{n}^{(r)}}{D_{n}^{(r-1)}}\left\{-\frac{D_{n+1}^{(r-1)}}{D_{n+1}^{(r)}}+\frac{D_{n+1}^{(r-3)}}{D_{n+1}^{(r-2)}}\right\} \nonumber \\
	& =\frac{D_{n}^{(r)}}{D_{n}^{(r-1)}} \frac{1}{D_{n+1}^{(r)} D_{n+1}^{(r-2)}}\left(-D_{n+1}^{(r-1)} D_{n+1}^{(r-2)}+D_{n+1}^{(r)} D_{n+1}^{(r-3)}\right) . \label{delta r + eta r-2}
\end{align}
Now let us recall the Dodgson Condensation identity written in equation (6.35) of \cite{GW}
\begin{equation}
	D^{(r-2)}_{n+2} D^{(r-1)}_{n} = D^{(r-2)}_{n+1} D^{(r-1)}_{n+1} - D^{(r-3)}_{n+1} D_{n+1}.
\end{equation}
Using this, we can simplify \eqref{delta r + eta r-2} as

\begin{equation}\label{de n r + eta n r-2}
\delta_{n}^{(r)}+\eta_{n}^{(r-2)} 
 = - \frac{D_{n}^{(r)}D^{(r-2)}_{n+2}}{D_{n+1}^{(r)} D_{n+1}^{(r-2)}} = - \frac{h^{(r-2)}_{n+1}}{h^{(r)}_{n}} 
\end{equation}
Also
\begin{align} \prod_{l=0}^{n-1}\left(\delta_{l}^{(r)}+\eta_{l}^{(r-2)}\right)& =\prod_{l=0}^{n-1}(-) \frac{h_{l+1}^{(r-2)}}{h_{l}^{(r)}} \nonumber \\
	& =(-)^{n} \frac{D_{n+1}^{(r-2)}}{D_{1}^{(r-2)}} \cdot \frac{D^{(r)}_0}{D_{n}^{(r)}} \nonumber \\
	& =(-)^{n} \frac{D_{n+1}^{(r-2)}}{w_{r-2}D_{n}^{(r)}}.
\end{align}

Recalling (4.115) and (4.116) of \cite{GW}, and using \eqref{de n r + eta n r-2} we have

\begin{align}
\beta_{l}^{(r)}+\alpha_{l}^{(r+1)} & =-\frac{1}{\delta_{l}^{(r+1)} \delta_{l}^{(r+2)}}+\frac{-\eta_{l}^{(r+1)}}{\delta_{l}^{(r+1)} \delta_{l}^{(r+2)} \delta_{l}^{(r+3)}} \nonumber \\
& =-\frac{1}{\delta_{l}^{(r+1)} \delta_{l}^{(r+2)} \delta_{l}^{(r+3)}}\left(\delta_{l}^{(r+3)}+\eta_{l}^{(r+1)}\right) \nonumber \\
& =\frac{1}{\delta_{l}^{(r+1)} \delta_{l}^{(r+2)} \delta_{l}^{(r+3)}} \cdot \frac{h_{l+1}^{(r+1)}}{h_{l}^{(r+3)}} 
\end{align}
Now, recalling \eqref{delta}, we find

\begin{equation}\label{be r + al r+1}
	\beta_{l}^{(r)}+\alpha_{l}^{(r+1)} = - \frac{h^{(r+1)}_{l+1}}{h^{(r)}_{l}}
\end{equation}
Therefore, in view of equation (4.2) of \cite{GW}, we have
\begin{equation}\label{be r + al r+1 2}
	\quad\left(\beta_{l+1}^{(r+1)}+\alpha_{l+1}^{(r+2)}\right)\left(\beta_{l}^{(r)}+\alpha_{l}^{(r+1)}\right)=\frac{h_{l+2}^{(r+2)}}{h_{l}^{(r)}}
\end{equation}
Thus
\begin{align}
	&\prod_{l=0}^{n-1}\left(\beta_{l+1}^{(r+1)}+\alpha_{l+1}^{(r+2)}\right)\left(\beta_{l}^{(r)}+\alpha_{l}^{(r+1)}\right)=\prod_{l=0}^{n-1} \frac{h_{l+2}^{(r+2)}}{h_{l}^{(r)}}=\frac{D_{n+2}^{(r+2)}}{D_{2}^{(r+2)} D_{n}^{(r)}} 
\end{align}
where
\begin{equation*}
	 D_{2}^{(r+2)}=\operatorname{det}\left(\begin{array}{ll}
	w_{r+2} & w_{r+1} \\
	w_{r+4} & w_{r+3}
	\end{array}\right)=w_{r+2} w_{r+3}-w_{r+1} w_{r+4}.
\end{equation*}

Next, we show that 

\begin{equation}
	q_{n+1,1}^{(r)}-q_{n,1}^{(r+2)}=-\frac{h_{n}^{(r+2)}}{h_{n}^{(r)}} \label{q n+1,1 r - q n,1 r+2}
\end{equation}
Indeed, let us recall equation (4.24) of \cite{GW} 
\begin{equation}
	Q_{n}^{*}(z ; r+2)=Q_{n+1}^{*}(z ; r)-\beta_{n}^{(r)} z Q_{n}^{*}(z ; r)
\end{equation}
Now, using the definition of reciprocal polynomials and after some straightforward simplifications we obtain
\begin{equation}
	z^{-1} Q_{n}\left(z^{-1} ; r+2\right)=Q_{n+1}\left(z^{-1} ; r\right)-\beta_{n}^{(r)} Q_{n}\left(z^{-1} ; r\right)
\end{equation} and thus
\begin{equation}
z Q_{n}(z ; r+2)=Q_{n+1}(z ; r)-\beta_{n}^{(r)} Q_{n}(z ; r) 
\end{equation}
Now, the comparison of the large-$z$ expansions of both sides gives 
\begin{equation}
	q_{n,1}^{(r+2)}=q_{n+1,1}^{(r)}-\beta_{n}^{(r)} ,
\end{equation}
which is equivalent to \eqref{q n+1,1 r - q n,1 r+2} once we recall \eqref{beta}. Using the identities \eqref{delta} and \eqref{de n r + eta n r-2} we can write \eqref{P pure n rec} as \eqref{P rec rel new}.

Finally using the identities \eqref{beta}, \eqref{be r + al r+1}, and \eqref{be r + al r+1 2} we can write \eqref{Q* pure n rec} as
\eqref{Q* pure n rec 1}--\eqref{Qn-recur}.
\end{proof}

\subsection{Relating Polynomial Coefficients to Recurrence Coefficients}

In this section we prove several identities connecting the coefficients of
the polynomials with the coefficients appearing in the recurrence relations.
These expressions will appear later in the Riemann--Hilbert derivation of the
recurrence relations in Section~\ref{sec rec rel from RHP}, as well as in the
expansions of multiplication by \(z^k\) in the bases \(\{P_n(z;r)\}\) and
\(\{Q_n(z;r)\}\) in Section~\ref{sec mult z}.

\begin{lemma}
  The following recurrence relations hold for the coefficients of the $2j-k$-bi-orthogonal polynomials:
  \begin{align}
& p^{(r)}_{n+3,1}  - p^{(r)}_{n+1,1} =  \de^{(r)}_{n+2} + \de^{(r-1)}_{n+1} - \de^{(r)}_{n}  -   \eta^{(r-2)}_{n}, \label{z^n+2 coeffs in pure n rec rel 1} \\ & 
  p^{(r)}_{n+3,2}   - p^{(r)}_{n+1,2} =  \left( \de^{(r)}_{n+2} + \de^{(r-1)}_{n+1} \right) p^{(r)}_{n+2,1} - \de^{(r-1)}_{n+1} \de^{(r)}_{n+1}  - \left( \de^{(r)}_{n} + \eta^{(r-2)}_{n}  \right) p^{(r)}_{n,1} \label{z^n+2 coeffs in pure n rec rel p2} \\ 
&  
p^{(r)}_{n+3,3}   - p^{(r)}_{n+1,3} =  \left( \de^{(r)}_{n+2} + \de^{(r-1)}_{n+1} \right) p^{(r)}_{n+2,2} - \de^{(r-1)}_{n+1} \de^{(r)}_{n+1} p^{(r)}_{n+1,1}  - \left( \de^{(r)}_{n} + \eta^{(r-2)}_{n}  \right) p^{(r)}_{n,2} 
\label{z^n coeffs in pure n rec rel 1}
\end{align}
and 
\begin{equation}\label{q1 rec rel}
q^{(r)}_{n+1,1} - q^{(r)}_{n,1}   =  \be^{(r)}_{n} - \be^{(r)}_{n-1}-\al^{(r+1)}_{n-1}-\be^{(r+1)}_{n-1}-\al^{(r+2)}_{n-1},
\end{equation}
\begin{equation}
\begin{split}
q^{(r)}_{n+1,2} - q^{(r)}_{n,2} & =  \be^{(r)}_{n}q^{(r)}_{n,1} - \left(\be^{(r)}_{n-1}+\al^{(r+1)}_{n-1}+\be^{(r+1)}_{n-1}+\al^{(r+2)}_{n-1}\right)q^{(r)}_{n-1,1} \\ & +\left(\be^{(r+1)}_{n-1}+\al^{(r+2)}_{n-1}\right)  \left(\be^{(r)}_{n-2}+\al^{(r+1)}_{n-2}\right),
\end{split}
\end{equation}
\begin{equation}\label{q3 rec rel}
\begin{split}
q^{(r)}_{n+1,3} - q^{(r)}_{n,3}   & =  \be^{(r)}_{n}q^{(r)}_{n,2} - \left(\be^{(r)}_{n-1}+\al^{(r+1)}_{n-1}+\be^{(r+1)}_{n-1}+\al^{(r+2)}_{n-1}\right)q^{(r)}_{n-1,2} \\ & +\left(\be^{(r+1)}_{n-1}+\al^{(r+2)}_{n-1}\right)  \left(\be^{(r)}_{n-2} + \al^{(r+1)}_{n-2}\right)q^{(r)}_{n-2,1}
\end{split}
\end{equation}
\end{lemma}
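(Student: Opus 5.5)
The plan is to read these identities off the pure-degree recurrences of Theorem~\ref{THM degree rec 2j-k} by comparing coefficients of fixed powers of \(z\). For the \(P\)-family, substitute \(P_k(z;r)=z^k+\sum_{\ell\ge1}p^{(r)}_{k,\ell}z^{k-\ell}\) into \eqref{P pure n rec}. The term \(z^{n+3}\) cancels automatically between \(P_{n+3}\) and \(-z^2P_{n+1}\). Writing \(c_2:=\de^{(r)}_{n+2}+\de^{(r-1)}_{n+1}\), \(c_1:=\de^{(r-1)}_{n+1}\de^{(r)}_{n+1}\) and \(c_0:=\de^{(r)}_n+\eta^{(r-2)}_n\), the coefficient of \(z^{n+3-j}\) for \(j=1,2,3\) gives
\[
p^{(r)}_{n+3,j}-c_2\,p^{(r)}_{n+2,j-1}+c_1\,p^{(r)}_{n+1,j-2}-p^{(r)}_{n+1,j}+c_0\,p^{(r)}_{n,j-1}=0,
\]
with the conventions \(p_{k,0}=1\) and \(p_{k,-1}=0\). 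Taking \(j=1,2,3\) yields \eqref{z^n+2 coeffs in pure n rec rel 1}, \eqref{z^n+2 coeffs in pure n rec rel p2} and \eqref{z^n coeffs in pure n rec rel 1}, once the sign conventions are matched term by term.

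For the \(Q\)-family, work with the reciprocal-polynomial recurrence \eqref{Q* pure n rec}. Since \(Q^*_k(z)=1+\sum_{\ell\ge1}q^{(r)}_{k,\ell}z^\ell\), the identities come from the low-order coefficients in \(z\), namely \(z^0,\dots,z^3\). I will shift the index \(n\to n-2\), so that the leading term is \(Q^*_{n+1}\). The \(z^0\) coefficient gives \(1-1=0\), which is consistent. The coefficient of \(z^j\) then reads
\[
q_{n+1,j}-q_{n,j}-\be^{(r)}_n q_{n,j-1}+(\be^{(r)}_{n-1}+\al^{(r+1)}_{n-1}+\be^{(r+1)}_{n-1}+\al^{(r+2)}_{n-1})q_{n-1,j-1}-(\be^{(r+1)}_{n-1}+\al^{(r+2)}_{n-1})(\be^{(r)}_{n-2}+\al^{(r+1)}_{n-2})q_{n-2,j-2}=0,
\]
with \(q_{k,0}=1\) and \(q_{k,-1}=0\). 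Setting \(j=1,2,3\) gives \eqref{q1 rec rel}, the middle identity, and \eqref{q3 rec rel}.

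The argument is entirely a bookkeeping exercise. The only real obstacle is tracking the signs and index shifts, and in particular checking the sign convention of the \(\de\)-terms in \eqref{z^n+2 coeffs in pure n rec rel 1} against \eqref{P pure n rec}. If a sign mismatch appears there, I would recheck it using the simplified form \eqref{P rec rel new} together with \eqref{de n r + eta n r-2}, and verify the result on the small cases \(n=0,1\) computed directly from the determinant formula \eqref{OP11}.
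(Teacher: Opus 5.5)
Your proposal is correct and is essentially the paper's argument: substitute the coefficient expansions into the pure-degree recurrences of Theorem~\ref{THM degree rec 2j-k} and match powers of $z$. The only difference is cosmetic. For the $Q$-family you read off the low-order coefficients of the $Q^*$ recurrence directly, whereas the paper first converts it via $z\mapsto z^{-1}$ into a recurrence for $Q_n$ and then matches $z^{n+2},z^{n+1},z^{n}$, which is the same computation. Your general coefficient identities already have the right signs (for instance, $j=1$ gives $p^{(r)}_{n+3,1}-p^{(r)}_{n+1,1}=c_2-c_0$ exactly), so the fallback check you mention is not needed.
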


\begin{proof}
    
Let 

\begin{equation}\label{P}
P_n(z;r) = z^n + \sum_{j=1}^{n} p^{(r)}_{n,j}z^{n-j}.
\end{equation}

\begin{equation}\label{Q}
Q_n(z;r) = z^n + \sum_{j=1}^{n} q^{(r)}_{n,j}z^{n-j}.
\end{equation}
From \eqref{P pure n rec} we have
\begin{equation}\label{P pure n rec*} \begin{split}
0 = & \ z^{n+3} + p^{(r)}_{n+3,1} z^{n+2} + p^{(r)}_{n+3,2} z^{n+1} + p^{(r)}_{n+3,3} z^{n} + \cdots  \\ &   -(\de_{n+2}^{(r)}+\de_{n+1}^{(r-1)}) \left[ z^{n+2} + p^{(r)}_{n+2,1} z^{n+1} + p^{(r)}_{n+2,2} z^{n} + p^{(r)}_{n+2,3} z^{n-1} + \cdots \right] \\ & + (\de_{n+1}^{(r-1)}\de_{n+1}^{(r)} )\left[ z^{n+1} + p^{(r)}_{n+1,1} z^{n} + p^{(r)}_{n+1,2} z^{n-1} + p^{(r)}_{n+1,3} z^{n-2} + \cdots \right] \\ &  - \left[ z^{n+3} + p^{(r)}_{n+1,1} z^{n+2} + p^{(r)}_{n+1,2} z^{n+1} + p^{(r)}_{n+1,3} z^{n} + \cdots \right] \\ & + (\de_n^{(r)}+\eta^{(r-2)}_n)\left[ z^{n+2} + p^{(r)}_{n,1} z^{n+1} + p^{(r)}_{n,2} z^{n} + p^{(r)}_{n,3} z^{n-1} + \cdots \right].
\end{split}
\end{equation}

Matching the coefficients of the monomials gives the desired identities relating the  polynomial coefficients to the recurrence coefficients. Doing so, respectively, for the coefficients of $z^{n+2}$, $z^{n+1}$, and $z^{n}$ yield
\begin{align}
	&  p^{(r)}_{n+3,1} - \de^{(r)}_{n+2} - \de^{(r-1)}_{n+1}+ \de^{(r)}_{n} + \eta^{(r-2)}_{n} - p^{(r)}_{n+1,1} = 0, \\ & 
	 p^{(r)}_{n+3,2} - \left( \de^{(r)}_{n+2} + \de^{(r-1)}_{n+1} \right) p^{(r)}_{n+2,1} + \de^{(r-1)}_{n+1} \de^{(r)}_{n+1}  + \left( \de^{(r)}_{n} + \eta^{(r-2)}_{n}  \right) p^{(r)}_{n,1}  - p^{(r)}_{n+1,2} = 0 \\ & 
	 p^{(r)}_{n+3,3} - \left( \de^{(r)}_{n+2} + \de^{(r-1)}_{n+1} \right) p^{(r)}_{n+2,2} + \de^{(r-1)}_{n+1} \de^{(r)}_{n+1} p^{(r)}_{n+1,1}  + \left( \de^{(r)}_{n} + \eta^{(r-2)}_{n}  \right) p^{(r)}_{n,2}  - p^{(r)}_{n+1,3} = 0 \label{z^n coeffs in pure n rec rel}
\end{align}
which gives the recurrence relations \eqref{z^n+2 coeffs in pure n rec rel 1}--\eqref{z^n coeffs in pure n rec rel 1} which can be solved recursively in order. 

Now we turn our attention to the analogous results for the coefficients $q^{(r)}_{n,j}$. The equation \eqref{Q* pure n rec} can be written as

\begin{equation}\label{Q* pure n rec 11}
\begin{split}
z^{n+3}Q_{n+3}(z^{-1};r) &  -  (1+\be^{(r)}_{n+2} z) z^{n+2}Q_{n+2}(z^{-1};r) +  (\be^{(r)}_{n+1}+\al^{(r+1)}_{n+1}+\be^{(r+1)}_{n+1}+\al^{(r+2)}_{n+1}) z^{n+2}Q_{n+1}(z^{-1};r) \\ & -(\be^{(r+1)}_{n+1}+\al^{(r+2)}_{n+1})  (\be^{(r)}_{n}+\al^{(r+1)}_{n}) z^{n+2}Q_{n}(z^{-1};r) = 0,
\end{split}
\end{equation}
or
\begin{equation}\label{Q* pure n rec 1111}
\begin{split}
Q_{n+3}(z^{-1};r) &  -  (z^{-1}+\be^{(r)}_{n+2} ) Q_{n+2}(z^{-1};r) +  (\be^{(r)}_{n+1}+\al^{(r+1)}_{n+1}+\be^{(r+1)}_{n+1}+\al^{(r+2)}_{n+1}) z^{-1}Q_{n+1}(z^{-1};r) \\ & -(\be^{(r+1)}_{n+1}+\al^{(r+2)}_{n+1})  (\be^{(r)}_{n}+\al^{(r+1)}_{n}) z^{-1}Q_{n}(z^{-1};r) = 0,
\end{split}
\end{equation}
which, after replacing $z \mapsto z^{-1}$, can be written as

\begin{equation}\label{Q* pure n rec 11111}
\begin{split}
Q_{n+3}(z;r) &  -  (z+\be^{(r)}_{n+2} ) Q_{n+2}(z;r) +  (\be^{(r)}_{n+1}+\al^{(r+1)}_{n+1}+\be^{(r+1)}_{n+1}+\al^{(r+2)}_{n+1}) zQ_{n+1}(z;r) \\ & -(\be^{(r+1)}_{n+1}+\al^{(r+2)}_{n+1})  (\be^{(r)}_{n}+\al^{(r+1)}_{n}) zQ_{n}(z;r) = 0,
\end{split}
\end{equation}
From \eqref{Q} we have
\begin{equation}\label{Q* pure n rec 1111111}
\begin{split}
0 = & \ \ z^{n+3} + q^{(r)}_{n+3,1} z^{n+2} + q^{(r)}_{n+3,2} z^{n+1} + q^{(r)}_{n+3,3} z^{n} + \cdots  \\ & -   \left[ z^{n+3} + q^{(r)}_{n+2,1} z^{n+2} + q^{(r)}_{n+2,2} z^{n+1} + q^{(r)}_{n+2,3} z^{n} + \cdots \right] \\ &  -  \be^{(r)}_{n+2}  \left[ z^{n+2} + q^{(r)}_{n+2,1} z^{n+1} + q^{(r)}_{n+2,2} z^{n} + \cdots \right] \\ &   +  \left(\be^{(r)}_{n+1}+\al^{(r+1)}_{n+1}+\be^{(r+1)}_{n+1}+\al^{(r+2)}_{n+1}\right) \left[ z^{n+2} + q^{(r)}_{n+1,1} z^{n+1} + q^{(r)}_{n+1,2} z^{n}  + \cdots \right] \\ & -\left(\be^{(r+1)}_{n+1}+\al^{(r+2)}_{n+1}\right)  \left(\be^{(r)}_{n}+\al^{(r+1)}_{n}\right) \left[ z^{n+1} + q^{(r)}_{n,1} z^{n}  + \cdots \right],
\end{split}
\end{equation}
Now we match the coefficients of $z^{n+2}$, $z^{n+1}$, and $z^{n}$ to respectively obtain

\begin{equation}
	q^{(r)}_{n+3,1} - q^{(r)}_{n+2,1} - \be^{(r)}_{n+2} + \left(\be^{(r)}_{n+1}+\al^{(r+1)}_{n+1}+\be^{(r+1)}_{n+1}+\al^{(r+2)}_{n+1}\right)  = 0,
\end{equation}
\begin{equation}
\begin{split}
	 q^{(r)}_{n+3,2} - q^{(r)}_{n+2,2} - \be^{(r)}_{n+2}q^{(r)}_{n+2,1} & + \left(\be^{(r)}_{n+1}+\al^{(r+1)}_{n+1}+\be^{(r+1)}_{n+1}+\al^{(r+2)}_{n+1}\right)q^{(r)}_{n+1,1} \\ & -\left(\be^{(r+1)}_{n+1}+\al^{(r+2)}_{n+1}\right)  \left(\be^{(r)}_{n}+\al^{(r+1)}_{n}\right)  = 0
\end{split}
\end{equation}
\begin{equation}
\begin{split}
	 q^{(r)}_{n+3,3} - q^{(r)}_{n+2,3} - \be^{(r)}_{n+2}q^{(r)}_{n+2,2} & + \left(\be^{(r)}_{n+1}+\al^{(r+1)}_{n+1}+\be^{(r+1)}_{n+1}+\al^{(r+2)}_{n+1}\right)q^{(r)}_{n+1,2} \\ & -\left(\be^{(r+1)}_{n+1}+\al^{(r+2)}_{n+1}\right)  \left(\be^{(r)}_{n}+\al^{(r+1)}_{n}\right)q^{(r)}_{n,1}  = 0
\end{split}
\end{equation}
These can be written as the following recursive recurrence relations \eqref{q1 rec rel}--\eqref{q3 rec rel}.

\end{proof}


We conclude this section with a collection of simple or ``atomic'' pure and mixed $n,r$ recurrence relations that will be extensively utilized in our applications work \cite{GW3}, and which were derived in \cite{GW}.
One has the following set of simple pure-$n$ recurrence relations for the sub-leading coefficients in the generic case
\begin{align}
    p^{(r)}_{n+2,1}-p^{(r)}_{n,1} & = -\frac{h^{(r-1)}_{n+1}}{h^{(r)}_{n+1}}-\frac{h^{(r-2)}_{n}}{h^{(r-1)}_{n}}+\frac{h^{(r-2)}_{n}}{h^{(r)}_{n-1}} , \label{p_1 step 2 difference}\\
    q^{(r)}_{n+1,1}-q^{(r)}_{n,1} & = -\frac{h^{(r+2)}_{n}}{h^{(r)}_{n}}+\frac{h^{(r+1)}_{n}}{h^{(r)}_{n-1}}+\frac{h^{(r+2)}_{n}}{h^{(r+1)}_{n-1}} ,
\label{p1q1_pure-n-shift}
\end{align}
and corresponding ones for pure-$r$ recurrence relations
\begin{align}
    p^{(r+2)}_{n,1}-p^{(r)}_{n,1} & = -\frac{h^{(r)}_{n}}{h^{(r+2)}_{n-1}} ,\\
    q^{(r+1)}_{n,1}-q^{(r)}_{n,1} & = \frac{h^{(r+1)}_{n}}{h^{(r)}_{n-1}} .
\label{p1q1_pure-r-shift}
\end{align}
In addition we list some mixed $n,r$ recurrences of low order \cite[Equations (4.24), (4.11), (4.14), and (4.27)]{GW}:
\begin{equation}
    q^{(r)}_{n+1,1}-q^{(r+2)}_{n,1} = -\frac{h^{(r+2)}_{n}}{h^{(r)}_{n}} ,
\label{q1_mixed-shift_single-h}
\end{equation}
\begin{equation}
    p^{(r)}_{n+1,1}-p^{(r-1)}_{n,1} = -\frac{h^{(r-1)}_{n}}{h^{(r)}_{n}} ,
\label{p1_mixed-shift_single-h}
\end{equation}
\begin{equation}
    p^{(r)}_{n+1,1}-p^{(r+1)}_{n,1} = -\frac{h^{(r+1)}_{n}}{h^{(r+2)}_{n}}+\frac{h^{(r)}_{n+1}}{h^{(r+2)}_{n}} ,
\label{p1_mixed-shift_double-h}
\end{equation}
\begin{equation}
    q^{(r)}_{n+1,1}-q^{(r+1)}_{n,1} = -\frac{h^{(r+1)}_{n}}{h^{(r-1)}_{n}}+\frac{h^{(r)}_{n+1}}{h^{(r-1)}_{n}} .
\label{q1_mixed-shift_double-h}
\end{equation}

\section{The Action of the Multiplication Operator in the Polynomial Basis}\label{sec mult z}
Understanding this action has both theoretical and practical significance. 
On the theoretical side one problem yet to be resolved is that of the analogue of the CMV matrix in the slanted Toeplitz systems, although we will not pursue this direction in our current work.
On the practical side of our application \cite{GW3} we will find it necessary to evaluate integrals of the form
\begin{equation}\label{Ik}
	\mathscr{I}^{(k)}_{n,m}(u;r) := \int_{\T} \frac{\dd\ze}{2\pi\ic\ze}w(\ze;u) P_n(\ze;r)Q_m(\ze^{-2};r) \ze^{k-r},
\end{equation}
where $ m,n \in \Z_{\ge 0} $, $k\in \Z$ are independent and arbitrary integers. Leveraging these integrals involves calculating them in two different ways, and both ways involve employing a multiplicative mapping of one of the polynomial factors by some power of $\ze$. 

Lemmas~\ref{prop zP zQ} and~\ref{prop z2P z2Q} below concern multiplication by \(z\) and
multiplication by \(z^2\), respectively.

\begin{lemma}\label{prop zP zQ}
   Let $\pi_n(z)$ and  $\hat{\pi}_n(z)$ denote some polynomials of degree at most $n$ in $z$. We have the following formulas for the action of the multiplication operator by $z$ in the polynomial basis:
\begin{equation}\label{zP}
\begin{split}
zP_n(z;r) & =  P_{n+1}(z;r) + \mathscr{A}^{(r)}_{n,0} P_n(z;r) + \mathscr{A}^{(r)}_{n,1} P_{n-1}(z;r) 	+ \mathscr{A}^{(r)}_{n,2} P_{n-2}(z;r)   + \pi_{n-3}(z)
\end{split}
\end{equation}
where
\begin{align}
\mathscr{A}^{(r)}_{n,0} & = p^{(r)}_{n,1}  - p^{(r)}_{n+1,1}, \label{An0} \\
\mathscr{A}^{(r)}_{n,1} & =  p^{(r)}_{n,1} p^{(r)}_{n+1,1}- \left(p^{(r)}_{n,1}\right)^2       +   p^{(r)}_{n,2}  - p^{(r)}_{n+1,2}, \label{An1}  \\
\mathscr{A}^{(r)}_{n,2} & = \left( p^{(r)}_{n,1}  - p^{(r)}_{n+1,1} \right)\left( p^{(r)}_{n,1} p^{(r)}_{n-1,1}  - p^{(r)}_{n,2} \right)   -  p^{(r)}_{n-1,1}  \left( p^{(r)}_{n,2}  - p^{(r)}_{n+1,2} \right)   + p^{(r)}_{n,3}  - p^{(r)}_{n+1,3}, \label{An2}   
\end{align}

\begin{equation}\label{zQ}
\begin{split}
zQ_n(z;r) & =  Q_{n+1}(z;r) + \mathscr{B}^{(r)}_{n,0} Q_n(z;r) + \mathscr{B}^{(r)}_{n,1} Q_{n-1}(z;r) 	+ \mathscr{B}^{(r)}_{n,2} Q_{n-2}(z;r)   + \hat{\pi}_{n-3}(z)
\end{split}
\end{equation}
where
\begin{align}
\mathscr{B}^{(r)}_{n,0} & = q^{(r)}_{n,1}  - q^{(r)}_{n+1,1}, \label{Bn0} \\
\mathscr{B}^{(r)}_{n,1} & =  q^{(r)}_{n,1} q^{(r)}_{n+1,1}- \left(q^{(r)}_{n,1}\right)^2       +   q^{(r)}_{n,2}  - q^{(r)}_{n+1,2}, \label{Bn1} \\
\mathscr{B}^{(r)}_{n,2} & = \left( q^{(r)}_{n,1}  - q^{(r)}_{n+1,1} \right)\left( q^{(r)}_{n,1} q^{(r)}_{n-1,1}  - q^{(r)}_{n,2} \right)   -  q^{(r)}_{n-1,1}  \left( q^{(r)}_{n,2}  - q^{(r)}_{n+1,2} \right)   + q^{(r)}_{n,3}  - q^{(r)}_{n+1,3}, \label{Bn2}  
\end{align}
\end{lemma}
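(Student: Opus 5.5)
This is a purely algebraic statement about monic polynomials, and the plan is to prove it by triangular expansion and coefficient matching. Since $\{P_k(z;r)\}_{k=0}^{n+1}$ are monic with $\deg P_k=k$ (Theorem~\ref{P and Q exist and are unique}, assuming the relevant $D^{(r)}_k\neq0$), they form a basis of polynomials of degree at most $n+1$. Hence $zP_n(z;r)$, being monic of degree $n+1$, admits a unique expansion
\[
zP_n(z;r)=P_{n+1}(z;r)+\sum_{k=0}^{n} c_k P_k(z;r).
\]
We set $\mathscr{A}^{(r)}_{n,j}:=c_{n-j}$ for $j=0,1,2$ and absorb $\sum_{k\le n-3}c_kP_k$ into $\pi_{n-3}$. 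It then remains only to identify $c_n,c_{n-1},c_{n-2}$ from the top three subleading coefficients.

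For this we substitute \eqref{P} into both sides and compare the coefficients of $z^{n}$, $z^{n-1}$ and $z^{n-2}$. Since $\pi_{n-3}$ does not contribute to these powers, we obtain a lower-triangular system.

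From $z^n$: $p^{(r)}_{n,1}=p^{(r)}_{n+1,1}+c_n$, which gives \eqref{An0}.

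From $z^{n-1}$: $p^{(r)}_{n,2}=p^{(r)}_{n+1,2}+c_n p^{(r)}_{n,1}+c_{n-1}$, so
\[
c_{n-1}=p^{(r)}_{n,2}-p^{(r)}_{n+1,2}-(p^{(r)}_{n,1}-p^{(r)}_{n+1,1})p^{(r)}_{n,1}.
\]
This is exactly \eqref{An1}.

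From $z^{n-2}$: $p^{(r)}_{n,3}=p^{(r)}_{n+1,3}+c_np^{(r)}_{n,2}+c_{n-1}p^{(r)}_{n-1,1}+c_{n-2}$. Substituting the values of $c_n$ and $c_{n-1}$ found above and solving for $c_{n-2}$ gives
\[
c_{n-2}=p^{(r)}_{n,3}-p^{(r)}_{n+1,3}-(p^{(r)}_{n,1}-p^{(r)}_{n+1,1})p^{(r)}_{n,2}-p^{(r)}_{n-1,1}\bigl[p^{(r)}_{n,2}-p^{(r)}_{n+1,2}-(p^{(r)}_{n,1}-p^{(r)}_{n+1,1})p^{(r)}_{n,1}\bigr].
\]
Regrouping the two terms proportional to $(p^{(r)}_{n,1}-p^{(r)}_{n+1,1})$ into $(p^{(r)}_{n,1}-p^{(r)}_{n+1,1})(p^{(r)}_{n,1}p^{(r)}_{n-1,1}-p^{(r)}_{n,2})$ yields \eqref{An2}.

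The argument for \eqref{zQ} and \eqref{Bn0}--\eqref{Bn2} is identical, with $Q_k$ and $q^{(r)}_{k,j}$ from \eqref{Q} in place of $P_k$ and $p^{(r)}_{k,j}$, because only monicity and the degree structure are used.

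There is no real obstacle. The only points needing care are the bookkeeping in the third coefficient match, including the sign conventions of the regrouping, and the convention for small $n$: for $n\le 2$ we take $p^{(r)}_{k,j}=0$ when $j>k$ and $P_k\equiv0$ for $k<0$, which keeps the formulas valid.
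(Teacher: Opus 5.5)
Your proposal is correct and is essentially the paper's argument: both rely on the triangular change of basis between monomials and the monic family $\{P_k(z;r)\}$. The paper first writes $z^n,z^{n-1},z^{n-2}$ in the $P$-basis and substitutes them into $zP_n-P_{n+1}$, whereas you solve the same triangular system by forward coefficient matching; your regrouping reproduces \eqref{An2} exactly, and the $Q$-case follows verbatim.
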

\begin{proof} We only present the details for the proof of \eqref{zP}--\eqref{An2}, as the proof of \eqref{zQ}--\eqref{Bn2} follows in a similar way.  We first express monomials in the polynomial basis $\{P_n(z;r)\}$.
Below, $\pi_n(z)$ denotes some polynomial of degree at most $n$ in $z$. Recall the notation introduced in \eqref{P}:

\begin{equation}
P_n(z;r) = z^n + p^{(r)}_{n,1} z^{n-1} + p^{(r)}_{n,2} z^{n-2} + p^{(r)}_{n,3} z^{n-3} + \pi_{n-4}(z) 
\end{equation}

\begin{equation}
P_{n-1}(z;r) = z^{n-1} + p^{(r)}_{n-1,1} z^{n-2} + p^{(r)}_{n-1,2} z^{n-3} + \pi_{n-4}(z) 
\end{equation}

\begin{equation}
P_{n-2}(z;r) = z^{n-2} + p^{(r)}_{n-2,1} z^{n-3} + \pi_{n-4}(z) 
\end{equation}

\begin{equation}
P_{n-3}(z;r) = z^{n-3} + \pi_{n-4}(z) 
\end{equation}

Inverting this representation we find

\begin{equation}\label{zn-3}
z^{n-3} = P_{n-3}(z;r)  + \pi_{n-4}(z) 
\end{equation}

\begin{equation}\label{zn-2}
z^{n-2}  = P_{n-2}(z;r) - p^{(r)}_{n-2,1} P_{n-3}(z;r)  + \pi_{n-4}(z) 
\end{equation}

\begin{equation}\label{zn-1}
z^{n-1} = P_{n-1}(z;r) - p^{(r)}_{n-1,1} P_{n-2}(z;r) + \left( p^{(r)}_{n-1,1}p^{(r)}_{n-2,1}  - p^{(r)}_{n-1,2} \right) P_{n-3}(z;r) + \pi_{n-4}(z)
\end{equation}

\begin{equation}
\begin{split}
z^n & = P_n(z;r) -  p^{(r)}_{n,1}  P_{n-1}(z;r) + \left( p^{(r)}_{n,1} p^{(r)}_{n-1,1}  - p^{(r)}_{n,2} \right)  P_{n-2}(z;r) \\ & +\left( p^{(r)}_{n,2} p^{(r)}_{n-2,1} -  p^{(r)}_{n,1} \left( p^{(r)}_{n-1,1}p^{(r)}_{n-2,1}  - p^{(r)}_{n-1,2} \right)    - p^{(r)}_{n,3} \right) P_{n-3}(z;r) + \pi_{n-4}(z)
\end{split}
\end{equation}

Similarly, we have

\begin{equation}
z^{n-3} = Q_{n-3}(z;r)  + \pi_{n-4}(z) 
\end{equation}

\begin{equation}
z^{n-2}  = Q_{n-2}(z;r) - q^{(r)}_{n-2,1} Q_{n-3}(z;r)  + \pi_{n-4}(z) 
\end{equation}

\begin{equation}
z^{n-1} = Q_{n-1}(z;r) - q^{(r)}_{n-1,1} Q_{n-2}(z;r) + \left( q^{(r)}_{n-1,1}q^{(r)}_{n-2,1}  - q^{(r)}_{n-1,2} \right) Q_{n-3}(z;r) + \pi_{n-4}(z)
\end{equation}

\begin{equation}
\begin{split}
z^n & = Q_n(z;r) -  q^{(r)}_{n,1}  Q_{n-1}(z;r) + \left( q^{(r)}_{n,1} q^{(r)}_{n-1,1}  - q^{(r)}_{n,2} \right)  Q_{n-2}(z;r) \\ & +\left( q^{(r)}_{n,2} q^{(r)}_{n-2,1} -  q^{(r)}_{n,1} \left( q^{(r)}_{n-1,1}q^{(r)}_{n-2,1}  - q^{(r)}_{n-1,2} \right)    - q^{(r)}_{n,3} \right) Q_{n-3}(z;r) + \pi_{n-4}(z)
\end{split}
\end{equation}

Now we are ready to read the action of multiplication operator in the polynomial basis.

\begin{equation}
P_{n+1}(z;r) = z^{n+1} + p^{(r)}_{n+1,1} z^{n} + p^{(r)}_{n+1,2} z^{n-1} + p^{(r)}_{n+1,3} z^{n-2} + \pi_{n-3}(z) 
\end{equation}

\begin{equation}
zP_n(z;r) = z^{n+1} + p^{(r)}_{n,1} z^{n} + p^{(r)}_{n,2} z^{n-1} + p^{(r)}_{n,3} z^{n-2} + \pi_{n-3}(z) 
\end{equation}
Thus
\begin{equation}
zP_n(z;r)-P_{n+1}(z;r) = \left( p^{(r)}_{n,1}  - p^{(r)}_{n+1,1} \right) z^n + \left( p^{(r)}_{n,2}  - p^{(r)}_{n+1,2} \right) z^{n-1} +\left( p^{(r)}_{n,3}  - p^{(r)}_{n+1,3} \right) z^{n-2}  + \pi_{n-3}(z), 
\end{equation}
and hence
\begin{equation}
\begin{split}
zP_n(z;r) & =  P_{n+1}(z;r) + \left( p^{(r)}_{n,1}  - p^{(r)}_{n+1,1} \right) P_n(z;r) + \left( p^{(r)}_{n,1} p^{(r)}_{n+1,1}- \left(p^{(r)}_{n,1}\right)^2      +   p^{(r)}_{n,2}  - p^{(r)}_{n+1,2}  \right) P_{n-1}(z;r) 	\\ &  + \left[  \left( p^{(r)}_{n,1}  - p^{(r)}_{n+1,1} \right)\left( p^{(r)}_{n,1} p^{(r)}_{n-1,1}  - p^{(r)}_{n,2} \right)   -  p^{(r)}_{n-1,1}  \left( p^{(r)}_{n,2}  - p^{(r)}_{n+1,2} \right)   +\left( p^{(r)}_{n,3}  - p^{(r)}_{n+1,3} \right) \right] P_{n-2}(z;r)   \\ & + \pi_{n-3}(z). 
\end{split}
\end{equation}
This is the desired identity \eqref{zP}.
\end{proof}

\begin{lemma}\label{prop z2P z2Q}
   Let $\tilde{\pi}_n(z)$ and  $\pi^*_n(z)$ denote some polynomials of degree at most $n$ in $z$. We have the following formulas for the action of multiplication operator by $z^2$ in the polynomial basis:
   \begin{equation}\label{z2P}
\begin{split}
z^2P_n(z;r) & =  P_{n+2}(z;r) + 	\mathscr{C}^{(r)}_{n,0} P_{n+1}(z;r) + \mathscr{C}^{(r)}_{n,1} P_{n}(z;r)  + \mathscr{C}^{(r)}_{n,2} P_{n-1}(z;r)  + \tilde{\pi}_{n-2}(z), 
\end{split}
\end{equation}

where
\begin{align}
\mathscr{C}^{(r)}_{n,0} & = p^{(r)}_{n,1}  - p^{(r)}_{n+2,1}, \\
\mathscr{C}^{(r)}_{n,1} & =  p^{(r)}_{n+1,1} p^{(r)}_{n+2,1}         - p^{(r)}_{n+2,2}     -p^{(r)}_{n,1} p^{(r)}_{n+1,1}       +   p^{(r)}_{n,2},  \\
\mathscr{C}^{(r)}_{n,2} & = p^{(r)}_{n+1,1} \left(p^{(r)}_{n,1}\right)^2-\left(p^{(r)}_{n,2}+p^{(r)}_{n+1,2}+p^{(r)}_{n+1,1} p^{(r)}_{n+2,1}-p^{(r)}_{n+2,2}\right) p^{(r)}_{n,1}+p^{(r)}_{n,3}+p^{(r)}_{n+1,2}
p^{(r)}_{n+2,1}-p^{(r)}_{n+2,3}. \label{Cn2}
\end{align}

\begin{equation}\label{z2Q}
\begin{split}
z^2 Q_n(z;r) & =  Q_{n+2}(z;r) + 	\mathscr{D}^{(r)}_{n,0} Q_{n+1}(z;r) + \mathscr{D}^{(r)}_{n,1} Q_{n}(z;r)  + \mathscr{D}^{(r)}_{n,2} Q_{n-1}(z;r)  + \pi^*_{n-2}(z), 
\end{split}
\end{equation}

where
\begin{align}
\mathscr{D}^{(r)}_{n,0} & = q^{(r)}_{n,1}  - q^{(r)}_{n+2,1}, \\
\mathscr{D}^{(r)}_{n,1} & =  q^{(r)}_{n+1,1} q^{(r)}_{n+2,1}         - q^{(r)}_{n+2,2}     -q^{(r)}_{n,1} q^{(r)}_{n+1,1}       +   q^{(r)}_{n,2},  \\
\mathscr{D}^{(r)}_{n,2} & = q^{(r)}_{n+1,1} \left(q^{(r)}_{n,1}\right)^2-\left(q^{(r)}_{n,2}+q^{(r)}_{n+1,2}+q^{(r)}_{n+1,1} q^{(r)}_{n+2,1}-q^{(r)}_{n+2,2}\right) q^{(r)}_{n,1}+q^{(r)}_{n,3}+q^{(r)}_{n+1,2}
q^{(r)}_{n+2,1}-q^{(r)}_{n+2,3}. \label{Dn2}
\end{align}
   \end{lemma}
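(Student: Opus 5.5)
The proof will follow the one given for Lemma~\ref{prop zP zQ}, with a wider index window. I only treat \eqref{z2P}; the $Q$-version \eqref{z2Q} comes from the same computation with $p^{(r)}_{n,j}$ replaced by $q^{(r)}_{n,j}$ throughout. The reason is that only monicity, the degree of each basis element, and the notation \eqref{P}, \eqref{Q} are used, never bi-orthogonality.

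\textbf{Step 1: the leading coefficients of the difference.} Write
\[
z^2P_n(z;r)=z^{n+2}+p^{(r)}_{n,1}z^{n+1}+p^{(r)}_{n,2}z^{n}+p^{(r)}_{n,3}z^{n-1}+\tilde\pi_{n-2}(z),
\]
and expand $P_{n+2}(z;r)$ in the same way. Subtracting gives
\[
z^2P_n-P_{n+2}=(p^{(r)}_{n,1}-p^{(r)}_{n+2,1})z^{n+1}+(p^{(r)}_{n,2}-p^{(r)}_{n+2,2})z^{n}+(p^{(r)}_{n,3}-p^{(r)}_{n+2,3})z^{n-1}+\tilde\pi_{n-2}(z).
\]

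\textbf{Step 2: express the monomials in the $P$-basis.} We need $z^{n+1},z^n,z^{n-1}$ modulo polynomials of degree at most $n-2$. From the triangular inversion already used in the proof of Lemma~\ref{prop zP zQ}, but with the window shifted up by one:
\[
z^{n-1}=P_{n-1}+\pi_{n-2},
\]
\[
z^{n}=P_n-p^{(r)}_{n,1}P_{n-1}+\pi_{n-2},
\]
\[
z^{n+1}=P_{n+1}-p^{(r)}_{n+1,1}P_n+\bigl(p^{(r)}_{n+1,1}p^{(r)}_{n,1}-p^{(r)}_{n+1,2}\bigr)P_{n-1}+\pi_{n-2}.
\]
These are obtained by successively peeling off the leading terms of $P_{n+1},P_n,P_{n-1}$.

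\textbf{Step 3: substitute and collect.} Insert Step 2 into Step 1 and gather the coefficients of $P_{n+1},P_n,P_{n-1}$:
\[
\mathscr C^{(r)}_{n,0}=p^{(r)}_{n,1}-p^{(r)}_{n+2,1},
\]
\[
\mathscr C^{(r)}_{n,1}=-(p^{(r)}_{n,1}-p^{(r)}_{n+2,1})p^{(r)}_{n+1,1}+p^{(r)}_{n,2}-p^{(r)}_{n+2,2},
\]
which matches the stated formula. Finally,
\[
\mathscr C^{(r)}_{n,2}=(p^{(r)}_{n,1}-p^{(r)}_{n+2,1})\bigl(p^{(r)}_{n+1,1}p^{(r)}_{n,1}-p^{(r)}_{n+1,2}\bigr)-(p^{(r)}_{n,2}-p^{(r)}_{n+2,2})p^{(r)}_{n,1}+p^{(r)}_{n,3}-p^{(r)}_{n+2,3}.
\]

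\textbf{Expected obstacle: matching $\mathscr C^{(r)}_{n,2}$ with \eqref{Cn2}.} The only real obstacle is this bookkeeping check. Expanding the product gives the terms
\[
p^{(r)}_{n+1,1}\bigl(p^{(r)}_{n,1}\bigr)^2,\quad -p^{(r)}_{n+1,2}p^{(r)}_{n,1},\quad -p^{(r)}_{n+2,1}p^{(r)}_{n+1,1}p^{(r)}_{n,1},\quad +p^{(r)}_{n+2,1}p^{(r)}_{n+1,2},
\]
and the remaining part contributes $-p^{(r)}_{n,2}p^{(r)}_{n,1}+p^{(r)}_{n+2,2}p^{(r)}_{n,1}+p^{(r)}_{n,3}-p^{(r)}_{n+2,3}$. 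Grouping all terms proportional to $p^{(r)}_{n,1}$ gives exactly \eqref{Cn2}. I will carry out this term-by-term comparison explicitly, since a sign slip is the only place the argument can go wrong.
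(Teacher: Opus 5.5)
Your proposal is correct: expanding your expression for $\mathscr{C}^{(r)}_{n,2}$ reproduces \eqref{Cn2} term by term. The $Q$-case also carries over verbatim, since only monicity and degrees are used. Your route does, however, differ from the paper's.

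\textbf{The paper's route.} The paper writes $z^2P_n=z\cdot(zP_n)$. It multiplies \eqref{zP} by $z$ and re-expands each of $zP_{n+1}$, $zP_n$, $zP_{n-1}$, $zP_{n-2}$ with Lemma~\ref{prop zP zQ}, truncated modulo degree $n-2$, before collecting terms.

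\textbf{Your route.} You work directly with the top three coefficients of $z^2P_n-P_{n+2}$. You then apply a three-step triangular inversion of $z^{n+1},z^n,z^{n-1}$.

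\textbf{What each buys.} Your argument is shorter and self-contained: it does not depend on the rather involved formula \eqref{An2} for $\mathscr{A}^{(r)}_{n,2}$. It also shows from the outset why the $\mathscr{C}^{(r)}_{n,j}$ involve only the coefficients of $P_n,P_{n+1},P_{n+2}$. In the paper's route, terms proportional to $p^{(r)}_{n-1,1}$ appear in both $\mathscr{A}^{(r)}_{n,2}$ and $\mathscr{A}^{(r)}_{n,1}\bigl(p^{(r)}_{n-1,1}-p^{(r)}_{n,1}\bigr)$. They cancel only in the ``further simplification'' step. Your method also extends immediately to multiplication by $z^k$ for any $k$. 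The paper's route, in exchange, presents the $z^2$-coefficients as the composition of the multiplication-by-$z$ expansion with itself. That is the natural viewpoint for the operator (CMV-type) questions mentioned at the start of Section~\ref{sec mult z}, and it reuses an already established lemma.
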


\begin{proof} We only present the details for the proof of \eqref{z2P}--\eqref{Cn2}, as the proof of \eqref{z2Q}--\eqref{Dn2} follows in a similar way.
 Note that 
\begin{equation}
\begin{split}
zP_{n+1}(z;r) & =  P_{n+2}(z;r) + \left( p^{(r)}_{n+1,1}  - p^{(r)}_{n+2,1} \right) P_{n+1}(z;r) \\ & + \left( p^{(r)}_{n+1,1} p^{(r)}_{n+2,1}- \left(p^{(r)}_{n+1,1}\right)^2      +   p^{(r)}_{n+1,2}  - p^{(r)}_{n+2,2}  \right) P_{n}(z;r) 	\\ &  + \left[  \left( p^{(r)}_{n+1,1}  - p^{(r)}_{n+2,1} \right)\left( p^{(r)}_{n+1,1} p^{(r)}_{n,1}  - p^{(r)}_{n+1,2} \right)   -  p^{(r)}_{n,1}  \left( p^{(r)}_{n+1,2}  - p^{(r)}_{n+2,2} \right)   + p^{(r)}_{n+1,3}  - p^{(r)}_{n+2,3}  \right] P_{n-1}(z;r)  \\ & + \pi_{n-2}(z), 
\end{split}
\end{equation}

\begin{equation}
\begin{split}
zP_{n-1}(z;r) & =  P_{n}(z;r) + \left( p^{(r)}_{n-1,1}  - p^{(r)}_{n,1} \right) P_{n-1}(z;r) + \left( p^{(r)}_{n-1,1} p^{(r)}_{n,1}- \left(p^{(r)}_{n-1,1}\right)^2       +   p^{(r)}_{n-1,2}  - p^{(r)}_{n,2}  \right) P_{n-2}(z;r) \\ & + \pi_{n-3}(z), 
\end{split}
\end{equation}
and
\begin{equation}
\begin{split}
zP_{n-2}(z;r) & =  P_{n-1}(z;r) + \left( p^{(r)}_{n-2,1}  - p^{(r)}_{n-1,1} \right) P_{n-2}(z;r) + \pi_{n-3}(z). 
\end{split}
\end{equation}
These are useful to express the right-hand side of 
\begin{equation}
\begin{split}
z^2P_n(z;r) & =  zP_{n+1}(z;r) + \left( p^{(r)}_{n,1}  - p^{(r)}_{n+1,1} \right) zP_n(z;r) + \left( p^{(r)}_{n,1} p^{(r)}_{n+1,1}- \left(p^{(r)}_{n,1}\right)^2       +   p^{(r)}_{n,2}  - p^{(r)}_{n+1,2}  \right) zP_{n-1}(z;r) 	\\ &  + \left[  \left( p^{(r)}_{n,1}  - p^{(r)}_{n+1,1} \right)\left( p^{(r)}_{n,1} p^{(r)}_{n-1,1}  - p^{(r)}_{n,2} \right)   -  p^{(r)}_{n-1,1}  \left( p^{(r)}_{n,2}  - p^{(r)}_{n+1,2} \right)   +\left( p^{(r)}_{n,3}  - p^{(r)}_{n+1,3} \right) \right] zP_{n-2}(z;r) \\ & + \pi_{n-3}(z), 
\end{split}
\end{equation}
purely in terms of the polynomials $P_j(z;r), 0\leq j \leq n+2$. Indeed,

\begin{equation}
\begin{split}
z^2P_n(z;r) & =  P_{n+2}(z;r) + \left( p^{(r)}_{n+1,1}  - p^{(r)}_{n+2,1} \right) P_{n+1}(z;r) \\ & + \left( p^{(r)}_{n+1,1} p^{(r)}_{n+2,1}- \left(p^{(r)}_{n+1,1}\right)^2      +   p^{(r)}_{n+1,2}  - p^{(r)}_{n+2,2}  \right) P_{n}(z;r) 	\\ &  + \left[  \left( p^{(r)}_{n+1,1}  - p^{(r)}_{n+2,1} \right)\left( p^{(r)}_{n+1,1} p^{(r)}_{n,1}  - p^{(r)}_{n+1,2} \right)   -  p^{(r)}_{n,1}  \left( p^{(r)}_{n+1,2}  - p^{(r)}_{n+2,2} \right)   + p^{(r)}_{n+1,3}  - p^{(r)}_{n+2,3}  \right] P_{n-1}(z;r)    \\& + \left( p^{(r)}_{n,1}  - p^{(r)}_{n+1,1} \right) \Big[P_{n+1}(z;r) + \left( p^{(r)}_{n,1}  - p^{(r)}_{n+1,1} \right) P_n(z;r) \\ & + \left( p^{(r)}_{n,1} p^{(r)}_{n+1,1}- \left(p^{(r)}_{n,1}\right)^2       +   p^{(r)}_{n,2}  - p^{(r)}_{n+1,2}  \right) P_{n-1}(z;r) \Big] \\ & + \left( p^{(r)}_{n,1} p^{(r)}_{n+1,1}- \left(p^{(r)}_{n,1}\right)^2       +   p^{(r)}_{n,2}  - p^{(r)}_{n+1,2}  \right) \left[ P_{n}(z;r) + \left( p^{(r)}_{n-1,1}  - p^{(r)}_{n,1} \right) P_{n-1}(z;r)\right]	\\ &  + \left[  \left( p^{(r)}_{n,1}  - p^{(r)}_{n+1,1} \right)\left( p^{(r)}_{n,1} p^{(r)}_{n-1,1}  - p^{(r)}_{n,2} \right)   -  p^{(r)}_{n-1,1}  \left( p^{(r)}_{n,2}  - p^{(r)}_{n+1,2} \right)   +\left( p^{(r)}_{n,3}  - p^{(r)}_{n+1,3} \right) \right] P_{n-1}(z;r) \\ &  + \pi_{n-2}(z). 
\end{split}
\end{equation}
Further simplification gives
\begin{equation}
\begin{split}
& z^2P_n(z;r)  =  P_{n+2}(z;r) + \left( p^{(r)}_{n,1}  - p^{(r)}_{n+2,1} \right) P_{n+1}(z;r) + \left(  p^{(r)}_{n+1,1} p^{(r)}_{n+2,1}         - p^{(r)}_{n+2,2}     -p^{(r)}_{n,1} p^{(r)}_{n+1,1}       +   p^{(r)}_{n,2}  \right) P_{n}(z;r)	\\ &  + \left[ p^{(r)}_{n+1,1} \left(p^{(r)}_{n,1}\right)^2-\left(p^{(r)}_{n,2}+p^{(r)}_{n+1,2}+p^{(r)}_{n+1,1} p^{(r)}_{n+2,1}-p^{(r)}_{n+2,2}\right) p^{(r)}_{n,1}+p^{(r)}_{n,3}+p^{(r)}_{n+1,2}
p^{(r)}_{n+2,1}-p^{(r)}_{n+2,3} \right] P_{n-1}(z;r) \\ & + \pi_{n-2}(z), 
\end{split}
\end{equation} which is the desired expression \eqref{z2P}.

\end{proof}

\hspace{1cm}

\section{A Multiple Integral Formula for a Bi-bordered $2j-k$ Determinant}\label{sec bi-bordered}

Multiple integral formulations are fundamental as they constitute the starting point for any applications involving
random matrix theory, matrix integrals or averages over the classical groups.
In addition they will be seen as an efficient computational tool for deriving key identities, 
as will become apparent in the subsequent sections.

We now turn to determinant identities for relevant multiple integral formulae.
Recall that \(P_n(z;r)\) and \(Q_n(z;r)\) are bordered \(2j-k\)
determinants; see \eqref{OP11} and \eqref{OP22}. Their multiple integral
representations are
\begin{align}
	P_n(z;r)
	&=
	\frac{1}{D_n^{(r)}}
	\mathcal{D}_n\!\left[w(\zeta)\zeta^{-r}(z-\zeta)\right],
	\label{P multiple integral recalled}\\
	Q_n(z;r)
	&=
	\frac{1}{D_n^{(r)}}
	\mathcal{D}_n\!\left[w(\zeta)\zeta^{-r}(z-\zeta^{-2})\right],
	\label{Q multiple integral recalled}
\end{align}
as proved in \cite[Theorem~5.3, Eqs.~(5.13)--(5.14)]{GW}. Moving beyond (singly) bordered determinants, bi-bordered determinants are also of interest. For example, in the case of a single row and a single column borders, the determinant is the reproducing kernel for the $P-Q$ system \cite[Theorem 3.4]{GW}. 
The determinant below is another important bi-bordered object: it has two spectral columns,
and its multiple integral representation therefore contains the two factors
\((z_1-\zeta^{-2})(z_2-\zeta^{-2})\).
The result below gives the corresponding multiple integral formula.

\begin{lemma}
	For the bi-bordered $2j-k$-determinant 
	\begin{equation}\label{bi-bordered 2j-k monomials}
	F^{(2)}_n(z_1,z_2):=  \frac{1}{z_1-z_2}  \det \begin{pmatrix}
	w_r & w_{r-1} & \cdots & w_{r-n+1} & 1 & 1 \\[3pt]
	w_{r+2} & w_{r+1}  & \cdots & w_{r-n+3}   & z_2 & z_1 \\[3pt]
	w_{r+4} & w_{r+3} &  \cdots & w_{r-n+5} & z^2_2 & z^2_1 \\[3pt]
	w_{r+6} & w_{r+5} &  \cdots & w_{r-n+7} & z^3_2 & z^3_1 \\[3pt]
	\vdots & \vdots  & \vdots & \vdots & \vdots & \vdots \\[3pt]
	w_{r+2n+2} & w_{r+2n+1}  & \cdots & w_{r+n+3}   & z^{n+1}_2 & z_1^{n+1}
	\end{pmatrix}
	\end{equation}
	it holds that 
	\begin{equation}
	F^{(2)}_n(z_1,z_2) = \mathcal{D}_n[w(\ze)\ze^{-r}(z_1-\ze^{-2})(z_2-\ze^{-2})],
	\end{equation}
	where for an integrable function $f$ on the unit circle we define \ \cite{GW} 
	\begin{equation}\label{DDDDDD}
	\mathcal{D}_n[f(\ze)]:= \frac{1}{n!} \int_{\T} \frac{\dd \ze_1}{2 \pi \ic \ze_1}\int_{\T} \frac{\dd \ze_2}{2 \pi \ic \ze_2} \cdots \int_{\T} \frac{\dd \ze_n}{2 \pi \ic \ze_n} \prod_{j=1}^{n}f(\ze_j) \prod_{1\leq j<k\leq n} (\ze_k-\ze_j)(\ze^{-2}_k-\ze^{-2}_j). 
	\end{equation} 	 
\end{lemma}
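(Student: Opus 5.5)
The plan is the Heine/Andr\'eief argument used for the singly bordered cases \eqref{P multiple integral recalled}--\eqref{Q multiple integral recalled}: integralize each moment column, recognize a Vandermonde determinant, then symmetrize over the integration variables.

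\emph{Step 1: write the moments as integrals.} Label the rows of the $(n+2)\times(n+2)$ matrix in \eqref{bi-bordered 2j-k monomials} by $j=0,\dots,n+1$ and the moment columns by $k=0,\dots,n-1$. The $(j,k)$ moment entry is
\[
w_{r+2j-k}=\int_{\T}\frac{\dd\mu(\ze)}{2\pi\ic\ze}\,\ze^{-r}\,\ze^{k}\,(\ze^{-2})^{j}.
\]
For the $k$-th column I use a separate integration variable $\ze_{k+1}$. Multilinearity of the determinant in its columns then pulls all $n$ integrals outside:
\[
\det(\cdots)=\int_{\T^n}\prod_{i=1}^n \frac{w(\ze_i)\ze_i^{-r}\,\ze_i^{\,i-1}\,\dd\ze_i}{2\pi\ic\ze_i}\;
\det\Big( (\ze_1^{-2})^j \,\Big|\, \cdots \,\Big|\, (\ze_n^{-2})^j \,\Big|\, z_2^j \,\Big|\, z_1^j\Big)_{j=0}^{n+1}.
\]

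\emph{Step 2: evaluate the inner determinant.} It is a Vandermonde determinant in the ordered nodes $(x_1,\dots,x_n,z_2,z_1)$, where $x_i=\ze_i^{-2}$. Hence it equals
\[
\prod_{1\le i<k\le n}(x_k-x_i)\;\prod_{i=1}^n(z_2-x_i)(z_1-x_i)\;(z_1-z_2).
\]
The last factor cancels the prefactor $1/(z_1-z_2)$ in the definition of $F^{(2)}_n$. What remains is an integral of
\[
\prod_i w(\ze_i)\ze_i^{-r}(z_1-\ze_i^{-2})(z_2-\ze_i^{-2})
\]
times $\prod_{i<k}(\ze_k^{-2}-\ze_i^{-2})$ times the non-symmetric weight $\prod_i\ze_i^{\,i-1}$.

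\emph{Step 3: symmetrize.} Relabel the variables by each $\sigma\in S_n$ and average over $S_n$. The product of $f(\ze_i):=w(\ze_i)\ze_i^{-r}(z_1-\ze_i^{-2})(z_2-\ze_i^{-2})$ is symmetric. The $\ze^{-2}$-Vandermonde picks up a factor $\operatorname{sgn}\sigma$. Averaging the monomial $\prod_i\ze_{\sigma(i)}^{\,i-1}$ against $\operatorname{sgn}\sigma$ therefore gives
\[
\frac{1}{n!}\det(\ze_i^{k-1})_{i,k=1}^n=\frac{1}{n!}\prod_{i<k}(\ze_k-\ze_i).
\]
This yields exactly $\mathcal{D}_n[f]$ as defined in \eqref{DDDDDD}.

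\emph{Expected obstacle.} The only delicate point is sign and ordering bookkeeping. I will check that placing $z_2$ before $z_1$ in the node ordering produces $+(z_1-z_2)$, and that the orientation conventions of the two Vandermonde factors match those in \eqref{DDDDDD}. As a sanity check, the $n=0$ case should reduce to $\det\begin{pmatrix}1&1\\ z_2&z_1\end{pmatrix}/(z_1-z_2)=1$. I will also check consistency with the $P_n$ formula \eqref{P multiple integral recalled}, whose proof follows the same pattern with a single border column.
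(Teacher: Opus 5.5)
Your proposal is correct, and it reaches the result by a different route from the paper's. You integralize column $k$ with its own variable $\zeta_{k+1}$, so the inner determinant is the Vandermonde in the ordered nodes $(\zeta_1^{-2},\dots,\zeta_n^{-2},z_2,z_1)$. Its last factor is exactly $z_1-z_2$, and antisymmetrizing $\prod_i\zeta_i^{\,i-1}$ against the $\zeta^{-2}$-Vandermonde gives $\frac{1}{n!}\prod_{i<k}(\zeta_k-\zeta_i)$, with the same orientation as in \eqref{DDDDDD}.

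The paper does not redo the Heine/Andr\'eief computation. It bootstraps from the known singly bordered formula \eqref{Q multiple integral recalled} for $Q_n$ via a Christoffel-type modification of the weight, $w(\zeta)\mapsto w(\zeta)(z_1-\zeta^{-2})$, whose moments are $z_1w_\ell-w_{\ell+2}$. It then embeds the resulting $(n+1)\times(n+1)$ bordered determinant into an $(n+2)\times(n+2)$ one by adjoining the row $(w_r,\dots,w_{r-n+1},0,1)$. Successive row operations $\widehat R_{j+1}=z_1\widehat R_j+R_{j+1}$, using the recursion for $(z_1^{j}-z_2^{j})/(z_1-z_2)$, then unfold the modified moments back into a second monomial border column.

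Your route needs only the moment representation and Fubini, makes the sign bookkeeping automatic through the Vandermonde formula, and extends verbatim to $m$ border columns. In that case you obtain $\prod_{a}\prod_i(z_a-\zeta_i^{-2})$ after dividing by the Vandermonde in the $z_a$, which is the multi-bordered statement alluded to in Remark~\ref{rem:m-bordered-polynomial-determinants}. The paper's route reuses the $Q_n$ integral formula of \cite{GW} and makes explicit how multiplying the weight by $(z_1-\zeta^{-2})$ corresponds to adding a border column. The cost is a more delicate row reduction that becomes cumbersome if iterated.
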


\begin{proof}

	Let us recall from Theorem 5.3 of \cite{GW} the multiple integral formula for $Q_n$ polynomials:
	
	\begin{equation}\label{MultIntPolysQ}
	Q_n(z;r) = \frac{1}{D^{(r)}_n} \mathcal{D}_n[w(\ze)\ze^{-r}(z-\ze^{-2})], 
	\end{equation}
	
	Let $\overset{*}{w}(z):=w(z)(z_1-z^{-2})$. We have
	\begin{equation}
	\mathcal{D}_n[w(\ze)\ze^{-r}(z_1-\ze^{-2})(z_2-\ze^{-2})]=	\mathcal{D}_n[w_*(\ze)\ze^{-r}(z_2-\ze^{-2})] =  \det \begin{pmatrix}
	\overset{*}{w}_{r} & \overset{*}{w}_{r-1}  & \cdots & \overset{*}{w}_{r-n+1} & 1 \\
	\overset{*}{w}_{r+2} & \overset{*}{w}_{r+1} &  \cdots & \overset{*}{w}_{r-n+3} & z_2 \\
	\vdots & \vdots  & \vdots & \vdots & \vdots \\
	\overset{*}{w}_{r+2n} & \overset{*}{w}_{r+2n-1} & \cdots & \overset{*}{w}_{r+n+1} & z_2^n
	\end{pmatrix} \oeq
	\end{equation}
	Since $\overset{*}{w}_r=z_1w_r-w_{r+2}$, we have
	\begin{equation}
	\begin{split}
	& \oeq \det \begin{pmatrix}
	z_1w_r-w_{r+2} & z_1w_{r-1}-w_{r+1}  & \cdots & z_1w_{r-n+1}-w_{r-n+3}   & 1 \\
	z_1w_{r+2}-w_{r+4} & z_1w_{r+1}-w_{r+3} &  \cdots & z_1w_{r-n+3}-w_{r-n+5} & z_2 \\
	\vdots & \vdots  & \vdots & \vdots & \vdots \\
	z_1w_{r+2n}-w_{r+2n+2} & z_1w_{r+2n-1}-w_{r+2n+1}  & \cdots & z_1w_{r+n+1}-w_{r+n+3}   & z_2^n
	\end{pmatrix} \\ & = (-1)^{n+1} \det \begin{pmatrix}
	w_r & w_{r-1} & \cdots & w_{r-n+1} & 0 & 1 \\
	z_1w_r-w_{r+2} & z_1w_{r-1}-w_{r+1}  & \cdots & z_1w_{r-n+1}-w_{r-n+3}   & 1 & 0 \\
	z_1w_{r+2}-w_{r+4} & z_1w_{r+1}-w_{r+3} &  \cdots & z_1w_{r-n+3}-w_{r-n+5} & z_2 & 0 \\
	\vdots & \vdots  & \vdots & \vdots & \vdots & \vdots \\
	z_1w_{r+2n}-w_{r+2n+2} & z_1w_{r+2n-1}-w_{r+2n+1}  & \cdots & z_1w_{r+n+1}-w_{r+n+3}   & z_2^n & 0
	\end{pmatrix} \\ & = \det \begin{pmatrix}
	w_r & w_{r-1} & \cdots & w_{r-n+1} & 0 & 1 \\
	-z_1w_r+w_{r+2} & -z_1w_{r-1}+w_{r+1}  & \cdots & -z_1w_{r-n+1}+w_{r-n+3}   & -1 & 0 \\
	-z_1w_{r+2}+w_{r+4} & -z_1w_{r+1}+w_{r+3} &  \cdots & -z_1w_{r-n+3}+w_{r-n+5} & -z_2 & 0 \\
	-z_1w_{r+4}+w_{r+6} & -z_1w_{r+3}+w_{r+5} &  \cdots & -z_1w_{r-n+5}+w_{r-n+7} & -z^2_2 & 0 \\
	\vdots & \vdots  & \vdots & \vdots & \vdots & \vdots \\
	-z_1w_{r+2n}+w_{r+2n+2} & -z_1w_{r+2n-1}+w_{r+2n+1}  & \cdots & -z_1w_{r+n+1}+w_{r+n+3}   & -z_2^n & 0
	\end{pmatrix} \oeq
	\end{split}
	\end{equation}
	Let $\widehat{R}_j$ denote the row $R_j$ after a row operation. We do the following row operations in order: I) $\widehat{R}_2 = z_1 R_1+R_2$, II) $\widehat{R}_3 \mapsto z_1 \widehat{R}_2+R_3$, and so on. In this calculation we use \[ -\left(\frac{z^j_1-z^j_2}{z_1-z_2}\right)z_1-z^j_2 = -\left(\frac{z^{j+1}_1-z^{j+1}_2}{z_1-z_2}\right)  \] We obtain
	\begin{equation}
	\oeq \det \begin{pmatrix}
	w_r & w_{r-1} & \cdots & w_{r-n+1} & 0 & 1 \\
	w_{r+2} & w_{r+1}  & \cdots & w_{r-n+3}   & -1 & z_1 \\
	w_{r+4} & w_{r+3} &  \cdots & w_{r-n+5} & -\left(\frac{z^2_1-z^2_2}{z_1-z_2}\right) & z^2_1 \\
	w_{r+6} & w_{r+5} &  \cdots & w_{r-n+7} & -\left(\frac{z^3_1-z^3_2}{z_1-z_2}\right) & z^3_1 \\
	\vdots & \vdots  & \vdots & \vdots & \vdots & \vdots \\
	w_{r+2n+2} & w_{r+2n+1}  & \cdots & w_{r+n+3}   & -\left(\frac{z^{n+1}_1-z^{n+1}_2}{z_1-z_2}\right) & z_1^{n+1}
	\end{pmatrix} 
	\end{equation}
	
	\begin{equation*}
	= \frac{1}{z_1-z_2}  \det \begin{pmatrix}
	w_r & w_{r-1} & \cdots & w_{r-n+1} & 0 & 1 \\[3pt]
	w_{r+2} & w_{r+1}  & \cdots & w_{r-n+3}   & z_2-z_1 & z_1 \\[3pt]
	w_{r+4} & w_{r+3} &  \cdots & w_{r-n+5} & z^2_2-z^2_1 & z^2_1 \\[3pt]
	w_{r+6} & w_{r+5} &  \cdots & w_{r-n+7} & z^3_2-z^3_1 & z^3_1 \\[3pt]
	\vdots & \vdots  & \vdots & \vdots & \vdots & \vdots \\[3pt]
	w_{r+2n+2} & w_{r+2n+1}  & \cdots & w_{r+n+3}   & z^{n+1}_2-z^{n+1}_1 & z_1^{n+1}
	\end{pmatrix}
	\end{equation*}
	
	\begin{equation*}
	= \frac{1}{z_1-z_2}  \det \begin{pmatrix}
	w_r & w_{r-1} & \cdots & w_{r-n+1} & 1 & 1 \\[3pt]
	w_{r+2} & w_{r+1}  & \cdots & w_{r-n+3}   & z_2 & z_1 \\[3pt]
	w_{r+4} & w_{r+3} &  \cdots & w_{r-n+5} & z^2_2 & z^2_1 \\[3pt]
	w_{r+6} & w_{r+5} &  \cdots & w_{r-n+7} & z^3_2 & z^3_1 \\[3pt]
	\vdots & \vdots  & \vdots & \vdots & \vdots & \vdots \\[3pt]
	w_{r+2n+2} & w_{r+2n+1}  & \cdots & w_{r+n+3}   & z^{n+1}_2 & z_1^{n+1}
	\end{pmatrix}
	\end{equation*}
\end{proof}

\subsection{Bi-bordered Determinant Evaluation}

We now evaluate the bi-bordered determinant discussed above in terms of the $Q$-polynomials.
This expression will be useful when we come to recast the Christoffel-Darboux summation formula in Proposition \ref{prop CD mixed}.
In addition these results are of interest in their own right.

\begin{lemma}
	For the bi-bordered $2j-k$-determinant \eqref{bi-bordered 2j-k monomials} we have the following representation in terms of the $Q$-polynomials:
	\begin{equation}
	\mathcal{D}_n[w(\ze)\ze^{-r}(z_1-\ze^{-2})(z_2-\ze^{-2})]	= D^{(r)}_n \frac{  Q_{n}(z_1;r) Q_{n+1}(z_2;r) - Q_{n}(z_2;r) Q_{n+1}(z_1;r)  }{z_2-z_1}. 
    \label{Q_bi-bordered}
	\end{equation}
\end{lemma}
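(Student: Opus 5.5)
The plan is to avoid integrals and work with the determinant in the previous lemma. Write $\boldsymbol{M}(u,v)$ for the $(n+2)\times(n+2)$ matrix whose first $n$ columns are the moment columns $(w_{r+2j-k})_{0\le j\le n+1,\,0\le k\le n-1}$ and whose last two columns are $u,v\in\C^{n+2}$. By the previous lemma,
\[
(z_1-z_2)\,\mathcal{D}_n[w(\ze)\ze^{-r}(z_1-\ze^{-2})(z_2-\ze^{-2})]=\det \boldsymbol{M}(\boldsymbol{z}_2,\boldsymbol{z}_1),
\qquad \boldsymbol{z}:=(1,z,\dots,z^{n+1})^T .
\]
The map $\Phi(u,v):=\det\boldsymbol{M}(u,v)$ is bilinear and alternating. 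It also vanishes whenever $u$ or $v$ lies in the $n$-dimensional span $W$ of the moment columns. So $\Phi$ descends to an alternating bilinear form on the $2$-dimensional quotient $\C^{n+2}/W$. Any such form is a scalar multiple of $\ell_1\wedge\ell_2$ for any two linear functionals $\ell_1,\ell_2$ that vanish on $W$ and are independent on the quotient.

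I will take these two functionals from the bordered formula \eqref{OP22} for $Q$:
\[
\ell_1(u):=\det\big[(w_{r+2j-k})_{0\le j\le n,\,0\le k\le n-1}\,\big|\,(u_0,\dots,u_n)^T\big],
\qquad
\ell_2(u):=\det\big[(w_{r+2j-k})_{0\le j\le n+1,\,0\le k\le n}\,\big|\,u\big].
\]
$\ell_1$ ignores the last entry of $u$. Both functionals vanish on $W$, since a repeated column appears. By Theorem \ref{P and Q exist and are unique}, $\ell_1(\boldsymbol{z})=D^{(r)}_nQ_n(z;r)$ and $\ell_2(\boldsymbol{z})=D^{(r)}_{n+1}Q_{n+1}(z;r)$. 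Hence
\[
\Phi(u,v)=c\,\big(\ell_1(u)\ell_2(v)-\ell_1(v)\ell_2(u)\big)
\]
for some constant $c$.

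To find $c$, I evaluate both sides on the standard basis vectors $u=e_n$, $v=e_{n+1}$ (indices $0,\dots,n+1$). Expanding along the last two columns gives $\Phi(e_n,e_{n+1})=D^{(r)}_n$. On the other side, $\ell_1(e_{n+1})=0$, $\ell_1(e_n)=D^{(r)}_n$ and $\ell_2(e_{n+1})=D^{(r)}_{n+1}$, so the right-hand side equals $c\,D^{(r)}_nD^{(r)}_{n+1}$. Therefore $c=1/D^{(r)}_{n+1}$, and
\[
\det\boldsymbol{M}(\boldsymbol{z}_2,\boldsymbol{z}_1)=D^{(r)}_n\big(Q_n(z_2;r)Q_{n+1}(z_1;r)-Q_n(z_1;r)Q_{n+1}(z_2;r)\big).
\]
Dividing by $z_1-z_2$ and flipping the sign of the denominator gives exactly \eqref{Q_bi-bordered}.

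The main obstacle is justifying the two-dimensional quotient argument, which needs the moment columns to be independent and $\ell_1,\ell_2$ to be independent modulo $W$. The computation at $(e_n,e_{n+1})$ already shows that the form is nonzero whenever $D^{(r)}_nD^{(r)}_{n+1}\neq0$, and this is the generic case in which $Q_n$ and $Q_{n+1}$ are defined. To remove the hypothesis, I would clear denominators, i.e. state the identity with $D^{(r)}_nQ_n$ and $D^{(r)}_{n+1}Q_{n+1}$ written as bordered determinants. Both sides are then polynomials in the moments $w_\ell$, so the identity extends from the Zariski-dense generic set by continuity. Alternatively, I could check the same identity via the Dodgson condensation \eqref{DODGSON} applied to an $(n+3)\times(n+3)$ matrix augmented by the column $w_{r+2j-n}$. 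I expect the bookkeeping of signs from column orderings to be the only delicate point, and I would verify the final sign with the $n=0$ case, where $\mathcal{D}_0=1$ and $Q_0=1$, $Q_1(z;r)=z-w_{r+2}/w_r$.
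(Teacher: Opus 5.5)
Your proof is correct, and it takes a genuinely different route from the paper's. The paper left-multiplies the bordered matrix by the unipotent lower-triangular matrix $\boldsymbol{\mathcal{Q}}^{(r)}_{n+1}$ of $Q$-coefficients. Bi-orthogonality, $\langle z^k,Q_j(z;r)\rangle_r=0$ for $k<j$ and $=h^{(r)}_j$ for $k=j$, does two things to the moment block: it makes it upper triangular with diagonal $h^{(r)}_0,\dots,h^{(r)}_{n-1}$, and it annihilates its last two rows. The determinant therefore collapses to $D^{(r)}_n=\prod_{j<n}h^{(r)}_j$ times the $2\times2$ determinant of $Q_n,Q_{n+1}$ at $z_1,z_2$.

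You instead use only two ingredients: the bordered formulas for $D^{(r)}_nQ_n$ and $D^{(r)}_{n+1}Q_{n+1}$, and the fact that alternating bilinear forms on the two-dimensional quotient $\C^{n+2}/W$ form a line. This is in effect the Sylvester (Desnanot--Jacobi) identity. Your evaluation at $(e_n,e_{n+1})$, the constant $1/D^{(r)}_{n+1}$, and the final sign are all correct, and the $n=0$ check does give $1=1$.

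The paper's argument is shorter, given the LDU machinery of \cite{GW}. It also shows the structural reason (orthogonality kills the bottom rows), which extends verbatim to $m$ borders. However, it needs all of $Q_0,\dots,Q_{n+1}$, i.e.\ $D^{(r)}_j\neq0$ for every $j\le n+1$.

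Your argument needs only $D^{(r)}_nD^{(r)}_{n+1}\neq0$. The first factor gives $\dim W=n$, and your evaluation gives independence of $\ell_1,\ell_2$ modulo $W$. With denominators cleared, it becomes a polynomial identity in the moments that holds unconditionally. For that density step you should add that $D^{(r)}_n$ is not the zero polynomial in the moments. This holds because its diagonal monomial $\prod_j w_{r+j}$ arises only from the identity permutation: equality of the multisets $\{2j-\sigma(j)\}$ and $\{j\}$ forces $\sum_j (j-\sigma(j))^2=0$.
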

\begin{proof}
	Let us recall the LDU decomposition for the $2j-k$ determinants from \cite{GW}. Let \begin{equation}\label{polys}
	P_n(z;r) = \sum^{n}_{\ell=0} \mathcal{p}^{(r)}_{n,\ell} z^{\ell}, \qquad Q_n(z;r) =\sum^{n}_{\ell=0} \mathcal{q}^{(r)}_{n,\ell} z^{\ell},
	\end{equation}
	with $\mathcal{p}^{(r)}_{n,n}=\mathcal{q}^{(r)}_{n,n}=1$. Let us also denote 
	\begin{equation}\label{Vectors}
	\boldsymbol{Z}_n(z) := \begin{pmatrix}
	1 \\ z \\ \vdots \\ z^{n}
	\end{pmatrix}  \qandq \boldsymbol{F}_n(z;r) := \begin{pmatrix}
	F_0(z;r) \\ F_1(z;r) \\ \vdots \\ F_n(z;r)
	\end{pmatrix}, \qquad \boldsymbol{F} \in \{\boldsymbol{P},\boldsymbol{Q}\}.
	\end{equation}
	We thus have
	\begin{equation}
	\boldsymbol{P}_n(z;r) = \boldsymbol{\mathcal{P}}_{n}^{(r)}  \boldsymbol{Z}_n(z), \qquad	\boldsymbol{Q}_n(z;r) = \boldsymbol{\mathcal{Q}}_{n}^{(r)}  \boldsymbol{Z}_n(z), 
	\end{equation}
	where $\boldsymbol{\mathcal{P}}_{n}^{(r)}$ and $\boldsymbol{\mathcal{Q}}_{n}^{(r)},$ are the following $(n+1)\times(n+1)$ lower triangular matrices
	\begin{equation}\label{A B}
	\boldsymbol{\mathcal{P}}_{n}^{(r)} := \begin{pmatrix}
	1 & 0 & \cdots & 0 \\
	\mathcal{p}^{(r)}_{1,0} & 1 & \cdots & 0 \\
	\vdots & \vdots & \ddots & \vdots \\
	\mathcal{p}^{(r)}_{n,0} & \mathcal{p}^{(r)}_{n,1} & \cdots & 1
	\end{pmatrix}, \qquad 	\boldsymbol{\mathcal{Q}}_{n}^{(r)} := \begin{pmatrix}
	1 & 0 & \cdots & 0 \\
	\mathcal{q}^{(r)}_{1,0} & 1 & \cdots & 0 \\
	\vdots & \vdots & \ddots & \vdots \\
	\mathcal{q}^{(r)}_{n,0} & \mathcal{q}^{(r)}_{n,1} & \cdots & 1
	\end{pmatrix}, 
	\end{equation}
	whose inverses are also lower diagonal with $1$'s on the main diagonal. Also let us denote the diagonal matrix of norms of polynomials by $\boldsymbol{h}^{(r)}_{n}$:
	\begin{equation}\label{h&H diag}
	\boldsymbol{h}^{(r)}_{n} := \begin{pmatrix}
	h^{(r)}_0 &  \cdots & 0 \\
	\vdots  & \ddots & \vdots \\
	0  & \cdots & h^{(r)}_n
	\end{pmatrix}.
	\end{equation}
	
	Let us now recall the LDU decomposition of the moment matrix for the $2j-k$ systems \cite[Theorem 2.1]{GW}:

		\begin{equation}\label{LDU D}
		\boldsymbol{D}^{(r)}_{n+1} = \left[\boldsymbol{\mathcal{Q}}^{(r)}_{n}\right]^{-1} 	\boldsymbol{h}^{(r)}_{n}  \left[\left(\boldsymbol{\mathcal{P}}^{(r)}_{n}\right)^T\right]^{-1}.
		\end{equation}

	Let us also define 
	\begin{equation}
	\langle f(z),g(z) \rangle_r := \int_{\T} f(\ze) g(\ze^{-2}) \ze^{-r} w(\ze) \frac{\dd \ze}{2\pi \ic \ze},
	\end{equation}
	so
	\begin{equation}
	w_{r+2j-k} = \langle z^{k},z^{j} \rangle_r.
	\end{equation}
	
	Now Let us consider the matrices
	
	\begin{equation}
	M_{n} :=	\begin{pmatrix}
	w_r & w_{r-1} & \cdots & w_{r-n+1} & 1 & 1 \\[3pt]
	w_{r+2} & w_{r+1}  & \cdots & w_{r-n+3}   & z_1 & z_2 \\[3pt]
	w_{r+4} & w_{r+3} &  \cdots & w_{r-n+5} & z^2_1 & z^2_2 \\[3pt]
	w_{r+6} & w_{r+5} &  \cdots & w_{r-n+7} & z^3_1 & z^3_2 \\[3pt]
	\vdots & \vdots  & \vdots & \vdots & \vdots & \vdots \\[3pt]
	w_{r+2n+2} & w_{r+2n+1}  & \cdots & w_{r+n+3}   & z^{n+1}_1 & z_2^{n+1}
	\end{pmatrix}, \qandq W_{n} := \begin{pmatrix}
	w_r & w_{r-1} & \cdots & w_{r-n+1}  \\
	w_{r+2} & w_{r+1}  & \cdots & w_{r-n+3}    \\
	w_{r+4} & w_{r+3} &  \cdots & w_{r-n+5}  \\
	w_{r+6} & w_{r+5} &  \cdots & w_{r-n+7}  \\
	\vdots & \vdots  & \vdots & \vdots  \\
	w_{r+2n+2} & w_{r+2n+1}  & \cdots & w_{r+n+3} 
	\end{pmatrix}
	\end{equation}
	Notice that $M_{n}$ is an $(n+2)\times(n+2)$ matrix while $W_n$ is strictly rectangular of size $(n+2)\times n$. Notice that
	\begin{equation}
	\boldsymbol{\mathcal{Q}}_{n+1}^{(r)} M_{n} = \boldsymbol{\mathcal{Q}}_{n+1}^{(r)}\left(  W_n , \boldsymbol{Z}_{n+1}(z_1), \boldsymbol{Z}_{n+1}(z_2)  \right)=  \left( \boldsymbol{\mathcal{Q}}_{n+1}^{(r)} W_n , \boldsymbol{Q}_{n+1}(z_1;r), \boldsymbol{Q}_{n+1}(z_2;r)  \right)
	\end{equation}
	Moreover,
	\begin{equation}
    \begin{split}
	\left( \boldsymbol{\mathcal{Q}}_{n+1}^{(r)} W_n\right)_{j,k} & = \left(  \boldsymbol{\mathcal{Q}}_{n+1}^{(r)} \left( w_{r+2j'-k'} \right)_{\substack{0\leq j' \leq n+1 \\ 0 \leq k' \leq n-1}} \right)_{j,k} = \sum^{n+1}_{\ell=0} \left( \boldsymbol{\mathcal{Q}}_{n+1}^{(r)}  \right)_{j,\ell} w_{r+2\ell-k} = \sum^{n+1}_{\ell=0} \mathcal{q}^{(r)}_{j,\ell} \langle z^{k},z^{\ell} \rangle_r \\ & = \langle z^{k},\sum^{n+1}_{\ell=0} \mathcal{q}^{(r)}_{j,\ell} z^{\ell} \rangle_r =  \langle z^{k}, Q_j(z;r) \rangle_r,
    \end{split}
	\end{equation}
	and therefore
	\begin{equation}
	\left( \boldsymbol{\mathcal{Q}}_{n+1}^{(r)} W_n\right)_{j,k} = \begin{cases}
	0 & k < j \\
	h^{(r)}_j & k=j \\
	* & k>j
	\end{cases}
	\end{equation}
	We thus have
	\begin{equation}
	\boldsymbol{\mathcal{Q}}_{n+1}^{(r)} M_n :=	\begin{pmatrix}
	h^{(r)}_0 & * & \cdots & * & 1 & 1 \\[3pt]
	0 & h^{(r)}_1  & \cdots & *   & Q_1(z_1;r) & Q_2(z_1;r) \\[3pt]
	\vdots & \vdots  & \ddots & \vdots & \vdots & \vdots \\[3pt]
	0 & 0  & \cdots & h^{(1)}_{n-1}   & Q_{n-1}(z_1;r) & Q_{n-1}(z_2;r) \\[3pt]
	0 & 0  & \cdots & 0   & Q_{n}(z_1;r) & Q_{n}(z_2;r) \\[3pt]
	0 & 0  & \cdots & 0   & Q_{n+1}(z_1;r) & Q_{n+1}(z_2;r)
	\end{pmatrix}.
	\end{equation}
	Therefore
	\begin{equation}
	\begin{split}
	\det M_n & \equiv	\det \left( \boldsymbol{\mathcal{Q}}_{n+1}^{(r)} M_n \right) = \left[ Q_{n}(z_1;r) Q_{n+1}(z_2;r) - Q_{n}(z_2;r) Q_{n+1}(z_1;r)  \right] \prod_{j=0}^{n-1} h^{(r)}_j \\ & = D^{(r)}_n \left[ Q_{n}(z_1;r) Q_{n+1}(z_2;r) - Q_{n}(z_2;r) Q_{n+1}(z_1;r)  \right]
	\end{split}
	\end{equation}	
	Therefore
	\begin{equation}
	\mathcal{D}_n[w(\ze)\ze^{-r}(z_1-\ze^{-2})(z_2-\ze^{-2})]	= D^{(r)}_n \frac{  Q_{n}(z_1;r) Q_{n+1}(z_2;r) - Q_{n}(z_2;r) Q_{n+1}(z_1;r)  }{z_2-z_1} .
	\end{equation}
\end{proof}

A consequence of the preceding theorem will be required for subsequent use in Lemma \ref{PP_shift_by_2}.
We require an identity which will be furnished by extracting the first two terms of the relation
\eqref{Q_bi-bordered} in a certain expansion of the free variables $z_1, z_2$.
\begin{lemma}
We have the following multiple integral evaluation
\begin{multline}
    \frac{1}{n!} \int_{\T} \frac{\dd \ze_1}{2 \pi \ic \ze_1} \cdots \int_{\T} \frac{\dd \ze_n}{2 \pi \ic \ze_n} 
            \prod_{j=1}^{n}w(\ze_j)\ze^{-r}_{j}
            \left( \sum^{n}_{l=1}\ze^{-2}_{l} \right) \prod^{n}_{m=1}\left( z-\ze^{-2}_{m} \right)
            \prod_{1\leq j<k\leq n} (\ze_k-\ze_j)(\ze^{-2}_k-\ze^{-2}_j)
\\
        = -D^{(r)}_{n} \left[ (z+q^{(r)}_{n+1,1})Q_{n}(z;r) - Q_{n+1}(z;r) \right] .
\label{extended_multiple_integral}
\end{multline}
\end{lemma}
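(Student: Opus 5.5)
The plan is to specialize the bi-bordered identity \eqref{Q_bi-bordered} by letting one spectral variable tend to infinity while keeping the other fixed. Set $z_1=z$ and $z_2=t$. For each $\zeta$ we have
\[
(t-\zeta^{-2})=t\,(1-t^{-1}\zeta^{-2}),
\]
so the left-hand side of \eqref{Q_bi-bordered} is
\[
t^{n}\,\mathcal{D}_n\!\left[w(\zeta)\zeta^{-r}(z-\zeta^{-2})\prod_m (1-t^{-1}\zeta_m^{-2})\right].
\]
Expanding the product in powers of $t^{-1}$ gives $1-t^{-1}\sum_{l}\zeta_l^{-2}+O(t^{-2})$. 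Every term is a polynomial in $t^{-1}$ with bounded integrands, so the expansion may be taken inside the (finite) integral. Hence
\[
\text{LHS}=t^{n}\Big(\mathcal{D}_n[w\zeta^{-r}(z-\zeta^{-2})]-t^{-1}\,I_n(z)+O(t^{-2})\Big),
\]
where $I_n(z)$ is the multiple integral on the left of \eqref{extended_multiple_integral}. The first coefficient equals $D^{(r)}_nQ_n(z;r)$ by \eqref{MultIntPolysQ}.

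Next, expand the right-hand side of \eqref{Q_bi-bordered} in $t$ as $t\to\infty$. Write
\[
Q_{n+1}(t)=t^{n+1}+q_{n+1,1}t^{n}+\cdots,\qquad Q_n(t)=t^n+q_{n,1}t^{n-1}+\cdots,\qquad (t-z)^{-1}=t^{-1}(1+zt^{-1}+\cdots).
\]
The numerator $Q_n(z)Q_{n+1}(t)-Q_n(t)Q_{n+1}(z)$ is
\[
t^{n+1}Q_n(z)+t^{n}\big(q_{n+1,1}Q_n(z)-Q_{n+1}(z)\big)+O(t^{n-1}).
\]
Dividing by $t-z$ gives
\[
t^{n}Q_n(z)+t^{n-1}\big(zQ_n(z)+q_{n+1,1}Q_n(z)-Q_{n+1}(z)\big)+O(t^{n-2}).
\]
Multiplying by $D^{(r)}_n$ and matching the $t^n$ coefficients reproduces \eqref{MultIntPolysQ}, which is a useful check on signs. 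Matching the $t^{n-1}$ coefficients gives
\[
-I_n(z)=D^{(r)}_n\big[(z+q^{(r)}_{n+1,1})Q_n(z;r)-Q_{n+1}(z;r)\big],
\]
which is exactly \eqref{extended_multiple_integral}.

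The main point needing care is the orientation and sign in \eqref{Q_bi-bordered}: the denominator $z_2-z_1$ must pair with the numerator ordering exactly as stated. I will take that identity as given and double-check it through the leading-order consistency with \eqref{MultIntPolysQ}. Since both sides are polynomials in $t$ (the division by $t-z$ being exact), coefficient matching is legitimate and no analytic issues arise.
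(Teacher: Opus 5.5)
Your proposal is correct and is the paper's own argument: take $z_2=t\to\infty$ in the bi-bordered identity \eqref{Q_bi-bordered} with $z_1=z$ fixed, check that the $t^{n}$ coefficient reproduces the single-integral formula for $D^{(r)}_nQ_n(z;r)$, and read off the claim from the $t^{n-1}$ coefficient, which you computed correctly. Your observation that both sides are exact polynomials in $t$ makes the coefficient matching rigorous, a point the paper leaves implicit.
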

\begin{proof}
We take both sides of \eqref{Q_bi-bordered} and perform an expansion of $ z_2\to \infty $ whilst keeping $ z_1 $ fixed
and only retaining the first two orders. 
Specifically we only require the coefficient of the order $ z_2^{n-1} $ term as the coefficient of the $ z_2^n $ term is trivially 
$ D^{(r)}_{n} Q_{n}(z_1;r) $.
\end{proof}

\begin{remark}\label{rem:m-bordered-polynomial-determinants}
In the same spirit as Propositions~1.9 and~1.10 of \cite{GL24}, the
bi-bordered identities stated above admit natural multi-bordered analogues.
More precisely, after division by the appropriate Vandermonde factor in
\(z_1,\ldots,z_m\), the \(m\)-column monomial-bordered determinant whose bulk
has the \(2j-k\) structure can be expressed as an \(m\times m\) determinant
built from the corresponding \(Q\)-polynomials. Similarly, the \(m\)-row
monomial-bordered determinant with \(2j-k\) bulk can be expressed as an
\(m\times m\) determinant built from the corresponding \(P\)-polynomials.

The same statement holds for the complementary \(j-2k\) structure. Namely,
the \(m\)-column monomial-bordered determinant with \(j-2k\) bulk can be
written as an \(m\times m\) determinant built from the corresponding
\(S\)-polynomials, while the \(m\)-row monomial-bordered determinant with
\(j-2k\) bulk can be written as an \(m\times m\) determinant built from the
corresponding \(R\)-polynomials. These identities follow by the same
Dodgson-condensation argument used in the proof of the multi-bordered
Toeplitz and Hankel identities in \cite[Propositions 1.9 and 1.10]{GL24}; see also the other derivation in \cite[Proposition A.1.]{GKM26}.
\end{remark}

\section{A Shifted Vandermonde Determinant and the Proof of Proposition \ref{prop CD mixed}}\label{sec shifted vandermonde}

In this section
we prove an elementary identity, which is important for the derivation of the mixed $P-Q$
Christoffel--Darboux formula in Proposition \ref{prop CD mixed}. The usual Vandermonde determinant appears
when one evaluates the determinant of consecutive monomials, or of a monic
polynomial basis with consecutive degrees. In the formula below, one
degree is skipped and replaced by the next one. The price of this shift is
a simple multiplicative factor involving the first nontrivial leading coefficient of the
top polynomial and the sum of the variables.

\begin{lemma}
	Let $P_n(z) = z^n + \sum_{j=1}^{n} p_{n,j}z^{n-j}.$ Then \begin{equation}\label{det identity}
	\underset{1 \leq j \leq n}{\det}	 \left[ P_0(\ze_j),P_1(\ze_j),\cdots, P_{n-2}(\ze_j),P_{n}(\ze_j) \right] = \left( p_{n,1} + \sum_{j=1}^{n} \ze_j \right)\underset{1 \leq j<k \leq n}{\prod} (\ze_k-\ze_j)
	\end{equation}
\end{lemma}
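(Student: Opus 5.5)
The plan is to reduce the determinant to one built from monomials by column operations, and then evaluate that monomial determinant as a Schur-function (first elementary symmetric polynomial) times the Vandermonde product. Throughout, the matrix has rows indexed by $j=1,\dots,n$ and columns given by the listed polynomials evaluated at $\ze_j$.

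\emph{Step 1: reduce to monomials.} Since each $P_k$ is monic of degree $k$, subtract suitable multiples of earlier columns from later ones, in increasing order, to replace $P_k(\ze_j)$ by $\ze_j^k$ for $k=0,\dots,n-2$; this is unitriangular and leaves the determinant unchanged. The last column is
\[
P_n(\ze_j)=\ze_j^n+p_{n,1}\ze_j^{n-1}+\sum_{k\le n-2}p_{n,n-k}\ze_j^{k}.
\]
Subtracting combinations of the first $n-1$ columns (now $1,\ze_j,\dots,\ze_j^{n-2}$) removes all terms of degree $\le n-2$. The last column becomes $\ze_j^n+p_{n,1}\ze_j^{n-1}$.

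\emph{Step 2: split by linearity in the last column.} The determinant equals
\[
\det[\ze_j^0,\dots,\ze_j^{n-2},\ze_j^{n}]+p_{n,1}\det[\ze_j^0,\dots,\ze_j^{n-2},\ze_j^{n-1}].
\]
The second determinant is the Vandermonde $\prod_{j<k}(\ze_k-\ze_j)$, with the sign convention fixed by the row/column ordering above.

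\emph{Step 3: evaluate the first determinant.} This is the only real step. I would show that $\det[\ze_j^0,\dots,\ze_j^{n-2},\ze_j^{n}]=e_1(\ze)\prod_{j<k}(\ze_k-\ze_j)$, where $e_1=\sum_j\ze_j$. There are two routes.

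The first route is direct. The determinant is an alternating polynomial, so it is divisible by the Vandermonde. The quotient is symmetric and homogeneous of degree $\binom{n}{2}+1-\binom{n}{2}=1$, hence equal to $c\,e_1$. Comparing the coefficient of the monomial $\ze_1^0\ze_2^1\cdots\ze_{n-1}^{n-2}\ze_n^{n}$, which comes from the diagonal term on the left and from $\ze_n\cdot(\text{diagonal term of the Vandermonde})$ on the right, gives $c=1$.

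The second route is an alternative check. Append a variable $x$ and consider the $(n+1)\times(n+1)$ Vandermonde in $\ze_1,\dots,\ze_n,x$, which equals $\prod_{j<k}(\ze_k-\ze_j)\prod_j(x-\ze_j)$. Expanding along the $x$-row, the coefficient of $x^{n-1}$ is, up to sign, the determinant with the power $n-1$ omitted. From the product, this coefficient is $-e_1\prod_{j<k}(\ze_k-\ze_j)$, and the cofactor sign $(-1)^{(n+1)+n}=-1$ cancels it. This also confirms the sign.

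\emph{Step 4: combine.} Adding the two pieces gives $(p_{n,1}+\sum_j\ze_j)\prod_{j<k}(\ze_k-\ze_j)$. The main obstacle is only keeping the sign conventions consistent, and the second route in Step 3 settles that cleanly.
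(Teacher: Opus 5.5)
Your proposal is correct and follows the same route as the paper: unitriangular column operations reduce the columns to $1,\zeta_j,\dots,\zeta_j^{n-2}$ and $\zeta_j^n+p_{n,1}\zeta_j^{n-1}$, and linearity in the last column splits the determinant into the shifted Vandermonde plus $p_{n,1}$ times the ordinary Vandermonde. The one difference is that you justify the evaluation $\det[\zeta_j^0,\dots,\zeta_j^{n-2},\zeta_j^{n}]=\bigl(\sum_j\zeta_j\bigr)\prod_{j<k}(\zeta_k-\zeta_j)$, whereas the paper simply asserts it in the last line of its computation; both of your routes are valid, including the coefficient comparison that gives $c=1$ and the cofactor sign $(-1)^{2n+1}$.
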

\begin{proof}
	Let us start from the left-hand side of \eqref{det identity}:
	\begin{equation}
	\begin{split}
	&	\underset{1 \leq j \leq n}{\det}	 \left[ P_0(\ze_j),P_1(\ze_j),\cdots, P_{n-2}(\ze_j),P_{n}(\ze_j) \right]  = \det \begin{pmatrix}
	P_{0}(\ze_1) & P_{1}(\ze_1)  & \cdots & P_{n-2}(\ze_1) & P_{n}(\ze_1) \\
	P_{0}(\ze_2) & P_{1}(\ze_2)  & \cdots & P_{n-2}(\ze_2) & P_{n}(\ze_2) \\
	\vdots & \vdots  & \vdots & \vdots & \vdots \\
	P_{0}(\ze_n) & P_{1}(\ze_n)  & \cdots & P_{n-2}(\ze_n) & P_{n}(\ze_n) \\
	\end{pmatrix} \\ & = \det \begin{pmatrix}
	1 & \ze_1+p_{1,1} &  \cdots & \ze^{n-2}_{1}+\cdots +p_{n-2,n-3}\ze_{1}+p_{n-2,n-2} & \ze^{n}_{1}+ p_{n,1}\ze^{n-1}_{1} +\cdots +p_{n,n-1}\ze_{1}+p_{n,n} \\
	1 & \ze_2+p_{1,1}  & \cdots & \ze^{n-2}_{2}+\cdots +p_{n-2,n-3}\ze_{2}+p_{n-2,n-2} & \ze^{n}_{2}+p_{n,1}\ze^{n-1}_{2}+\cdots +p_{n,n-1}\ze_{2}+p_{n,n} \\
	\vdots & \vdots   & \cdots & \vdots & \vdots \\
	1 & \ze_n+p_{1,1}  & \cdots & \ze^{n-2}_{n}+\cdots +p_{n-2,n-3}\ze_{n}+p_{n-2,n-2} & \ze^{n}_{n}+ p_{n,1}\ze^{n-1}_{n}+\cdots +p_{n,n-1}\ze_{n}+p_{n,n} \\
	\end{pmatrix} \oeq
	\end{split}
	\end{equation}	
	Using the first column we eliminate all the constant terms in all columns, then we use the second column to eliminate all the linear terms, and so on. Indeed, 
	\begin{equation}\label{73}
	\begin{split}
	& \oeq \det \begin{pmatrix}
	1 & \ze_1 & \ze^2_1+p_{2,1}\ze_1 & \cdots & \ze^{n-2}_{1}+\cdots +p_{n-2,n-3}\ze_{1} & \ze^{n}_{1}+p_{n,1}\ze^{n-1}_{1}+\cdots +p_{n,n-1}\ze_{1} \\
	1 & \ze_2 & \ze^2_2+p_{2,1}\ze_2 & \cdots & \ze^{n-2}_{2}+\cdots +p_{n-2,n-3}\ze_{2} & \ze^{n}_{2}+p_{n,1}\ze^{n-1}_{2}+\cdots +p_{n,n-1}\ze_{2} \\
	\vdots & \vdots  & \vdots & \vdots & \vdots \\
	1 & \ze_n & \ze^2_n+p_{2,1}\ze_n & \cdots & \ze^{n-2}_{n}+\cdots +p_{n-2,n-3}\ze_{n} & \ze^{n}_{n}+p_{n,1}\ze^{n-1}_{n}+\cdots +p_{n,n-1}\ze_{n} \\
	\end{pmatrix} \\ & = \det \begin{pmatrix}
	1 & \ze_1 & \ze^2_1 & \cdots & \ze^{n-2}_{1}+\cdots +p_{n-2,n-4}\ze^2_{1} & \ze^{n}_{1}+p_{n,1}\ze^{n-1}_{1}+\cdots +p_{n,n-2}\ze^2_{1} \\
	1 & \ze_2 & \ze^2_2 & \cdots & \ze^{n-2}_{2}+\cdots +p_{n-2,n-4}\ze^2_{2} & \ze^{n}_{2}+p_{n,1}\ze^{n-1}_{2}+\cdots +p_{n,n-2}\ze^2_{2} \\
	\vdots & \vdots  & \vdots & \vdots & \vdots \\
	1 & \ze_n & \ze^2_n & \cdots & \ze^{n-2}_{n}+\cdots +p_{n-2,n-4}\ze^2_{n} & \ze^{n}_{n}+p_{n,1}\ze^{n-1}_{n}+\cdots +p_{n,n-2}\ze^2_{n} \\
	\end{pmatrix} \\ & \vdots \\ & = \det \begin{pmatrix}
	1 & \ze_1 & \ze^2_1 & \cdots & \ze^{n-2}_{1} & \ze^{n}_{1}+p_{n,1}\ze^{n-1}_{1} \\
	1 & \ze_2 & \ze^2_2 & \cdots & \ze^{n-2}_{2} & \ze^{n}_{2}+p_{n,1}\ze^{n-1}_{2} \\
	\vdots & \vdots  & \vdots & \vdots & \vdots \\
	1 & \ze_n & \ze^2_n & \cdots & \ze^{n-2}_{n} & \ze^{n}_{n}+p_{n,1}\ze^{n-1}_{n} \\
	\end{pmatrix} \\ & = \det \begin{pmatrix}
	1 & \ze_1 & \ze^2_1 & \cdots & \ze^{n-2}_{1} & \ze^{n}_{1} \\
	1 & \ze_2 & \ze^2_2 & \cdots & \ze^{n-2}_{2} & \ze^{n}_{2} \\
	\vdots & \vdots  & \vdots & \vdots & \vdots \\
	1 & \ze_n & \ze^2_n & \cdots & \ze^{n-2}_{n} & \ze^{n}_{n} \\
	\end{pmatrix} + p_{n,1} \det \begin{pmatrix}
	1 & \ze_1 & \ze^2_1 & \cdots & \ze^{n-2}_{1} & \ze^{n-1}_{1} \\
	1 & \ze_2 & \ze^2_2 & \cdots & \ze^{n-2}_{2} & \ze^{n-1}_{2} \\
	\vdots & \vdots  & \vdots & \vdots & \vdots \\
	1 & \ze_n & \ze^2_n & \cdots & \ze^{n-2}_{n} & \ze^{n-1}_{n} \\
	\end{pmatrix} \\ & =  \left( p_{n,1} + \sum_{j=1}^{n} \ze_j \right)\underset{1 \leq j<k \leq n}{\prod} (\ze_k-\ze_j).
	\end{split}
	\end{equation}
\end{proof}

\subsection{The Christoffel-Darboux Formula Revisited}

Let us recall from \cite{GW} the Christoffel-Darboux formula for the $2j-k$ reproducing kernel.

	\begin{equation}\label{CDDDDD}
	K_n(z^2_2,z_1;r)=\frac{1}{2}\frac{D_n^{(r+2)}}{D_{n+1}^{(r)}} \frac{z^{2n+1}_2}{z^{2}_1-z^{-2}_2} \det \begin{pmatrix}
	P_n(-z^{-1}_2;r+2) & P_{n+1}(-z^{-1}_2;r+2) & P_{n+2}(-z^{-1}_2;r+2) \\[2pt]
	P_n(z^{-1}_2;r+2) & P_{n+1}(z^{-1}_2;r+2) & P_{n+2}(z^{-1}_2;r+2) \\[2pt]
	P_n(z_1;r+2) & P_{n+1}(z_1;r+2) & P_{n+2}(z_1;r+2) \\				
	\end{pmatrix},
	\end{equation}
	where $K_{n}$ denotes the reproducing kernel:
	\begin{equation}
	K_{n}(z,\mathcal{z};r)  := \sum_{j=0}^{n} \frac{1}{h^{(r)}_{j}}Q_{j}(z;r)P_{j}(\mathcal{z};r). \label{RepKer33333}
	\end{equation}

Let us also recall the following identity from \cite[Theorem~5.9]{GW} which helps in simplifying this determinant:

\begin{equation}\label{222}
Q_n(z^{2};r)=\frac{D^{(r+2)}_n}{D^{(r)}_n}\frac{z^{2n+1}}{2}\left[ P_{n+1}(z^{-1};r+2)P_n(-z^{-1};r+2) - P_{n+1}(-z^{-1};r+2)P_n(z^{-1};r+2) \right].
\end{equation}
Notice that

\begin{equation}
\begin{split}
& \det \begin{pmatrix}
P_n(-z^{-1}_2;r+2) & P_{n+1}(-z^{-1}_2;r+2) & P_{n+2}(-z^{-1}_2;r+2) \\[2pt]
P_n(z^{-1}_2;r+2) & P_{n+1}(z^{-1}_2;r+2) & P_{n+2}(z^{-1}_2;r+2) \\[2pt]
P_n(z_1;r+2) & P_{n+1}(z_1;r+2) & P_{n+2}(z_1;r+2) \\				
\end{pmatrix} = \\ & P_{n+2}(z_1;r+2)\left[P_n(-z^{-1}_2;r+2) P_{n+1}(z^{-1}_2;r+2) -  P_{n+1}(-z^{-1}_2;r+2)P_n(z^{-1}_2;r+2) \right] \\& + 	P_n(z_1;r+2) \left[ P_{n+1}(-z^{-1}_2;r+2)P_{n+2}(z^{-1}_2;r+2) - P_{n+2}(-z^{-1}_2;r+2) P_{n+1}(z^{-1}_2;r+2)   \right] \\ & - P_{n+1}(z_1;r+2) \left[ 	P_n(-z^{-1}_2;r+2) P_{n+2}(z^{-1}_2;r+2) - P_{n+2}(-z^{-1}_2;r+2) 	P_n(z^{-1}_2;r+2) \right].
\end{split}	
\end{equation}
We observe that the first two terms on the right-hand side of this equation can be further simplified in terms of $Q$-polynomials via equation \eqref{222}, while for the last term we would need an identity analogous to \eqref{222}. This is what the next lemma gives us.

\begin{lemma}\label{PP_shift_by_2}
We have
\begin{equation}\label{a 2by2 determinant of P's with a gap}
	\begin{split}
	& P_{n+1}(z^{-1};r+2)P_{n-1}(-z^{-1};r+2) - P_{n+1}(-z^{-1};r+2)P_{n-1}(z^{-1};r+2) \\ & = \frac{h^{(r+2)}_{n-1}D^{(r)}_n}{h^{(r+2)}_{n}D^{(r+2)}_n} 2z^{-2n-1} \left[    \left(z^2+q^{(r)}_{n+1,1}-q^{(r+2)}_{n,1}\right)Q_n(z^2;r) - Q_{n+1}(z^2;r)  \right]
	\end{split}
\end{equation}
\end{lemma}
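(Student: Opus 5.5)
The plan is to prove \eqref{a 2by2 determinant of P's with a gap} by expressing both sides as multiple integrals of $\mathcal{D}$ type and matching them. On the right-hand side the bracket is exactly the combination appearing in \eqref{extended_multiple_integral}, evaluated at $z\mapsto z^2$. However, \eqref{extended_multiple_integral} involves $q^{(r)}_{n+1,1}$ alone, whereas the claim involves $q^{(r)}_{n+1,1}-q^{(r+2)}_{n,1}$. So I would first rewrite the right-hand side as
\[
-\frac{1}{D^{(r)}_n}\mathcal{D}_n\Big[\,\cdot\,\Big]\;-\;q^{(r+2)}_{n,1}Q_n(z^2;r),
\]
where the first term is the integral with insertion $\sum_l \zeta_l^{-2}$. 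Using $q^{(r+2)}_{n,1}=q^{(r)}_{n+1,1}+h^{(r+2)}_n/h^{(r)}_n$ from \eqref{q1_mixed-shift_single-h}, this would be reorganized into a single clean integral.

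For the left-hand side, I would follow the route by which \eqref{222} was proved in \cite[Theorem~5.9]{GW}. Write $P_{n+1}(\pm z^{-1};r+2)$ and $P_{n-1}(\pm z^{-1};r+2)$ through \eqref{P multiple integral recalled} with weight $w\zeta^{-r-2}$. Then interpret the antisymmetric combination $A(x)B(-x)-A(-x)B(x)$, with $x=z^{-1}$, as a $2\times2$ determinant of evaluations at the two points $\pm x$.

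Equivalently, I would use the row-bordered analogue of the bi-bordered identity in Remark~\ref{rem:m-bordered-polynomial-determinants}: a $2j-k$ bulk of size $n+1$ bordered by two rows $(1,\pm x,\dots,\pm x^{n+1})$, divided by the Vandermonde factor $-2x$. Expanding via the LDU structure \eqref{LDU D}, exactly as in the proof of \eqref{Q_bi-bordered} but with $\boldsymbol{\mathcal{P}}$ instead of $\boldsymbol{\mathcal{Q}}$, gives a determinant
\[
\det\begin{pmatrix}P_n(x)&P_{n+1}(x)\\P_n(-x)&P_{n+1}(-x)\end{pmatrix}
\]
for consecutive degrees. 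This is the origin of \eqref{222}. Our gap combination $(n-1,n+1)$ instead arises from the coefficient of a shifted degree. This is precisely where the shifted Vandermonde Lemma \eqref{det identity} enters: the polynomial matrix $[P_0,\dots,P_{n-2},P_n]$ evaluated at the integration variables together with $\pm x$ produces an extra factor $\big(p_{n,1}+\sum\zeta_j\big)$ inside the integral.

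Concretely, I would write the gap determinant as an $(n+1)$-fold $\mathcal{D}$-integral with weight $w\zeta^{-r-2}$ and factors $(x^2-\zeta_j^2)$. The two-point Vandermonde $\prod(x-\zeta)(-x-\zeta)$ collapses to $\prod(\zeta^2-x^2)$, which after $\zeta\mapsto$ the slanted variables and $x=z^{-1}$ becomes $\prod(z^2-\zeta^{-2})$ times powers of $z$. The shifted Vandermonde identity inserts the symmetric factor $p^{(r+2)}_{\cdot,1}+\sum\zeta$, and after the change converting the $r+2$ integral of size $n+1$ into an $r$ integral of size $n$ (the same reduction used for \eqref{222}) this factor becomes $\sum\zeta_l^{-2}$ plus a constant. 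At that point \eqref{extended_multiple_integral} applies directly.

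Finally, I would fix the constant. I would collect the prefactors $D^{(r+2)}_{n+1}, D^{(r+2)}_{n-1}$ and $D^{(r)}_n$ and simplify them to $h^{(r+2)}_{n-1}D^{(r)}_n/(h^{(r+2)}_nD^{(r+2)}_n)$ using \eqref{h}. The additive constant would be identified with $-q^{(r+2)}_{n,1}$ by comparing the leading $z$-asymptotics of both sides; the coefficient of $z^{-2n-1}\cdot z^{2n+2}$ is the cleanest check.

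The main obstacle is the middle step: correctly transporting the $(n+1)$-fold integral with two evaluation points $\pm x$ into an $n$-fold $r$-weighted integral with the symmetric insertion. The sign and power bookkeeping there is delicate. As a fallback, I would instead verify the identity by showing both sides are odd Laurent polynomials in $z$ with matching top coefficients. Then I would check that their difference is annihilated by the orthogonality relations \eqref{OP2}, using uniqueness of $Q_n,Q_{n+1}$.
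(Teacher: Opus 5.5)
There is a genuine gap, and it is at the step you flag yourself as the main obstacle. Your reformulation of the right-hand side is fine: the bracket equals $-1/D^{(r)}_n$ times the $n$-fold integral with weight $w(\zeta)\zeta^{-r}(z^2-\zeta^{-2})$ and insertion $q^{(r+2)}_{n,1}+\sum_l\zeta_l^{-2}$. The substitution for $q^{(r+2)}_{n,1}$ is not needed. What fails is the mechanism you propose for producing that insertion from the left-hand side.

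\emph{Wrong side for the gap.} You place the gap on the $P$-side. That yields a factor $p_{\cdot,1}+\sum_j\zeta_j$ in the variables themselves. You then invoke a ``change converting the $r+2$ integral of size $n+1$ into an $r$ integral of size $n$'' under which $\sum\zeta_j$ becomes $\sum\zeta_l^{-2}$. No such step exists. In the argument behind \eqref{222} the number of integration variables is fixed and only the weight changes, through $w(\zeta)\zeta^{-r-2}(\zeta^2-z^{-2})=z^{-2}w(\zeta)\zeta^{-r}(z^2-\zeta^{-2})$. Nothing converts $\sum\zeta_j$ into $\sum\zeta_j^{-2}$.

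\emph{Wrong index set.} Moreover, $[P_0,\dots,P_{n-2},P_n]$ is the index set for the pair $(n-2,n)$, not $(n-1,n+1)$. A correctly indexed $P$-side gap, $[P_0,\dots,P_{n-1},P_{n+1}]$ against $n-1$ moment rows, gives an $(n-1)$-fold integral with insertion $p^{(r+2)}_{n+1,1}+\sum_j\zeta_j$. That evaluates to a combination of $Q_{n-1}(z^2;r)$ and $Q_{n-2}(z^2;r)$, not to \eqref{extended_multiple_integral}. It could only be matched to the claim via \eqref{Qn-recur} plus further coefficient identities. The presence of $q^{(r+2)}_{n,1}$, a $Q$-coefficient at level $r+2$, in the target signals that the shifted Vandermonde must act on the $\zeta^{-2}$ side.

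\emph{The missing idea.} Realize $\frac{h^{(r+2)}_n}{h^{(r+2)}_{n-1}}D^{(r+2)}_n$ times the left-hand side, up to sign, as an $(n+2)\times(n+2)$ determinant built as follows.
\begin{itemize}
\item Columns are indexed by $P_0,\dots,P_{n+1}$ at level $r+2$.
\item The last two rows are $P_k(\pm z^{-1};r+2)$.
\item The first $n$ rows are $\int w(\zeta)\zeta^{-r-2}P_k(\zeta;r+2)Q_j(\zeta^{-2};r+2)\frac{\mathrm{d}\zeta}{2\pi i\zeta}$ for $j=0,\dots,n-2$ and $j=n$, skipping $j=n-1$.
\end{itemize}
By \eqref{PQorth} each of these rows has a single nonzero entry, $h^{(r+2)}_j$ in column $P_j$. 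Hence only the columns of $P_{n-1}$ and $P_{n+1}$ survive in the last two rows.

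Pulling out the $n$ integrals and antisymmetrizing gives the integrand $\frac{1}{n!}\det[P_k(\zeta_j),P_k(\pm z^{-1})]_{k=0}^{n+1}\cdot\det[Q_0,\dots,Q_{n-2},Q_n](\zeta_j^{-2})$.
\begin{itemize}
\item The first determinant has consecutive monic degrees. It is therefore the full Vandermonde on $\zeta_1,\dots,\zeta_n,z^{-1},-z^{-1}$, namely $-2z^{-1}\prod_j(\zeta_j^2-z^{-2})\prod_{j<k}(\zeta_k-\zeta_j)$.
\item The second determinant is where \eqref{det identity} enters. It gives $(q^{(r+2)}_{n,1}+\sum_l\zeta_l^{-2})\prod_{j<k}(\zeta_k^{-2}-\zeta_j^{-2})$.
\end{itemize}
Then \eqref{Q multiple integral recalled} and \eqref{extended_multiple_integral} finish the proof, and the constant $-q^{(r+2)}_{n,1}$ comes out automatically.

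\emph{Your fallbacks.} Your plan to fix the constant asymptotically would not have worked. The coefficient of $z^{-2n-1}\cdot z^{2n+2}$ vanishes on both sides for every value of the constant, since $(z^2+c)Q_n(z^2)-Q_{n+1}(z^2)$ has no $z^{2n+2}$ term. The fallback via \eqref{OP2} does not close either. The bracket is in general annihilated by $\zeta^{m-r}w$ only for $2\le m\le n-1$, so orthogonality plus a leading coefficient does not determine it.
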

\begin{proof}
We start with
\begin{equation}
	\begin{split}
	& \frac{h^{(r+2)}_n}{h^{(r+2)}_{n-1}} D^{(r+2)}_n \left[  P_{n+1}(-z^{-1};r+2)P_{n-1}(z^{-1};r+2) - P_{n+1}(z^{-1};r+2)P_{n-1}(-z^{-1};r+2) \right] \\ & =  - \det \begin{pmatrix}
	h^{(r+2)}_0  & \cdots & 0 & 0 & 0 & 0 \\
	\vdots  & \ddots & \vdots & \vdots & \vdots & \vdots \\
	0  & \cdots & h^{(r+2)}_{n-2} & 0 & 0 & 0 \\
	0  & \cdots & 0 & 0 & h^{(r+2)}_{n}  & 0 \\
	P_0(z^{-1};r+2)  & \cdots & P_{n-2}(z^{-1};r+2) & P_{n-1}(z^{-1};r+2) & P_n(z^{-1};r+2) & P_{n+1}(z^{-1};r+2) \\
	P_0(-z^{-1};r+2)  & \cdots & P_{n-2}(-z^{-1};r+2) & P_{n-1}(-z^{-1};r+2) & P_n(-z^{-1};r+2) & P_{n+1}(-z^{-1};r+2) \\	
	\end{pmatrix} \\ & = - \underset{\substack{1 \leq j \leq n-1 \\ 1 \leq k\leq n+2}}{\det} \begin{pmatrix}
	\di \int_{\T} \frac{\dd \ze}{2 \pi \ic \ze} w(\ze)\ze^{-r-2} P_{k-1}(\ze;r+2)Q_{j-1}(\ze^{-2};r+2) \\
	\di \int_{\T} \frac{\dd \ze}{2 \pi \ic \ze} w(\ze)\ze^{-r-2} P_{k-1}(\ze;r+2)Q_{n}(\ze^{-2};r+2) \\ P_{k-1}(z^{-1};r+2) \\ P_{k-1}(-z^{-1};r+2)
	\end{pmatrix} \\ & = -\int_{\T} \frac{\dd \ze_1}{2 \pi \ic \ze_1} \cdots \int_{\T} \frac{\dd \ze_n}{2 \pi \ic \ze_n} \prod_{j=1}^{n}w(\ze_j)\ze^{-r-2}_j \underset{\substack{1 \leq j \leq n \\ 1 \leq k\leq n+2}}{\det} \begin{pmatrix}
	P_{k-1}(\ze_j;r+2) \\ P_{k-1}(z^{-1};r+2) \\ P_{k-1}(-z^{-1};r+2)
	\end{pmatrix} \prod_{j=1}^{n-1} Q_{j-1}(\ze^{-2}_j;r+2) \cdot Q_{n}(\ze^{-2}_{n};r+2) \oeq 				
	\end{split}
\end{equation}
		
\noindent Notice that	
\begin{equation}
	\prod_{1\leq j<k\leq n} (\ze_k-\ze_j) = \underset{1\leq j,k\leq n}{\det} \{\ze^{k-1}_j\}= \underset{1\leq j,k\leq n}{\det} \{P_{k-1}(\ze_j)\},
\end{equation}
	
\begin{equation}
	\prod_{1\leq j<k\leq n} (\ze^{-2}_k-\ze^{-2}_j)= \underset{1\leq j,k\leq n}{\det} \{\ze^{-2(k-1)}_j\}= \underset{1\leq j,k\leq n}{\det} \{Q_{k-1}(\ze^{-2}_j)\},
\end{equation}
and
\begin{equation}
	\begin{split}
	& \frac{1}{n!} \int_{\T} \frac{\dd \ze_1}{2 \pi \ic \ze_1}\int_{\T} \frac{\dd \ze_2}{2 \pi \ic \ze_2} \cdots \int_{\T} \frac{\dd \ze_n}{2 \pi \ic \ze_n} \prod_{j=1}^{n}w(\ze_j) \ze^{-r}_j \underset{1\leq j,k\leq n}{\det} \{P_{k-1}(\ze_j)\} \underset{1\leq j,k\leq n}{\det} \{Q_{k-1}(\ze^{-2}_j)\}  \\ & = \frac{1}{n!} \int_{\T} \frac{\dd \ze_1}{2 \pi \ic \ze_1}\int_{\T} \frac{\dd \ze_2}{2 \pi \ic \ze_2} \cdots \int_{\T} \frac{\dd \ze_n}{2 \pi \ic \ze_n} \prod_{j=1}^{n}w(\ze_j) \ze^{-r}_j \underset{1\leq j,k\leq n}{\det} \{P_{k-1}(\ze_j)\} \sum_{\sigma \in S_n} \mbox{sgn}(\sigma) \prod_{j=1}^n Q_{j-1}(\ze^{-2}_{\sigma_j})  \\ & = \frac{1}{n!} \sum_{\sigma \in S_n}  \int_{\T} \frac{\dd \ze_1}{2 \pi \ic \ze_1}\int_{\T} \frac{\dd \ze_2}{2 \pi \ic \ze_2} \cdots \int_{\T} \frac{\dd \ze_n}{2 \pi \ic \ze_n} \prod_{j=1}^{n}w(\ze_j) \ze^{-r}_j \left[  \mbox{sgn}(\sigma)\underset{1\leq j,k\leq n}{\det} \{P_{k-1}(\ze_j)\} \right] \prod_{j=1}^n Q_{j-1}(\ze^{-2}_{\sigma_j})  \\ & = \frac{1}{n!} \sum_{\sigma \in S_n}  \int_{\T} \frac{\dd \ze_1}{2 \pi \ic \ze_1}\int_{\T} \frac{\dd \ze_2}{2 \pi \ic \ze_2} \cdots \int_{\T} \frac{\dd \ze_n}{2 \pi \ic \ze_n} \prod_{j=1}^{n}w(\ze_j) \ze^{-r}_j   \underset{1\leq j,k\leq n}{\det} \{P_{k-1}(\ze_{\sigma_j})\}  \prod_{j=1}^n Q_{j-1}(\ze^{-2}_{\sigma_j})  \\ & = \frac{1}{n!} \sum_{\sigma \in S_n}  \int_{\T} \frac{\dd \ze_{\sigma_1}}{2 \pi \ic \ze_{\sigma_1}}\int_{\T} \frac{\dd \ze_{\sigma_2}}{2 \pi \ic \ze_{\sigma_2}} \cdots \int_{\T} \frac{\dd \ze_{\sigma_n}}{2 \pi \ic \ze_{\sigma_n}} \prod_{j=1}^{n}w(\ze_{\sigma_j}) \ze^{-r}_{\sigma_j}   \underset{1\leq j,k\leq n}{\det} \{P_{k-1}(\ze_{\sigma_j})\}  \prod_{j=1}^n Q_{j-1}(\ze^{-2}_{\sigma_j})  \\ & =  \int_{\T} \frac{\dd \ze_1}{2 \pi \ic \ze_1}\int_{\T} \frac{\dd \ze_2}{2 \pi \ic \ze_2} \cdots \int_{\T} \frac{\dd \ze_n}{2 \pi \ic \ze_n} \prod_{j=1}^{n}w(\ze_j) \ze^{-r}_j  \underset{1\leq j,k\leq n}{\det} \{P_{k-1}(\ze_j)\}  \prod_{j=1}^n Q_{j-1}(\ze^{-2}_j)
	\end{split}
\end{equation}
So we can write
\begin{equation}
	\begin{split}
	&		\oeq \frac{-1}{n!} \int_{\T} \frac{\dd \ze_1}{2 \pi \ic \ze_1} \cdots \int_{\T} \frac{\dd \ze_n}{2 \pi \ic \ze_n} \prod_{j=1}^{n}w(\ze_j)\ze^{-r-2}_j \hspace{-.3cm} \underset{\substack{1 \leq j \leq n \\ 1 \leq k\leq n+2}}{\det} \hspace{-.2cm} \begin{pmatrix}
	P_{k-1}(\ze_j;r+2) \\ P_{k-1}(z^{-1};r+2) \\ P_{k-1}(-z^{-1};r+2)
	\end{pmatrix} \hspace{-.2cm} \underset{\substack{1 \leq j \leq n \\ 1 \leq k\leq n-1}}{\det} \hspace{-.3cm} \left( Q_{k-1}(\ze^{-2}_j;r+2) , Q_{n}(\ze^{-2}_{j};r+2) \right) \\ & = \frac{-1}{n!} \int_{\T} \frac{\dd \ze_1}{2 \pi \ic \ze_1} \cdots \int_{\T} \frac{\dd \ze_n}{2 \pi \ic \ze_n} \prod_{j=1}^{n}w(\ze_j)\ze^{-r-2}_j \prod_{1\leq j<k\leq n+2} (\ze_k-\ze_j) \underset{\substack{1 \leq j \leq n \\ 1 \leq k\leq n-1}}{\det} \left( Q_{k-1}(\ze^{-2}_j;r+2) , Q_{n}(\ze^{-2}_{j};r+2) \right) \oeq 
	\end{split}
\end{equation}
where $\ze_{n+1}:= z^{-1}$ and $\ze_{n+2}:= -z^{-1}$. Now we use the same identity as in \eqref{73} to write
\begin{equation}
	\underset{\substack{1 \leq j \leq n \\ 1 \leq k\leq n-1}}{\det} \left( Q_{k-1}(\ze^{-2}_j;r+2) , Q_{n}(\ze^{-2}_{j};r+2) \right) =  \left( q^{(r+2)}_{n,1} + \sum_{\ell=1}^{n} \ze^{-2}_{\ell} \right)\underset{1 \leq j<k \leq n}{\prod} (\ze^{-2}_k-\ze^{-2}_j)
\end{equation} 
\begin{equation}
	\begin{split}
	&		\oeq  \frac{-1}{n!} \int_{\T} \frac{\dd \ze_1}{2 \pi \ic \ze_1} \cdots \int_{\T} \frac{\dd \ze_n}{2 \pi \ic \ze_n} \prod_{j=1}^{n}w(\ze_j)\ze^{-r-2}_j (\ze_{n+2}-\ze_{n+1}) \prod_{1\leq j\leq n} (\ze_{n+2}-\ze_j) (\ze_{n+1}-\ze_j) \\ & \times \left( q^{(r+2)}_{n,1} + \sum_{\ell=1}^{n} \ze^{-2}_{\ell} \right) \prod_{1\leq j<k\leq n} (\ze_k-\ze_j)  (\ze^{-2}_k-\ze^{-2}_j) \\ & = \frac{2z^{-1}}{n!} \int_{\T} \frac{\dd \ze_1}{2 \pi \ic \ze_1} \cdots \int_{\T} \frac{\dd \ze_n}{2 \pi \ic \ze_n} \prod_{j=1}^{n}w(\ze_j)\ze^{-r-2}_j  \prod_{1\leq j\leq n} (-z^{-1}-\ze_j) (z^{-1}-\ze_j) \\ & \times \left( q^{(r+2)}_{n,1} + \sum_{\ell=1}^{n} \ze^{-2}_{\ell} \right) \prod_{1\leq j<k\leq n} (\ze_k-\ze_j)  (\ze^{-2}_k-\ze^{-2}_j) \\ & = \frac{2z^{-1}}{n!} \int_{\T} \frac{\dd \ze_1}{2 \pi \ic \ze_1} \cdots \int_{\T} \frac{\dd \ze_n}{2 \pi \ic \ze_n} \prod_{j=1}^{n}w(\ze_j)\ze^{-r-2}_j  \prod_{1\leq j\leq n} (\ze^2_j-z^{-2}) \\ & \times \left( q^{(r+2)}_{n,1} + \sum_{\ell=1}^{n} \ze^{-2}_{\ell} \right) \prod_{1\leq j<k\leq n} (\ze_k-\ze_j)  (\ze^{-2}_k-\ze^{-2}_j)
	\end{split}
\end{equation}
\begin{equation}
	\begin{split}
	& = \frac{2z^{-1-2n}}{n!} \int_{\T} \frac{\dd \ze_1}{2 \pi \ic \ze_1} \cdots \int_{\T} \frac{\dd \ze_n}{2 \pi \ic \ze_n} \prod_{j=1}^{n}w(\ze_j)\ze^{-r}_j  \prod_{1\leq j\leq n} (z^{2}-\ze_j^{-2}) \\ & \times \left( q^{(r+2)}_{n,1} + \sum_{\ell=1}^{n} \ze^{-2}_{\ell} \right) \prod_{1\leq j<k\leq n} (\ze_k-\ze_j)  (\ze^{-2}_k-\ze^{-2}_j) \\ & = 2z^{-1-2n} D^{(r)}_n \left\{ q^{(r+2)}_{n,1} Q_{n}(z^2;r) - \left[ (\ze^{2}+q^{(r)}_{n+1,1})Q_{n}(z^2;r)-Q_{n+1}(z^2;r)\right] \right\} ,
	\end{split}
\end{equation}
where we have used \eqref{extended_multiple_integral} in the final step.
Thus we have arrived at 
\begin{equation}
    \begin{split}
	& \frac{h^{(r+2)}_n}{h^{(r+2)}_{n-1}} D^{(r+2)}_n \left[  P_{n+1}(-z^{-1};r+2)P_{n-1}(z^{-1};r+2) - P_{n+1}(z^{-1};r+2)P_{n-1}(-z^{-1};r+2) \right] \\ & = 2z^{-1-2n} D^{(r)}_n \left\{ q^{(r+2)}_{n,1} Q_{n}(z^2;r) - \left[ (z^{2}+q^{(r)}_{n+1,1})Q_{n}(z^2;r)-Q_{n+1}(z^2;r)\right] \right\}
	\end{split}
\end{equation}
which is equivalent to \eqref{a 2by2 determinant of P's with a gap}.
\end{proof}

\subsubsection{Proof of Proposition \ref{prop CD mixed}}

We can use the result of Lemma \ref{PP_shift_by_2} to re-write the right-hand side of the Christoffel-Darboux identity  \eqref{CD} in terms of both $P$ and $Q$ polynomials. This gives the proof of Proposition \ref{prop CD mixed}.

	Expand \eqref{CD} along the third row:
	
	\begin{equation}
	\begin{split}
	K_{n}\left(z_{2}^{2}, z_{1} ; r\right) &=\frac{1}{2} \frac{D_{n}^{(r+2)}}{D_{n+1}^{(r)}} \frac{z_2^{2 n+1}}{z_{1}^{2}-z_{2}^{-2}} \times \\ & \Big\{ 
	P_{n}\left(z_{1} ; r+2\right)\left[P_{n+1}\left(-z_{2}^{-1} ; r+2\right) P_{n+2}\left(z_{2}^{-1} ; r+2\right)-P_{n+1}\left(z_{2}^{-1} ; r+2\right) P_{n+2}\left(-z_{2}^{-1} ; r+2\right)\right]  \\ & - P_{n+1}\left(z_{1} ; r+2\right)\left[P_{n}\left(-z_{2}^{-1} ;r+2\right) P_{n+2}\left(z_{2}^{-1} ;r+2\right)-P_{n}\left(z_{2}^{-1} ; r+2\right) P_{n+2}\left(-z_{2}^{-1} ; r+2\right)\right] \\
	&  +P_{n+2}\left(z_{1} ; r+2\right)\left[P_{n}\left(-z_{2}^{-1} ; r+2\right) P_{n+1}\left(z_{2}^{-1} ; r+2\right)-P_{n}\left(z_{2}^{-1} ; r+2\right) P_{n+1}\left(-z_{2}^{-1} ; r+2\right)\right]\Big\}
	\end{split}
	\end{equation}
	Using \eqref{222} and \eqref{a 2by2 determinant of P's with a gap} we have
	\begin{equation}
	\begin{split}
	 K_{n}\left(z_{2}^{2},  z_{1} ; r\right) &  = \frac{1}{2} \frac{D_{n}^{(r+2)}}{D_{n+1}^{(r)}} \frac{z_{2}^{2 n+1}}{z_{1}^{2}-z_{2}^{-2}} \times \\ & \bigg\{   2P_{n}\left(z_{1};r+2\right)  \frac{D_{n+1}^{(r)}}{D_{n+1}^{(r+2)}} z_{2}^{-2 n-3} Q_{n+1}\left(z_{2}^{2} ;r\right)  +P_{n+2}(z_1 ;r+2) \frac{D_{n}^{(r)}}{D_{n}^{(r+2)}} 2 z_{2}^{-2 n-1} Q_{n}\left(z_{2}^{2} ; r\right)  \\
	&  +2P_{n+1}\left(z_{1} ; r+2\right)  \frac{D_{n+1}^{(r)}}{D_{n+1}^{(r+2)}}  \frac{h_{n}^{(r+2)}}{h_{n+1}^{(r+2)}} z_{2}^{-2 n-3} \left[ -\left(z_{2}^{2}-q_{n+1,1}^{(r+2)}+q_{n+2,1}^{(r)}\right) Q_{n+1}\left(z_{2}^{2} ; r\right) +Q_{n+2}\left(z_{2}^{2} ; r\right) \right] \bigg\},
	\end{split}
	\end{equation}
	so
	\begin{equation}
	\begin{split}
	K_{n}\left(z_{2}^{2}, z_{1} ; r\right)  & =   \frac{z_{2}^{-2}}{z_{1}^{2}-z_{2}^{-2}} \left\{  \frac{D_{n}^{(r+2)}}{D_{n+1}^{(r+2)}}  P_{n}\left(z_{1};r+2\right)   Q_{n+1}\left(z_{2}^{2} ;r\right) + \frac{D_{n}^{(r)}}{D_{n+1}^{(r)}} z_{2}^{2} P_{n+2}(z_1 ;r+2)  Q_{n}\left(z_{2}^{2} ; r\right) \right. \\
	& \left. + \frac{D_{n}^{(r+2)}}{D_{n+1}^{(r+2)}}  \frac{h_{n}^{(r+2)}}{h_{n+1}^{(r+2)}}  P_{n+1}\left(z_{1} ; r+2\right)  \left[ -\left(z_{2}^{2}-q_{n+1,1}^{(r+2)}+q_{n+2,1}^{(r)}\right) Q_{n+1}\left(z_{2}^{2} ; r\right) +Q_{n+2}\left(z_{2}^{2} ; r\right) \right] \right\}.
	\end{split}
	\end{equation}
	Simplifying this using \eqref{h} and replacing $z^2_2$ by $z_2$ gives the desired formula \eqref{CD1}.

\section{The Riemann-Hilbert Characterization for the $2j-k$ Systems}\label{sec 2j-k}

This section establishes the Riemann--Hilbert characterization of the
\(2j-k\) system. We first prove the forward direction, showing that the
polynomials defined by the \(2j-k\) orthogonality relations give rise to a
solution of the corresponding Riemann--Hilbert problem. We then prove the
converse direction, showing that
any solution of the Riemann--Hilbert problem recovers the orthogonal
polynomials \(P_n(z;r)\) and their associated functions \(G_n(z;r)\).

\subsection{The Forward Direction: Proof of Theorem \ref{thm P}}\label{sec forward thm P}

In what follows we assume that the symbol $w$ and $r\in \Z$ are such that $D^{(r)}_n \neq 0$ for $n\in \{2m, 2m-1,2m-2\}$ and also $D^{(r-2)}_{2m-1} \neq 0$. Then  $\boldsymbol{Y}_{m}(z;r)$ is uniquely well-defined by \eqref{YYYY}.

First, let us tabulate the asymptotic behaviors of $G_n(z;r)$ as $z \to 0$ and $z \to \infty$.  For $|z|<1$ we can write \eqref{G} as
\begin{equation}\label{asymp G zero0}
	G_n(z;r) = 2z^{-2n} \sum^{\infty}_{\ell=0} z^{2\ell} \int_{\T} P_n(\ze;r) w(\ze) \ze^{-r-2\ell}  \frac{\dd \ze}{2\pi \ic \ze},
\end{equation}
and therefore
\begin{equation}\label{asymp G zero}
	G_n(z;r) = 2 h^{(r)}_n + O(z^2), \qasq z \to 0,
\end{equation}
by \eqref{OP1}.  Notice that due to \eqref{asymp G zero} and

\begin{equation}
	\frac{P_{n}(z;r)-P_{n}(-z;r)}{2z} = P_n'(0;r) + O(z^2), \qasq z \to 0,
\end{equation}
the function $\boldsymbol{Y}_{m}(z;r)$ has no singular behavior as $z\to 0$. This confirms \textbf{RH}-$\boldsymbol{Y1}$.

For $|z|>1$ we can write \eqref{G} as \begin{equation}\label{2}
G_n(z;r) = -2 z^{-2n-2} \sum_{\ell=0}^{\infty} z^{-2 \ell} \int_{\T} P_n(\ze;r) w(\ze) \ze^{-r} \ze^{2\ell +2} \frac{\dd \ze}{2\pi \ic \ze}.
\end{equation}
Therefore
\begin{equation}\label{3}
G_n(z;r) = \mathfrak{g}^{(r)}_n z^{-2n-2} + \mathfrak{h}^{(r)}_n z^{-2n-4} + O\left(z^{-2n-6}\right), \qasq z \to \infty,
\end{equation}
where
\begin{equation}
\mathfrak{g}^{(r)}_n = -2 \int_{\T} P_n(\ze;r) w(\ze) \ze^{2-r} \frac{\dd \ze}{2\pi \ic \ze} = -2 (-1)^n \frac{D^{(r-2)}_{n+1}}{D^{(r)}_n},
\label{g-evaluation}
\end{equation}
and
\begin{equation}
	\mathfrak{h}^{(r)}_n = -2 \int_{\T} P_n(\ze;r) w(\ze) \ze^{4-r} \frac{\dd \ze}{2\pi \ic \ze} = -2 (-1)^n \frac{\overset{\circ}{D}^{(r)}_{n+1}}{D^{(r)}_n}.
\label{h-evaluation}
\end{equation}
where $\overset{\circ}{D}^{(r)}_{n}$ is the following $n \times n$ \textit{bordered}-($2j-k$) determinant where all rows except the first one have the $2j-k$ structure:
\begin{equation}\label{D circ}
    \overset{\circ}{D}^{(r)}_{n} = \det \begin{pmatrix}
w_{r-4} & w_{r-5}   & \cdots & w_{r-n-3} \\
w_{r} & w_{r-1}   & \cdots & w_{r-n+1} \\
w_{r+2}  & w_{r+1}  & \cdots & w_{r-n+3} \\
\vdots & \vdots &  \vdots & \vdots \\
w_{r+2n-4} & w_{r+2n-5} &  \cdots & w_{r+n-3}
\end{pmatrix}  
\end{equation}
The expression for $	\mathfrak{g}^{(r)}_n $ is simply obtained by integrating \eqref{OP11} against $w(\ze) \ze^{2-r} \frac{\dd \ze}{2\pi \ic \ze}$ and swapping adjacent rows ($n$ times) in order to bring the last row to the top.

Notice that
\begin{align}
\mathscr{D}_{2m}(z;r) =	\di \frac{P_{2m}(z;r)-P_{2m}(-z;r)}{2z} \ \ \ 
	&= p^{(r)}_{2m,1} \, z^{2m-2} + O\!\left(z^{2m-4}\right), 
	&& \qasq z \to \infty, \\
\mathscr{D}_{2m-1}(z;r) =	\di \frac{P_{2m-1}(z;r)-P_{2m-1}(-z;r)}{2z} 
	&= z^{2m-2} + p_{2m-1,2}z^{2m-4} + O\!\left(z^{2m-6}\right), 
	&& \qasq z \to \infty, \\
\mathscr{D}_{2m-2}(z;r) =	\di \frac{P_{2m-2}(z;r)-P_{2m-2}(-z;r)}{2z} 
	&= p^{(r)}_{2m-2,1}\, z^{2m-4} + O\!\left(z^{2m-6}\right), 
	&& \qasq z \to \infty.
\end{align}
Using these and \eqref{3} we obtain
\begin{equation}\label{Y asymp inf}
\boldsymbol{Y}_{m}(z;r)= \boldsymbol{A}^{-1}_{m}(r) \begin{pmatrix}
z^{2m} + O(z^{2m-1}) & p^{(r)}_{2m,1} z^{2m-2} + O(z^{2m-4}) & \mathfrak{g}^{(r)}_{2m} z^{-4m+2} + O\left(z^{-4m}\right) \\[10pt]
z^{2m-1} + O(z^{2m-2}) & z^{2m-2} + O(z^{2m-4}) & \mathfrak{g}^{(r)}_{2m-1} z^{-4m+2} + O\left(z^{-4m}\right)\\[10pt]
z^{2m-2} + O(z^{2m-3}) & \di p^{(r)}_{2m-2,1} z^{2m-4} + O(z^{2m-6}) & \mathfrak{g}^{(r)}_{2m-2} z^{-4m+2} + O\left(z^{-4m}\right) \\	
\end{pmatrix},
\end{equation}
Recalling \eqref{A A A}, observe that
\begin{equation}
	\boldsymbol{A}^{-1}_{m}(r) =  \begin{pmatrix}
		1 & -p^{(r)}_{2m,1} & \frac{p^{(r)}_{2m,1}\mathfrak{g}^{(r)}_{2m-1}-\mathfrak{g}^{(r)}_{2m}}{\mathfrak{g}^{(r)}_{2m-2}}  \\[8pt]
		0 & 1 & \frac{-\mathfrak{g}^{(r)}_{2m-1}}{\mathfrak{g}^{(r)}_{2m-2}}\\[8pt]
		0 & 0 & \frac{1}{\mathfrak{g}^{(r)}_{2m-2}}\\	
	\end{pmatrix},
\end{equation} and thus the columns $\boldsymbol{Y}^{(k)}_m(z;r)$, $k=1,2,3$ of  \eqref{YYYY} can respectively be written as 
\begin{equation}
\boldsymbol{Y}^{(1)}_m(z;r)=
\begin{pmatrix}
	P_{2m}(z;r) - p^{(r)}_{2m,1} P_{2m-1}(z;r) 
	+ \dfrac{p^{(r)}_{2m,1}\,\mathfrak{g}^{(r)}_{2m-1}-\mathfrak{g}^{(r)}_{2m}}{\mathfrak{g}^{(r)}_{2m-2}}\,P_{2m-2}(z;r)
	\\[12pt]
	P_{2m-1}(z;r) - \dfrac{\mathfrak{g}^{(r)}_{2m-1}}{\mathfrak{g}^{(r)}_{2m-2}}\,P_{2m-2}(z;r)
	\\[12pt]
	\dfrac{1}{\mathfrak{g}^{(r)}_{2m-2}}\,P_{2m-2}(z;r)
\end{pmatrix},
\end{equation}
\begin{equation}
\boldsymbol{Y}^{(2)}_m(z;r)=
\begin{pmatrix}
	\mathscr{D}_{2m}(z;r) - p^{(r)}_{2m,1}\,\mathscr{D}_{2m-1}(z;r)
	+ \dfrac{p^{(r)}_{2m,1}\,\mathfrak{g}^{(r)}_{2m-1}-\mathfrak{g}^{(r)}_{2m}}{\mathfrak{g}^{(r)}_{2m-2}}\,\mathscr{D}_{2m-2}(z;r)
	\\[12pt]
	\mathscr{D}_{2m-1}(z;r) - \dfrac{\mathfrak{g}^{(r)}_{2m-1}}{\mathfrak{g}^{(r)}_{2m-2}}\,\mathscr{D}_{2m-2}(z;r)
	\\[12pt]
	\dfrac{1}{\mathfrak{g}^{(r)}_{2m-2}}\,\mathscr{D}_{2m-2}(z;r)
\end{pmatrix},
\end{equation}
and
\begin{equation}
	\boldsymbol{Y}^{(3)}_m(z;r)=
	\begin{pmatrix}
		z^{4}G_{2m}(z;r) - p^{(r)}_{2m,1}\,z^{2}G_{2m-1}(z;r) 
		+ \dfrac{p^{(r)}_{2m,1}\,\mathfrak{g}^{(r)}_{2m-1}-\mathfrak{g}^{(r)}_{2m}}{\mathfrak{g}^{(r)}_{2m-2}}\,G_{2m-2}(z;r)
		\\[12pt]
		z^{2}G_{2m-1}(z;r) - \dfrac{\mathfrak{g}^{(r)}_{2m-1}}{\mathfrak{g}^{(r)}_{2m-2}}\,G_{2m-2}(z;r)
		\\[12pt]
		\dfrac{1}{\mathfrak{g}^{(r)}_{2m-2}}\,G_{2m-2}(z;r)
	\end{pmatrix}.
\end{equation}
As $z \to \infty$ we thus have
\begin{equation}
	\boldsymbol{Y}^{(1)}_m(z;r)=
	\begin{pmatrix}
		z^{2m} + O(z^{2m-2})
		\\[12pt]
		z^{2m-1} + \left(p^{(r)}_{2m-1,1} - \dfrac{\mathfrak{g}^{(r)}_{2m-1}}{\mathfrak{g}^{(r)}_{2m-2}} \right) z^{2m-2} + O(z^{2m-3})
		\\[12pt]
	\dfrac{1}{\mathfrak{g}^{(r)}_{2m-2}}	z^{2m-2} + O(z^{2m-3})
	\end{pmatrix},
\end{equation}
\begin{equation}
	\boldsymbol{Y}^{(2)}_m(z;r)=
	\begin{pmatrix}
		 O\!\left(z^{2m-4}\right)
		\\[12pt]
z^{2m-2} + \left( p^{(r)}_{2m-1,2} - \dfrac{\mathfrak{g}^{(r)}_{2m-1}}{\mathfrak{g}^{(r)}_{2m-2}}p^{(r)}_{2m-2,1} \right)z^{2m-4} + O\!\left(z^{2m-6}\right)
		\\[12pt]
	\dfrac{p^{(r)}_{2m-2,1}}{\mathfrak{g}^{(r)}_{2m-2}}\, z^{2m-4} + O\!\left(z^{2m-6}\right)
	\end{pmatrix}
\end{equation}
and
\begin{equation}
	\boldsymbol{Y}^{(3)}_m(z;r)=
	\begin{pmatrix}
	O(z^{-4m})
		\\[12pt]
 \left( \mathfrak{h}^{(r)}_{2m-1} - \dfrac{\mathfrak{g}^{(r)}_{2m-1}}{\mathfrak{g}^{(r)}_{2m-2}}\mathfrak{h}^{(r)}_{2m-2} \right)z^{-4m} +	O(z^{-4m-2})
		\\[12pt]
 z^{-4m+2} +	O(z^{-4m})
	\end{pmatrix}.
\end{equation}

The asymptotics \eqref{Y asymp inf} can be written as 

\begin{equation}
\boldsymbol{Y}_{m}(z;r)= \left( I + \begin{pmatrix}
	 O(z^{-2}) & O(z^{-2}) & O(z^{-2}) \\
	z^{-1}+O(z^{-2}) & O(z^{-2})  & O(z^{-2})\\
	\dfrac{1}{\mathfrak{g}^{(r)}_{2m-2}}	z^{-2} + O(z^{-3}) & O(z^{-2}) &  O(z^{-2}) \\	
\end{pmatrix} \right) 
\begin{pmatrix}
z^{2m} & 0 & 0 \\
0 & z^{2m-2}  & 0\\
0 & 0 &  z^{-4m+2} \\	
\end{pmatrix}, \qasq z \to \infty,
\end{equation} where $I$ is the $3\times 3 $ identity matrix. This suggests that indeed
\begin{equation}\label{*Y_1 is nice!}
	\overset{\infty}{  \boldsymbol{Y}}_1(m,r) \equiv  \begin{pmatrix}
		0 & 0 & 0 \\
		1 & 0  & 0\\
		0 & 0 &  0 \\	
	\end{pmatrix}, \qandq \overset{\infty}{\boldsymbol{Y}}_{2,31}(m,r) = \dfrac{1}{\mathfrak{g}^{(r)}_{2m-2}} = -\frac{1}{2}\,
\frac{D^{(r)}_{2m-2}}{D^{(r-2)}_{2m-1}} \neq 0;
\end{equation}
and thus confirms \textbf{RH}-$\boldsymbol{Y3}$. The structure of $	\overset{\infty}{  \boldsymbol{Y}}_1(m,r)$ is going to be fundamental in derivation of recurrence formulae for the polynomials $P_n(z;r)$ as detailed in Section \ref{sec rec rel from RHP}. 

For the purposes of showing the compatibility of the four-term recurrence relation obtained from the Riemann-Hilbert problem in Section \ref{sec rec rel from RHP}, and the one obtained in \cite{GW} we need to keep track of $	\overset{\infty}{  \boldsymbol{Y}}_2(m,r)$. In particular:
\begin{align}
\overset{\infty}{  \boldsymbol{Y}}_{2,21}(m,r) & =  p^{(r)}_{2m-1,1} - \dfrac{\mathfrak{g}^{(r)}_{2m-1}}{\mathfrak{g}^{(r)}_{2m-2}} \label{Y2,21} \\ \overset{\infty}{  \boldsymbol{Y}}_{2,22}(m,r) & =   p^{(r)}_{2m-1,2} - \dfrac{\mathfrak{g}^{(r)}_{2m-1}}{\mathfrak{g}^{(r)}_{2m-2}}p^{(r)}_{2m-2,1} \label{Y2,22} \\ \overset{\infty}{  \boldsymbol{Y}}_{2,23}(m,r) & =   \mathfrak{h}^{(r)}_{2m-1} - \dfrac{\mathfrak{g}^{(r)}_{2m-1}}{\mathfrak{g}^{(r)}_{2m-2}}\mathfrak{h}^{(r)}_{2m-2} \label{Y2,23}
\end{align}

Finally we turn our attention to \textbf{RH}-$\boldsymbol{Y2}$. Let us investigate the additive jump condition satisfied by $G_n(z;r)$. Using
\begin{equation}\label{**}
	\frac{2 \ze^2}{\ze^2-z^2} = 2 + \frac{z}{\ze-z} - \frac{z}{\ze+z},
\end{equation} 
we rewrite \eqref{G} as 
\begin{equation}\label{rewrite G}
	G_n(z;r) = z^{-2n} \int_{\T} P_n(\ze;r) w(\ze) \ze^{-r} \left( 2 + \frac{z}{\ze-z} - \frac{z}{\ze+z} \right) \frac{\dd \ze}{2\pi \ic \ze},
\end{equation}
which can be simplfied using \eqref{OP1} as
\begin{equation}\label{RewritE G}
	G_n(z;r) = z^{-2n+1} \int_{\T}   \frac{P_n(\ze;r) w(\ze) \ze^{-r}}{\ze-z}  \frac{\dd \ze}{2\pi \ic \ze} - z^{-2n+1} \int_{\T}   \frac{P_n(\ze;r) w(\ze) \ze^{-r}}{\ze+z}  \frac{\dd \ze}{2\pi \ic \ze}.
\end{equation} This is the difference of two Cauchy transforms
\begin{equation}\label{RRewritE G}
	G_n(z;r) = z^{-2n+1} \left\{ \mathcal{C}\left[P_n(\ze;r)w(\ze)\ze^{-r-1};z\right] - \mathcal{C}\left[P_n(\ze;r)w(\ze)\ze^{-r-1};-z\right] \right\}, 
\end{equation}
where
\begin{equation}
	\mathcal{C}[f;z]:= 	\frac{1}{2 \pi \ic} \int_{\T} \frac{f(\ze)}{\ze-z} \dd \ze
\end{equation} denotes the Cauchy transform of $f$ and its boundary values satisfy the so-called Plemelj-Sokhotskii formula: 
\begin{equation}
	\mathcal{C}_{+}[f;\tau]-  \mathcal{C}_{-}[f;\tau] = f(\tau), \qquad \mbox{for} \qquad \tau \in \T.
\end{equation}
Therefore, for $z \in \T$, we have
\begin{equation}\label{G addidtive jump}
	G_{n,+}(z;r)-G_{n,-}(z;r) = z^{-2n+1}  \left[  P_n(z;r) w(z) z^{-r-1} - P_n(-z;r) w(-z) (-z)^{-r-1} \right].
\end{equation}
Let $\boldsymbol{Y}_{m}$ be given by \eqref{YYYY} and let $\boldsymbol{\widehat{Y}}_{m}(z;r):=\boldsymbol{A}_m(r)\boldsymbol{Y}_{m}(z;r)$. Let $\boldsymbol{\widehat{Y}}_{m,j}$ denote the $j$-th row of $\boldsymbol{\widehat{Y}}_{m}$, $j=1,2,3$.  Using \eqref{G addidtive jump} it is straightforward to see that for $j=1,2,3$:
\begin{equation}
	\boldsymbol{\widehat{Y}}_{m,j,+}(z;r) = \boldsymbol{\widehat{Y}}_{m,j,-}(z;r)   \begin{pmatrix}
		1 & 0 & \mathcal{j}_1(z;m,r) \\
		0 & 1 & \mathcal{j}_2(z;m,r)  \\
		0 & 0 & 1\\	
	\end{pmatrix}, \qquad z \in \T,
\end{equation}
where
\begin{equation}
	\mathcal{j}_1(z;m,r) :=	z^{-4m-r+4} \left[w(z)+(-1)^rw(-z)\right], \qandq \mathcal{j}_2(z;m,r) := 2(-1)^{r+1} z^{-4m-r+5}w(-z).
\end{equation}
Therefore for the whole matrix $\boldsymbol{\widehat{Y}}_{m}$ we have the jump condition
\begin{equation}
	\boldsymbol{\widehat{Y}}_{m,+}(z;r) = \boldsymbol{\widehat{Y}}_{m,-}(z;r)   \begin{pmatrix}
		1 & 0 & \mathcal{j}_1(z;m,r) \\
		0 & 1 & \mathcal{j}_2(z;m,r)  \\
		0 & 0 & 1\\	
	\end{pmatrix}, \qquad z \in \T.
\end{equation}
Since $\boldsymbol{Y}_{m}(z;r) = \boldsymbol{A}^{-1}_m(r)\boldsymbol{\widehat{Y}}_m(z;r)$, and multiplication on the left by an analytic factor (here the constant matrix $\boldsymbol{A}^{-1}_m(r)$) does not change the jump condition, we conclude that $\boldsymbol{Y}_m(z;r)$ indeed satisfies the jump condition \eqref{jump Y} as stated in \textbf{RH}-$\boldsymbol{Y2}$.

We have thus proven Theorem \ref{thm P}.

\subsection{The Converse Direction: Proof of Theorem \ref{thm:Y converse P entries}}\label{sec converse thm P}

We first provide the proof of Lemma \ref{prop:Y column symmetry}. Set
\[
	\boldsymbol{L}(z)
	:=
	\begin{pmatrix}
		1 & 0 & 0\\
		2z & 1 & 0\\
		0 & 0 & 1
	\end{pmatrix}
\]
and define
\[
	\widetilde{\boldsymbol{Y}}_m(z;r)
	:=
	\boldsymbol{Y}_m(-z;r)\boldsymbol{L}(z).
\]
We show that \(\widetilde{\boldsymbol{Y}}_m\) solves the same Riemann--
Hilbert problem as \(\boldsymbol{Y}_m\). First, \(\widetilde{\boldsymbol{Y}}_m\) is analytic in \(\C\setminus\T\).
Write the jump matrix as
\[
	\boldsymbol{J}_{Y}(z)
	=
	\begin{pmatrix}
		1 & 0 & \mathcal{j}_1(z;m,r)\\
		0 & 1 & \mathcal{j}_2(z;m,r)\\
		0 & 0 & 1
	\end{pmatrix}.
\]
From the definitions
\[
	\mathcal{j}_1(z;m,r)
	=
	z^{-4m-r+4}\left[w(z)+(-1)^r w(-z)\right],
\]
and
\[
	\mathcal{j}_2(z;m,r)
	=
	2(-1)^{r+1}z^{-4m-r+5}w(-z),
\]
one checks directly that
\begin{equation}\label{j symmetry identities}
	\mathcal{j}_1(-z;m,r)=\mathcal{j}_1(z;m,r),
	\qquad
	\mathcal{j}_2(-z;m,r)
	=
	\mathcal{j}_2(z;m,r)+2z\mathcal{j}_1(z;m,r).
\end{equation}
Equivalently,
\begin{equation}\label{jump intertwining Y}
	\boldsymbol{J}_{Y}(-z)\boldsymbol{L}(z)
	=
	\boldsymbol{L}(z)\boldsymbol{J}_{Y}(z).
\end{equation}
Therefore, for \(z\in\T\),
\[
	\widetilde{\boldsymbol{Y}}_{m,+}(z;r)
	=
	\boldsymbol{Y}_{m,+}(-z;r)\boldsymbol{L}(z)
	=
	\boldsymbol{Y}_{m,-}(-z;r)\boldsymbol{J}_{Y}(-z)\boldsymbol{L}(z).
\]
Using \eqref{jump intertwining Y}, this becomes
\[
	\widetilde{\boldsymbol{Y}}_{m,+}(z;r)
	=
	\boldsymbol{Y}_{m,-}(-z;r)\boldsymbol{L}(z)\boldsymbol{J}_{Y}(z)
	=
	\widetilde{\boldsymbol{Y}}_{m,-}(z;r)\boldsymbol{J}_{Y}(z).
\]
So \(\widetilde{\boldsymbol{Y}}_m\) has the same jump as
\(\boldsymbol{Y}_m\). It remains to check the normalization at infinity. Let
\begin{equation}\label{Lambda}
    \boldsymbol{\Lambda}_m(z)
	:=
	\begin{pmatrix}
		z^{2m} & 0 & 0\\
		0 & z^{2m-2} & 0\\
		0 & 0 & z^{-4m+2}
	\end{pmatrix}.
\end{equation}
Since all three powers are even,
\(
	\boldsymbol{\Lambda}_m(-z)=\boldsymbol{\Lambda}_m(z).
\)
Also,
\begin{equation}\label{Lambda L Lambda^{-1}}
\boldsymbol{\Lambda}_m(z)\boldsymbol{L}(z)\boldsymbol{\Lambda}_m(z)^{-1}
	=
	I+\frac{2E_{21}}{z}
\end{equation}
where \(E_{21}\) is the elementary matrix with a single \(1\) in the
\((2,1)\)-entry. By \textbf{RH}-\(\boldsymbol{Y3}\),
\[
	\boldsymbol{Y}_m(z;r)
	=
	\left(I+\frac{E_{21}}{z}+O(z^{-2})\right)\boldsymbol{\Lambda}_m(z),
	\qquad z\to\infty.
\]
Thus
\[
	\boldsymbol{Y}_m(-z;r)
	=
	\left(I-\frac{E_{21}}{z}+O(z^{-2})\right)\boldsymbol{\Lambda}_m(z),
	\qquad z\to\infty.
\]
Therefore
\begin{equation*}
    \begin{split}
\widetilde{\boldsymbol{Y}}_m(z;r)
	& =
	\left(I-\frac{E_{21}}{z}+O(z^{-2})\right) \boldsymbol{\Lambda}_m(z) \boldsymbol{L}(z) \\ & = 	\left(I-\frac{E_{21}}{z}+O(z^{-2})\right) \left( I+\frac{2E_{21}}{z} \right)\boldsymbol{\Lambda}_m(z) \\ & = \left(I+\frac{E_{21}}{z}+O(z^{-2})\right)\boldsymbol{\Lambda}_m(z),
	\qquad z\to\infty.
    \end{split}
\end{equation*}

So \(\widetilde{\boldsymbol{Y}}_m\) satisfies the same normalization as
\(\boldsymbol{Y}_m\). By Lemma~\ref{lemma:Y uniqueness},
\[
	\widetilde{\boldsymbol{Y}}_m(z;r)=\boldsymbol{Y}_m(z;r).
\]
This proves \eqref{Y z minus z symmetry}. Now consider the first two columns of \eqref{Y z minus z symmetry}, we obtain
\[
	\boldsymbol{Y}^{(1)}_m(z;r)
	=
	\boldsymbol{Y}^{(1)}_m(-z;r)
	+
	2z\boldsymbol{Y}^{(2)}_m(-z;r),
\]
and
\[
	\boldsymbol{Y}^{(2)}_m(z;r)
	=
	\boldsymbol{Y}^{(2)}_m(-z;r).
\]
Hence
\[
	\boldsymbol{Y}^{(1)}_m(z;r)
	-
	\boldsymbol{Y}^{(1)}_m(-z;r)
	=
	2z\boldsymbol{Y}^{(2)}_m(z;r).
\]
This is exactly \eqref{Y second column from first column} and thus completes the proof of Lemma \ref{lemma:Y uniqueness}.

\begin{lemma}
\label{lemma:large z gives P orthogonality}
Let \(n\geq1\), and let \(F\) be a polynomial of degree at most \(n\). Suppose
that a function \(\mathcal{G}\), analytic for \(|z|>1\), has boundary values on
\(\T\) satisfying
\begin{equation}\label{general unshifted G jump}
	\mathcal{G}_{+}(z)-\mathcal{G}_{-}(z)
	=
	z^{-2n-r}
	\left[
		F(z)w(z)+(-1)^rF(-z)w(-z)
	\right],
	\qquad z\in\T,
\end{equation}
and suppose that
\begin{equation}\label{general unshifted G decay}
	\mathcal{G}(z)=O(z^{-2n-1}),\qquad z\to\infty.
\end{equation}
Then
\begin{equation}\label{P orthogonality from large z lemma}
	\int_{\T}
	F(\zeta)\zeta^{-2k-r}w(\zeta)
	\frac{\dd\zeta}{2\pi\ic\zeta}
	=0,
	\qquad k=0,1,\ldots,n-1.
\end{equation}
\end{lemma}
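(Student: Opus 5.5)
The strategy is to turn the jump condition into a contour integral of \(\mathcal{G}\) against monomials, and then use the decay at infinity to evaluate it as zero. Fix \(k\in\{0,\ldots,n-1\}\). Multiply the jump relation \eqref{general unshifted G jump} by \(z^{2n-2k-1}\) and integrate over \(\T\) with respect to \(\dd z/(2\pi\ic)\).

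We start with the right-hand side. It gives
\[
\int_{\T}\left[F(z)w(z)+(-1)^rF(-z)w(-z)\right] z^{-2k-r}\frac{\dd z}{2\pi\ic z}.
\]
In the second term, substitute \(z\mapsto -z\), which preserves \(\T\) and the measure \(\dd z/z\). This produces \((-1)^r(-1)^{-2k-r}F(z)w(z)z^{-2k-r}=F(z)w(z)z^{-2k-r}\). The right-hand side therefore equals
\[
2\int_{\T}F(\zeta)\zeta^{-2k-r}w(\zeta)\frac{\dd\zeta}{2\pi\ic\zeta}.
\]
This is the parity mechanism: only even powers \(\zeta^{-2k}\) are tested, which is why the statement concerns the shifted even moments.

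Next, the left-hand side. The hypothesis only says that \(\mathcal{G}\) is analytic for \(|z|>1\), so I interpret \(\mathcal{G}\) as in the RHP setting: it is analytic in \(\C\setminus\T\), with \(\mathcal{G}_+\) the boundary value from inside and \(\mathcal{G}_-\) the boundary value from outside, as for the third column of \(\boldsymbol{Y}_m\). The monomial \(z^{2n-2k-1}\) has nonnegative exponent since \(k\le n-1\). Hence \(\int_{\T}\mathcal{G}_+(z)z^{2n-2k-1}\dd z=0\) by Cauchy's theorem, provided \(\mathcal{G}\) is analytic inside the disk.

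For the outside term, \(z^{2n-2k-1}\mathcal{G}(z)=O(z^{-2k-2})=O(z^{-2})\) at infinity. Deforming the contour to a large circle \(|z|=R\) and letting \(R\to\infty\) therefore gives \(\int_{\T}\mathcal{G}_-(z)z^{2n-2k-1}\dd z=0\). Combining the two sides yields the claimed orthogonality \eqref{P orthogonality from large z lemma}.

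The main obstacle is this interpretation issue. If \(\mathcal{G}\) is genuinely given only for \(|z|>1\), then the \(\mathcal{G}_+\) integral is not controlled, and one must instead use that \(\mathcal{G}\) is the exterior restriction of a sectionally analytic function. In the intended application (the third column of \(\boldsymbol{Y}_m\), which is analytic inside \(\T\) as well, by \textbf{RH}-\(\boldsymbol{Y1}\)) this holds. I would state the argument under that reading, and note that the inside integral vanishes for every \(k\ge0\) with \(2n-2k-1\ge0\). A secondary point is justifying the boundary-value integrals, for example by assuming Hölder or \(L^2\) boundary values so that limits from circles \(|z|=1\pm\epsilon\) can be taken.
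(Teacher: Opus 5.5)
Your proof is correct and is essentially the paper's argument. The paper writes $\mathcal{G}$ as the Cauchy transform of its jump $\Delta_F$, expands at infinity to get $\frac{1}{2\pi\ic}\int_{\T}\zeta^{\ell}\Delta_F(\zeta)\,\dd\zeta=0$ for $0\le\ell\le 2n-1$, and then uses the same $\zeta\mapsto-\zeta$ parity computation at $\ell=2n-2k-1$; that is exactly the moment your contour integration against $z^{2n-2k-1}$ evaluates directly. The paper's Cauchy-transform representation tacitly needs the same analyticity of $\mathcal{G}$ inside $\T$ that you make explicit, and this hypothesis is genuinely necessary. Your remark that the intended application satisfies it automatically is too quick: the paper applies the lemma to shifted entries such as $z^{-4}Y_{13}$, which can have a pole at $z=0$.
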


\begin{proof}
Set
\[
	\Delta_F(\zeta)
	:=
	\zeta^{-2n-r}
	\left[
		F(\zeta)w(\zeta)+(-1)^rF(-\zeta)w(-\zeta)
	\right].
\]
By the Plemelj--Sokhotskii formula
\[
	\mathcal{G}(z)=
	\frac{1}{2\pi\ic}
	\int_{\T}
	\frac{\Delta_F(\zeta)}{\zeta-z}\,\dd\zeta.
\]

For \(|z|\) large, we expand
\[
	\frac{1}{\zeta-z}
	=
	-\sum_{\ell=0}^{N-1}\frac{\zeta^\ell}{z^{\ell+1}}
	+
	\frac{\zeta^N}{z^N(\zeta-z)}.
\]
Thus
\begin{equation}\label{general Cauchy large z expansion}
	\mathcal{G}(z)
	=
	-\sum_{\ell=0}^{N-1}
	\frac{1}{z^{\ell+1}}
	\frac{1}{2\pi\ic}
	\int_{\T}\zeta^\ell\Delta_F(\zeta)\,\dd\zeta
	+
	O(z^{-N-1}).
\end{equation}
Since \(\mathcal{G}(z)=O(z^{-2n-1})\), 
\begin{equation}\label{raw moment vanishing from G}
	\frac{1}{2\pi\ic}
	\int_{\T}\zeta^\ell\Delta_F(\zeta)\,\dd\zeta=0,
	\qquad \ell=0,1,\ldots,2n-1.
\end{equation}

We now rewrite these coefficients. By definition of \(\Delta_F\),
\[
	\frac{1}{2\pi\ic}
	\int_{\T}\zeta^\ell\Delta_F(\zeta)\,\dd\zeta
	=
	I_\ell^{(1)}+I_\ell^{(2)},
\]
where
\[
	I_\ell^{(1)}
	=
	\frac{1}{2\pi\ic}
	\int_{\T}
	F(\zeta)w(\zeta)\zeta^{\ell-2n-r}\,\dd\zeta
\qandq
	I_\ell^{(2)}
	=
	\frac{(-1)^r}{2\pi\ic}
	\int_{\T}
	F(-\zeta)w(-\zeta)\zeta^{\ell-2n-r}\,\dd\zeta.
\]
In \(I_\ell^{(2)}\), make the change of variables \(\eta=-\zeta\). Then
\[
	I_\ell^{(2)}
	=
	(-1)^{\ell+1}
	\frac{1}{2\pi\ic}
	\int_{\T}
	F(\eta)w(\eta)\eta^{\ell-2n-r}\,\dd\eta.
\]
Hence
\begin{equation}\label{parity coefficient computation}
	\frac{1}{2\pi\ic}
	\int_{\T}\zeta^\ell\Delta_F(\zeta)\,\dd\zeta
	=
	\left(1+(-1)^{\ell+1}\right)
	\frac{1}{2\pi\ic}
	\int_{\T}
	F(\zeta)w(\zeta)\zeta^{\ell-2n-r}\,\dd\zeta.
\end{equation}
If \(\ell\) is even, the factor \(1+(-1)^{\ell+1}\) is zero. If \(\ell\) is
odd, the factor is \(2\). Now take
\[
	\ell=2n-2k-1,
	\qquad k=0,1,\ldots,n-1.
\]
For these values of \(\ell\), equations \eqref{raw moment vanishing from G}
and \eqref{parity coefficient computation} yield
\[
	\int_{\T}
	F(\zeta)\zeta^{-2k-r}w(\zeta)
	\frac{\dd\zeta}{2\pi\ic\zeta}
	=0,
	\qquad k=0,1,\ldots,n-1.
\]
This proves the claim.
\end{proof}
\subsubsection{Completion of the Proof of Theorem \ref{thm:Y converse P entries}}
The first column has no jump across \(\T\). Hence \(Y_{11}(z;r)\) extends to
an entire function. From \textbf{RH}-\(\boldsymbol{Y3}\),
\[
	Y_{11}(z;r)=z^{2m}+O(z^{2m-1}),\qquad z\to\infty.
\]
Thus
\[
	F(z):=Y_{11}(z;r)
\]
is a monic polynomial of degree \(2m\). By Lemma~\ref{prop:Y column symmetry},
\[
	Y_{12}(z;r)=\frac{F(z)-F(-z)}{2z}.
\]
The jump of the \((1,3)\)-entry is therefore
\[
	Y_{13,+}(z;r)-Y_{13,-}(z;r)
	=
	F(z)\mathcal{j}_1(z;m,r)
	+
	\frac{F(z)-F(-z)}{2z}\mathcal{j}_2(z;m,r).
\]
Substituting the definitions of \(\mathcal{j}_1\) and \(\mathcal{j}_2\), we
obtain
\begin{equation}\label{Y13 jump in terms of F detailed}
	Y_{13,+}(z;r)-Y_{13,-}(z;r)
	=
	z^{-4m-r+4}
	\left[
		F(z)w(z)+(-1)^rF(-z)w(-z)
	\right].
\end{equation}

To get the orthogonality conditions, we use 
\[
	\mathcal{G}_F(z):=z^{-4}Y_{13}(z;r),
	\qquad |z|>1.
\]
From \eqref{Y13 jump in terms of F detailed},
\[
	\mathcal{G}_{F,+}(z)-\mathcal{G}_{F,-}(z)
	=
	z^{-4m-r}
	\left[
		F(z)w(z)+(-1)^rF(-z)w(-z)
	\right],
	\qquad z\in\T.
\]
The normalization at infinity gives
\[
	\mathcal{G}_F(z)=O(z^{-4m-1}),
	\qquad z\to\infty.
\]
Therefore Lemma~\ref{lemma:large z gives P orthogonality}, with
\(n=2m\), gives
\begin{equation}\label{F top P orthogonality}
 	\int_{\T}
	F(\zeta)\zeta^{-2k-r}w(\zeta)
	\frac{\dd\zeta}{2\pi\ic\zeta}
	=0,
	\qquad k=0,1,\ldots,2m-1.   
\end{equation}
Thus \(F\) satisfies the defining \(2j-k\) orthogonality conditions in
degree \(2m\).

We now prove that \(D_{2m}^{(r)}\neq0\). If \(D_{2m}^{(r)}=0\), then the
linear system for the coefficients of a monic degree \(2m\) polynomial
satisfying \eqref{F top P orthogonality} is not uniquely solvable. Hence one
can find a second monic degree \(2m\) polynomial satisfying the same
orthogonality conditions. Repeating the same Cauchy-transform construction
with this second polynomial gives another solution of the same
\(\boldsymbol{Y}\)-RHP. This contradicts
Lemma~\ref{lemma:Y uniqueness}. Therefore
\[
	D_{2m}^{(r)}\neq0.
\]
By Theorem~\ref{P and Q exist and are unique}, the monic polynomial
satisfying \eqref{F top P orthogonality} is unique. Hence
\[
	F(z)=P_{2m}(z;r).
\]

Now we prove the statment about $P_{2m-1}$. The functions \(Y_{21}\) and \(Y_{31}\)
are entire functions. For the second row, the normalization gives
\[
	Y_{21}(z;r)=z^{2m-1}+O(z^{2m-2}),
	\qquad z\to\infty.
\]
Thus \(Y_{21}\) is a monic polynomial of degree \(2m-1\). By
Proposition~\ref{prop:Y column symmetry},
\[
	Y_{22}(z;r)
	=
	\frac{Y_{21}(z;r)-Y_{21}(-z;r)}{2z}.
\]
The jump of \(Y_{23}\) gives
\[
	Y_{23,+}(z;r)-Y_{23,-}(z;r)
	=
	z^{-4m-r+4}
	\left[
		Y_{21}(z;r)w(z)+(-1)^rY_{21}(-z;r)w(-z)
	\right].
\]
Now define,
\[
	\mathcal{G}_{21}(z):=z^{-2}Y_{23}(z;r),
	\qquad |z|>1.
\]
Then
\[
	\mathcal{G}_{21,+}(z)-\mathcal{G}_{21,-}(z)
	=
	z^{-4m-r+2}
	\left[
		Y_{21}(z;r)w(z)+(-1)^rY_{21}(-z;r)w(-z)
	\right].
\]
This is the form in Lemma~\ref{lemma:large z gives P orthogonality} with
\[
	n=2m-1.
\]
The normalization at infinity gives
\[
	\mathcal{G}_{21}(z)=O(z^{-4m+1}),
	\qquad z\to\infty.
\]
Therefore
\[
	\int_{\T}
	Y_{21}(\zeta;r)\zeta^{-2k-r}w(\zeta)
	\frac{\dd\zeta}{2\pi\ic\zeta}
	=0,
	\qquad k=0,1,\ldots,2m-2.
\]
Thus \(Y_{21}\) satisfies the defining \(2j-k\) orthogonality conditions in
degree \(2m-1\). The same uniqueness argument as above gives
\[
	D_{2m-1}^{(r)}\neq0.
\]
Hence
\[
	Y_{21}(z;r)=P_{2m-1}(z;r).
\]

Finally, we prove the statement about \(P_{2m-2}\). The function
\(Y_{31}\) is entire. From the normalization at infinity,
\[
	Y_{31}(z;r)=O(z^{2m-2}),
	\qquad z\to\infty.
\]
More precisely, since
\[
	\overset{\infty}{\boldsymbol{Y}}_{1,31}(m,r)=0,
\]
we have
\[
	Y_{31}(z;r)
	=
	\overset{\infty}{\boldsymbol{Y}}_{2,31}(m,r)z^{2m-2}
	+
	O(z^{2m-3}),
	\qquad z\to\infty.
\]
By the additional condition in \textbf{RH}-\(\boldsymbol{Y3}\),
\[
	\overset{\infty}{\boldsymbol{Y}}_{2,31}(m,r)\neq0.
\]
Hence
\[
	\widetilde P_{2m-2}(z;r)
	:=
	\frac{1}{\overset{\infty}{\boldsymbol{Y}}_{2,31}(m,r)}
	Y_{31}(z;r)
\]
is a monic polynomial of degree \(2m-2\).

By Lemma~\ref{prop:Y column symmetry},
\[
	Y_{32}(z;r)
	=
	\frac{Y_{31}(z;r)-Y_{31}(-z;r)}{2z}.
\]
Therefore the jump of \(Y_{33}\) is
\[
	Y_{33,+}(z;r)-Y_{33,-}(z;r)
	=
	Y_{31}(z;r)\mathcal{j}_1(z;m,r)
	+
	\frac{Y_{31}(z;r)-Y_{31}(-z;r)}{2z}
	\mathcal{j}_2(z;m,r).
\]
Substituting the definitions of \(\mathcal{j}_1\) and \(\mathcal{j}_2\), we
obtain
\[
	Y_{33,+}(z;r)-Y_{33,-}(z;r)
	=
	z^{-4m-r+4}
	\left[
		Y_{31}(z;r)w(z)+(-1)^rY_{31}(-z;r)w(-z)
	\right].
\]
This is the form in Lemma~\ref{lemma:large z gives P orthogonality} with
\[
	n=2m-2.
\]
The normalization at infinity gives
\[
	Y_{33}(z;r)=O(z^{-4m+2}),
	\qquad z\to\infty,
\]
which is stronger than the decay required in
Lemma~\ref{lemma:large z gives P orthogonality}. Therefore
\[
	\int_{\T}
	Y_{31}(\zeta;r)\zeta^{-2k-r}w(\zeta)
	\frac{\dd\zeta}{2\pi\ic\zeta}
	=0,
	\qquad k=0,1,\ldots,2m-3.
\]
Dividing by \(\overset{\infty}{\boldsymbol{Y}}_{2,31}(m,r)\), we get
\[
	\int_{\T}
	\widetilde P_{2m-2}(\zeta;r)\zeta^{-2k-r}w(\zeta)
	\frac{\dd\zeta}{2\pi\ic\zeta}
	=0,
	\qquad k=0,1,\ldots,2m-3.
\]
Thus \(\widetilde P_{2m-2}\) is a monic polynomial of degree \(2m-2\)
satisfying the defining \(2j-k\) orthogonality conditions.

We now prove that \(D_{2m-2}^{(r)}\neq0\). If \(D_{2m-2}^{(r)}=0\), then
the linear system for the coefficients of a monic degree \(2m-2\) polynomial
satisfying the above orthogonality conditions would not have a unique
solution. Hence one could find a second monic polynomial of degree
\(2m-2\) satisfying the same orthogonality conditions. Repeating the same
Cauchy-transform construction with this second polynomial gives another
solution of the same \(\boldsymbol{Y}\)-RHP, contradicting
Lemma~\ref{lemma:Y uniqueness}. Therefore
\[
	D_{2m-2}^{(r)}\neq0.
\]
By Theorem~\ref{P and Q exist and are unique}, the monic polynomial
satisfying the defining orthogonality conditions in degree \(2m-2\) is
unique. Hence
\[
	\widetilde P_{2m-2}(z;r)=P_{2m-2}(z;r).
\]

We have now proved
\[
	D_{2m}^{(r)}D_{2m-1}^{(r)}D_{2m-2}^{(r)}\neq0.
\]
Consequently, the polynomials
\[
	P_{2m}(z;r),\qquad P_{2m-1}(z;r),\qquad P_{2m-2}(z;r)
\]
exist and are unique. In particular, the matrix \(\boldsymbol{A}_m(r)\)
defined by \eqref{A A A} is well-defined.

Set
\[
	\widehat{\boldsymbol{Y}}_m(z;r)
	:=
	\boldsymbol{A}_m(r)\boldsymbol{Y}_m(z;r).
\]
We will show that
\[
	\widehat{\boldsymbol{Y}}_m(z;r)
	=
	\boldsymbol{\Xi}_m(z;r),
\]
where
\[
	\boldsymbol{\Xi}_m(z;r)
	:=
	\begin{pmatrix}
		P_{2m}(z;r) & \mathscr{D}_{2m}(z;r) & z^4G_{2m}(z;r) \\[8pt]
		P_{2m-1}(z;r) & \mathscr{D}_{2m-1}(z;r) & z^2G_{2m-1}(z;r) \\[8pt]
		P_{2m-2}(z;r) & \mathscr{D}_{2m-2}(z;r) & G_{2m-2}(z;r)
	\end{pmatrix}.
\]

First compare the first two columns. The first two columns of
\(\widehat{\boldsymbol{Y}}_m\) have no jump across \(\T\), because the jump
matrix differs from the identity only in its third column. The first two
columns of \(\boldsymbol{\Xi}_m\) are polynomial columns. By the definition of
\(\boldsymbol{A}_m(r)\), together with the normalization of
\(\boldsymbol{Y}_m\), the first two columns of
\(\widehat{\boldsymbol{Y}}_m\) and \(\boldsymbol{\Xi}_m\) have the same
large-\(z\) expansions. Hence their difference is an entire vector-valued
function which vanishes at infinity. By Liouville's theorem, the first two
columns agree:
\[
	\widehat{\boldsymbol{Y}}_m^{(1)}(z;r)
	=
	\boldsymbol{\Xi}_m^{(1)}(z;r),
	\qquad
	\widehat{\boldsymbol{Y}}_m^{(2)}(z;r)
	=
	\boldsymbol{\Xi}_m^{(2)}(z;r).
\]

It remains to compare the third columns. Since both
\(\widehat{\boldsymbol{Y}}_m\) and \(\boldsymbol{\Xi}_m\) satisfy the same jump
relation, their difference
\[
	\boldsymbol{d}_m(z;r)
	:=
	\widehat{\boldsymbol{Y}}_m(z;r)-\boldsymbol{\Xi}_m(z;r)
\]
satisfies
\[
	\boldsymbol{d}_{m,+}(z;r)
	=
	\boldsymbol{d}_{m,-}(z;r)
	\begin{pmatrix}
		1 & 0 & \mathcal{j}_1(z;m,r)\\
		0 & 1 & \mathcal{j}_2(z;m,r)\\
		0 & 0 & 1
	\end{pmatrix}.
\]
The first two columns of \(\boldsymbol{d}_m\) are zero, as just proved.
Therefore the jump of the third column is trivial:
\[
	\boldsymbol{d}_{m,+}^{(3)}(z;r)
	=
	\boldsymbol{d}_{m,-}^{(3)}(z;r),
	\qquad z\in\T.
\]
Thus \(\boldsymbol{d}_m^{(3)}\) extends to an entire vector-valued function.
Again by the definition of \(\boldsymbol{A}_m(r)\), the normalization of
\(\boldsymbol{Y}_m\), and the large-\(z\) expansions of
\(G_{2m}\), \(G_{2m-1}\), and \(G_{2m-2}\), we have
\[
	\boldsymbol{d}_m^{(3)}(z;r)=O(z^{-1}),
	\qquad z\to\infty.
\]
Liouville's theorem gives
\[
	\boldsymbol{d}_m^{(3)}(z;r)\equiv0.
\]
Therefore
\[
	\widehat{\boldsymbol{Y}}_m(z;r)=\boldsymbol{\Xi}_m(z;r).
\]
Equivalently,
\[
	\begin{pmatrix}
		P_{2m}(z;r) & \mathscr{D}_{2m}(z;r) & z^4G_{2m}(z;r) \\[8pt]
		P_{2m-1}(z;r) & \mathscr{D}_{2m-1}(z;r) & z^2G_{2m-1}(z;r) \\[8pt]
		P_{2m-2}(z;r) & \mathscr{D}_{2m-2}(z;r) & G_{2m-2}(z;r)
	\end{pmatrix}
	=
	\boldsymbol{A}_m(r)\boldsymbol{Y}_m(z;r).
\]
This concludes the proof of Theorem~\ref{thm:Y converse P entries}.

\section{The Riemann-Hilbert Characterization for the $j-2k$ Systems}\label{sec j-2k}

This section establishes the Riemann--Hilbert characterization of the
\(j-2k\) system. We first prove the forward direction, showing that the
polynomials defined by the \(2j-k\) orthogonality relations give rise to a
solution of the corresponding Riemann--Hilbert problem, including all the requirements of \textbf{RH-X3}. We then prove the
converse direction, showing that
any solution of the Riemann--Hilbert problem recovers the orthogonal
polynomials \(S_n(z;s)\) and their associated functions \(H_n(z;s)\).

\subsection{The Forward Direction: Proof of Theorem \ref{thm Q}}\label{sec forward thm Q}

In what follows we assume that the symbol \(w\) and \(s\in\Z\) are such that
\[
	E_n^{(s)}\neq0,
	\qquad n\in\{2m,2m-1,2m-2\},
\]
and that the generic non-vanishing condition
\[
	\frac{
	E^{(s+1)}_{2m}\widehat{E}^{(s)}_{2m-1}
	-
	E^{(s+1)}_{2m-1}\widehat{E}^{(s)}_{2m}
	}{
	E^{(s)}_{2m}E^{(s)}_{2m-1}
	}
	\neq0
\]
holds. Then \(\boldsymbol{B}_m(s)\) is invertible, and hence
\(\boldsymbol{X}_m(z;s)\) is uniquely well-defined by
\[
	\boldsymbol{X}_m(z;s)
	:=
	\boldsymbol{B}_m(s)^{-1}\boldsymbol{\Phi}_m(z;s),
\]
where
\begin{equation}\label{Phi S H}
    \boldsymbol{\Phi}_m(z;s)
	:=
	\begin{pmatrix}
		S^*_{2m}(z;s) &
		\di \frac{S^*_{2m}(z;s)-S^*_{2m}(-z;s)}{2z} &
		H_{2m}(z;s) \\[10pt]
		zS^*_{2m-1}(z;s) &
		\di \frac{S^*_{2m-1}(z;s)+S^*_{2m-1}(-z;s)}{2} &
		H_{2m-1}(z;s) \\[10pt]
		z^2S^*_{2m-2}(z;s) &
		\di \frac{z\left(S^*_{2m-2}(z;s)-S^*_{2m-2}(-z;s)\right)}{2} &
		H_{2m-2}(z;s)
	\end{pmatrix}.
\end{equation}

First, let us tabulate the asymptotic behaviors of \(H_n(z;s)\) as
\(z\to0\) and \(z\to\infty\). Recall that
\[
	H_n(z;s)
	=
	\int_{\T}
	S_n(\zeta^{-1};s)w(\zeta)\zeta^{-s}
	\frac{2}{\zeta^2-z^2}
	\frac{\dd\zeta}{2\pi\ic\zeta}.
\]
For \(|z|<1\), we can write
\[
	\frac{2}{\zeta^2-z^2}
	=
	2\sum_{\ell=0}^{\infty}z^{2\ell}\zeta^{-2\ell-2}.
\]
Therefore
\begin{equation}\label{asymp H zero0}
	H_n(z;s)
	=
	2\sum_{\ell=0}^{\infty}z^{2\ell}
	\int_{\T}
	S_n(\zeta^{-1};s)w(\zeta)\zeta^{-s-2\ell-2}
	\frac{\dd\zeta}{2\pi\ic\zeta}.
\end{equation}
In particular,
\begin{equation}\label{asymp H zero}
	H_n(z;s)=\widehat H_n(s)+O(z^2),
	\qquad z\to0,
\end{equation}
where
\[
	\widehat H_n(s)
	:=
	2
	\int_{\T}
	S_n(\zeta^{-1};s)w(\zeta)\zeta^{-s-2}
	\frac{\dd\zeta}{2\pi\ic\zeta}.
\]
Notice that due to \eqref{asymp H zero} and the polynomial nature of the
first two columns of \(\boldsymbol{\Phi}_m(z;s)\), the function
\(\boldsymbol{X}_m(z;s)\) has no singular behavior as \(z\to0\). This
confirms \textbf{RH}-\(\boldsymbol{X1}\).

For \(|z|>1\), we can write
\[
	\frac{2}{\zeta^2-z^2}
	=
	-2\sum_{\ell=0}^{\infty}z^{-2\ell-2}\zeta^{2\ell}.
\]
Hence
\begin{equation}\label{asymp H inf0}
	H_n(z;s)
	=
	-2\sum_{\ell=0}^{\infty}z^{-2\ell-2}
	\int_{\T}
	S_n(\zeta^{-1};s)w(\zeta)\zeta^{2\ell-s}
	\frac{\dd\zeta}{2\pi\ic\zeta}.
\end{equation}
By the defining \(j-2k\) orthogonality conditions,
\[
	\int_{\T}
	S_n(\zeta^{-1};s)w(\zeta)\zeta^{2\ell-s}
	\frac{\dd\zeta}{2\pi\ic\zeta}
	=0,
	\qquad \ell=0,1,\ldots,n-1.
\]
Therefore
\begin{equation}\label{asymp H inf}
	H_n(z;s)
	=
	-2g_n^{(s)}z^{-2n-2}
	+
	\mathfrak{u}_n^{(s)}z^{-2n-4}
	+
	O(z^{-2n-6}),
	\qquad z\to\infty,
\end{equation}
where 
\begin{equation}\label{h X evaluation}
	g_n^{(s)}
	=
	\frac{E_{n+1}^{(s)}}{E_n^{(s)}},
\end{equation}

\begin{equation}\label{u X coefficient}
	\mathfrak{u}_n^{(s)}
	:=
	-2
	\int_{\T}
	S_n(\zeta^{-1};s)w(\zeta)\zeta^{2n+2-s}
	\frac{\dd\zeta}{2\pi\ic\zeta}.
\end{equation}

Notice that
\begin{align}
	S^*_{2m}(z;s)
	&=
	S_{2m}(0;s)z^{2m}
	+
	S'_{2m}(0;s)z^{2m-1}
	+
	O(z^{2m-2}),
	&& z\to\infty,
	\\
	zS^*_{2m-1}(z;s)
	&=
	S_{2m-1}(0;s)z^{2m}
	+
	S'_{2m-1}(0;s)z^{2m-1}
	+
	O(z^{2m-2}),
	&& z\to\infty,
	\\
	z^2S^*_{2m-2}(z;s)
	&=
	S_{2m-2}(0;s)z^{2m}
	+
	S'_{2m-2}(0;s)z^{2m-1}
	+
	O(z^{2m-2}),
	&& z\to\infty.
\end{align}
Also,
\begin{align}
	\frac{S^*_{2m}(z;s)-S^*_{2m}(-z;s)}{2z}
	&=
	S'_{2m}(0;s)z^{2m-2}
	+
	O(z^{2m-4}),
	&& z\to\infty,
	\\
	\frac{S^*_{2m-1}(z;s)+S^*_{2m-1}(-z;s)}{2}
	&=
	S'_{2m-1}(0;s)z^{2m-2}
	+
	O(z^{2m-4}),
	&& z\to\infty,
	\\
	\frac{z\left(S^*_{2m-2}(z;s)-S^*_{2m-2}(-z;s)\right)}{2}
	&=
	S'_{2m-2}(0;s)z^{2m-2}
	+
	O(z^{2m-4}),
	&& z\to\infty.
\end{align}
Using these expansions and \eqref{asymp H inf}, we obtain
\begin{equation}\label{M X asymp inf}
\boldsymbol{\Phi}_m(z;s)
=
\left(
	\boldsymbol{B}_m(s)
	+
	\frac{\boldsymbol{B}_m(s)E_{21}}{z}
	+
	O(z^{-2})
\right)
\begin{pmatrix}
	z^{2m} & 0 & 0\\
	0 & z^{2m-2} & 0\\
	0 & 0 & z^{-4m+2}
\end{pmatrix},
\qquad z\to\infty.
\end{equation}
Indeed, the leading coefficient matrix is precisely
\[
	\boldsymbol{B}_{m}(s)
	=
	\begin{pmatrix}
		S_{2m}(0;s)  &  S'_{2m}(0;s)  & 0 \\[6pt]
		S_{2m-1}(0;s) & S'_{2m-1}(0;s) & 0 \\[6pt]
		S_{2m-2}(0;s) & S'_{2m-2}(0;s) & -2g^{(s)}_{2m-2}
	\end{pmatrix}.
\]
Multiplying \eqref{M X asymp inf} on the left by
\(\boldsymbol{B}_m(s)^{-1}\), we get
\begin{equation}\label{X asymp inf}
\boldsymbol{X}_m(z;s)
=
\left(
	I+\frac{E_{21}}{z}+O(z^{-2})
\right)
\begin{pmatrix}
	z^{2m} & 0 & 0\\
	0 & z^{2m-2} & 0\\
	0 & 0 & z^{-4m+2}
\end{pmatrix},
\qquad z\to\infty.
\end{equation}
This confirms the first part of \textbf{RH}-\(\boldsymbol{X3}\), namely
\[
	\overset{\infty}{\boldsymbol{X}}_1(m,s)
	=
	\begin{pmatrix}
		0 & 0 & 0\\
		1 & 0 & 0\\
		0 & 0 & 0
	\end{pmatrix}.
\]

We now check the additional non-degeneracy condition in
\textbf{RH}-\(\boldsymbol{X3}\). Let
\[
	\boldsymbol{v}(z):=\boldsymbol{X}^{(1)}_m(z;s),
	\qandq
	\boldsymbol{v}_j
	:=
	\frac{1}{j!}
	\frac{\dd^j}{\dd z^j}\boldsymbol{v}(0),
	\qquad j=0,1,2.
\]
Since
\begin{equation}\label{v B S^*}
    \boldsymbol{v}(z)
	=
	\boldsymbol{B}_m(s)^{-1}
	\begin{pmatrix}
		S^*_{2m}(z;s)\\
		zS^*_{2m-1}(z;s)\\
		z^2S^*_{2m-2}(z;s)
	\end{pmatrix},
\end{equation}
and since \(S_n^*(0;s)=1\), we have
\[
	\begin{pmatrix}
		S^*_{2m}(z;s)\\
		zS^*_{2m-1}(z;s)\\
		z^2S^*_{2m-2}(z;s)
	\end{pmatrix}
	=
	\begin{pmatrix}
		1\\0\\0
	\end{pmatrix}
	+
	z
	\begin{pmatrix}
		*\\1\\0
	\end{pmatrix}
	+
	z^2
	\begin{pmatrix}
		*\\ *\\1
	\end{pmatrix}
	+
	O(z^3),
	\qquad z\to0.
\]
Therefore matching the coefficients of $z^0$, $z^1$, and $z^2$ in \eqref{v B S^*} yields
\begin{equation}\label{v0 v1 v2 and B}
    \boldsymbol{v}_0
	= \boldsymbol{B}^{-1}_m(s)
	\begin{pmatrix}
		1\\0\\0
	\end{pmatrix},
	\qquad
	\boldsymbol{v}_1
	=
\boldsymbol{B}^{-1}_m(s)\begin{pmatrix}
		*\\1\\0
	\end{pmatrix},
	\qquad
	\boldsymbol{v}_2
	= \boldsymbol{B}^{-1}_m(s)
	\begin{pmatrix}
		*\\ *\\1
	\end{pmatrix},
\end{equation}
and therefore
\begin{equation}\label{X rank v0v1v2 forward}
	\rank
	\begin{pmatrix}
		\boldsymbol{v}_0 & \boldsymbol{v}_1 & \boldsymbol{v}_2
	\end{pmatrix}
	=3.
\end{equation}

Next, the expansion \eqref{X asymp inf} gives
\[
	\boldsymbol{X}^{(3)}_m(z;s)
	=
	z^{-4m+2}\boldsymbol{e}_3
	+
	z^{-4m}\boldsymbol{\phi}
	+
	O(z^{-4m-2}),
	\qquad z\to\infty,
\]
for some vector \(\boldsymbol{\phi}\in\C^3\). More precisely, after
multiplication by \(\boldsymbol{B}_m(s)\), the leading coefficient and the
first subleading coefficient of the third column are
\begin{equation}\label{B e3}
\boldsymbol{B}_m(s)\boldsymbol{e}_3
	=
	\begin{pmatrix}
		0\\
		0\\
		-2g^{(s)}_{2m-2}
	\end{pmatrix},    
\end{equation}

and
\[
	\boldsymbol{B}_m(s)\boldsymbol{\phi}
	=
	\begin{pmatrix}
		0\\
		-2g^{(s)}_{2m-1}\\
		\mathfrak{u}^{(s)}_{2m-2}
	\end{pmatrix}.
\]
Since
\[
	g^{(s)}_{2m-2}
	=
	\frac{E^{(s)}_{2m-1}}{E^{(s)}_{2m-2}}\neq0,
	\qquad
	g^{(s)}_{2m-1}
	=
	\frac{E^{(s)}_{2m}}{E^{(s)}_{2m-1}}\neq0,
\]
and recalling the form of $\boldsymbol{B}_m(s)\boldsymbol{v}_0$ in \eqref{v0 v1 v2 and B}, we have
\[
	\rank
	\begin{pmatrix}
		\boldsymbol{B}_m(s)\boldsymbol{v}_0 &
		\boldsymbol{B}_m(s)\boldsymbol{e}_3 &
		\boldsymbol{B}_m(s)\boldsymbol{\phi}
	\end{pmatrix}
	=3.
\]
Since \(\boldsymbol{B}_m(s)\) is invertible, this gives
\begin{equation}\label{X rank v0 e3 a2 forward}
	\rank
	\begin{pmatrix}
		\boldsymbol{v}_0 & \boldsymbol{e}_3 & \boldsymbol{\phi}
	\end{pmatrix}
	=3.
\end{equation}
Similarly, recalling \eqref{v0 v1 v2 and B} and \eqref{B e3}
\[
	\rank
	\begin{pmatrix}
		\boldsymbol{B}_m(s)\boldsymbol{v}_0 &
		\boldsymbol{B}_m(s)\boldsymbol{v}_1 &
		\boldsymbol{B}_m(s)\boldsymbol{e}_3
	\end{pmatrix}
	=3.
\]
 Hence
\begin{equation}\label{X rank v0 v1 e3 forward}
	\rank
	\begin{pmatrix}
		\boldsymbol{v}_0 & \boldsymbol{v}_1 & \boldsymbol{e}_3
	\end{pmatrix}
	=3.
\end{equation}
The rank conditions
\eqref{X rank v0v1v2 forward}, \eqref{X rank v0 e3 a2 forward}, and
\eqref{X rank v0 v1 e3 forward} confirm the additional requirements of
\textbf{RH}-\(\boldsymbol{X3}\).

Finally, we turn our attention to \textbf{RH}-\(\boldsymbol{X2}\). Let us
investigate the additive jump condition satisfied by \(H_n(z;s)\). Using
\[
	\frac{2}{\zeta^2-z^2}
	=
	\frac{z^{-1}}{\zeta-z}
	-
	\frac{z^{-1}}{\zeta+z},
\]
we rewrite \(H_n\) as
\[
	H_n(z;s)
	=
	z^{-1}
	\int_{\T}
	\frac{S_n(\zeta^{-1};s)w(\zeta)\zeta^{-s}}
	{\zeta-z}
	\frac{\dd\zeta}{2\pi\ic\zeta}
	-
	z^{-1}
	\int_{\T}
	\frac{S_n(\zeta^{-1};s)w(\zeta)\zeta^{-s}}
	{\zeta+z}
	\frac{\dd\zeta}{2\pi\ic\zeta}.
\]
Equivalently,
\[
	H_n(z;s)
	=
	z^{-1}
	\left\{
	\mathcal{C}\left[
		S_n(\zeta^{-1};s)w(\zeta)\zeta^{-s-1};z
	\right]
	-
	\mathcal{C}\left[
		S_n(\zeta^{-1};s)w(\zeta)\zeta^{-s-1};-z
	\right]
	\right\},
\]
where
\[
	\mathcal{C}[f;z]
	:=
	\frac{1}{2\pi\ic}
	\int_{\T}\frac{f(\zeta)}{\zeta-z}\,\dd\zeta
\]
denotes the Cauchy transform. By the Plemelj--Sokhotskii formula, for
\(z\in\T\),
\begin{equation}\label{H additive jump}
	H_{n,+}(z;s)-H_{n,-}(z;s)
	=
	z^{-s-2}
	\left[
		S_n(z^{-1};s)w(z)
		+
		(-1)^sS_n(-z^{-1};s)w(-z)
	\right].
\end{equation}

For \(t=0,1,2\), set
\[
	n=2m-t,
\]
and define
\[
	F_t(z;s):=z^tS^*_{2m-t}(z;s).
\]
Then
\[
	F_t(z;s)=z^{2m}S_{2m-t}(z^{-1};s),
\qquad
	F_t(-z;s)=z^{2m}S_{2m-t}(-z^{-1};s).
\]
Hence \eqref{H additive jump} gives
\[
	H_{2m-t,+}(z;s)-H_{2m-t,-}(z;s)
	=
	z^{-2m-s-2}
	\left[
		F_t(z;s)w(z)+(-1)^sF_t(-z;s)w(-z)
	\right].
\]
On the other hand, the corresponding second-column entry in the (\(t+1\))-st
row of \(\boldsymbol{\Phi}_m\) is
\[
	\frac{F_t(z;s)-F_t(-z;s)}{2z}.
\]
Therefore
\[
	H_{2m-t,+}(z;s)-H_{2m-t,-}(z;s)
	=
	F_t(z;s)\mathcal{k}_1(z;m,s)
	+
	\frac{F_t(z;s)-F_t(-z;s)}{2z}\mathcal{k}_2(z;m,s),
\]
where
\[
	\mathcal{k}_1(z;m,s)
	:=
	z^{-2m-s-2}\left[w(z)+(-1)^sw(-z)\right],
\]
and
\[
	\mathcal{k}_2(z;m,s)
	:=
	2(-1)^{s+1}z^{-2m-s-1}w(-z).
\]
It follows that each row of \(\boldsymbol{\Phi}_m\) satisfies
\[
	\boldsymbol{\Phi}_{m,j,+}(z;s)
	=
	\boldsymbol{\Phi}_{m,j,-}(z;s)
	\begin{pmatrix}
		1 & 0 & \mathcal{k}_1(z;m,s)\\
		0 & 1 & \mathcal{k}_2(z;m,s)\\
		0 & 0 & 1
	\end{pmatrix},
	\qquad z\in\T,
\]
for \(j=1,2,3\). Therefore the whole matrix satisfies
\[
	\boldsymbol{\Phi}_{m,+}(z;s)
	=
	\boldsymbol{\Phi}_{m,-}(z;s)
	\begin{pmatrix}
		1 & 0 & \mathcal{k}_1(z;m,s)\\
		0 & 1 & \mathcal{k}_2(z;m,s)\\
		0 & 0 & 1
	\end{pmatrix},
	\qquad z\in\T.
\]
Since
\[
	\boldsymbol{X}_m(z;s)
	=
	\boldsymbol{B}_m(s)^{-1}\boldsymbol{\Phi}_m(z;s),
\]
and multiplication on the left by an analytic factor, in this case the constant
matrix \(\boldsymbol{B}_m(s)^{-1}\), does not change the jump condition, we
conclude that \(\boldsymbol{X}_m(z;s)\) satisfies
\textbf{RH}-\(\boldsymbol{X2}\).

We have thus proven Theorem~\ref{thm Q}.

\subsection{The Converse Direction: Proof of Theorem \ref{thm:X converse}}\label{sec converse thm Q}
We first prove an auxiliary lemma.
 \begin{lemma}
\label{lemma:X row reconstruction}
Assume that the \(\boldsymbol{X}\)-RHP is solvable. Let
\[
	\boldsymbol{v}(z):=\boldsymbol{X}^{(1)}_m(z;s)
\]
be the first column of \(\boldsymbol{X}_m\), and write its Taylor expansion
at the origin as
\[
	\boldsymbol{v}(z)
	=
	\boldsymbol{v}_0+z\boldsymbol{v}_1+z^2\boldsymbol{v}_2+O(z^3),
	\qquad z\to0.
\]
Also write the refined expansion of the third column as
\[
	\boldsymbol{X}^{(3)}_m(z;s)
	=
	z^{-4m+2}
	\left(
		\boldsymbol{e}_3+\frac{\boldsymbol{\phi}}{z^2}+O(z^{-4})
	\right),
	\qquad z\to\infty,
\]
where \(\boldsymbol{e}_3=(0,0,1)^T\). Then the following two statements are equivalent.

\begin{enumerate}
	\item The following non-degeneracy conditions hold:
	\begin{equation}\label{X intrinsic nondegeneracy iff}
		\rank
		\begin{pmatrix}
			\boldsymbol{v}_0 & \boldsymbol{e}_3 & \boldsymbol{\phi}
		\end{pmatrix}
		=3,
		\qquad
		\rank
		\begin{pmatrix}
			\boldsymbol{v}_0 & \boldsymbol{v}_1 & \boldsymbol{e}_3
		\end{pmatrix}
		=3,
		\qquad
		\rank
		\begin{pmatrix}
			\boldsymbol{v}_0 & \boldsymbol{v}_1 & \boldsymbol{v}_2
		\end{pmatrix}
		=3.
	\end{equation}

	\item There exists a unique constant matrix
	\[
		\boldsymbol{L}_m
		=
		\begin{pmatrix}
			\ell_0\\
			\ell_1\\
			\ell_2
		\end{pmatrix},
		\qquad
		\ell_0,\ell_1,\ell_2\in\C^{1\times3},
	\]
	such that, with
	\[
		\widehat{\boldsymbol{X}}_m(z;s)
		:=
		\boldsymbol{L}_m\boldsymbol{X}_m(z;s),
	\]
	the first column has the structure
	\begin{equation}\label{X unnormalized first column structure}
		\widehat X_{11}(z;s)=q_{2m}(z),
		\qquad
		\widehat X_{21}(z;s)=zq_{2m-1}(z),
		\qquad
		\widehat X_{31}(z;s)=z^2q_{2m-2}(z),
	\end{equation}
	where
	\[
		\deg q_n\leq n,
		\qquad
		q_n(0)=1,
	\]
	and the third column satisfies
	\begin{equation}\label{X unnormalized third column decay}
		\widehat X_{13}(z;s)=O(z^{-4m-2}),
		\qquad
		\widehat X_{23}(z;s)=O(z^{-4m}),
		\qquad
		\widehat X_{33}(z;s)=O(z^{-4m+2}),
		\qquad z\to\infty.
	\end{equation}
\end{enumerate}

When these equivalent conditions hold, the row vectors are characterized by
\[
	\ell_0\boldsymbol{v}_0=1,
	\qquad
	\ell_0\boldsymbol{e}_3=0,
	\qquad
	\ell_0\boldsymbol{\phi}=0,
\]
\[
	\ell_1\boldsymbol{v}_0=0,
	\qquad
	\ell_1\boldsymbol{v}_1=1,
	\qquad
	\ell_1\boldsymbol{e}_3=0,
\]
and
\[
	\ell_2\boldsymbol{v}_0=0,
	\qquad
	\ell_2\boldsymbol{v}_1=0,
	\qquad
	\ell_2\boldsymbol{v}_2=1.
\]
Moreover, \(\boldsymbol{L}_m\) is invertible.
\end{lemma}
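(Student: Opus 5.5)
The plan is to reduce everything to linear algebra on the rows $\ell_0,\ell_1,\ell_2$. Two analytic facts come first.

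First, since the jump matrix in \textbf{RH}-$\boldsymbol{X2}$ differs from the identity only in its third column, the first column $\boldsymbol{v}(z)=\boldsymbol{X}^{(1)}_m(z;s)$ has no jump across $\T$. Hence it is entire. By \textbf{RH}-$\boldsymbol{X3}$ it is $O(z^{2m})$ at infinity, so each entry of $\boldsymbol{v}$ is a polynomial of degree at most $2m$.

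Second, for any row vector $\ell\in\C^{1\times3}$ the refined expansion \eqref{X third column refined} gives
\[
\ell\boldsymbol{X}^{(3)}_m(z;s)=z^{-4m+2}\left(\ell\boldsymbol{e}_3+\frac{\ell\boldsymbol{\phi}}{z^2}+O(z^{-4})\right),\qquad z\to\infty .
\]
Since the expansion proceeds in steps of $z^{-2}$, the decay conditions \eqref{X unnormalized third column decay} translate exactly as follows:
\begin{itemize}
\item $O(z^{-4m-2})$ holds if and only if $\ell\boldsymbol{e}_3=\ell\boldsymbol{\phi}=0$;
\item $O(z^{-4m})$ holds if and only if $\ell\boldsymbol{e}_3=0$;
\item $O(z^{-4m+2})$ holds with no condition.
\end{itemize}

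Next I translate the first-column structure \eqref{X unnormalized first column structure}. The polynomial $\ell\boldsymbol{v}(z)$ already has degree at most $2m$, so the degree bounds $\deg q_n\le n$ are automatic. What remains is the vanishing and normalization of the low Taylor coefficients at $0$:
\begin{itemize}
\item $\ell_0\boldsymbol{v}$ has the form $q_{2m}$ with $q_{2m}(0)=1$ if and only if $\ell_0\boldsymbol{v}_0=1$;
\item $\ell_1\boldsymbol{v}=zq_{2m-1}$ with $q_{2m-1}(0)=1$ if and only if $\ell_1\boldsymbol{v}_0=0$ and $\ell_1\boldsymbol{v}_1=1$;
\item $\ell_2\boldsymbol{v}=z^2q_{2m-2}$ with $q_{2m-2}(0)=1$ if and only if $\ell_2\boldsymbol{v}_0=\ell_2\boldsymbol{v}_1=0$ and $\ell_2\boldsymbol{v}_2=1$.
\end{itemize}
Combining the two translations, statement (2) is equivalent to the three systems displayed after the lemma each having a unique solution. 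These are
\[
\ell_0\begin{pmatrix}\boldsymbol{v}_0&\boldsymbol{e}_3&\boldsymbol{\phi}\end{pmatrix}=(1,0,0),\quad
\ell_1\begin{pmatrix}\boldsymbol{v}_0&\boldsymbol{v}_1&\boldsymbol{e}_3\end{pmatrix}=(0,1,0),\quad
\ell_2\begin{pmatrix}\boldsymbol{v}_0&\boldsymbol{v}_1&\boldsymbol{v}_2\end{pmatrix}=(0,0,1).
\]
This also gives the stated characterization of the rows.

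The equivalence is then immediate. If (1) holds, each $3\times3$ coefficient matrix is invertible, so each $\ell_i$ exists and is unique. Conversely, if a solution exists and is unique, the homogeneous system $\ell M=0$ has only the trivial solution; this is because adding a nonzero kernel element to a solution would give a second solution. Hence $M$ has rank $3$, which is (1).

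For the invertibility of $\boldsymbol{L}_m$, I compute $\boldsymbol{L}_m\begin{pmatrix}\boldsymbol{v}_0&\boldsymbol{v}_1&\boldsymbol{v}_2\end{pmatrix}$ using the constraints. It is upper triangular with ones on the diagonal: its first row is $(1,*,*)$, its second row is $(0,1,*)$ and its third row is $(0,0,1)$. Therefore its determinant is $1$, so $\det\boldsymbol{L}_m\neq0$.

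The only step requiring care, which I expect to be the main obstacle, is making the third-column translation rigorous. The claim that each decay condition is equivalent to the vanishing of finitely many coefficients relies on the refined expansion containing only powers $z^{-4m+2-2k}$. I would justify this by citing \eqref{X third column refined} as an assumed part of \textbf{RH}-$\boldsymbol{X3}$, rather than deriving it from the jump.
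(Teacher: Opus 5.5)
Your proposal is correct and follows essentially the same route as the paper. Both translate the first-column structure and the third-column decay into the three linear systems for $\ell_0,\ell_1,\ell_2$, deduce the equivalence from unique solvability $\Leftrightarrow$ full rank (the row conditions decouple), and get invertibility of $\boldsymbol{L}_m$ from the unit upper-triangular product $\boldsymbol{L}_m\begin{pmatrix}\boldsymbol{v}_0 & \boldsymbol{v}_1 & \boldsymbol{v}_2\end{pmatrix}$.

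Your argument that $\boldsymbol{v}$ is a polynomial of degree at most $2m$ (no jump in the first column, plus the growth at infinity) fills in a step the paper only asserts. The concern in your last paragraph is unnecessary: the $O(z^{-4})$ remainder in the assumed refined expansion already gives the if-and-only-if translations directly.
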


\begin{proof}
We first show that the structural conditions in item \(2\) force the stated
linear conditions on the row vectors. Since the first column of
\(\widehat{\boldsymbol{X}}_m\) is
\[
	\begin{pmatrix}
		\widehat X_{11}(z;s)\\
		\widehat X_{21}(z;s)\\
		\widehat X_{31}(z;s)
	\end{pmatrix}
	=
	\begin{pmatrix}
		\ell_0\boldsymbol{v}(z)\\
		\ell_1\boldsymbol{v}(z)\\
		\ell_2\boldsymbol{v}(z)
	\end{pmatrix},
\]
the Taylor expansion of \(\boldsymbol{v}\) gives
\[
	\ell_j\boldsymbol{v}(z)
	=
	\ell_j\boldsymbol{v}_0
	+
	z\ell_j\boldsymbol{v}_1
	+
	z^2\ell_j\boldsymbol{v}_2
	+
	O(z^3).
\]

The condition
\[
	\widehat X_{11}(z;s)=q_{2m}(z),
	\qquad
	q_{2m}(0)=1,
\]
forces
\[
	\ell_0\boldsymbol{v}_0=1.
\]
Also,
\[
	\widehat X_{13}(z;s)
	=
	\ell_0\boldsymbol{X}^{(3)}_m(z;s)
	=
	z^{-4m+2}\ell_0\boldsymbol{e}_3
	+
	z^{-4m}\ell_0\boldsymbol{\phi}
	+
	O(z^{-4m-2}).
\]
Thus the decay
\[
	\widehat X_{13}(z;s)=O(z^{-4m-2})
\]
forces
\[
	\ell_0\boldsymbol{e}_3=0,
	\qandq
	\ell_0\boldsymbol{\phi}=0.
\]

Similarly, the condition
\[
	\widehat X_{21}(z;s)=zq_{2m-1}(z),
	\qquad
	q_{2m-1}(0)=1,
\]
forces
\[
	\ell_1\boldsymbol{v}_0=0,
	\qandq
	\ell_1\boldsymbol{v}_1=1.
\]
Furthermore,
\[
	\widehat X_{23}(z;s)
	=
	\ell_1\boldsymbol{X}^{(3)}_m(z;s)
	=
	z^{-4m+2}\ell_1\boldsymbol{e}_3
	+
	z^{-4m}\ell_1\boldsymbol{\phi}
	+
	O(z^{-4m-2}).
\]
The decay
\[
	\widehat X_{23}(z;s)=O(z^{-4m})
\]
forces
\[
	\ell_1\boldsymbol{e}_3=0.
\]

Finally, the condition
\[
	\widehat X_{31}(z;s)=z^2q_{2m-2}(z),
	\qquad
	q_{2m-2}(0)=1,
\]
forces
\[
	\ell_2\boldsymbol{v}_0=0,
	\qquad
	\ell_2\boldsymbol{v}_1=0,
	\qquad
	\ell_2\boldsymbol{v}_2=1.
\]
The decay condition
\[
	\widehat X_{33}(z;s)=O(z^{-4m+2})
\]
imposes no additional cancellation, since
\[
	\widehat X_{33}(z;s)
	=
	z^{-4m+2}\ell_2\boldsymbol{e}_3
	+
	z^{-4m}\ell_2\boldsymbol{\phi}
	+
	O(z^{-4m-2}).
\]

We now prove \(1\Rightarrow2\). If the rank conditions
\eqref{X intrinsic nondegeneracy iff} hold, then the three systems
\[
	\ell_0
	\begin{pmatrix}
		\boldsymbol{v}_0 & \boldsymbol{e}_3 & \boldsymbol{\phi}
	\end{pmatrix}
	=
	\begin{pmatrix}
		1 & 0 & 0
	\end{pmatrix},
\]
\[
	\ell_1
	\begin{pmatrix}
		\boldsymbol{v}_0 & \boldsymbol{v}_1 & \boldsymbol{e}_3
	\end{pmatrix}
	=
	\begin{pmatrix}
		0 & 1 & 0
	\end{pmatrix},
\]
and
\[
	\ell_2
	\begin{pmatrix}
		\boldsymbol{v}_0 & \boldsymbol{v}_1 & \boldsymbol{v}_2
	\end{pmatrix}
	=
	\begin{pmatrix}
		0 & 0 & 1
	\end{pmatrix}
\]
have unique solutions. Let these solutions be
\[
	\ell_0,\qquad \ell_1,\qquad \ell_2.
\]
We know that each entry of \(\boldsymbol{X}^{(1)}_m = \boldsymbol{v}\) is a
polynomial of degree at most \(2m\). Consequently, for any constant row vector \(\ell\in\C^{1\times3}\), the
scalar function
\[
	\ell\boldsymbol{v}(z)
	=
	\ell\boldsymbol{X}^{(1)}_m(z;s)
\]
is a polynomial of degree at most \(2m\).

Therefore, if
\[
	\ell_0\boldsymbol{v}_0=1,
\]
then
\[
	\ell_0\boldsymbol{v}(z)=q_{2m}(z),
	\qquad
	\deg q_{2m}\le 2m,
	\qquad
	q_{2m}(0)=1.
\]
If
\[
	\ell_1\boldsymbol{v}_0=0,
	\qquad
	\ell_1\boldsymbol{v}_1=1,
\]
then \(\ell_1\boldsymbol{v}(z)\) has a simple zero at \(z=0\), and hence
\[
	\ell_1\boldsymbol{v}(z)=z q_{2m-1}(z),
	\qquad
	\deg q_{2m-1}\le 2m-1,
	\qquad
	q_{2m-1}(0)=1.
\]
Finally, if
\[
	\ell_2\boldsymbol{v}_0=0,
	\qquad
	\ell_2\boldsymbol{v}_1=0,
	\qquad
	\ell_2\boldsymbol{v}_2=1,
\]
then \(\ell_2\boldsymbol{v}(z)\) has a double zero at \(z=0\), and hence
\[
	\ell_2\boldsymbol{v}(z)=z^2 q_{2m-2}(z),
	\qquad
	\deg q_{2m-2}\le 2m-2,
	\qquad
	q_{2m-2}(0)=1.
\]

The same linear conditions, together with the refined expansion of the third
column, give
\begin{equation}\label{decay X hat}
    \widehat X_{13}(z;s)=O(z^{-4m-2}),
	\qquad
	\widehat X_{23}(z;s)=O(z^{-4m}),
	\qquad
	\widehat X_{33}(z;s)=O(z^{-4m+2}).
\end{equation}
Thus item \(2\) holds.

Next we prove \(2\Rightarrow1\). Suppose that there exists a unique constant
matrix \(\boldsymbol{L}_m\) satisfying the structural conditions in item
\(2\). As shown above, those structural conditions force
\[
	\ell_0
	\begin{pmatrix}
		\boldsymbol{v}_0 & \boldsymbol{e}_3 & \boldsymbol{\phi}
	\end{pmatrix}
	=
	\begin{pmatrix}
		1 & 0 & 0
	\end{pmatrix},
\]
\[
	\ell_1
	\begin{pmatrix}
		\boldsymbol{v}_0 & \boldsymbol{v}_1 & \boldsymbol{e}_3
	\end{pmatrix}
	=
	\begin{pmatrix}
		0 & 1 & 0
	\end{pmatrix},
\]
and
\[
	\ell_2
	\begin{pmatrix}
		\boldsymbol{v}_0 & \boldsymbol{v}_1 & \boldsymbol{v}_2
	\end{pmatrix}
	=
	\begin{pmatrix}
		0 & 0 & 1
	\end{pmatrix}.
\]
If any one of the three matrices
\[
	\begin{pmatrix}
		\boldsymbol{v}_0 & \boldsymbol{e}_3 & \boldsymbol{\phi}
	\end{pmatrix},
	\qquad
	\begin{pmatrix}
		\boldsymbol{v}_0 & \boldsymbol{v}_1 & \boldsymbol{e}_3
	\end{pmatrix},
	\qquad
	\begin{pmatrix}
		\boldsymbol{v}_0 & \boldsymbol{v}_1 & \boldsymbol{v}_2
	\end{pmatrix}
\]
had rank strictly less than \(3\), then the corresponding linear system
would either have no solution or infinitely many solutions. Since a solution
exists, it would have infinitely many solutions. Replacing the corresponding
row vector by another solution would produce another constant matrix with
the same first-column structure and the same third-column decay. This would
contradict the uniqueness in item \(2\). Therefore all three matrices have
rank \(3\), which proves \eqref{X intrinsic nondegeneracy iff}.

It remains to show that \(\boldsymbol{L}_m\) is invertible. We have
\[
	\boldsymbol{L}_m
	\begin{pmatrix}
		\boldsymbol{v}_0 & \boldsymbol{v}_1 & \boldsymbol{v}_2
	\end{pmatrix}
	=
	\begin{pmatrix}
		1 & \ell_0\boldsymbol{v}_1 & \ell_0\boldsymbol{v}_2\\
		0 & 1 & \ell_1\boldsymbol{v}_2\\
		0 & 0 & 1
	\end{pmatrix}.
\]
The matrix on the right is upper triangular with determinant one. Since
\[
	\rank
	\begin{pmatrix}
		\boldsymbol{v}_0 & \boldsymbol{v}_1 & \boldsymbol{v}_2
	\end{pmatrix}
	=3,
\]
we conclude that \(\boldsymbol{L}_m\) is invertible.
\end{proof}
The following Lemma is the analogue of Lemma \ref{lemma:large z gives P orthogonality} for $j-2k$ systems. The proof follows a similar line hence we omit the details.
\begin{lemma}
\label{lemma:X large z gives S orthogonality}
Let \(t\in\{0,1,2\}\). Set
\(
	n:=2m-t,
\)
and let
\(
	F(z)=z^tq_n(z),
\)
where \(q_n\) is a polynomial satisfying
\(
	\deg q_n\leq n\), and
\(
	q_n(0)=1.
\)
Suppose that a function \(\mathcal{H}\), analytic for \(|z|>1\), has jump
\[
	\mathcal{H}_{+}(z)-\mathcal{H}_{-}(z)
	=
	z^{-2m-s-2}
	\left[
		F(z)w(z)+(-1)^sF(-z)w(-z)
	\right],
	\qquad z\in\T,
\]
and satisfies
\[
	\mathcal{H}(z)=O(z^{-2n-2}),
	\qquad z\to\infty.
\]
Define
\[
	S_n(z):=z^n q_n(z^{-1}).
\]
Then \(S_n\) is monic of degree \(n\), and
\[
	\int_{\T}
	S_n(\zeta^{-1})\zeta^{2k-s}w(\zeta)
	\frac{\dd\zeta}{2\pi\ic\zeta}
	=0,
	\qquad k=0,1,\ldots,n-1.
\]
\end{lemma}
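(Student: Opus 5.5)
The plan is to mirror the proof of Lemma~\ref{lemma:large z gives P orthogonality}. Set
\[
\Delta_F(\zeta):=\zeta^{-2m-s-2}\left[F(\zeta)w(\zeta)+(-1)^sF(-\zeta)w(-\zeta)\right].
\]
By the Plemelj--Sokhotskii formula, $\mathcal{H}(z)=\frac{1}{2\pi\ic}\int_{\T}\frac{\Delta_F(\zeta)}{\zeta-z}\,\dd\zeta$. This uses the fact that $\mathcal{H}$ is the Cauchy transform of its jump, which follows from the decay at infinity together with the implicit analyticity inside the disk, exactly as in Lemma~\ref{lemma:large z gives P orthogonality}. Expanding $1/(\zeta-z)$ for large $|z|$ gives
\[
\mathcal{H}(z)=-\sum_{\ell\ge0}z^{-\ell-1}\frac{1}{2\pi\ic}\int_{\T}\zeta^\ell\Delta_F(\zeta)\,\dd\zeta .
\]
The hypothesis $\mathcal{H}=O(z^{-2n-2})$ then forces the vanishing of these moments for $\ell=0,1,\dots,2n$.

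Next I split the moments by parity. Substituting $\eta=-\zeta$ in the second term, exactly as in \eqref{parity coefficient computation}, gives
\[
\frac{1}{2\pi\ic}\int_{\T}\zeta^\ell\Delta_F\,\dd\zeta=\bigl(1+(-1)^{\ell+1}\bigr)\frac{1}{2\pi\ic}\int_{\T}F(\zeta)w(\zeta)\zeta^{\ell-2m-s-2}\,\dd\zeta .
\]
Only odd $\ell$ contribute, and for them the factor is $2$. Hence
\[
\int_{\T}F(\zeta)w(\zeta)\zeta^{\ell-2m-s-1}\frac{\dd\zeta}{2\pi\ic\zeta}=0\qquad\text{for odd }\ell\le 2n .
\]

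It remains to translate to $S_n$. Since $n=2m-t$ and $F(\zeta)=\zeta^tq_n(\zeta)$, we have $F(\zeta)=\zeta^{2m}\zeta^{-n}q_n(\zeta)=\zeta^{2m}S_n(\zeta^{-1})$. The condition $q_n(0)=1$ together with $\deg q_n\le n$ is exactly the statement that $S_n(z)=z^nq_n(z^{-1})$ is monic of degree $n$. The integrand then becomes $S_n(\zeta^{-1})w(\zeta)\zeta^{\ell-s-1}$. Writing $\ell=2k+1$ with $0\le k\le n-1$, the condition $\ell\le 2n-1\le 2n$ holds, and the exponent is $\zeta^{2k-s}$, which is the claim.

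The main obstacle is bookkeeping rather than any conceptual step. First, one must check that the decay exponent $-2n-2$ supplies enough vanishing moments: the largest required odd index is $\ell=2n-1$, and the decay gives vanishing up to $\ell=2n$, so this is fine. Second, one must track the exponent shift $2m$ versus $n$ through the identity $F(\zeta)=\zeta^{2m}S_n(\zeta^{-1})$; a sign or index slip there is where the argument is most likely to go wrong.
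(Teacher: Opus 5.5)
Your proposal is correct and follows the route the paper intends: the paper omits this proof, saying only that it mirrors Lemma~\ref{lemma:large z gives P orthogonality}, and your steps carry that out. The Cauchy-transform expansion, the parity split via $\eta=-\zeta$, and the identity $F(\zeta)=\zeta^{2m}S_n(\zeta^{-1})$ are all handled with correct bookkeeping: moments vanish for $\ell\le 2n$, only odd $\ell=2k+1$ with $k\le n-1$ survive, and the exponent becomes $\zeta^{2k-s}$. You are also right that the Cauchy representation needs $\mathcal{H}$ to be analytic on all of $\C\setminus\T$, including at $z=0$. This holds in the paper's application, where $\mathcal{H}=\widehat X_{t+1,3}$.
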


\subsubsection{Completion of the Proof of Theorem \ref{thm:X converse}}

Let \(\boldsymbol{X}_m(z;s)\) be the solution. By
Lemma~\ref{lemma:X uniqueness}, the solution is unique, has
determinant one, and by Lemma \ref{lemma:X column symmetry} it satisfies
\[
	X_{j2}(z;s)
	=
	\frac{X_{j1}(z;s)-X_{j1}(-z;s)}{2z},
	\qquad j=1,2,3.
\]

By Lemma~\ref{lemma:X row reconstruction}, there exists an invertible
constant matrix
\[
	\boldsymbol{L}_m
	=
	\begin{pmatrix}
		\ell_0\\
		\ell_1\\
		\ell_2
	\end{pmatrix}
\]
such that
\[
	\widehat{\boldsymbol{X}}_m(z;s)
	:=
	\boldsymbol{L}_m\boldsymbol{X}_m(z;s)
\]
has first column
\[
	\widehat X_{11}(z;s)=q_{2m}(z),
	\qquad
	\widehat X_{21}(z;s)=zq_{2m-1}(z),
	\qquad
	\widehat X_{31}(z;s)=z^2q_{2m-2}(z),
\]
with
\[
	\deg q_n\leq n,
	\qquad
	q_n(0)=1,
\]
and third-column decay
\[
	\widehat X_{13}(z;s)=O(z^{-4m-2}),
	\qquad
	\widehat X_{23}(z;s)=O(z^{-4m}),
	\qquad
	\widehat X_{33}(z;s)=O(z^{-4m+2}).
\]

Since \(\boldsymbol{L}_m\) is constant, the first two columns of
\(\widehat{\boldsymbol{X}}_m\) satisfy the same symmetry:
\[
	\widehat X_{j2}(z;s)
	=
	\frac{\widehat X_{j1}(z;s)-\widehat X_{j1}(-z;s)}{2z},
	\qquad j=1,2,3.
\]
Fix \(t\in\{0,1,2\}\), and set
\[
	F_t(z):=\widehat X_{t+1,1}(z;s)=z^t q_{2m-t}(z).
\]
Notice that $\widehat{\boldsymbol{X}}_m(z;s)
	$ satisfies the same jump condition as $
	\boldsymbol{X}_m(z;s)$. The jump of the third column gives
\[
	\widehat X_{t+1,3,+}(z;s)-\widehat X_{t+1,3,-}(z;s)
	=
	F_t(z)\mathcal{k}_1(z;m,s)
	+
	\frac{F_t(z)-F_t(-z)}{2z}\mathcal{k}_2(z;m,s).
\]
Using the definitions of \(\mathcal{k}_1\) and \(\mathcal{k}_2\), this is
\[
	\widehat X_{t+1,3,+}(z;s)-\widehat X_{t+1,3,-}(z;s)
	=
	z^{-2m-s-2}
	\left[
		F_t(z)w(z)+(-1)^sF_t(-z)w(-z)
	\right].
\]
The decay estimates from Lemma~\ref{lemma:X row reconstruction} (see equation \eqref{decay X hat}) are exactly
the estimates required in Lemma~\ref{lemma:X large z gives S orthogonality}.
Therefore, defining
\[
	S_n(z;s):=z^n q_n(z^{-1}),
	\qquad n=2m,2m-1,2m-2,
\]
we obtain
\[
	\int_{\T}
	S_n(\zeta^{-1};s)\zeta^{2k-s}w(\zeta)
	\frac{\dd\zeta}{2\pi\ic\zeta}
	=0,
	\qquad k=0,1,\ldots,n-1.
\]
Thus the three polynomials
\[
	S_{2m}(z;s),\qquad S_{2m-1}(z;s),\qquad S_{2m-2}(z;s)
\]
satisfy the defining \(j-2k\) orthogonality conditions.

We now prove that the determinants are non-zero. Suppose, for one of the
degrees \(n\in\{2m,2m-1,2m-2\}\), that \(E_n^{(s)}=0\). Since a monic
solution of the degree \(n\) orthogonality problem has just been constructed,
the corresponding finite linear system would not have a unique monic
solution. Hence there would be another monic polynomial of degree \(n\)
satisfying the same orthogonality conditions. Repeating the same
Cauchy-transform construction with this second polynomial gives another
solution of the same \(\boldsymbol{X}\)-RHP, contradicting uniqueness.
Therefore
\[
	E_{2m}^{(s)}E_{2m-1}^{(s)}E_{2m-2}^{(s)}\neq0.
\]
Consequently, the above polynomials are the unique polynomials
\[
	S_{2m}(z;s),\qquad S_{2m-1}(z;s),\qquad S_{2m-2}(z;s).
\]
In particular, \(\boldsymbol{B}_m(s)\) is now well-defined.

Let \(\boldsymbol{\Phi}_m(z;s)\) be the canonical matrix \eqref{Phi S H} built from
these recovered polynomials and their associated functions \(H_n\). We claim that
\[
	\widehat{\boldsymbol{X}}_m(z;s)=\boldsymbol{\Phi}_m(z;s).
\]
The first columns agree by construction. The second columns agree by the
symmetry relation. The third columns have the same jumps, because their
jumps are determined by the first two columns. Their differences have no
jump across \(\T\), are analytic at \(z=0\), and vanish at infinity by the
decay estimates above and by \eqref{asymp H inf}. Liouville's theorem
therefore gives equality of the third columns. Hence
\[
	\widehat{\boldsymbol{X}}_m(z;s)=\boldsymbol{\Phi}_m(z;s), \qquad \mbox{in other words} \qquad \boldsymbol{L}_m\boldsymbol{X}_m(z;s)=\boldsymbol{\Phi}_m(z;s).
\]

Since \(\boldsymbol{L}_m\) is invertible and
\(\det\boldsymbol{X}_m\equiv1\), we have
\[
	\det\boldsymbol{\Phi}_m(z;s)=\det\boldsymbol{L}_m\neq0.
\]
On the other hand, recall \eqref{M X asymp inf} and \eqref{Lambda}
\[
	\boldsymbol{\Phi}_m(z;s)
	=
	\left(
		\boldsymbol{B}_m(s)+O(z^{-1})
	\right)
	\boldsymbol{\Lambda}_m(z),
	\qquad z\to\infty,
\]
where \(\det\boldsymbol{\Lambda}_m(z)=1\). Therefore
\[
	\det\boldsymbol{B}_m(s)=\det\boldsymbol{L}_m\neq0.
\]
So \(\boldsymbol{B}_m(s)\) is invertible. Finally,
\[
	\det\boldsymbol{B}_m(s)
	=
	-2g_{2m-2}^{(s)}
	\det
	\begin{pmatrix}
		S_{2m}(0;s) & S'_{2m}(0;s)\\
		S_{2m-1}(0;s) & S'_{2m-1}(0;s)
	\end{pmatrix}.
\]
Since
\[
	g_{2m-2}^{(s)}
	=
	\frac{E_{2m-1}^{(s)}}{E_{2m-2}^{(s)}}\neq0,
\]
we obtain
\[
	\det
	\begin{pmatrix}
		S_{2m}(0;s) & S'_{2m}(0;s)\\
		S_{2m-1}(0;s) & S'_{2m-1}(0;s)
	\end{pmatrix}
	\neq0.
\]
In view of \eqref{S 0 S' 0}
we obtain exactly the generic condition \eqref{generic weights}.

Since
\[
	\boldsymbol{L}_m\boldsymbol{X}_m(z;s)=\boldsymbol{\Phi}_m(z;s)
\qandq
	\det\boldsymbol{B}_m(s)=\det\boldsymbol{L}_m\neq0,
\]
the normalization at infinity yields \(\boldsymbol{L}_m=\boldsymbol{B}_m(s)\).
Therefore
\[
	\boldsymbol{B}_m(s)\boldsymbol{X}_m(z;s)=\boldsymbol{\Phi}_m(z;s).
\]
This completes the proof of Theorem \ref{thm:X converse}.

\section{Recurrence Relations from the Riemann--Hilbert Formulation}\label{sec rec rel from RHP}

One of the useful checks on the Riemann--Hilbert formulation is that it
should recover the recurrence relations of the polynomial system. This is
the analogue of deriving three-term recurrence relations from the
Riemann--Hilbert problem in the usual orthogonal polynomial theory. In the
present setting, the relevant comparison matrix is obtained by multiplying
the solution indexed by $m+1$ by the inverse of the solution indexed by $m$,
with the appropriate diagonal matrix in between. With the appropriate diagonal matrix, the jump on the unit circle disappears, and the
resulting matrix is meromorphic with controlled behavior at zero and
infinity. Reading off the polynomial entries gives the recurrence
relations. This is in the same spirit as the recurrence relations derived for the multiple orthogonal polynomials on the unit circle in \cite{MinguezCenicerosVanAssche2008}.

Let us first recall the four-term recurrence relations for the $P$ polynomials from \cite{GW}.

	\begin{equation}\label{P pure n rec 1}
		P_{n+3}(z;r) -(\de_{n+2}^{(r)}+\de_{n+1}^{(r-1)})P_{n+2}(z;r) + (\de_{n+1}^{(r-1)}\de_{n+1}^{(r)} - z^2)P_{n+1}(z;r)+ (\de_n^{(r)}+\eta^{(r-2)}_n)z^2P_{n}(z;r)=0,
	\end{equation}
	where
    \begin{equation}
        \de^{(r)}_{n} = -\frac{h^{(r-1)}_n}{h^{(r)}_{n}}, \qandq \eta^{(r)}_n = \frac{D_n^{(r+2)}D_{n+1}^{(r-1)}}{D^{(r)}_{n+1}D^{(r+1)}_n}.
    \end{equation}

Below, following \cite{MinguezCenicerosVanAssche2008} we show how we could derive this four-term recurrence relation from the Riemann-Hilbert characterization for the $2j-k$ systems. We drop the dependence of all objects on the offset $r$ for simplicity of our exposition. Consider 
\begin{equation}\label{Psi def}
	\boldsymbol{\Psi}_m(z):=   \boldsymbol{Y}_{m+1}(z) \begin{pmatrix}
		1 & 0 & 0 \\
		0 & 1 & 0 \\
		0 & 0 & z^4
	\end{pmatrix}\boldsymbol{Y}^{-1}_{m}(z),
\end{equation}
Notice that for $z \in \T$:
\begin{equation}
	\begin{split}
		\boldsymbol{\Psi}^{-1}_{m,-}(z)\boldsymbol{\Psi}_{m,+}(z) & = \boldsymbol{Y}_{m,-}(z) \begin{pmatrix}
			1 & 0 & 0 \\
			0 & 1 & 0 \\
			0 & 0 & z^{-4}
		\end{pmatrix}\boldsymbol{Y}^{-1}_{m+1,-}(z)\boldsymbol{Y}_{m+1,+}(z)\begin{pmatrix}
		1 & 0 & 0 \\
		0 & 1 & 0 \\
		0 & 0 & z^{4}
	\end{pmatrix}\boldsymbol{Y}^{-1}_{m,+}(z) \\ & = \boldsymbol{Y}_{m,-}(z) \begin{pmatrix}
	1 & 0 & 0 \\
	0 & 1 & 0 \\
	0 & 0 & z^{-4}
\end{pmatrix}\begin{pmatrix}
1 & 0 & \mathcal{j}_1(z;m+1,r) \\
0 & 1 & \mathcal{j}_2(z;m+1,r)  \\
0 & 0 & 1\\	
\end{pmatrix}\begin{pmatrix}
1 & 0 & 0 \\
0 & 1 & 0 \\
0 & 0 & z^{4}
\end{pmatrix}\boldsymbol{Y}^{-1}_{m,+}(z) \\ & = \boldsymbol{Y}_{m,-}(z) \begin{pmatrix}
1 & 0 & \mathcal{j}_1(z;m,r) \\
0 & 1 & \mathcal{j}_2(z;m,r)  \\
0 & 0 & 1\\	
\end{pmatrix}\boldsymbol{Y}^{-1}_{m,+}(z)=\boldsymbol{Y}_{m,+}(z)\boldsymbol{Y}^{-1}_{m,+}(z)=I.
	\end{split}
\end{equation}
So $\boldsymbol{\Psi}_m(z)$ has no jumps on the unit circle and is thus an entire function.  Let us consider the behavior of $\boldsymbol{\Psi}_m(z)$ as $z \to \infty$. We have

\begin{equation}
	\begin{split}
& 	\boldsymbol{\Psi}_m(z)=  \left(\di I+\frac{ \overset{\infty}{  \boldsymbol{Y}}_1(m+1)}{z}+\frac{\overset{\infty}{\boldsymbol{Y}}_2(m+1)}{z^2} + O(z^{-3})\right) \begin{pmatrix}
	z^{2m+2} & 0 & 0 \\
	0 & z^{2m}  & 0\\
	0 & 0 &  z^{-4m-2} \\	
	\end{pmatrix}  \begin{pmatrix}
	1 & 0 & 0 \\
	0 & 1 & 0 \\
	0 & 0 & z^{4}
	\end{pmatrix}
 \\ & \times \begin{pmatrix}
z^{-2m} & 0 & 0 \\
0 & z^{-2m+2}  & 0\\
0 & 0 &  z^{4m-2} \\	
\end{pmatrix} \left(\di I-\frac{ \overset{\infty}{  \boldsymbol{Y}}_1(m)}{z} +\frac{ \left(\overset{\infty}{  \boldsymbol{Y}}_1(m)\right)^2 - \overset{\infty}{  \boldsymbol{Y}}_2(m)}{z^2}+ O(z^{-3})\right) = \\ &  \left(\di I+\frac{ \overset{\infty}{  \boldsymbol{Y}}_1(m+1)}{z}+\frac{\overset{\infty}{\boldsymbol{Y}}_2(m+1)}{z^2} + O(z^{-3})\right)
 \begin{pmatrix}
z^{2} & 0 & 0 \\
0 & z^{2} & 0 \\
0 & 0 & 1
\end{pmatrix} \\ & \times  \left(\di I-\frac{ \overset{\infty}{  \boldsymbol{Y}}_1(m)}{z} +\frac{ \left(\overset{\infty}{  \boldsymbol{Y}}_1(m)\right)^2 - \overset{\infty}{  \boldsymbol{Y}}_2(m)}{z^2}+ O(z^{-3})\right)
\end{split}
\end{equation}
We will extract the polynomial part now. We have

\begin{equation}
	\begin{split}
		\boldsymbol{\Psi}_m(z) & = z^2  \begin{pmatrix}
	1 & 0 & 0 \\
	0 & 1 & 0 \\
	0 & 0  & 0 
	\end{pmatrix} + z \left[ \overset{\infty}{  \boldsymbol{Y}}_1(m+1) \begin{pmatrix}
	1 & 0 & 0 \\
	0 & 1 & 0 \\
	0 & 0  & 0 
	\end{pmatrix} - \begin{pmatrix}
	1 & 0 & 0 \\
	0 & 1 & 0 \\
	0 & 0  & 0 
	\end{pmatrix} \overset{\infty}{  \boldsymbol{Y}}_1(m)  \right] \\ & +\begin{pmatrix}
	0 & 0 & 0 \\
	0 & 0 & 0 \\
	0 & 0  & 1 
	\end{pmatrix}     - \overset{\infty}{  \boldsymbol{Y}}_1(m+1) \begin{pmatrix}
	1 & 0 & 0 \\
	0 & 1 & 0 \\
	0 & 0  & 0 
	\end{pmatrix} \overset{\infty}{  \boldsymbol{Y}}_1(m) + \overset{\infty}{  \boldsymbol{Y}}_2(m+1) \begin{pmatrix}
	1 & 0 & 0 \\
	0 & 1 & 0 \\
	0 & 0  & 0 
	\end{pmatrix}   \\ & +  \begin{pmatrix}
	1 & 0 & 0 \\
	0 & 1 & 0 \\
	0 & 0  & 0 
	\end{pmatrix} \left( \left(\overset{\infty}{  \boldsymbol{Y}}_1(m)\right)^2 - \overset{\infty}{  \boldsymbol{Y}}_2(m) \right) = \\ &  z^2  \begin{pmatrix}
	1 & 0 & 0 \\
	0 & 1 & 0 \\
	0 & 0  & 0 
	\end{pmatrix} +  \overset{\infty}{  \boldsymbol{Y}}_2(m+1) \begin{pmatrix}
	1 & 0 & 0 \\
	0 & 1 & 0 \\
	0 & 0  & 0 
	\end{pmatrix} - \begin{pmatrix}
	1 & 0 & 0 \\
	0 & 1 & 0 \\
	0 & 0  & 0 
	\end{pmatrix} \overset{\infty}{  \boldsymbol{Y}}_2(m)  +\begin{pmatrix}
	0 & 0 & 0 \\
	0 & 0 & 0 \\
	0 & 0  & 1 
	\end{pmatrix} 
	\end{split}
\end{equation}
where we have used  \textbf{RH}-$\boldsymbol{Y3}$, particularly  the structure of $\overset{\infty}{  \boldsymbol{Y}}_1(m)$ to observe that
\begin{align}
&	\overset{\infty}{  \boldsymbol{Y}}_1(m+1) \begin{pmatrix}
		1 & 0 & 0 \\
		0 & 1 & 0 \\
		0 & 0  & 0 
	\end{pmatrix} - \begin{pmatrix}
		1 & 0 & 0 \\
		0 & 1 & 0 \\
		0 & 0  & 0 
	\end{pmatrix} \overset{\infty}{  \boldsymbol{Y}}_1(m) =  \begin{pmatrix}
	0 & 0 & 0 \\
	0 & 0 & 0 \\
	0 & 0  & 0 
	\end{pmatrix}, \\  & \overset{\infty}{  \boldsymbol{Y}}_1(m+1) \begin{pmatrix}
	1 & 0 & 0 \\
	0 & 1 & 0 \\
	0 & 0  & 0 
	\end{pmatrix} \overset{\infty}{  \boldsymbol{Y}}_1(m) = \begin{pmatrix}
	0 & 0 & 0 \\
	0 & 0 & 0 \\
	0 & 0  & 0 
	\end{pmatrix}, \\ & \left(\overset{\infty}{  \boldsymbol{Y}}_1(m)\right)^2 = \begin{pmatrix}
	0 & 0 & 0 \\
	0 & 0 & 0 \\
	0 & 0  & 0 
	\end{pmatrix}.
\end{align} 
Let us introduce the notations
\begin{equation}\label{Y2jk}
	\overset{\infty}{  \boldsymbol{Y}}_2(m) \equiv \left( 	\mathscr{Y}_{jk}(m)  \right)^3_{j,k=1}, \qandq \triangle_{jk}(m) := \mathscr{Y}_{jk}(m+1) - \mathscr{Y}_{jk}(m).
\end{equation}
In these notations we can write $	\boldsymbol{\Psi}_m(z)$ as
\begin{equation}
		\boldsymbol{\Psi}_m(z) =  \begin{pmatrix}
		z^2 +	\triangle_{11}(m) & \triangle_{12}(m) & -\mathscr{Y}_{13}(m) \\
			\triangle_{21}(m) & z^2 + \triangle_{22}(m) & -\mathscr{Y}_{23}(m) \\
			\mathscr{Y}_{31}(m+1) & \mathscr{Y}_{32}(m+1)  & 1 
		\end{pmatrix}.
\end{equation}
Recalling \eqref{Psi def} we have
\begin{equation}
	 \boldsymbol{Y}_{m+1}(z) =	\boldsymbol{\Psi}_m(z)\boldsymbol{Y}_{m}(z)\begin{pmatrix}
			1 & 0 & 0 \\
			0 & 1 & 0 \\
			0 & 0 & z^{-4}
		\end{pmatrix}, 
\end{equation}
and therefore recalling \eqref{YYYY} we arrive at the equation which is going to produce the desired recurrence relation for the polynomials
\begin{equation}\label{a source of equations}\begin{split}
		&	\boldsymbol{A}^{-1}_{m+1}(r) \begin{pmatrix}
		P_{2m+2}(z;r) & \mathscr{D}_{2m+2}(z;r) & z^4G_{2m+2}(z;r) \\[10pt]
		P_{2m+1}(z;r) & \mathscr{D}_{2m+1}(z;r) & z^2G_{2m+1}(z;r) \\[10pt]
		P_{2m}(z;r) & \mathscr{D}_{2m}(z;r) & G_{2m}(z;r) \\	
	\end{pmatrix} \\ & = 	\boldsymbol{\Psi}_m(z)\boldsymbol{A}^{-1}_{m}(r) \begin{pmatrix}
	P_{2m}(z;r) & \mathscr{D}_{2m}(z;r) & z^4G_{2m}(z;r) \\[10pt]
	P_{2m-1}(z;r) & \mathscr{D}_{2m-1}(z;r) & z^2G_{2m-1}(z;r) \\[10pt]
	P_{2m-2}(z;r) & \mathscr{D}_{2m-2}(z;r) & G_{2m-2}(z;r) \\	
	\end{pmatrix}\begin{pmatrix}
		1 & 0 & 0 \\
		0 & 1 & 0 \\
		0 & 0 & z^{-4}
	\end{pmatrix}. 
\end{split}
\end{equation}
We now write the equality for $21$-entry of the above equation to arrive at a 4-term recurrence relation for the $P$ polynomials:

\begin{equation}\label{rec rel from RHP}
	\begin{split}
		P_{2m+1}(z;r)-\left(\frac{\mathfrak{g}^{(r)}_{2m+1}}{\mathfrak{g}^{(r)}_{2m}}+ \triangle_{21}(m)\right)P_{2m}(z;r)
		+\Big(\!p^{(r)}_{2m,1}\,\triangle_{21}(m)-\triangle_{22}(m)-z^2\Big)P_{2m-1}(z;r)
		\\
		+\frac{\triangle_{21}(m)\!\left( - p^{(r)}_{2m,1}\mathfrak{g}^{(r)}_{2m-1}+\mathfrak{g}^{(r)}_{2m}\right)
			+\mathfrak{g}^{(r)}_{2m-1} \triangle_{22}(m)+\mathscr{Y}_{23}(m)}
		{\mathfrak{g}^{(r)}_{2m-2}}\,P_{2m-2}(z;r) + \frac{\mathfrak{g}^{(r)}_{2m-1}}{\mathfrak{g}^{(r)}_{2m-2}} z^2 P_{2m-2}(z;r) = 0
	\end{split}
\end{equation}

In the following discussion we show that the coefficients of this recurrence relation obtained purely from the Riemann-Hilbert characterization, are identical to the recurrence coefficients in \eqref{P pure n rec 1}.

Firstly with regard to the coefficient of $z^2 P_{2m-2}(z;r)$ we observe that this is equal to
\begin{equation}
     \frac{\mathfrak{g}^{(r)}_{2m-1}}{\mathfrak{g}^{(r)}_{2m-2}} = \de_{2m}^{(r)}+\eta^{(r-2)}_{2m} ,
\label{g-ratio}
\end{equation}
as an immediate consequence of \eqref{de n r + eta n r-2}.

The coefficient of $P_{2m}(z;r)$ requires an evaluation of $\triangle_{21}(m)$. Starting with its definition \eqref{Y2jk} and employing the formula \eqref{Y2,21} evaluated at $n=2m-2$ we have
\begin{equation}
    \triangle_{21}(m) = p^{(r)}_{2m+1,1} - \dfrac{\mathfrak{g}^{(r)}_{2m+1}}{\mathfrak{g}^{(r)}_{2m}} - p^{(r)}_{2m-1,1} + \dfrac{\mathfrak{g}^{(r)}_{2m-1}}{\mathfrak{g}^{(r)}_{2m-2}} .
\end{equation}
From \eqref{z^n+2 coeffs in pure n rec rel 1}, and \eqref{g-ratio} again, we can simplify this to
\begin{equation}\label{Delta21}
    \triangle_{21}(m) = \de_{2m-1}^{(r-1)} - \eta^{(r-2)}_{2m} .
\end{equation}
Now using \eqref{g-ratio} again with $m\to m+1$ we deduce 
\begin{equation}
    \frac{\mathfrak{g}^{(r)}_{2m+1}}{\mathfrak{g}^{(r)}_{2m}} + \triangle_{21}(m) = \de_{2m}^{(r)}+\de_{2m-1}^{(r-1)} .
\end{equation}

Next we evaluate the coefficient of $P_{2m-1}(z;r)$ and for this we require $\Delta_{22}(m)$.
In this case we start with the definition \eqref{Y2jk} and the formula \eqref{Y2,22}, which yields
\begin{equation}
    \triangle_{22}(m) = p^{(r)}_{2m+1,2} - \dfrac{\mathfrak{g}^{(r)}_{2m+1}}{\mathfrak{g}^{(r)}_{2m}}p^{(r)}_{2m,1} - p^{(r)}_{2m-1,2} + \dfrac{\mathfrak{g}^{(r)}_{2m-1}}{\mathfrak{g}^{(r)}_{2m-2}}p^{(r)}_{2m-2,1} .
\end{equation}
This can be simplified by employing the next-to-leading polynomial coefficients \eqref{z^n+2 coeffs in pure n rec rel p2} along with the leading order \eqref{z^n+2 coeffs in pure n rec rel 1} and we find
\begin{equation}\label{Delta22}
	\triangle_{22}(m) = p^{(r)}_{2m,1}\,\left(\de_{2m-1}^{(r-1)} - \eta^{(r-2)}_{2m}\right) - \de_{2m-1}^{(r-1)}\de_{2m-1}^{(r)}.
\end{equation}
Now by combining this latter result and that of the preceding one \eqref{Delta21} we deduce
\begin{equation}
		p^{(r)}_{2m,1}\,\triangle_{21}(m)-\triangle_{22}(m) = \de_{2m-1}^{(r-1)}\de_{2m-1}^{(r)},
\end{equation}
which is our desired result.

Lastly, to ensure full structural consistency between \eqref{rec rel from RHP} and \eqref{P pure n rec} we need to prove that the coefficient of $P_{2m-2}(z;r)$ vanishes, namely,
\begin{equation}\label{y23}
	\mathscr{Y}_{23}(m)+\triangle_{21}(m)\!\left( - p^{(r)}_{2m,1}\mathfrak{g}^{(r)}_{2m-1}+\mathfrak{g}^{(r)}_{2m}\right)
	+\mathfrak{g}^{(r)}_{2m-1} \triangle_{22}(m) = 0.
\end{equation}
By employing our earlier results for $\triangle_{21}(m)$ \eqref{Delta21} and $\triangle_{22}(m)$ \eqref{Delta22} along with those of $\mathscr{Y}_{23}(m)$, as given in \eqref{Y2jk} and \eqref{Y2,23},
we arrive at the following expression
\begin{equation}
    \mathfrak{h}^{(r)}_{2m-1} - \dfrac{\mathfrak{g}^{(r)}_{2m-1}}{\mathfrak{g}^{(r)}_{2m-2}}\mathfrak{h}^{(r)}_{2m-2} - \mathfrak{g}^{(r)}_{2m-1}\de_{2m-1}^{(r-1)}\de_{2m-1}^{(r)} + \mathfrak{g}^{(r)}_{2m} 	\left( \de_{2m-1}^{(r-1)} - \eta^{(r-2)}_{2m} \right) .
\end{equation}
Now we utilize our evaluations of $\mathfrak{h}^{(r)}_{n}$, $\mathfrak{g}^{(r)}_{n}$, $\delta^{(r)}_{n}$ and $\eta_{n}^{(r)}$ in terms of determinants, referring to \eqref{g-evaluation}, \eqref{h-evaluation}, \eqref{delta} and \eqref{eta}.
After simplifying the ensuing expression, we now conclude that the coefficient of $P_{2m-2}(z;r)$ is proportional to
\begin{equation}
    \frac{\cD{2m}{r}}{\D{2m-1}{r}} - \frac{\D{2m}{r-2}}{\D{2m-1}{r-2}}\frac{\cD{2m-1}{r}}{\D{2m-1}{r}}
    + \frac{\D{2m+1}{r-2}\D{2m}{r-2}\D{2m-1}{r-1}}{\D{2m}{r}\D{2m-1}{r-2}\D{2m}{r-1}}
    + \frac{\D{2m+1}{r-3}}{\D{2m}{r-1}}
    - \frac{\left( \D{2m}{r-2} \right)^2}{\D{2m-1}{r-2}\D{2m}{r}} .
\end{equation}
Then using a DCI in two variations, this expression can be shown to significantly simplify to the special case $n=2m$ of
\begin{equation}
    \cD{n}{r} \D{n-1}{r-2} - \cD{n-1}{r} \D{n}{r-2} - \D{n}{r-4} \D{n-1}{r} ,
\label{P2m-2Coeff}
\end{equation}
and our next task is to derive an identity for this.
Such an identity is proved starting from a relation for $\cD{n}{r}$ in terms of the tail of the $Q_n$ polynomials
\begin{equation}
    \cD{n}{r}=(-1)^{n+1}\D{n}{r-4}Q'_n(0;r-4),
\end{equation}
which follows from \eqref{OP22}.
Next one differentiates the $n$-recurrence relation for $Q_n(z;r)$ as given in \eqref{Qn-recur} and setting $ z\to 0$, yields
\begin{equation*}
    Q'_{n+3}(0;r) + \frac{h^{(r+2)}_{n+2}}{h^{(r)}_{n+2}}Q'_{n+2}(0;r) = Q_{n+2}(0;r) 
    + \left( \frac{h^{(r+1)}_{n+2}}{h^{(r)}_{n+1}}+\frac{h^{(r+2)}_{n+2}}{h^{(r+1)}_{n+1}} \right)Q_{n+1}(0;r) + \frac{h^{(r+2)}_{n+2}}{h^{(r)}_{n}}Q_{n}(0;r) .
\end{equation*}
Furthermore we have the evaluation, from \cite[Equation (4.135)]{GW},  
\begin{equation*}
    Q_{n}(0;r) = (-1)^n \frac{\D{n}{r+2}}{\D{n}{r}} ,
\end{equation*}
which leads to the following result
\begin{multline*}
    \frac{\cD{n}{r}}{\D{n-1}{r}} - \frac{\D{n}{r-2}}{\D{n-1}{r-2}}\frac{\cD{n-1}{r}}{\D{n-1}{r}}
    - \frac{\D{n-1}{r-2}}{\D{n-1}{r}}\frac{\D{n}{r-4}}{\D{n-1}{r-4}} + \frac{\D{n}{r-3}}{\D{n-1}{r-3}}\frac{\D{n}{r-4}}{\D{n-1}{r-4}}\frac{\D{n-2}{r-2}}{\D{n-1}{r}}
\\
    + \frac{\D{n-2}{r-3}}{\D{n-1}{r-3}}\frac{\D{n}{r-2}}{\D{n-1}{r-2}}\frac{\D{n}{r-4}}{\D{n-2}{r-4}}\frac{\D{n-2}{r-2}}{\D{n-1}{r}}
    - \frac{\D{n}{r-4}}{\D{n-2}{r-4}}\frac{\D{n}{r-2}}{\D{n-1}{r-2}}\frac{\D{n-3}{r-2}}{\D{n-1}{r}} = 0 .
\end{multline*}
This result drastically simplifies with the application of three variations of the DCI,
\begin{equation*}
    \D{n}{r-2}\D{n-2}{r-1} = \D{n-1}{r-1}\D{n-1}{r-2} - \D{n-1}{r-3}\D{n-1}{r}
\end{equation*}
and gives the final result 
\begin{equation}
    \cD{n}{r} \D{n-1}{r-2} - \cD{n-1}{r} \D{n}{r-2} - \D{n}{r-4} \D{n-1}{r} = 0 .
\end{equation}
The left-hand side of this identity is precisely that of expression \eqref{P2m-2Coeff},
and thus the coefficient of $P_{2m-2}$ vanishes.

\begin{remark} If we write the equality for $31$-entry of the equation \eqref{a source of equations} we arrive at
\begin{equation}\label{aux eqn} \begin{split}
\frac{1}{\mathfrak{g}^{(r)}_{2m}}\,P_{2m}(z;r)	=\mathscr{Y}_{31}(m+1)\,P_{2m}(z;r)
	+\big(\mathscr{Y}_{32}(m+1)-p^{(r)}_{2m,1}\,\mathscr{Y}_{31}(m+1)\big)P_{2m-1}(z;r)
	\\
	+\frac{\mathscr{Y}_{31}(m+1)\!\left(p^{(r)}_{2m,1}\mathfrak{g}^{(r)}_{2m-1}-\mathfrak{g}^{(r)}_{2m}\right)
		-\mathscr{Y}_{32}(m+1)\,\mathfrak{g}^{(r)}_{2m-1}+1}
	{\mathfrak{g}^{(r)}_{2m-2}}\,P_{2m-2}(z;r).
	\end{split}
\end{equation}
Matching the coefficients of $z^{2m}$ and $z^{2m-1}$ in this equation, implies the equalities:
\begin{align}
	\mathscr{Y}_{31}(m+1) & = \frac{1}{\mathfrak{g}^{(r)}_{2m}}, \\
	\mathscr{Y}_{32}(m+1)& = p^{(r)}_{2m,1}\,\mathscr{Y}_{31}(m+1) = \frac{p^{(r)}_{2m,1}}{\mathfrak{g}^{(r)}_{2m}}.
\end{align}
These two identities, force the coefficient of $P_{2m-2}$ in \eqref{aux eqn}:
\[ \frac{\mathscr{Y}_{31}(m+1)\!\left(p^{(r)}_{2m,1}\mathfrak{g}^{(r)}_{2m-1}-\mathfrak{g}^{(r)}_{2m}\right)
	-\mathscr{Y}_{32}(m+1)\,\mathfrak{g}^{(r)}_{2m-1}+1}
{\mathfrak{g}^{(r)}_{2m-2}} \] equal to zero, as expected. So the equality for $31$-entry of the equation \eqref{a source of equations} does not produce a genuine three-term recurrence relation.\end{remark}



\section*{List of Symbols}\label{Sec list of symbols}

\footnotesize
\renewcommand{\arraystretch}{1.18}
\begin{longtable}{>{\raggedright\arraybackslash}p{0.24\textwidth}
                  >{\raggedright\arraybackslash}p{0.53\textwidth}
                  >{\raggedright\arraybackslash}p{0.16\textwidth}}
	\textbf{Symbol} & \textbf{Description} & \textbf{Definition} \\
	\hline
	\endfirsthead

	\textbf{Symbol} & \textbf{Description} & \textbf{Definition} \\
	\hline
	\endhead

	\multicolumn{3}{l}{\textbf{Basic notation}}\\[2pt]

	$\T$ & The unit circle $\{z\in\C: |z|=1\}$ & \\

	$\Z$, $\N$, $\C$ & The integers, positive integers, and complex plane & \\

	$w(z)$ & The weight function on the unit circle, with $\dd\mu(z)=w(z)\dd z$ & \\

	$w_{\ell}$ & The $\ell$-th moment of the weight, $\ell\in\Z$ & \eqref{Det} \\

	$r$ & The integer offset parameter for the \(2j-k\) system & \\

	$s$ & The integer offset parameter for the \(j-2k\) system & \\[2pt]

	\multicolumn{3}{l}{\textbf{Slanted determinants and related determinant functionals}}\\[2pt]

	$\boldsymbol{D}^{(r)}_{n}$ & The \(n\times n\) moment matrix with \(2j-k\) structure and offset \(r\) & \eqref{Det} \\

	$D^{(r)}_{n}$ & The determinant of \(\boldsymbol{D}^{(r)}_{n}\), with \(D^{(r)}_0=1\) & \eqref{Det}, \eqref{h} \\

	$\boldsymbol{E}^{(s)}_{n}$ & The \(n\times n\) moment matrix with \(j-2k\) structure and offset \(s\) & \eqref{Det E} \\

	$E^{(s)}_{n}$ & The determinant of \(\boldsymbol{E}^{(s)}_{n}\), with \(E^{(s)}_0=1\) & \eqref{Det E} \\

	$\widehat{E}^{(s)}_{n}$ & The bordered \(j-2k\) determinant entering the invertibility condition for \(\boldsymbol{B}_m(s)\) & \eqref{OP22 S E}, \eqref{generic weights} \\

	$\overset{\circ}{D}{}^{(r)}_{n}$ & The bordered \(2j-k\) determinant appearing in the second coefficient of \(G_n(z;r)\) at infinity & \eqref{D circ}, \eqref{h-evaluation} \\

	$\mathcal{D}_n[f]$ & The \(2j-k\) multiple integral functional with integrand \(f\) & \eqref{P multiple integral recalled}, \eqref{Q multiple integral recalled} \\

	$F^{(2)}_n(z_1,z_2)$ & The bi-bordered \(2j-k\) determinant with two spectral columns & \eqref{bi-bordered 2j-k monomials} \\[2pt]

	\multicolumn{3}{l}{\textbf{Bi-orthogonal polynomials and norms}}\\[2pt]

	$P_n(z;r)$ & The monic polynomial in the \(2j-k\) system associated with the first variable & \eqref{OP11}, \eqref{OP1} \\

	$Q_n(z;r)$ & The monic polynomial in the \(2j-k\) system associated with the second variable & \eqref{OP22}, \eqref{OP2} \\

	$R_n(z;s)$ & The monic polynomial in the \(j-2k\) system associated with the first variable & \eqref{OP11 R}, \eqref{OP1 R} \\

	$S_n(z;s)$ & The monic polynomial in the \(j-2k\) system associated with the second variable & \eqref{OP22 S}, \eqref{OP2 S} \\

	$f_n^*(z)$ & The reciprocal polynomial \(z^n f_n(z^{-1})\) associated with a degree \(n\) polynomial \(f_n\) & \eqref{XXXXXX} \\

	$Q_n^*(z;r)$, $S_n^*(z;s)$ & Reciprocal polynomials associated with \(Q_n(z;r)\) and \(S_n(z;s)\) & \eqref{Q* pure n rec}, \eqref{XXXXXX} \\

	$h^{(r)}_n$ & The norm of the \(n\)-th \(2j-k\) bi-orthogonal pair & \eqref{PQorth}, \eqref{h} \\

	$g^{(s)}_n$ & The norm of the \(n\)-th \(j-2k\) bi-orthogonal pair & \eqref{RSorth}, \eqref{Dets from norms 2} \\

	$p^{(r)}_{n,\ell}$ & The coefficient of \(z^{n-\ell}\) in \(P_n(z;r)\) & \eqref{P} \\

	$q^{(r)}_{n,\ell}$ & The coefficient of \(z^{n-\ell}\) in \(Q_n(z;r)\) & \eqref{Q} \\

	$\mathcal{p}^{(r)}_{n,\ell}$, $\mathcal{q}^{(r)}_{n,\ell}$ & The coefficients of \(z^\ell\) in the increasing-power expansions of \(P_n(z;r)\) and \(Q_n(z;r)\) & \eqref{polys} \\

	$\boldsymbol{Z}_n(z)$ & The monomial column vector \((1,z,\ldots,z^n)^T\) & \eqref{Vectors} \\

	$\boldsymbol{P}_n(z;r)$, $\boldsymbol{Q}_n(z;r)$ & The column vectors of \(P\)- and \(Q\)-polynomials of degrees \(0,\ldots,n\) & \eqref{Vectors} \\

	$\boldsymbol{h}^{(r)}_n$ & The diagonal matrix of \(2j-k\) norms \(h^{(r)}_0,\ldots,h^{(r)}_n\) & \eqref{h&H diag} \\

	$\boldsymbol{\mathcal{P}}^{(r)}_n$, $\boldsymbol{\mathcal{Q}}^{(r)}_n$ & The lower triangular coefficient matrices for the \(P\)- and \(Q\)-polynomial bases & \eqref{A B} \\[2pt]

	\multicolumn{3}{l}{\textbf{Associated functions, kernels, and Cauchy transforms}}\\[2pt]

	$G_n(z;r)$ & The associated Cauchy-type transform for \(P_n(z;r)\) used in the \(\boldsymbol{Y}\)-RHP & \eqref{G}, \eqref{RRewritE G} \\

	$H_n(z;s)$ & The associated Cauchy-type transform for \(S_n(z;s)\) used in the \(\boldsymbol{X}\)-RHP & \eqref{H}, \eqref{H additive jump} \\

	$\mathcal{C}[f;z]$ & The Cauchy transform of \(f\) on the unit circle & \eqref{RRewritE G} \\

	$\mathfrak{g}^{(r)}_n$ & The leading normalization coefficient in the large-\(z\) expansion of \(G_n(z;r)\) & \eqref{gg}, \eqref{g-evaluation} \\

	$\mathfrak{h}^{(r)}_n$ & The next normalization coefficient in the large-\(z\) expansion of \(G_n(z;r)\) & \eqref{h-evaluation} \\

    	$\mathfrak{u}^{(s)}_n$ & The first subleading coefficient in the large-\(z\) expansion of \(H_n(z;s)\) & \eqref{asymp H inf}, \eqref{u X coefficient} \\

	$\mathscr{D}_k(z;r)$ & The odd part of \(P_k(z;r)\) divided by \(z\), used in the second column of \(\boldsymbol{Y}_m\) & \eqref{YYYY} \\

	$K_n(z,\mathcal{z};r)$ & The reproducing kernel of the \(2j-k\) bi-orthogonal system & \eqref{RepKer3}, \eqref{CD1} \\

	$\mathscr{I}^{(k)}_{n,m}(u;r)$ & The mixed moment integral used to study multiplication by powers of \(z\) & \eqref{Ik} \\[2pt]

	\multicolumn{3}{l}{\textbf{Riemann--Hilbert data}}\\[2pt]

	$\boldsymbol{A}_m(r)$ & The constant normalization matrix in the \(2j-k\) Riemann--Hilbert problem & \eqref{A A A} \\

	$\boldsymbol{B}_m(s)$ & The constant normalization matrix in the \(j-2k\) Riemann--Hilbert problem & \eqref{BBB}, \eqref{S 0 S' 0} \\

	$\boldsymbol{Y}_m(z;r)$ & The \(3\times3\) Riemann--Hilbert solution associated with the \(2j-k\) system & \eqref{YYYY}, Theorem~\ref{thm P} \\

	$\boldsymbol{X}_m(z;s)$ & The \(3\times3\) Riemann--Hilbert solution associated with the \(j-2k\) system & \eqref{XXXXXX}, Theorem~\ref{thm Q} \\

	$\boldsymbol{\Phi}_m(z;s)$ & The unnormalized \(j-2k\) matrix built from \(S_n^*(z;s)\) and \(H_n(z;s)\) & \eqref{Phi S H} \\

	$\widehat{\boldsymbol{Y}}_m(z;r)$ & The unnormalized \(2j-k\) matrix built from \(P_n(z;r)\) and \(G_n(z;r)\) & Theorem~\ref{thm:Y converse P entries} \\

	$\widehat{\boldsymbol{X}}_m(z;s)$ & The row-reconstructed \(j-2k\) matrix \(\boldsymbol{L}_m\boldsymbol{X}_m(z;s)\) & Lemma~\ref{lemma:X row reconstruction} \\

	$\boldsymbol{L}_m$ & The constant row-reconstruction matrix with rows \(\ell_0,\ell_1,\ell_2\) & Lemma~\ref{lemma:X row reconstruction} \\

	$\boldsymbol{\Lambda}_m(z)$ & The diagonal normalization matrix in the asymptotics of $\boldsymbol{Y}_m(z;r)$ and $\boldsymbol{X}_m(z;s)$ as  $z\to\infty$ & \eqref{Lambda} \\

	$\overset{\infty}{\boldsymbol{Y}}_1(m,r)$, $\overset{\infty}{\boldsymbol{Y}}_2(m,r)$ & The first two coefficients in the expansion of \(\boldsymbol{Y}_m(z;r)\) at infinity & \eqref{Y_1 is nice!}, \eqref{Y2,21}--\eqref{Y2,23} \\

	$\overset{\infty}{\boldsymbol{X}}_1(m,s)$, $\overset{\infty}{\boldsymbol{X}}_2(m,s)$ & The first two coefficients in the expansion of \(\boldsymbol{X}_m(z;s)\) at infinity & \eqref{X_1 is nice!}, \eqref{X asymp inf} \\

	$\mathcal{j}_1(z;m,r)$, $\mathcal{j}_2(z;m,r)$ & The nontrivial jump entries in the \(\boldsymbol{Y}\)-RHP & \eqref{jump Y} \\

	$\mathcal{k}_1(z;m,s)$, $\mathcal{k}_2(z;m,s)$ & The nontrivial jump entries in the \(\boldsymbol{X}\)-RHP & Theorem~\ref{thm Q} \\

	$\boldsymbol{e}_3$ & The vector \((0,0,1)^T\) appearing in the refined \(\boldsymbol{X}\)-normalization & \eqref{X third column refined} \\

	$\boldsymbol{\phi}$ & The coefficient vector in the refined expansion of the third column of \(\boldsymbol{X}_m\) & \eqref{X third column refined} \\

	$\boldsymbol{v}(z)$, $\boldsymbol{v}_j$ & The first column of \(\boldsymbol{X}_m\) and its Taylor coefficients at the origin & \eqref{X intrinsic nondegeneracy}, Lemma~\ref{lemma:X row reconstruction} \\[2pt]

	\multicolumn{3}{l}{\textbf{Recurrence and multiplication coefficients}}\\[2pt]

	$\delta^{(r)}_n$ & A pure-degree recurrence coefficient for the \(P\)-polynomials & \eqref{delta} \\

	$\eta^{(r)}_n$ & A pure-degree recurrence coefficient for the \(P\)-polynomials & \eqref{eta} \\

	$\beta^{(r)}_n$ & A pure-degree recurrence coefficient for the \(Q\)-polynomials & \eqref{beta} \\

	$\alpha^{(r)}_n$ & A pure-degree recurrence coefficient for the \(Q\)-polynomials & \eqref{alpha} \\

	$\mathscr{A}^{(r)}_{n,j}$ & The coefficients in the expansion of \(zP_n(z;r)\) in the \(P\)-basis & \eqref{zP}, \eqref{An0}--\eqref{An2} \\

	$\mathscr{B}^{(r)}_{n,j}$ & The coefficients in the expansion of \(zQ_n(z;r)\) in the \(Q\)-basis & \eqref{zQ}, \eqref{Bn0}--\eqref{Bn2} \\

	$\mathscr{C}^{(r)}_{n,j}$ & The coefficients in the expansion of \(z^2P_n(z;r)\) in the \(P\)-basis & \eqref{z2P}, \eqref{Cn2} \\

	$\mathscr{D}^{(r)}_{n,j}$ & The coefficients in the expansion of \(z^2Q_n(z;r)\) in the \(Q\)-basis & \eqref{z2Q}, \eqref{Dn2} \\

	$\boldsymbol{\Psi}_m(z)$ & The comparison matrix used to recover recurrence relations from the \(\boldsymbol{Y}\)-RHP & \eqref{Psi def} \\

	$\mathscr{Y}_{jk}(m)$, $\triangle_{jk}(m)$ & The entries of \(\overset{\infty}{\boldsymbol{Y}}_2(m)\) and their forward \(m\)-differences & \eqref{Y2jk} \\[2pt]

	\multicolumn{3}{l}{\textbf{Dodgson condensation notation}}\\[2pt]

	$\boldsymbol{\mathscr{M}}$ & A generic square matrix used in the Dodgson condensation identity & \eqref{DODGSON} \\

	$\mathscr{M}\left\lbrace \begin{matrix} j_1&\cdots&j_\ell \\ k_1&\cdots&k_\ell \end{matrix}\right\rbrace$ &
	The determinant obtained from \(\boldsymbol{\mathscr{M}}\) by deleting rows \(j_1,\ldots,j_\ell\) and columns \(k_1,\ldots,k_\ell\) & \eqref{DODGSON} \\

\end{longtable}
\normalsize

\end{itemize}

\newpage

\bibliographystyle{plain}
\bibliography{all}

\end{document}